\documentclass[11pt,letterpaper]{amsart}
\usepackage[foot]{amsaddr}

\usepackage{mathtools}
\usepackage[T1]{fontenc}
\usepackage[latin9]{inputenc}
\usepackage{geometry}
\geometry{verbose,letterpaper,tmargin= 1in,bmargin=1in,lmargin=1in,rmargin=1in}
\usepackage{wrapfig}
\usepackage{multicol}
\usepackage{graphicx}
\usepackage{soul}
\usepackage{xcolor}
\usepackage{amssymb}
\usepackage{placeins}
\setcounter{tocdepth}{1}
\usepackage{cite}
\usepackage{caption}
\usepackage{enumerate}
\usepackage{afterpage}
\usepackage{enumitem}
\usepackage{bmpsize}
\usepackage{hyperref}
\usepackage{tabu}
\usepackage{enumitem}   
\numberwithin{equation}{section}
\usepackage{stmaryrd}
\usepackage{tikz}
\usetikzlibrary{matrix,graphs,arrows,positioning,calc,decorations.markings,shapes.symbols}

\makeatletter
\renewcommand{\email}[2][]{%
  \ifx\emails\@empty\relax\else{\g@addto@macro\emails{,\space}}\fi%
  \@ifnotempty{#1}{\g@addto@macro\emails{\textrm{(#1)}\space}}%
  \g@addto@macro\emails{#2}%
}
\makeatother


\newtheorem{theorem}{Theorem}[section]
\newtheorem{lemma}[theorem]{Lemma}
\newtheorem{proposition}[theorem]{Proposition}
\newtheorem{conjecture}[theorem]{Conjecture}

{ \theoremstyle{definition}
\newtheorem{definition}[theorem]{Definition}}
{ \theoremstyle{remark}
\newtheorem{remark}[theorem]{Remark}}

\newcommand{\N}{\mathbb{N}}
\newcommand{\Z}{\mathbb{Z}}
\newcommand{\R}{\mathbb{R}}
\newcommand{\E}{\mathbb{E}}

\newcommand{\CL}{\mathcal{L}}

\newcommand{\weyl}{W^\circ}
\newcommand{\GT}{\mathbb{GT}}

\title{Characterization of Brownian Gibbsian line ensembles}
\date{\today}
\author[E. Dimitrov]{Evgeni Dimitrov}
\author[K. Matetski]{Konstantin Matetski}
\address[E. Dimitrov, K. Matetski]{Columbia University, Department of Mathematics, Office 517, 2990 Broadway, New York, NY 10027, USA}
\email[E. Dimitrov]{edimitro@math.columbia.edu}
\email[K. Matetski]{matetski@math.columbia.edu}

\begin{document}

\begin{abstract}
In this paper we show that a Brownian Gibbsian line ensemble is completely characterized by the finite-dimensional marginals of its top curve, i.e. the finite-dimensional sets of the its top curve form a separating class. A particular consequence of our result is that the Airy line ensemble is the unique Brownian Gibbsian line ensemble, whose top curve is the Airy$_2$ process.
\end{abstract}

\maketitle 

\tableofcontents

\section{Introduction and main result}

%
\subsection{Gibbs measures} Many problems in probability theory and mathematical physics deal with random objects, whose distribution has a {\em Gibbs property}. The term ``Gibbs'' means different things in different contexts, and to illustrate what we mean by it and provide some motivation for our work, we consider a simple model of lozenge tilings of the hexagon. Consider three integers $A, B, C\geq 1$ and the $A \times B \times C$ hexagon drawn on the triangular lattice, see the left part of Figure \ref{S1_1}. By gluing two triangles along a common side, we obtain three types of tiles (also called {\em lozenges}), that are depicted in red, blue and green in Figure \ref{S1_1}. There are finitely many possible ways to tile any given hexagon and we can put the uniform measure on all such tilings. The resulting random tiling model satisfies the following Gibbs property: if we fix a tileable region $K$ in the hexagon, and fix the tiling outside of it then the conditional distribution of the tilings of $K$ is just the uniform measure on all possible tilings of $K$. See the right part of Figure \ref{S1_1}.
\begin{figure}[ht]
\begin{center}
  \includegraphics[scale = 0.40]{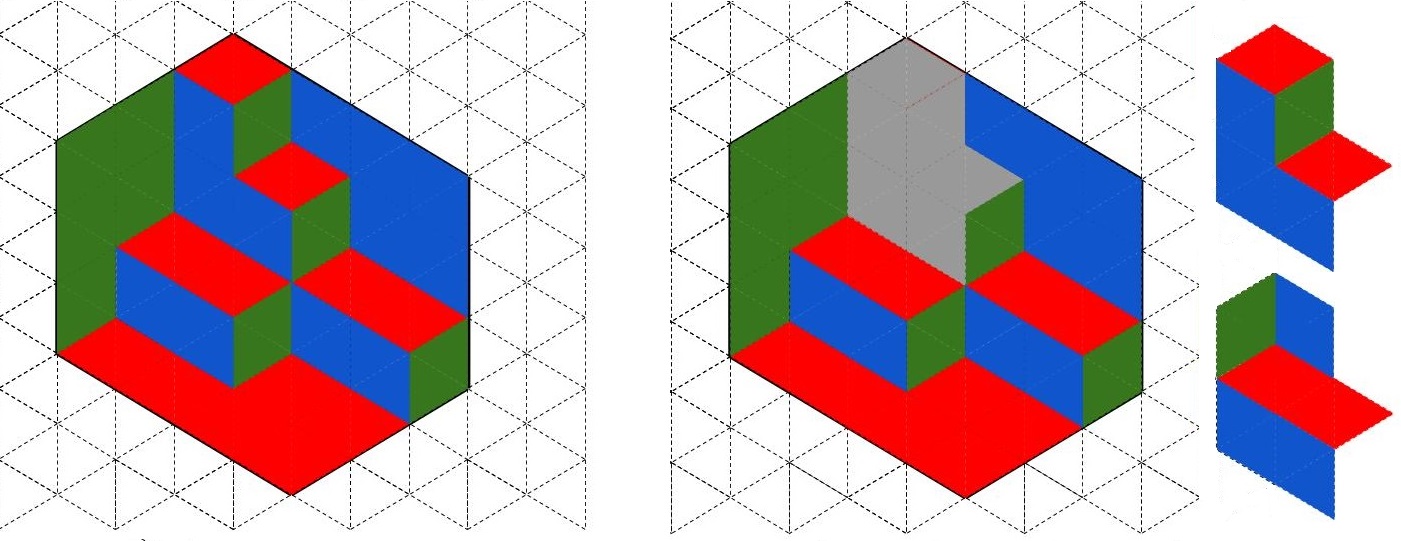}
  \vspace{-2mm}
 \captionsetup{width=0.95\linewidth}
  \caption{ The left part depicts a $3 \times 3 \times 4$ hexagon with a particular tiling. On the right side, a tileable region $K$ is depicted in grey. There are two possible ways to tile $K$, given the tiling outside of it (they are drawn on the very right of the picture). The Gibbs property says that conditioned on the tiling outside of $K$ each of these two tilings is equally likely. }
  \label{S1_1}
  \end{center}
\end{figure}

\begin{figure}[ht]
\begin{center}
  \includegraphics[scale = 0.50]{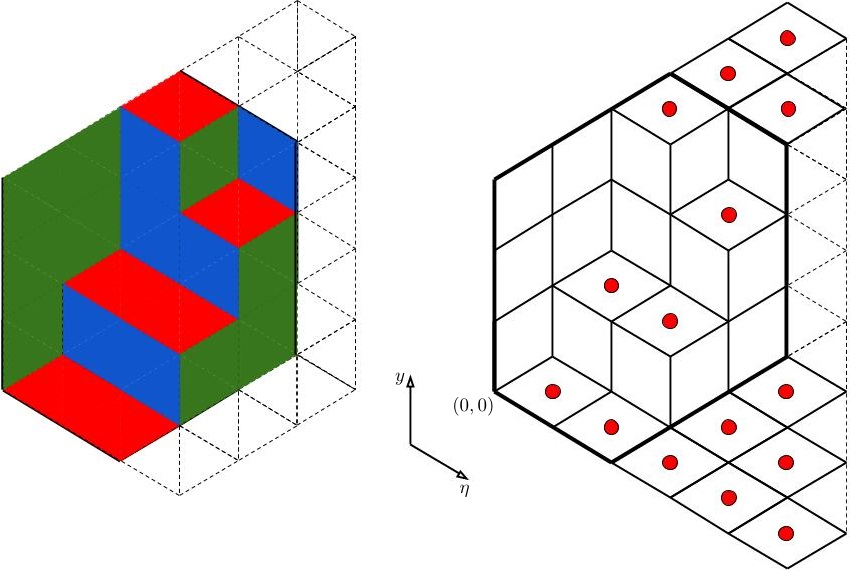}
  \vspace{-2mm}
 \captionsetup{width=0.95\linewidth}
  \caption{Given an element $(\lambda^1, \dots, \lambda^N)  \in \GT^{(B+C)}(\mu)$ with $\mu = (A^C, 0^B)$, we construct an array $y_i^j$ for $1 \leq i \leq j$ and $1 \leq j \leq B+C$ through $y_i^j = \lambda_i^j + j - i + 1/2$ and then plot the points $(i, y_i^j)$ on the triangular grid, denoted as red dots on the right side of the figure. The dots outside of the hexagon are fixed by the interlacing conditions and the positions of the dots inside are distinct for different elements of $\GT^{(B+C)}(\mu)$. The positions of the red dots inside the hexagon specify the locations of the red lozenges, which uniquely determine the tiling. }
  \label{S1_2}
  \end{center}
\end{figure}
An alternative way to represent the above hexagon tiling model is as a random triangular array of interlacing signatures. Specifically, let 
$$\Lambda_k = \{ \lambda \in \mathbb{Z}^k: \lambda_1 \geq \lambda_2 \geq \cdots \geq \lambda_k \}$$
denote the set of signatures of length $k$. Given $N \in \mathbb{N}$ we let 
$$\GT^{(N)} = \{ (\lambda^1,  \dots, \lambda^N): \lambda^k \in \Lambda_k \mbox{ for $k = 1, \dots, N$ and }  \lambda^1 \preceq \lambda^2 \preceq \cdots \preceq \lambda^N\},$$
denote the set of {\em Gelfand-Tsetlin patterns}. The notation $\mu \preceq \lambda$ means that the signatures $\lambda$ and $\mu$ {\em interlace}; i.e. we have $\lambda_1 \geq \mu_1 \geq \lambda_2 \geq \mu_2 \geq \dots$. Finally, given a signature $\mu \in \Lambda^N$, we let $\GT^{(N)}(\mu)$ denote the set of elements in $\GT^{(N)}$ such that $\lambda^N = \mu$. With this notation one can see that lozenge tilings of an $A \times B \times C$ hexagon are in one-to-one correspondence with elements in $\GT^{(B+C)}(\mu)$, where $\mu = (A^C, 0^B)$ (the signature with first $C$ entries equal to $A$ and last $B$ entries equal to $0$). The correspondence is depicted in Figure \ref{S1_2}.

With the above correspondence, we see that the uniform measure on the set of lozenge tilings of the $A \times B\times C$ hexagon is the same as the uniform measure on $\GT^{(B+C)}(\mu)$. In this new notation, the Gibbs property of the beginning of the section can be rephrased as follows. Given any $k \in \{1, \dots, B+C\}$, the conditional distribution of $(\lambda^1, \dots, \lambda^k)$, given $\lambda^k$ is precisely the uniform measure on $\GT^{(k)}({\lambda^k}).$ Both of the Gibbs properties described so far are equivalent to the statement that the lozenge tiling of the hexagon is uniform, and are thus equivalent to each other.\\

There is a natural continuous analogue of the above setting of interlacing triangular arrays, which essentially corresponds to replacing the state space $\mathbb{Z}$ with $\mathbb{R}$. Specifically, let 
$$\mathcal{W}_k = \{ \vec{x} \in \mathbb{R}^k: x_1 \geq x_2 \geq \cdots \geq x_k \},$$
denote the {\em Weyl chamber} in $\mathbb{R}^k$. For $\vec{x} \in \mathbb{R}^n$ and $\vec{y} \in \mathbb{R}^{n-1}$ we write $\vec{y} \preceq \vec{x}$ to mean that
$$x_1 \geq y_1 \geq x_2 \geq y_2 \geq \cdots \geq x_{n-1} \geq y_{n-1} \geq x_n.$$
Given, $N \in \mathbb{N}$ we define the {\em Gelfand-Tsetlin cone} $\GT^{(N)}$ to be 
$$\GT^{(N)} = \{ y \in \mathbb{R}^{N(N+1)/2}: y_i^{j+1} \geq y_i^j \geq y_{i+1}^{j+1}, \mbox{ } 1 \leq i \leq j \leq N-1\}.$$ 
Finally, given $\vec{x} \in \mathcal{W}_N$ we define the {\em Gelfand-Tsetlin polytope}
$$\GT^{(N)}(\vec{x}) = \{ y \in \mathbb{R}^{N(N+1)/2}: y^N_i = x_i \mbox{ for $i = 1, \dots, N$}\}.$$
In this context, we say that a probability measure $\mu$ on $\GT^{(N)}$ is Gibbs, or equivalently satisfies the {\em continuous Gibbs property} \cite{GorinAlt}, if the following condition is satisfied for a $\mu$-distributed random variable $Y$. Given any $k \in \{1, \dots, N\}$ the conditional distribution of $( Y_i^j: 1 \leq i \leq j \leq k)$, given $\vec{Y}^k = (Y_1^k, \dots, Y_k^k)$, is uniform on $\GT^{(k)}(\vec{Y}^k).$

Measures that have the continuous Gibbs property naturally appear in random matrix theory, generally in the context of orbital measures on the space of Hermitian matrices under the action of the unitary group, see e.g. \cite{Def10}. We forgo stating the most general result and illustrate a simple special case coming from the Gaussian Unitary Ensemble (GUE). Recall that the GUE of rank $N$ is the ensemble of random Hermitian matrices $X = \{X_{ij}\}_{i,j = 1}^N$ with probability density proportional to $\exp(-\text{Trace}(X^2)/2)$, with respect to Lebesgue measure. For $r = 1, \ldots, N$  we let $Y_1^r  \geq Y_2^r \geq \cdots \geq Y_r^r$ denote the eigenvalues of the top-left $r \times r$ corner $\{X_{ij}\}_{i,j = 1}^r$. The joint distribution of $(Y_i^j: 1 \leq i \leq j \leq N)$ is known as the {\em GUE-corners} process of rank $N$ (sometimes called the GUE-minors process), and it satisfies the continuous Gibbs property, see \cite{Bar01, Def10}. The fact that the GUE-corners process satisfies the continuous Gibbs property is not a lucky coincidence, but is actually a manifestation of the Gibbs property for tiling models. Indeed, the GUE-corners process is known to be a diffuse limit of random lozenge tiling models, see \cite{JN06}, \cite{Nor09} and \cite{OR}, and under this diffuse scaling the tiling Gibbs property naturally becomes the continuous Gibbs property.\\

There is a different way to interpret a lozenge tilings of the hexagon, which is closer to the topic of the present paper. Specifically, let us perform a simple affine transformation and draw segments connecting the mid-points of the left and right sides of each of the green and blue lozenges, see Figure \ref{Fig1}. In this way a random lozenge tiling corresponds a set of $A$ random curves connecting the left and right side of the hexagon. A natural way to interpret these curves is as trajectories of $A$ Bernoulli random walks, whose starting and ending points are equally spaced points on the two sides of the hexagon, and which have been conditioned to never intersect.  
\begin{figure}[ht]
\definecolor{myBlue}{RGB}{16,85,204}
\definecolor{myRed}{RGB}{253,1,0}
\definecolor{myGreen}{RGB}{55,118,29}
\centering
\begin{tikzpicture}[>=stealth,
brxy/.style={fill=myRed, draw=myRed!30!black},
bryz/.style={fill=myGreen, draw=myGreen!30!black},
brxz/.style={fill=myBlue, draw=myBlue!30!black},
pth/.style={very thick, draw=black},
intpt/.style={circle, draw=white!100, fill=purple!70!black, very thick, inner sep=1pt, minimum size=2.5mm},
scale=0.7
]
\def\brxy(#1:#2:#3){
\filldraw[brxy]
(${(#1)+0}*(1,0) + {(#2)+0}*({sqrt(2)*cos(deg(pi/4))},{sqrt(2)*sin(deg(pi/4))}) + {(#3)-0.5}*(0,1)$) --
(${(#1)+1}*(1,0) + {(#2)+0}*({sqrt(2)*cos(deg(pi/4))},{sqrt(2)*sin(deg(pi/4))}) + {(#3)-0.5}*(0,1)$) --
(${(#1)+1}*(1,0) + {(#2)+1}*({sqrt(2)*cos(deg(pi/4))},{sqrt(2)*sin(deg(pi/4))}) + {(#3)-0.5}*(0,1)$) --
(${(#1)+0}*(1,0) + {(#2)+1}*({sqrt(2)*cos(deg(pi/4))},{sqrt(2)*sin(deg(pi/4))}) + {(#3)-0.5}*(0,1)$) -- cycle;
}
\def\bryz(#1:#2:#3){
\filldraw[bryz]
(${(#1)+0}*(1,0) + {(#2)+0}*({sqrt(2)*cos(deg(pi/4))},{sqrt(2)*sin(deg(pi/4))}) + {(#3)-0.5}*(0,1)$) --
(${(#1)+0}*(1,0) + {(#2)+1}*({sqrt(2)*cos(deg(pi/4))},{sqrt(2)*sin(deg(pi/4))}) + {(#3)-0.5}*(0,1)$) --
(${(#1)+0}*(1,0) + {(#2)+1}*({sqrt(2)*cos(deg(pi/4))},{sqrt(2)*sin(deg(pi/4))}) + {(#3)+0.5}*(0,1)$) --
(${(#1)+0}*(1,0) + {(#2)+0}*({sqrt(2)*cos(deg(pi/4))},{sqrt(2)*sin(deg(pi/4))}) + {(#3)+0.5}*(0,1)$) -- cycle;
\draw[pth]
(${(#1)+0}*(1,0) + {(#2)+0}*({sqrt(2)*cos(deg(pi/4))},{sqrt(2)*sin(deg(pi/4))}) + {(#3)}*(0,1)$) --
(${(#1)+0}*(1,0) + {(#2)+1}*({sqrt(2)*cos(deg(pi/4))},{sqrt(2)*sin(deg(pi/4))}) + {(#3)}*(0,1)$);
}
\def\brxz(#1:#2:#3){
\filldraw[brxz]
(${(#1)+0}*(1,0) + {(#2)+0}*({sqrt(2)*cos(deg(pi/4))},{sqrt(2)*sin(deg(pi/4))}) + {(#3)-0.5}*(0,1)$) --
(${(#1)+1}*(1,0) + {(#2)+0}*({sqrt(2)*cos(deg(pi/4))},{sqrt(2)*sin(deg(pi/4))}) + {(#3)-0.5}*(0,1)$) --
(${(#1)+1}*(1,0) + {(#2)+0}*({sqrt(2)*cos(deg(pi/4))},{sqrt(2)*sin(deg(pi/4))}) + {(#3)+0.5}*(0,1)$) --
(${(#1)+0}*(1,0) + {(#2)+0}*({sqrt(2)*cos(deg(pi/4))},{sqrt(2)*sin(deg(pi/4))}) + {(#3)+0.5}*(0,1)$) -- cycle;
\draw[pth]
(${(#1)+0}*(1,0) + {(#2)+0}*({sqrt(2)*cos(deg(pi/4))},{sqrt(2)*sin(deg(pi/4))}) + {(#3)}*(0,1)$) --
(${(#1)+1}*(1,0) + {(#2)+0}*({sqrt(2)*cos(deg(pi/4))},{sqrt(2)*sin(deg(pi/4))}) + {(#3)}*(0,1)$);
}
\brxy(0:1:3);\brxy(0:2:3);\brxz(2:3:2);\brxz(3:3:2);
\bryz(0:0:2);\brxz(0:1:2);\bryz(1:1:2);\brxy(1:1:2);\brxy(1:2:2);\brxy(2:2:2);\bryz(1:2:2);\brxz(2:2:3);
\bryz(0:0:1);\brxz(0:1:1);\brxz(1:1:1);\bryz(2:1:1);\brxz(2:2:1);\bryz(3:2:1);\brxz(3:3:1);
\brxy(0:0:1);\brxy(1:0:1);\brxy(2:0:1);\brxy(2:1:1);\brxy(3:0:1);\brxy(3:1:1);\brxy(3:2:1);
\brxz(0:0:0);\brxz(1:0:0);\brxz(2:0:0);\brxz(3:0:0);\bryz(4:0:0);\bryz(4:1:0);\bryz(4:2:0);
\draw[->] (-0.5,0) -- (8,0) node[right] {$t$};
\draw[->] (0,-0.5) -- (0,6.5) node[above] {$x$};
\draw[-,line width=0pt] (3,6) -- (3,-1) node[below] {$t=3$};
\node[intpt] at (3,0) {}; \node[intpt] at (3,2) {}; \node[intpt] at (3,4) {};
\end{tikzpicture}
\caption{Lozenge tiling of the hexagon and corresponding up-right path configuration. The dots represent the location of the random walks at time $t = 3$.} \label{Fig1}
\end{figure} 

Let us number the random paths from top to bottom by $L_1, L_2, \dots, L_A$, and denote the position of the $k$-th random walk at time $t$ by $L_k(t)$. Then the Gibbs property for the tiling model, can be seen to be equivalent to the following resampling invariance. Suppose that we sample $\{L_m\}_{m = 1}^A$ and fix two times $0 \leq s < t \leq B+C$ and $k_1, k_2 \in \{1, \dots, A\}$ with $k_1 \leq k_2$. We can erase the part of the paths $L_k$ between the points $(s, L_k(s))$ and $(t, L_k(t))$ for $k = k_1, \dots, k_2$ and sample independently $k_2 - k_1 + 1$ up-right paths between these points uniformly from the set of all such paths that do not intersect the lines $L_{k_1-1}$ and $L_{k_2 + 1}$ or each other with the convention that $L_{0} = \infty$ and $L_{A+1} = -\infty$. In this way we obtain a new random collection of paths $\{L'_m\}_{m = 1}^A$ and the essence of the Gibbs property is that the law of $\{L'_m\}_{m = 1}^A$ is the same as that of $\{L_m\}_{m = 1}^A$. 

There is a natural continuous analogue of the above random path formulation, which in this case corresponds to replacing the random walk trajectories with those of Brownian motions. In this continuous context, the random variables of interest take values in $C(\Sigma \times \Lambda)$ -- the space of continuous functions on $\Sigma \times \Lambda$, where $\Sigma = \{1, \dots, N\}$ with $N \in \mathbb{N}$ or $\Sigma = \mathbb{N}$ and $\Lambda \subset \mathbb{R}$ is an interval. We call $C(\Sigma \times \Lambda)$-valued random variables $\mathcal{L}$ {\em line ensembles} (indexed by $\Sigma$ on $\Lambda$). A formal definition of this object is given in Section \ref{Section2.1}, presently it will suffice for us to know that a line ensemble is a collection of at most countably many continuous functions on $\Lambda$, which we number using the index set $\Sigma$. For convenience we denote $\mathcal{L}_{i}(\omega)(x) =\mathcal{L}(\omega)(i,x)$ the $i$-th continuous function (or line) in the ensemble, and typically we drop the dependence on $\omega$ from the notation as one usually does for Brownian motion. The notion of a line ensemble is what replaces the collection of random walk trajectories from the previous paragraph, and we next explain the continuous analogue of the Gibbs property. The description we give is informal, and we postpone the precise formulation to Section \ref{Section2.1} as it requires more notation.

 We say that a probability measure $\mu$ on $C(\Sigma \times \Lambda)$ satisfies the {\em Brownian Gibbs property} if it has the following resampling invariance. Suppose we sample $\mathcal{L}$ according to $\mu$ and fix two times $s, t \in \Lambda$ with $s < t$ and a finite set $K = \{k_1, \dots, k_2\} \subset \Sigma$ with $k_1 \leq k_2$. We can erase the part of the lines $\mathcal{L}_k$ between the points $(s, \mathcal{L}_k(s))$ and $(t, \mathcal{L}_k(t))$ for $k = k_1, \dots, k_2$ and sample independently $k_2 - k_1 + 1$ random curves between these points according to the law of $k_2 - k_1 + 1$ Brownian bridges, which have been conditioned to not intersect the lines $\mathcal{L}_{k_1 - 1}$ and $\mathcal{L}_{k_2 + 1}$ or each other with the convention that $\mathcal{L}_0 = \infty$ and $\mathcal{L}_{k_2 + 1} = -\infty$ if $k_2 + 1 \not \in \Sigma$. In this way we obtain a new random line ensemble $\mathcal{L}'$, and the essence of the Brownian Gibbs property is that the law of $\mathcal{L}'$ is equal to $\mu$. 

While versions of the above definition have appeared earlier in the literature, the term ``Brownian Gibbs property'' was first coined in \cite{CorHamA}. One of the prototypical random models that enjoy the Brownian Gibbs property is {\em Dyson Brownian motion} \cite{Dys62} as has been shown in \cite{CorHamA}. As in the case of the GUE-corners process the latter can be seen as a consequence of the fact that Dyson Brownian motion can be obtained as a diffuse limit of the non-colliding Bernoulli walkers \cite{EK08}; under this limit transition the path resampling interpretation of the tiling Gibbs property naturally becomes the Brownian Gibbs property. \\

The present paper deals with Brownian Gibbsian line ensembles, i.e. line ensembles that satisfy the Brownian Gibbs property. Our main interest comes from the following basic question:\\ {\em How much information does one need in order to uniquely specify the law of a Brownian Gibbsian line ensemble?}

 To begin understanding the latter question let us go back to the case of random variables on $\mathbb{R}^{N(N+1)/2}$, which satisfy the continuous Gibbs property and recall the notion of a {\em separating class} \cite[p.9]{Billing}. Given a class of probability measures $\mathcal{P}$ on the same space $(S, \mathcal{S})$, we call a $\pi$-system of sets $\mathcal{A} \subset \mathcal{S}$ a {\em separating class for $\mathcal{P}$} if the following implication holds 
$$\mu_1, \mu_2 \in \mathcal{P} \mbox{ and }\mu_1(A) = \mu_2(A) \mbox{ for all $A \in \mathcal{A}$ } \implies \mu_1 = \mu_2.$$
If $\mathcal{P}$ denotes the set of all probability measures on $S = \mathbb{R}^{N(N+1)/2}$ and $Y$ is an $S$-valued random variable, it is well-known that the sets of the form
$$ \{ Y_i^j \leq x_i^j : 1 \leq i \leq j \leq N\}, \mbox{ for $x_i^j \in \mathbb{R}$ and $1 \leq i \leq j \leq N$},$$
form a separating class for $\mathcal{P}$, cf. \cite[Example 1.1, p.9]{Billing}. However, if $\mathcal{P}_{Gibbs}$ denotes the set of probability measures on $S = \mathbb{R}^{N(N+1)/2}$ that satisfy the continuous Gibbs property, one can readily see that the sets 
$$ \{ Y_i^N \leq x_i^N : 1 \leq i \leq N\}, \mbox{ for $x_i^N \in \mathbb{R}$ and $i = 1,\dots, N$},$$
form a separating class for $\mathcal{P}_{Gibbs}$. Indeed, since conditionally on $\vec{Y}^N = (Y_1^N, \dots, Y_N^N)$ the law of $( Y_i^j: 1 \leq i \leq j \leq N)$, is uniform $\GT^{(N)}(\vec{Y}^N),$ we see that two Gibbsian probability measures on $S$ are equal the moment their top rows have the same marginal distribution. The essential observation here is that the continuous Gibbs property reduces the amount of information one needs to specify the law of a random variable from order $N^2$ to order $N$; or from dimension $2$ to dimension $1$. 

The main result of the present paper is the analogue of the above statement for Brownian Gibbsian line ensembles and is the content of the following theorem.
\begin{theorem}\label{ThmMainS1}  Let $\Sigma = \{ 1,2, \dots,  N \}$ with $N \in \mathbb{N}$ or $\Sigma =\mathbb{N}$, and let $\Lambda \subset \mathbb{R}$ be an interval. Suppose that $\mathcal{L}^1$ and $\mathcal{L}^2$ are $\Sigma$-indexed line ensembles on $\Lambda$ that satisfy the Brownian Gibbs property with laws $\mathbb{P}_1$ and $\mathbb{P}_2$ respectively. Suppose further that for every $k\in \mathbb{N}$,  $t_1 < t_2 < \cdots < t_k$ with $t_i \in \Lambda$ and $x_1, \dots, x_k \in \mathbb{R}$ we have that 
\begin{equation*}
\mathbb{P}_1 \left( \mathcal{L}^1_1(t_1) \leq x_1, \dots,\mathcal{L}^1_1(t_k) \leq x_k  \right) =\mathbb{P}_2 \left( \mathcal{L}^2_1(t_1) \leq x_1, \dots,\mathcal{L}^2_1(t_k) \leq x_k  \right).
\end{equation*}
Then we have that $\mathbb{P}_1 = \mathbb{P}_2$.
\end{theorem}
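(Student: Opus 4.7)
The plan is to reduce $\mathbb{P}_1=\mathbb{P}_2$ to matching all joint finite-dimensional distributions of the curves $(\mathcal{L}_i(t_j))_{i,j}$, and then prove these by induction on the number $n$ of curves involved. That matching joint f.d.d.\ for every $n\in\Sigma$ and every finite collection of times is enough is standard: cylinder sets form a separating class on $C(\Sigma\times\Lambda)$, and for $\Sigma=\mathbb{N}$ one concludes by a Kolmogorov-type projective limit argument. The base case $n=1$ is precisely the hypothesis, so the content is the inductive step: assuming the joint law of $(\mathcal{L}_1,\dots,\mathcal{L}_n)$ agrees under $\mathbb{P}_1$ and $\mathbb{P}_2$, deduce agreement for $(\mathcal{L}_1,\dots,\mathcal{L}_{n+1})$.

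The main tool is the Brownian Gibbs property applied with $K=\{1,\dots,n\}$ on a compact interval $[a,b]\subset\Lambda$: the conditional law of the top $n$ curves on $[a,b]$, given the boundary values $\{\mathcal{L}_i(a),\mathcal{L}_i(b):i\le n\}$ and the entire curve $\mathcal{L}_{n+1}|_{[a,b]}$, is that of $n$ non-intersecting Brownian bridges with the prescribed endpoints conditioned to stay above $\mathcal{L}_{n+1}|_{[a,b]}$. Integrating out the conditioning gives, for any bounded continuous functional $f$ on $C(\{1,\dots,n\}\times[a,b])$, an identity of the form
\[
\mathbb{E}_r\bigl[f(\mathcal{L}^r_1,\dots,\mathcal{L}^r_n)|_{[a,b]}\bigr]=\int \mathcal{T}(f;\vec u,\vec v,g)\,d\nu_r(\vec u,\vec v,g),
\]
where $\nu_r$ is the $\mathbb{P}_r$-joint law of the boundary vectors $\vec u=(\mathcal{L}^r_i(a))_{i\le n}$, $\vec v=(\mathcal{L}^r_i(b))_{i\le n}$, and the curve $g=\mathcal{L}^r_{n+1}|_{[a,b]}$, and $\mathcal{T}$ is the expectation of $f$ against non-intersecting bridges from $\vec u$ to $\vec v$ conditioned to stay above $g$. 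By the inductive hypothesis (upgraded from f.d.d.\ to full-law equality on $C(\{1,\dots,n\}\times[a,b])$ via the cylinder separating class) the left-hand side does not depend on $r$, so the $\mathcal{T}$-transforms of $\nu_1$ and $\nu_2$ coincide.

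The analytic heart, and the main obstacle, is to invert this transform to conclude $\nu_1=\nu_2$. I would test against functionals $f$ that probe only the bottom-most curve, for instance $f(\vec B)=\varphi(B_n(t_1),\dots,B_n(t_k))$ with $t_i\in(a,b)$ and $\varphi$ ranging over a separating class. Via the Karlin--McGregor determinantal formula and a Girsanov change of measure, $\mathcal{T}(f;\vec u,\vec v,g)$ factors into a free-bridge expectation weighted by an avoidance functional of $g$, reducing the problem to identifiability of the joint law of $(\vec u,\vec v,g)$ from these weighted functionals. The crucial claim, and where I expect the real work to sit, is that this family separates joint laws of boundary-vector/continuous-curve triples; I would approach it by exploiting monotonicity and continuity of bridge-avoidance probabilities in $g$, together with approximation of continuous $g$ by its values on increasingly fine meshes of $[a,b]$, so that testing $\mathcal{L}_n$ at many interior times effectively recovers the law of $g$.

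Once $\nu_1=\nu_2$ is established, the joint finite-dimensional marginals of $\mathcal{L}_{n+1}$ at times inside $[a,b]$ agree with those of $\mathcal{L}_1,\dots,\mathcal{L}_n$ at the endpoints $a,b$. Letting $[a,b]$ exhaust $\Lambda$ along a nested countable sequence and combining with the inductive hypothesis yields agreement of every joint f.d.d.\ of $(\mathcal{L}_1,\dots,\mathcal{L}_{n+1})$ at any finite set of times in $\Lambda$, completing the induction and hence the proof.
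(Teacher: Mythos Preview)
Your inductive framework and your use of the Brownian Gibbs property with $K=\{1,\dots,n\}$ match the paper's architecture exactly. The gap is precisely where you flag it: the ``inversion'' of the transform $\nu\mapsto\bigl(\int\mathcal T(f;\cdot)\,d\nu\bigr)_f$. Your proposed route---Karlin--McGregor, Girsanov, mesh approximation of $g$, monotonicity in $g$---does not obviously assemble into an identifiability argument, and you have not said what concrete family of $f$'s would separate laws of $(\vec u,\vec v,g)$. Since $\mathcal T(f;\vec u,\vec v,g)$ depends on $g$ in a highly nonlinear, nonlocal way, this is the entire difficulty.

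The paper does not attempt to recover the full law of $g=\mathcal L_{n+1}|_{[a,b]}$. Instead, for each time $t_s$ at which one wishes to probe $\mathcal L_{n+1}$, it takes a \emph{shrinking} window $[a_s^w,b_s^w]=[t_s-w^{-1},t_s+w^{-1}]$ and the observable
\[
H_w \;=\; \prod_{s}\frac{\mathbf{1}\{\mathcal L_n(t_s)\le x_s\}}{F\bigl(x_s;\,a_s^w,b_s^w,\,\mathcal L_{\le n}(a_s^w),\,\mathcal L_{\le n}(b_s^w)\bigr)},
\]
where $F$ is the probability that the bottom curve of $n$ \emph{unconstrained} avoiding bridges (no floor) lies below $x_s$ at the midpoint. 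This $H_w$ is measurable in the top $n$ curves, so by the inductive hypothesis $\mathbb E_1[H_w]=\mathbb E_2[H_w]=:p_w$. Applying the Gibbs property on each small window turns the numerator into the corresponding \emph{floor-constrained} probability $\mathbb P_{\mathrm{avoid}}^{g=\mathcal L_{n+1}}(\mathcal Q_n(t_s)\le x_s)$, so
\[
p_w=\mathbb E_r\Bigl[\prod_s\frac{\mathbb P_{\mathrm{avoid}}^{g=\mathcal L^r_{n+1}}(\mathcal Q_n(t_s)\le x_s)}{F(x_s;\cdot)}\Bigr].
\]
The ratio inside is always $\le\mathbf 1\{\mathcal L^r_{n+1}(t_s)\le x_s\}$ by monotone coupling (the floor only pushes up), and the crux is that it converges to this indicator as $w\to\infty$: on a window of size $w^{-1}$, if $\mathcal L_{n+1}(t_s)<x_s$ then the floor sits macroscopically (order $1$) below the level $x_s$ while the bridges fluctuate on scale $w^{-1/2}$, so the floor constraint becomes negligible and the ratio tends to $1$. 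This is made precise via the coupling lemmas and Gaussian tail bounds (Lemmas~\ref{LemmaBotMax}--\ref{LemmaAvoidMax}). The shrinking window is what localizes the dependence on $g$ to a single value $\mathcal L_{n+1}(t_s)$ and makes the problem tractable; your fixed-$[a,b]$ transform does not have this feature.
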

\begin{remark}
In plain words, Theorem \ref{ThmMainS1} states that if two line ensembles both satisfy the Brownian Gibbs property and have the same finite-dimensional distributions of the top curve, then they have the same distribution as line ensembles. Equivalently, the finite-dimensional sets of the top curve form a separating class for the space of probability measures with the Brownian Gibbs property.
\end{remark}
\begin{remark}
Theorem \ref{ThmMainS1} is formulated slightly more generally after introducing some necessary notation as Theorem \ref{ThmMain} in the main text.
\end{remark}

The proof of Theorem \ref{ThmMainS1}, or rather its generalization Theorem \ref{ThmMain} in the text, is presented in Section \ref{Section4} and is the main novel contribution of the present paper. The argument is inductive and one roughly shows that if two Brownian Gibbsian line ensembles $\mathcal{L}^1$ and $\mathcal{L}^2$ have the same finite dimensional distributions when restricted to their top $k$ curves then the same is true for when they are restricted to their top $k+1$ curves. The difficulty lies in establishing the induction step, since we are assuming the statement of the theorem for the base case $k = 1$. In going from $k$ to $k+1$ the key idea of the proof is to use the available by induction equality of laws of $\{\mathcal{L}^1_i\}_{i = 1}^k$ and $\{\mathcal{L}^2_i\}_{i = 1}^k$ to construct a family of observables, which are measurable with respect to the top $k$ curves, but which probe the $k+1$-st one. Informally speaking, the law of $\{\mathcal{L}^v_i\}_{i = 1}^k$ is that of $k$ Brownian bridges conditioned on non-intersecting each other and staying above $\mathcal{L}^v_{k+1}$ for $v \in \{1,2\}$. Then the observables we construct measure the difference in behavior between $\{\mathcal{L}^v_i\}_{i = 1}^k$ and that of $k$ Brownian bridges conditioned on non-intersecting each other but being allowed to freely go below $\mathcal{L}^v_{k+1}$. The difference between these two ensembles is negligible when the curve $\mathcal{L}^v_{k+1}$ is below a certain level and non-negligible when it is above it, and a careful analysis of our observables show that they effectively approximate the joint cumulative distribution of the $k+1$-st curve. This allows us to conclude that the restrictions of $\mathcal{L}^1$ and $\mathcal{L}^2$ to their top $k+1$ curves also need to agree in the sense of finite dimensional distributions, which is enough to complete the induction step. The latter description of the main argument is of course quite reductive and the full argument, presented in Section \ref{Section4.1} for a special case and Section \ref{Section4.2} in full generality, relies on various technical statements and definitions that are given in Sections \ref{Section2} and \ref{Section3}. We remark that some of the results we establish in these two sections have appeared in earlier studies on Brownian Gibbsian line ensembles; however, we could not always find complete proofs of them. We have thus opted to fill in the gaps in the proofs of these statements in the literature and this work is the content of the (somewhat) technical Sections \ref{Section3} and \ref{Section5}.\\

We end this section with a brief discussion of some of the motivation behind our work. Our interest in Theorem \ref{ThmMainS1} is twofold. Firstly, Brownian Gibbsian line ensembles have become central objects in probability theory and understanding their structure is an important area of research. As mentioned earlier, Dyson Brownian motion is an example of these ensembles and is a key object in random matrix theory. Other important examples of models that satisfy the Brownian Gibbs property include {\em Brownian last passage percolation}, which has been extensively studied recently in \cite{Ham1, Ham2, Ham3, Ham4} and the {\em Airy line ensemble} (shifted by a parabola) \cite{Spohn, CorHamA}. The second, and more important, reason we believe Theorem \ref{ThmMainS1} to be important is that it can be used as a tool for proving KPZ universality for various models in integrable probability. We elaborate on these points below.

Regarding the first point, there has been some interest in classifying the set of random $\mathbb{N}$-indexed line ensembles that satisfy the Brownian Gibbs property. Specifically, one has the following open problem, which can be found as \cite[Conjecture 3.2]{CorHamA}, see also \cite[Conjecture 1.7]{CorwinSun}.
\begin{conjecture}\label{Conjecture1} The set of extremal Brownian Gibbs $\mathbb{N}$-indexed line ensembles $\mathcal{L}$, which have the property that $\mathcal{A}$ (given by $\mathcal{A}_i(t) = 2^{1/2} \mathcal{L}_i(t) + t^2$ for $i \in \mathbb{N}$) is horizontal shift-invariant, is given by $\{\mathcal{L}^{Airy} + y: y \in \mathbb{R} \}$, where $\mathcal{L}^{Airy}$ denotes the Airy line ensemble, cf. \cite{Spohn} and \cite[Theorem 3.1]{CorHamA}. 
\end{conjecture}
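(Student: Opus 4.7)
The plan is to use Theorem \ref{ThmMainS1} of the present paper as the backbone and thereby reduce the classification problem to one purely about the top curve. Since $\mathcal{L}^{Airy}+y$ is known to be Brownian Gibbs (cf.\ \cite{CorHamA, Spohn}) and its parabolically corrected version is horizontal shift-invariant, the inclusion $\{\mathcal{L}^{Airy}+y : y \in \mathbb{R}\}$ into the set described in the conjecture is immediate once one verifies that each such ensemble is extremal, which should follow from triviality of its tail $\sigma$-algebra. By Theorem \ref{ThmMainS1}, the converse direction reduces to showing that for any Brownian Gibbsian $\mathbb{N}$-indexed line ensemble $\mathcal{L}$ which is extremal and for which $\mathcal{A}_i(t)=2^{1/2}\mathcal{L}_i(t)+t^2$ is horizontal shift-invariant, the finite-dimensional marginals of $\mathcal{L}_1$ agree with those of $\mathcal{L}_1^{Airy}+y$ for some $y \in \mathbb{R}$.

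First, I would extract qualitative properties of $\mathcal{A}_1$ from the hypotheses. Extremality together with horizontal shift-invariance of $\mathcal{A}$ should force $\mathcal{A}_1$ to be a stationary and ergodic process under translations of its time variable. Combined with the Brownian Gibbs property and the Brownian comparison estimates developed in Section \ref{Section3} (which themselves build on the arguments of \cite{CorHamA} and on Hammond's line ensemble work \cite{Ham1, Ham2, Ham3, Ham4}), this should furnish one-point tightness of $\mathcal{A}_1$ and tightness of its restriction to any compact interval in the space of continuous functions. The law of $\mathcal{A}_1(0)$ is then some fixed probability measure $\nu$ on $\mathbb{R}$ with no atoms and finite moments.

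Next, I would identify $\nu$ and, more generally, the finite-dimensional distributions of $\mathcal{A}_1$. The natural route is a monotone coupling argument: for a large interval $[-T,T]$, resample the restrictions of $\mathcal{L}$ and $\mathcal{L}^{Airy}$ to $[-T,T]$ using the same collection of Brownian bridges under the Gibbs dynamics, appealing to the monotone dependence on boundary data that underlies the Brownian Gibbs resampling. Shift-invariance and ergodicity of the difference process, together with the observation that far from the origin both top curves must be asymptotically parabolic with the same curvature, should pin $\mathcal{L}_1-\mathcal{L}_1^{Airy}$ to an almost surely constant value $y$. Finite-dimensional equality of $\mathcal{L}_1$ and $\mathcal{L}_1^{Airy}+y$ then follows, and Theorem \ref{ThmMainS1} upgrades this to equality in distribution of the full line ensembles.

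The main obstacle is the identification step, which amounts to proving that the Airy$_2$ process is the unique shift-invariant fixed point of the Brownian Gibbs resampling with the prescribed parabolic curvature. No direct analytic handle on such fixed points is currently available, and the argument above requires genuinely new input: either a quantitative ergodic coupling estimate showing that any two extremal shift-invariant Brownian Gibbs ensembles can be made to coincide up to a constant on a fixed interval with high probability, or an external identification of $\nu$ -- for instance via convergence of an integrable discrete model whose edge limit is already known to be $\mathcal{L}^{Airy}$ -- that Theorem \ref{ThmMainS1} can then promote to a full line-ensemble identification. The contribution of the present paper in this direction is precisely that it isolates the identification of the top-curve marginals as the sole remaining difficulty, which is why the statement remains offered as a conjecture rather than a theorem.
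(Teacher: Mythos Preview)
The statement you are addressing is Conjecture~\ref{Conjecture1}, which the paper explicitly presents as an \emph{open problem}; there is no proof in the paper to compare against. Your write-up is not a proof either, and to your credit you acknowledge this in the final paragraph. What you have written is a heuristic programme, and the honest thing to do is to assess whether that programme is sound and where it actually breaks.

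The reduction via Theorem~\ref{ThmMainS1} to identifying the finite-dimensional distributions of the top curve is correct in spirit, and the paper makes the same observation in its discussion following the conjecture. However, the paper is more cautious than you are: it says explicitly that ``it is not clear if Theorem~\ref{ThmMainS1} brings us any closer to proving Conjecture~\ref{Conjecture1}.'' Your sketch glosses over several genuine difficulties. First, extremality of the full line ensemble with respect to the Brownian Gibbs specification does not automatically imply ergodicity of the top curve $\mathcal{A}_1$ under horizontal shifts; these are different notions attached to different $\sigma$-algebras, and you would need an argument linking them. Second, your claim that the Brownian comparison estimates of Section~\ref{Section3} ``should furnish one-point tightness of $\mathcal{A}_1$'' is unsupported: those lemmas control avoiding bridges with \emph{given} boundary data, and give no a priori control on an abstract ensemble satisfying only the Gibbs property and shift-invariance. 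Third, the monotone coupling paragraph is the crux and is essentially a restatement of the conjecture: asserting that the difference $\mathcal{L}_1-\mathcal{L}_1^{Airy}$ is almost surely constant is precisely what needs to be proved, and ``shift-invariance and ergodicity of the difference process'' is neither established nor obviously true.

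In short: your proposal correctly locates the reduction afforded by the paper's main theorem and correctly names the remaining obstacle, but the intermediate steps you label with ``should'' are not currently known and constitute the substance of the open problem.
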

\begin{remark}
Let us explain the terms {\em horizontal shift-invariant} and {\em extremal} in Conjecture \ref{Conjecture1}. We say that an $\mathbb{N}$-indexed line ensemble $\mathcal{A}$ is horizontal shift-invariant if $\mathcal{A}(s + \cdot)$ is equal in distribution to $\mathcal{A}$ for each $s \in \mathbb{R}$. It is relatively easy to see that any convex combination of two laws that satisfy the Brownian Gibbs property also satisfies it. Therefore, the set of Brownian Gibbs measures naturally has the structure of a convex set in the space of measures on $\mathbb{N}$-indexed line ensembles on $\mathbb{R}$. A measure that satisfies the Brownian Gibbs property is then said to be extremal (or {\em ergodic}) if it cannot be written as a non-trivial convex combination of two other measures that satisfy the Brownian Gibbs property. 
\end{remark}
\begin{remark}
We mention that the analogue of Conjecture \ref{Conjecture1} in the context of the probability measures on triangular interlacing arrays we discussed earlier asks about the classification of ergodic measures on the set of infinite triangular arrays that satisfy the continuous Gibbs property. This classification result has been established in the remarkable paper \cite{OV} and has important implications about asymptotic representation theory.
\end{remark}
Beyond its intrinsic interest, Conjecture \ref{Conjecture1} is of considerable interest in light of its possible use as an invariance principle for deriving convergence of systems to the Airy line ensemble, see \cite[Section 2.3.3]{CorHamK} for a discussion of this approach in the context of the KPZ line ensemble. We also mention that in \cite{CorwinSun} the authors showed that $\{\mathcal{L}^{Airy} + y: y \in \mathbb{R} \}$ are ergodic with respect to the action of the translation group on $\mathbb{R}$, which provides further evidence for the validity of the above conjecture.

The relationship between Conjecture \ref{Conjecture1} and our Theorem \ref{ThmMainS1} is somewhat indirect, and in order to compare them we discuss how each classifies the Airy line ensemble. In this context, Conjecture \ref{Conjecture1} says that if $\mathcal{L}$ an extremal Brownian Gibbs $\mathbb{N}$-indexed line ensemble such that $\mathcal{A}$ (given by $\mathcal{A}_i(t) = 2^{1/2} \mathcal{L}_i(t) + t^2$ for $i \in \mathbb{N}$) is horizontal shift-invariant and $\mathbb{E} \left[\mathcal{L}_1(0) \right] = \mathbb{E} \left[\mathcal{L}^{Airy}_1(0) \right]$, then $\mathcal{L}$ has the same law as $\mathcal{L}^{Airy}$. On the other hand, Theorem \ref{ThmMainS1} states that if $\mathcal{L}$ is a Brownian Gibbs $\mathbb{N}$-indexed line ensemble, and $\mathcal{L}_1$ has the same finite dimensional distribution as $\mathcal{L}^{Airy}_1$ then $\mathcal{L}$ has the same law as $\mathcal{L}^{Airy}$. While the conclusions of the two results are the same, we emphasize that the assumptions are quite different. In the case of the conjecture, mostly qualitative information for the ensemble (such as ergodicity and horizontal shift-invariance) is required, and only a bit of quantitative information is needed (mostly to determine the vertical shift $y$). On the other hand, our theorem requires significant quantitative information, specifically the finite dimensional distribution of the top curve$\mathcal{L}_1$; however, it does not require any information about the remaining curves in the ensemble. So in a sense, Theorem \ref{ThmMainS1} requires a lot of quantitative information but only for $\mathcal{L}_1$, while Conjecture \ref{Conjecture1} requires only qualitative information but for the full ensemble. In particular, one result does not imply the other. While it is not clear if Theorem \ref{ThmMainS1} brings us any closer to proving Conjecture \ref{Conjecture1}, we do want to emphasize that the two problems are naturally related as they both characterize the Airy line ensemble in terms of reduced information about the ensemble. In addition, similarly to Conjecture \ref{Conjecture1} we also hope that Theorem \ref{ThmMainS1} can be as a tool for deriving convergence of systems to the Airy line ensemble as we explain next.

The Airy line ensemble, first introduced in \cite{Spohn} and later extensively studied in \cite{CorHamA}, is believed to be a universal scaling limit for various models that belong to the so-called {\em KPZ universality class}, see \cite{CU2} for an expository review of this class. In \cite{Spohn} the convergence to the Airy line ensemble (in the finite dimensional sense) was established for the polynuclear growth model and in \cite{CorHamA} it was shown for Dyson Brownian motion (in a stronger uniform sense). Very recently, \cite{DNV} established the uniform convergence of various classical integrable models to the Airy line ensemble including non-colliding Bernoulli walks and geometric, Poisson and Brownian last passage percolation. We refer to the introduction of \cite{DNV} for a more extensive discussion of the history, motivation behind and progress on the problem of establishing convergence to the Airy line ensemble.

The approach taken in \cite{DNV} relies on obtaining finite dimensional convergence to the Airy line ensemble as a prerequisite for obtaining uniform convergence. Theorem \ref{ThmMainS1} paves a different way to showing uniform convergence to the Airy line ensemble, where establishing finite dimensional convergence is required only for the top curve of the ensemble. We believe that the latter approach is more suitable for models, which naturally have the structure of a line ensemble and for which the finite dimensional marginals of the top curve are easier to access. The primary examples, we are interested in applying this approach to, come from the {\em Macdonald processes} \cite{BorCor}, and include the {\em Hall-Littlewood processes} \cite{ED, CD}, the {\em $q$-Whittaker processes} \cite{BCF}, the {\em log-gamma polymer} \cite{Sep12, COSZ}, the {\em semi-discrete polymer} \cite{OCY} and the {\em mixed polymer model} of \cite{BCFV}. Each of the models we listed naturally has the structure of a line ensemble with a Gibbs property, which can be found for the Hall-Littlewood process in \cite{CD} and for the log-gamma polymer in \cite{Wu19}. If we denote by $\{L^N_i\}_{i = 1}^\infty$ the discrete line ensemble associated to one of the above models and $\{\mathcal{L}_i^{Airy}\}_{i = 1}^\infty$ the Airy line ensemble, the proposed program for establishing the uniform convergence of $\{L^N_i \}_{i = 1}^\infty$  to $\{\mathcal{L}_i^{Airy}\}_{i = 1}^\infty$ goes through the following steps:
\begin{enumerate}
\item show that $L^N_1$ converges in the sense of finite dimensional distributions to $\mathcal{L}^{Airy}_1$, which is the Airy$_2$ process, as $N \rightarrow \infty$;
\item show that $\left\{L^N_i \right\}_{i = 1}^\infty$ form a tight sequence of line ensembles and that every subsequential limit enjoys the Brownian Gibbs property;
\item use the characterization of Theorem \ref{ThmMainS1} to prove that all subsequential limits are given by $\{\mathcal{L}_i^{Airy}\}_{i = 1}^\infty$.
\end{enumerate}
The difference between the above program and the approach of \cite{DNV} is that in the latter the necessity of showing that any subsequential limit satisfies the Brownian Gibbs property is omitted from (2), but one is required to show the finite dimensional convergence in (1), not just for the top line $L_1^N$ but for all of the lines. This approach is best suited for {\em determinantal point processes}, for which the finite dimensional formulas are readily available and their asymptotics fairly well-understood. A common feature of all of the above models coming from the Macdonald processes, is that they are no longer determinantal and formulas suitable for taking asymptotics are unknown for all of the lines. One reason we are optimistic that our proposed program has a better chance of establishing convergence to the Airy line ensemble for these models is that there are non-determinantal formulas that allow one to study one-point marginals of $L_1^N$, see e.g. \cite{ED,CD,BCFV,KQ,BorCor} and also there is some progress on understanding the multi-point asymptotics of $L_1^N$ for the case of the log-gamma polymer \cite{NZ}. Another reason we are optimistic about our proposed program is that its analogue for the triangular interlacing arrays was successfully implemented in \cite{ED2} to prove the convergence of a class of six-vertex models to the GUE-corners process.

Even beyond the above program, we believe that Theorem \ref{ThmMainS1} will be useful in reducing some of the work in showing convergence to the Airy line ensemble, and is an important result that furthers our understanding of Gibbsian line ensembles in general. \\

%
\subsection{Outline of the paper}

The structure of this paper is as follows. In Section~\ref{Section2} we make crucial definitions which are used throughout the paper. In particular, we  define avoiding Brownian line ensembles and introduce the standard and partial Brownian Gibbs properties. The main result of this paper is stated in this section as Theorem \ref{ThmMain}. In Section~\ref{Section3} we collect several properties of Brownian line ensembles and in Section~\ref{Section4} a proof of Theorem \ref{ThmMain} is provided. In Section~\ref{Section5} we prove several technical results, which include the construction of monotonically coupled Brownian line ensembles and a proof of the statement that non-intersecting Brownian bridges satisfy the Brownian Gibbs property.\\

\noindent {\bf Acknowledgments.} We would like to thank Alexei Borodin, Ivan Corwin and Vadim Gorin for useful comments on earlier drafts of this paper. We also thank Julien Dub{\'e}dat for suggesting some useful literature and Alisa Knizel for some of the figures. Both authors are partially supported by the Minerva Foundation Fellowship.
\section{Definitions, main result and basic lemmas}\label{Section2}

In this section we introduce the basic definitions that are necessary for formulating our main result, given in Section \ref{Section2.2} below. In Section \ref{Section2.3} we state several lemmas used later in the paper.

%
%
\subsection{Line ensembles and the (partial) Brownian Gibbs property}\label{Section2.1}
In order to state our main results we need to introduce some notation as well as the notions of a {\em line ensemble} and the {\em (partial) Brownian Gibbs property}. Our exposition in this section closely follows that of \cite[Section 2]{CorHamA}. 

Given two integers $p \leq q$, we let $\llbracket p, q \rrbracket$ denote the set $\{p, p+1, \dots, q\}$. Given an interval $\Lambda \subset \mathbb{R}$ we endow it with the subspace topology of the usual topology on $\mathbb{R}$. We let $(C(\Lambda), \mathcal{C})$ denote the space of continuous functions $f: \Lambda \rightarrow \mathbb{R}$ with the topology of uniform convergence over compacts, see \cite[Chapter 7, Section 46]{Munkres}, and Borel $\sigma$-algebra $\mathcal{C}$. Given a set $\Sigma \subset \mathbb{Z}$ we endow it with the discrete topology and denote by $\Sigma \times \Lambda$ the set of all pairs $(i,x)$ with $i \in \Sigma$ and $x \in \Lambda$ with the product topology. We also denote by $\left(C (\Sigma \times \Lambda), \mathcal{C}_{\Sigma}\right)$ the space of continuous functions on $\Sigma \times \Lambda$ with the topology of uniform convergence over compact sets and Borel $\sigma$-algebra $\mathcal{C}_{\Sigma}$. Typically, we will take $\Sigma = \llbracket 1, N \rrbracket$ (we use the convention $\Sigma = \mathbb{N}$ if $N = \infty$) and then we write  $\left(C (\Sigma \times \Lambda), \mathcal{C}_{|\Sigma|}\right)$ in place of $\left(C (\Sigma \times \Lambda), \mathcal{C}_{\Sigma}\right)$.
The following defines the notion of a line ensemble.
\begin{definition}
Let $\Sigma \subset \mathbb{Z}$ and $\Lambda \subset \mathbb{R}$ be an interval. A {\em $\Sigma$-indexed line ensemble $\mathcal{L}$} is a random variable defined on a probability space $(\Omega, \mathcal{F}, \mathbb{P})$ that takes values in $\left(C (\Sigma \times \Lambda), \mathcal{C}_{\Sigma}\right)$. Intuitively, $\mathcal{L}$ is a collection of random continuous curves (sometimes referred to as {\em lines}), indexed by $\Sigma$,  each of which maps $\Lambda$ in $\mathbb{R}$. We will often slightly abuse notation and write $\mathcal{L}: \Sigma \times \Lambda \rightarrow \mathbb{R}$, even though it is not $\mathcal{L}$ which is such a function, but $\mathcal{L}(\omega)$ for every $\omega \in \Omega$. For $i \in \Sigma$ we write $\mathcal{L}_i(\omega) = (\mathcal{L}(\omega))(i, \cdot)$ for the curve of index $i$ and note that the latter is a map $\mathcal{L}_i: \Omega \rightarrow C(\Lambda)$, which is $(\mathcal{C}, \mathcal{F})-$measurable.

Given a sequence $\{ \mathcal{L}^n: n \in \mathbb{N} \}$ of random $\Sigma$-indexed line ensembles we say that $\mathcal{L}^n$ {\em converge weakly} to a line ensemble $\mathcal{L}$, and write $\mathcal{L}^n \implies \mathcal{L}$ if for any bounded continuous function $f: C (\Sigma \times \Lambda) \rightarrow \mathbb{R}$ we have that 
$$\lim_{n \rightarrow \infty} \mathbb{E} \left[ f(\mathcal{L}^n) \right] = \mathbb{E} \left[ f(\mathcal{L}) \right].$$
We call a line ensemble {\em non-intersecting} if $\mathbb{P}$-almost surely $\mathcal{L}_i(r) > \mathcal{L}_j(r)$  for all $i < j$ and $r \in \Lambda$.
\end{definition}

We next turn to formulating the Brownian Gibbs property -- we do this in Definition \ref{DefBGP} after introducing some relevant notation and results. If $W_t$ denotes a standard one-dimensional Brownian motion, then the process
$$\tilde{B}(t) =  W_t - t W_1, \hspace{5mm} 0 \leq t \leq 1,$$
is called a {\em Brownian bridge (from $\tilde{B}(0) = 0$ to $\tilde{B}(1) = 0 $) with diffusion parameter $1$.} For brevity we call the latter object a {\em standard Brownian bridge}.

Given $a, b,x,y \in \mathbb{R}$ with $a < b$ we define a random variable on $(C([a,b]), \mathcal{C})$ through
\begin{equation}\label{BBDef}
B(t) = (b-a)^{1/2} \cdot \tilde{B} \left( \frac{t - a}{b-a} \right) + \left(\frac{b-t}{b-a} \right) \cdot x + \left( \frac{t- a}{b-a}\right) \cdot y, 
\end{equation}
and refer to the law of this random variable as a {\em Brownian bridge (from $B(a) = x$ to $B(b) = y$) with diffusion parameter $1$.} Given $k \in \mathbb{N}$ and $\vec{x}, \vec{y} \in \mathbb{R}^k$ we let $\mathbb{P}^{a,b, \vec{x},\vec{y}}_{free}$ denote the law of $k$ independent Brownian bridges $\{B_i: [a,b] \rightarrow \mathbb{R} \}_{i = 1}^k$ from $B_i(a) = x_i$ to $B_i(b) = y_i$ all with diffusion parameter $1$.

We next state a couple of results about Brownian bridges from \cite{CorHamA} for future use.
\begin{lemma}\label{NoTouch} \cite[Corollary 2.9]{CorHamA}. Fix a continuous function $f: [0,1] \rightarrow \mathbb{R}$ such that $f(0) > 0$ and $f(1) > 0$. Let $B$ be a standard Brownian bridge and let $C = \{ B(t) > f(t) \mbox{ for some $t \in [0,1]$}\}$ (crossing) and $T = \{ B(t) = f(t) \mbox{ for some } t\in [0,1]\}$ (touching). Then $\mathbb{P}(T \cap C^c) = 0.$
\end{lemma}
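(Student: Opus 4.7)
My plan is to recast the event $T \cap C^c$ as $\{M=0\}$ for $M:=\sup_{t\in[0,1]}(B(t)-f(t))$, and then argue the latter has probability zero by a Cameron-Martin randomization trick. The reformulation is immediate: since $B(0)=B(1)=0$ is strictly less than $\min\{f(0),f(1)\}>0$, on $T \cap C^c$ the continuous function $B-f$ is $\leq 0$ everywhere and $=0$ somewhere, so by compactness $M=0$ is attained in the interior $(0,1)$; conversely $\{M=0\}\subseteq T\cap C^c$. Hence it suffices to show $\mathbb{P}(M=0)=0$.

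To do this, I would fix $\phi(t):=t(1-t)$, which lies in the Cameron-Martin space of the standard Brownian bridge (it satisfies $\phi(0)=\phi(1)=0$ and $\phi'\in L^2$) and is strictly positive on $(0,1)$. By the Cameron-Martin theorem for the Brownian bridge, for every $c\in\mathbb{R}$ the laws of $B$ and $B+c\phi$ on $C([0,1])$ are mutually absolutely continuous. Next, introduce
\[
F_B(c)\;:=\;\sup_{t\in[0,1]}\bigl(B(t)+c\phi(t)-f(t)\bigr),\qquad c\in\mathbb{R}.
\]
For each realization of $B$, $F_B$ is continuous and nondecreasing in $c$ (as $\phi \geq 0$), and I claim that $\{c:F_B(c)=0\}$ contains at most one point. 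Indeed, if $F_B(c_0)=0$, then because the boundary values of $B+c_0\phi-f$ are $-f(0),-f(1)<0$, every maximizer $t^*$ of $B+c_0\phi-f$ must lie in $(0,1)$, so $\phi(t^*)>0$; a direct comparison at $t^*$ then yields $F_B(c)>0$ for $c>c_0$ and $F_B(c)<0$ for $c<c_0$. In particular, $\{c:F_B(c)=0\}$ has Lebesgue measure zero.

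The last step is a standard Fubini/randomization argument. Let $Z$ be a standard Gaussian independent of $B$, with density $\rho$. Conditioning on $B$ first gives $\mathbb{P}(F_B(Z)=0)=\mathbb{E}\bigl[\mathbb{P}(Z\in\{c:F_B(c)=0\}\mid B)\bigr]=0$, while conditioning on $Z$ first gives $\mathbb{P}(F_B(Z)=0)=\int_{\mathbb{R}}\mathbb{P}(F_B(c)=0)\,\rho(c)\,dc$, so $\mathbb{P}(F_B(c)=0)=0$ for Lebesgue-a.e.\ $c$. Picking any such $c$ and using the mutual absolute continuity of the laws of $B$ and $B+c\phi$ transfers this zero probability to $c=0$, giving $\mathbb{P}(M=0)=0$, which completes the proof. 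The main subtle step is the strict-monotonicity observation, which crucially uses $f(0),f(1)>0$ to guarantee that any zero-level maximizer of $F_B$ lies in the open interval where $\phi$ is supported; the rest is a routine measure-theoretic manipulation.
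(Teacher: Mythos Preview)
Your proof is correct. The paper does not supply its own argument for this lemma; it simply quotes the statement from \cite[Corollary~2.9]{CorHamA}, so there is no in-paper proof to compare against.

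For context, your approach --- rewriting $T\cap C^c$ as $\{M=0\}$ and then using a Cameron--Martin shift together with a Fubini/randomization argument to show the supremum $M$ has no atom --- is a clean and standard route (variants of this trick appear, e.g., in the literature on atoms of Gaussian suprema). The only place the hypothesis $f(0),f(1)>0$ enters is exactly where you identify it: forcing any zero-level maximizer of $B+c\phi-f$ into the open interval where $\phi$ is strictly positive, which in turn yields the strict monotonicity of $c\mapsto F_B(c)$ through $0$. One small clarification worth making explicit in the write-up: the transfer step at the end uses \emph{mutual} absolute continuity (not just one direction), so that $\mathbb{P}(G(B+c\phi)=0)=0$ for some fixed $c$ genuinely implies $\mathbb{P}(G(B)=0)=0$; you state this, but it is the point a reader is most likely to query.
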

\begin{lemma}\label{Spread} \cite[Corollary 2.10]{CorHamA}. Let $U$ be an open subset of $C([0,1])$, which contains a function $f$ such that $f(0) = f(1) = 0$. If $B:[0,1] \rightarrow \mathbb{R}$ is a standard Brownian bridge then $\mathbb{P}(B[0,1] \subset U) > 0$.
\end{lemma}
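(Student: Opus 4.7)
The plan is to reduce the claim to a small-ball estimate for the Brownian bridge. Since $U$ is open in $C([0,1])$ under the uniform topology and $f \in U$, there exists $\epsilon > 0$ such that $\{g \in C([0,1]) : \|g - f\|_\infty < \epsilon\} \subset U$. It therefore suffices to prove
\[ \mathbb{P}\bigl(\|B - f\|_\infty < \epsilon \bigr) > 0 \]
for every continuous $f:[0,1] \to \mathbb{R}$ with $f(0) = f(1) = 0$.

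First I would approximate $f$ by a piecewise-linear function $f_n$ that interpolates the values $f(k/n)$ at the nodes $t_k = k/n$ for $k = 0, 1, \ldots, n$. Uniform continuity of $f$ gives $\|f - f_n\|_\infty < \epsilon/3$ for $n$ sufficiently large, so one needs to verify $\mathbb{P}(\|B - f_n\|_\infty < 2\epsilon/3) > 0$. Fix $\eta \in (0, \epsilon/6)$ and consider the skeleton event
\[ A_\eta = \bigl\{ |B(t_k) - f(t_k)| < \eta \text{ for } k = 1, \ldots, n-1 \bigr\}. \]
Since the joint law of $(B(t_1), \ldots, B(t_{n-1}))$ has a strictly positive Gaussian density on $\mathbb{R}^{n-1}$, one has $\mathbb{P}(A_\eta) > 0$. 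By the Markov property of the Brownian bridge, given the skeleton $(B(t_k))_{k=0}^n$, the restrictions $B|_{[t_{k-1}, t_k]}$ are conditionally independent Brownian bridges between the prescribed endpoint values.

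On each subinterval, let $\ell_k$ denote the linear interpolation between $B(t_{k-1})$ and $B(t_k)$; on $A_\eta$ one has $\|\ell_k - f_n\|_{\infty,[t_{k-1},t_k]} < \eta$, so the inclusion $\{\|B - \ell_k\|_{\infty,[t_{k-1},t_k]} < \epsilon/2\} \subset \{\|B - f_n\|_{\infty,[t_{k-1},t_k]} < 2\epsilon/3\}$ holds. The process $B - \ell_k$ on $[t_{k-1}, t_k]$ is a (conditional) scaled standard Brownian bridge from $0$ to $0$, and the small-ball estimate $\mathbb{P}(\sup_{t \in [0,1]} |\tilde B(t)| < \delta) > 0$ for every $\delta > 0$ is classical; it follows, for instance, from the representation $\tilde B(t) = W_t - tW_1$ together with the small-ball probability for standard Brownian motion. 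Taking a product of these positive conditional probabilities across the $n$ subintervals and integrating over $A_\eta$ yields the required strict positivity.

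The main technical point, rather than a genuine obstacle, is the careful application of the Markov property at the interior nodes to decouple the subintervals and to identify the conditional law on each one as a Brownian bridge with the indicated endpoints. Once this decoupling is in place, the argument is a standard blend of a positive-density Gaussian estimate for the skeleton and the classical small-ball probability for Brownian motion.
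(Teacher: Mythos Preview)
The paper does not supply its own proof of this lemma; it is quoted verbatim from \cite[Corollary 2.10]{CorHamA} and used as a black box. Your argument is a correct, self-contained proof: the reduction to an $\epsilon$-ball around $f$, the piecewise-linear approximation, the positive Gaussian density of the skeleton $(B(t_1),\dots,B(t_{n-1}))$, the Markov decomposition into independent bridges on the subintervals, and the small-ball estimate on each piece all go through as you describe. One small point worth making explicit is that the conditional law of $B-\ell_k$ on $[t_{k-1},t_k]$ given the skeleton does not depend on the skeleton values (it is always a time-scaled standard bridge from $0$ to $0$), so the conditional small-ball probability is a fixed constant $p>0$ and the product over the $n$ subintervals is simply $p^n$; this is what makes the final ``integrating over $A_\eta$'' step clean.
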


The following definition introduces the notion of an $(f,g)$-avoiding Brownian line ensemble, which in plain words can be understood as a random ensemble of $k$ independent Brownian bridges, conditioned on not-crossing each other and staying above the graph of $g$ and below the graph of $f$ for two continuous functions $f$ and $g$.
\begin{definition}\label{DefAvoidingLaw}
Let $k \in \mathbb{N}$ and $\weyl_k$ denote the open Weyl chamber in $\mathbb{R}^k$, i.e.
$$\weyl_k = \{ \vec{x} = (x_1, \dots, x_k) \in \mathbb{R}^k: x_1 > x_2 > \cdots > x_k \}$$
(in \cite{CorHamA} the notation $\mathbb{R}_{>}^k$ was used for this set).
Let $\vec{x}, \vec{y} \in \weyl_k$, $a,b \in \mathbb{R}$ with $a < b$, and $f: [a,b] \rightarrow (-\infty, \infty]$ and $g: [a,b] \rightarrow [-\infty, \infty)$ be two continuous functions. The latter condition means that either $f: [a,b] \rightarrow \mathbb{R}$ is continuous or $f = \infty$ everywhere, and similarly for $g$. We also assume that $f(t) > g(t)$ for all $t \in[a,b]$, $f(a) > x_1, f(b) > y_1$ and $g(a) < x_k, g(b) < y_k.$

With the above data we define the {\em $(f,g)$-avoiding Brownian line ensemble on the interval $[a,b]$ with entrance data $\vec{x}$ and exit data $\vec{y}$} to be the $\Sigma$-indexed line ensemble $\mathcal{Q}$ with $\Sigma = \llbracket 1, k\rrbracket$ on $\Lambda = [a,b]$ and with the law of $\mathcal{Q}$ equal to $\mathbb{P}^{a,b, \vec{x},\vec{y}}_{free}$ (the law of $k$ independent Brownian bridges $\{B_i: [a,b] \rightarrow \mathbb{R} \}_{i = 1}^k$ from $B_i(a) = x_i$ to $B_i(b) = y_i$) conditioned on the event 
$$E  = \left\{ f(r) > B_1(r) > B_2(r) > \cdots > B_k(r) > g(r) \mbox{ for all $r \in[a,b]$} \right\}.$$ 

Let us elaborate on the above formulation briefly.  Let $\left(\Omega, \mathcal{F}, \mathbb{P}\right)$ be a probability measure that supports $k$ independent Brownian bridges $\{B_i: [a,b] \rightarrow \mathbb{R} \}_{i = 1}^k$ from $B_i(a) = x_i$ to $B_i(b) = y_i$ all with diffusion parameter $1$. Notice that we can find $\tilde{u}_1, \dots, \tilde{u}_k \in C([0,1])$ and $\epsilon > 0$ (depending on $\vec{x}, \vec{y}, f, g, a, b$) such that $\tilde{u}_i(0) = \tilde{u}_i(1) = 0$ for $i = 1, \dots, k$ and such that if $\tilde{h}_1, \dots, \tilde{h}_k \in C([0,1])$ satisfy $\tilde{h}_i(0) = \tilde{h}_i(1) = 0$ for $i = 1, \dots, k$ and $\sup_{t \in [0,1]}|\tilde{u}_i(t) - \tilde{h}_i(t)| < \epsilon$ then the functions
$$h_i(t) = (b-a)^{1/2} \cdot \tilde{h}_i \left( \frac{t - a}{b-a} \right) + \left(\frac{b-t}{b-a} \right) \cdot x_i + \left( \frac{t- a}{b-a}\right) \cdot y_i,$$ 
satisfy $f(r) > h_1(r) > \cdots > h_k(r) > g(r)$. It follows from Lemma \ref{Spread} that 
$$\mathbb{P}(E) \geq \mathbb{P}\left(\max_{1 \leq i \leq k} \sup_{r \in [0,1]}|\tilde{B}_i(r) - \tilde{u}_i(r)| < \epsilon \right) = \prod_{i = 1}^k \mathbb{P} \left( \sup_{r \in [0,1]}|\tilde{B}_i(r) - \tilde{u}_i(r)| < \epsilon  \right)> 0,$$
 and so we can condition on the event $E$. 

To construct a realization of $\mathcal{Q}$ we proceed as follows. For $\omega \in E$ we define
$$\mathcal{Q}(\omega)(i,r) = B_i(r)(\omega) \mbox{ for $i = 1, \dots, k$ and $r \in [a,b]$}.$$
Observe that for $i \in \{1, \dots, k\}$ and an open set $U \in C([a,b])$ we have that 
$$\mathcal{Q}^{-1}(\{i\} \times U) = \{B_i \in U \} \cap E\; \in\; \mathcal{F},$$
and since the sets $\{i\} \times U$ form an open basis of $C(\llbracket 1, k \rrbracket \times [a,b])$ we conclude that $\mathcal{Q}$ is $\mathcal{F}$-measurable. This implies that the law $\mathcal{Q}$ is indeed well-defined and also it is non-intersecting almost surely.  Also, given measurable subsets $A_1, \dots, A_k$ of $C([a,b])$ we have that 
$$\mathbb{P}(\mathcal{Q}_i \in A_i \mbox{ for $i = 1, \dots, k$} ) = \frac{\mathbb{P}^{a,b, \vec{x},\vec{y}}_{free} \left( \{ B_i \in A_i \mbox{ for $i = 1, \dots, k$}\} \cap E \right) }{\mathbb{P}^{a,b, \vec{x},\vec{y}}_{free}(E)}.$$
We denote the probability distribution of $\mathcal{Q}$ as $\mathbb{P}_{avoid}^{a,b, \vec{x}, \vec{y}, f, g}$ and write $\mathbb{E}_{avoid}^{a,b, \vec{x}, \vec{y}, f, g}$ for the expectation with respect to this measure.
\end{definition}

The following definition introduces the notion of the Brownian Gibbs property from \cite{CorHamA}.
\begin{definition}\label{DefBGP}
Fix a set $\Sigma = \llbracket 1, N \rrbracket$ with $N \in \mathbb{N}$ or $N = \infty$ and an interval $\Lambda \subset \mathbb{R}$ and let $K = \{k_1, k_1 + 1, \dots, k_2 \} \subset \Sigma$ be finite and $a,b \in \Lambda$ with $a < b$. Set $f = \mathcal{L}_{k_1 - 1}$ and $g = \mathcal{L}_{k_2 + 1}$ with the convention that $f = \infty$ if $k_1 - 1 \not \in \Sigma$ and $g = -\infty$ if $k_2 +1 \not \in \Sigma$. Write $D_{K,a,b} = K \times (a,b)$ and $D_{K,a,b}^c = (\Sigma \times \Lambda) \setminus D_{K,a,b}$. A $\Sigma$-indexed line ensemble $\mathcal{L} : \Sigma \times \Lambda \rightarrow \mathbb{R}$ is said to have the {\em Brownian Gibbs property} if it is non-intersecting and 
$$\mbox{ Law}\left( \mathcal{L}|_{K \times [a,b]} \mbox{ conditional on } \mathcal{L}|_{D^c_{K,a,b}} \right)= \mbox{Law} \left( \mathcal{Q} \right),$$
where $\mathcal{Q}_i = \tilde{\mathcal{Q}}_{i - k_1 + 1}$ and $\tilde{\mathcal{Q}}$ is the $(f,g)$-avoiding Brownian line ensemble on $[a,b]$ with entrance data $(\mathcal{L}_{k_1}(a), \dots, \mathcal{L}_{k_2}(a))$ and exit data $(\mathcal{L}_{k_1}(b), \dots, \mathcal{L}_{k_2}(b))$ from Definition \ref{DefAvoidingLaw}. Note that $\tilde{Q}$ is introduced because, by definition, any such $(f,g)$-avoiding Brownian line ensemble is indexed from $1$ to $k_2 - k_1 + 1$ but we want $\mathcal{Q}$ to be indexed from $k_1$ to $k_2$.

A more precise way to express the Brownian Gibbs property is as follows. A $\Sigma$-indexed line ensemble $\mathcal{L}$ on $\Lambda$ satisfies the Brownian Gibbs property if and only if it is non-intersecting and for any finite $K = \{k_1, k_1 + 1, \dots, k_2 \} \subset \Sigma$ and $[a,b] \subset \Lambda$ and any bounded Borel-measurable function $F: C(K \times [a,b]) \rightarrow \mathbb{R}$ we have $\mathbb{P}$-almost surely
\begin{equation}\label{BGPTower}
\mathbb{E} \left[ F\left(\mathcal{L}|_{K \times [a,b]} \right)  {\big \vert} \mathcal{F}_{ext} (K \times (a,b))  \right] =\mathbb{E}_{avoid}^{a,b, \vec{x}, \vec{y}, f, g} \bigl[ F(\tilde{\mathcal{Q}}) \bigr],
\end{equation}
where
$$\mathcal{F}_{ext} (K \times (a,b)) = \sigma \left \{ \mathcal{L}_i(s): (i,s) \in D_{K,a,b}^c \right\}$$
is the $\sigma$-algebra generated by the variables in the brackets above, $ \mathcal{L}|_{K \times [a,b]}$ denotes the restriction of $\mathcal{L}$ to the set $K \times [a,b]$, $\vec{x} = (\mathcal{L}_{k_1}(a), \dots, \mathcal{L}_{k_2}(a))$, $\vec{y} = (\mathcal{L}_{k_1}(b), \dots, \mathcal{L}_{k_2}(b))$, $f = \mathcal{L}_{k_1 - 1}[a,b]$ (the restriction of $\mathcal{L}$ to the set $\{k_1 - 1 \} \times [a,b]$) with the convention that $f = \infty$ if $k_1 - 1 \not \in \Sigma$, and $g = \mathcal{L}_{k_2 +1}[a,b]$ with the convention that $g =-\infty$ if $k_2 +1 \not \in \Sigma$. 
\end{definition}
\begin{remark}\label{RemMeas} It is perhaps worth explaining why equation (\ref{BGPTower}) makes sense. Firstly, since $\Sigma \times \Lambda$ is locally compact, we know by \cite[Lemma 46.4]{Munkres} that $\mathcal{L} \rightarrow \mathcal{L}|_{K \times [a,b]}$ is a continuous map from $C(\Sigma \times \Lambda)$ to $C(K \times [a,b])$, so that the left side of (\ref{BGPTower}) is the conditional expectation of a bounded measurable function, and is thus well-defined. A more subtle question is why the right side of (\ref{BGPTower})  is $\mathcal{F}_{ext} (K \times (a,b))$-measurable. In fact we will show in Lemma \ref{LemmaMeasExp} that the right side is measurable with respect to the $\sigma$-algebra 
$$ \sigma \left\{ \mathcal{L}_i(s) : \mbox{  $i \in K$ and $s \in \{a,b\}$, or $i \in \{k_1 - 1, k_2 +1 \}$ and $s \in [a,b]$} \right\}.$$
\end{remark}

In the present paper it will be convenient for us to use the following modified version of the definition above, which we call the {\em partial Brownian Gibbs property}. We explain the difference between the two definitions, and why we prefer the second one in Remark \ref{RPBGP}.
\begin{definition}\label{DefPBGP}
Fix a set $\Sigma = \llbracket 1 , N \rrbracket$ with $N \in \mathbb{N}$ or $N  = \infty$ and an interval $\Lambda \subset \mathbb{R}$.  A $\Sigma$-indexed line ensemble $\mathcal{L}$ on $\Lambda$ is said to satisfy the {\em partial Brownian Gibbs property} if and only if it is non-intersecting and for any finite $K = \{k_1, k_1 + 1, \dots, k_2 \} \subset \Sigma$ with $k_2 \leq N - 1$ (if $\Sigma \neq \mathbb{N}$), $[a,b] \subset \Lambda$ and any bounded Borel-measurable function $F: C(K \times [a,b]) \rightarrow \mathbb{R}$ we have $\mathbb{P}$-almost surely
\begin{equation}\label{PBGPTower}
\mathbb{E} \left[ F(\mathcal{L}|_{K \times [a,b]}) {\big \vert} \mathcal{F}_{ext} (K \times (a,b))  \right] =\mathbb{E}_{avoid}^{a,b, \vec{x}, \vec{y}, f, g} \bigl[ F(\tilde{\mathcal{Q}}) \bigr],
\end{equation}
where we recall that $D_{K,a,b} = K \times (a,b)$ and $D_{K,a,b}^c = (\Sigma \times \Lambda) \setminus D_{K,a,b}$, and
$$\mathcal{F}_{ext} (K \times (a,b)) = \sigma \left \{ \mathcal{L}_i(s): (i,s) \in D_{K,a,b}^c \right\}$$
is the $\sigma$-algebra generated by the variables in the brackets above, $ \mathcal{L}|_{K \times [a,b]}$ denotes the restriction of $\mathcal{L}$ to the set $K \times [a,b]$, $\vec{x} = (\mathcal{L}_{k_1}(a), \dots, \mathcal{L}_{k_2}(a))$, $\vec{y} = (\mathcal{L}_{k_1}(b), \dots, \mathcal{L}_{k_2}(b))$, $f = \mathcal{L}_{k_1 - 1}[a,b]$ with the convention that $f = \infty$ if $k_1 - 1 \not \in \Sigma$, and $g = \mathcal{L}_{k_2 +1}[a,b]$.
\end{definition}
\begin{remark} 
Observe that if $N = 1$ then the conditions in Definition \ref{DefPBGP} become void. I.e., any line ensemble with one line satisfies the partial Brownian Gibbs property. Also we mention that (\ref{PBGPTower}) makes sense by the same reason that (\ref{BGPTower}) makes sense, see Remark \ref{RemMeas}.
\end{remark}
\begin{remark}\label{RPBGP}
Definition \ref{DefPBGP} is slightly different from the Brownian Gibbs property of Definition~\ref{DefBGP} as we explain here. Assuming that $\Sigma = \mathbb{N}$ the two definitions are equivalent. However, if $\Sigma = \{1, \dots, N\}$ with $1 \leq N < \infty$ then a line ensemble that satisfies the Brownian Gibbs property also satisfies the partial Brownian Gibbs property, but the reverse need not be true. Specifically, the Brownian Gibbs property allows for the possibility that $k_2 = N$ in Definition \ref{DefPBGP} and in this case the convention is that $g = -\infty$. A distinct advantage of working with the partial Brownian Gibbs property instead of the Brownian Gibbs property is that the former is stable under projections, while the latter is not. Specifically, if $1 \leq M \leq N$ and $\mathcal{L}$ is a $\llbracket 1, N\rrbracket$-indexed line ensemble on $\Lambda$ that satisfies the partial Brownian Gibbs property, and $\tilde{\mathcal{L}}$ is obtained from $\mathcal{L}$ by projecting on $(\mathcal{L}_1, \dots, \mathcal{L}_M)$ then the induced law on $\tilde{\mathcal{L}}$ also satisfies the partial Brownian Gibbs property as a $\llbracket 1, M \rrbracket$-indexed line ensemble on $\Lambda$. Later in the text, some of our arguments rely on an induction on $N$, for which having this projectional stability becomes important. This is why we choose to work with the partial Brownian Gibbs property instead of the Brownian Gibbs property.
\end{remark}

%
%
\subsection{Main result}\label{Section2.2} In this section we formulate the main result of the paper. We continue with the same notation as in Section \ref{Section2.1}.
\begin{theorem}\label{ThmMain}  Let $\Sigma = \llbracket 1, N \rrbracket$ with $N \in \mathbb{N}$ or $N =\infty$, and let $\Lambda \subset \mathbb{R}$ be an interval. Suppose that $\mathcal{L}^1$ and $\mathcal{L}^2$ are $\Sigma$-indexed line ensembles on $\Lambda$ that satisfy the partial Brownian Gibbs property with laws $\mathbb{P}_1$ and $\mathbb{P}_2$ respectively. Suppose further that for every $k\in \mathbb{N}$,  $t_1 < t_2 < \cdots < t_k$ with $t_i \in \Lambda$ and $x_1, \dots, x_k \in \mathbb{R}$ we have that 
\begin{equation*}
\mathbb{P}_1 \left( \mathcal{L}^1_1(t_1) \leq x_1, \dots,\mathcal{L}^1_1(t_k) \leq x_k  \right) =\mathbb{P}_2 \left( \mathcal{L}^2_1(t_1) \leq x_1, \dots,\mathcal{L}^2_1(t_k) \leq x_k  \right).
\end{equation*}
Then we have that $\mathbb{P}^1 = \mathbb{P}^2$.
\end{theorem}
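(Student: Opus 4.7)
I would prove the theorem by induction on $k \geq 1$, with hypothesis $H_k$: the joint laws of the top $k$ curves of $\mathcal{L}^1$ and $\mathcal{L}^2$ agree as $\llbracket 1,k\rrbracket$-indexed line ensembles on $\Lambda$. Since cylinder events based on finitely many curves at finitely many times form a separating class for probability measures on $C(\Sigma \times \Lambda)$, once $H_k$ holds for every $k$ with $k \leq N$ we obtain $\mathbb{P}_1 = \mathbb{P}_2$. The base case $H_1$ is precisely the hypothesis of the theorem, since the finite-dimensional marginals of a single continuous function determine its law on $C(\Lambda)$.

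For the inductive step $H_k \Rightarrow H_{k+1}$, which requires $k + 1 \leq N$ and thus matches the constraint $k_2 \leq N-1$ in Definition \ref{DefPBGP}, fix a bounded subinterval $[a,b] \subset \Lambda$, times $a < t_1 < \cdots < t_m < b$, thresholds $\vec{z} \in \mathbb{R}^m$, and an event $A \in \mathcal{F}_{ext}(K \times (a,b))$ measurable with respect to $\mathcal{L}^v_1,\dots,\mathcal{L}^v_k$, where $K = \llbracket 1, k\rrbracket$. Setting $E = \{\mathcal{L}^v_{k+1}(t_j) \leq z_j \text{ for all } j\}$, the goal reduces to establishing $\mathbb{P}_1(A \cap E) = \mathbb{P}_2(A \cap E)$; varying $[a,b]$, $\vec{t},\vec{z}$ and $A$, together with the observation that by the partial Brownian Gibbs property the conditional law of the top $k$ curves on $(a,b)$ given the exterior data and $\mathcal{L}^v_{k+1}$ is the deterministic avoiding law, then upgrades $H_k$ to $H_{k+1}$ via a standard monotone-class/covering argument. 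Applying the partial Brownian Gibbs property to $K$ on $[a,b]$ yields, for any bounded measurable $F$ of the top $k$ curves restricted to $[a,b]$,
\[\mathbb{E}_v\bigl[F \cdot \mathbf{1}_A\bigr] = \mathbb{E}_v\bigl[\Phi_F(\vec{x}^v,\vec{y}^v,\mathcal{L}^v_{k+1}|_{[a,b]}) \cdot \mathbf{1}_A\bigr],\]
where $\Phi_F(\vec{x},\vec{y},g) := \mathbb{E}^{a,b,\vec{x},\vec{y},\infty,g}_{avoid}[F]$, and $H_k$ equates the left-hand side across $v = 1, 2$.

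The crucial and delicate construction is, for each $\epsilon > 0$, a bounded observable $F_\epsilon$ on the top $k$ curves such that $\Phi_{F_\epsilon}(\vec{x},\vec{y},g) \to \prod_{j=1}^m \mathbf{1}\{g(t_j) \leq z_j\}$ as $\epsilon \downarrow 0$, uniformly in $(\vec{x},\vec{y})$ on compacts and in $g$ over a suitable admissible class. Heuristically $F_\epsilon$ should detect whether the $k$-th curve can execute narrow excursions touching the levels $z_j$ at times $t_j$: when $g(t_j) > z_j$ the Gibbsian floor forbids such behavior and $\Phi_{F_\epsilon}$ vanishes, whereas when $g(t_j) \leq z_j$ the excursion is unobstructed and, after an appropriate normalization by the corresponding floorless probabilities, $\Phi_{F_\epsilon}$ tends to $1$. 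Such $F_\epsilon$ can be engineered as normalized indicators (or differences thereof) that $\mathcal{L}^v_k$ lies in narrow windows around $z_j$ near $t_j$, analyzed through the Radon--Nikodym relation $d\mathbb{P}^{a,b,\vec{x},\vec{y},\infty,g}_{avoid}/d\mathbb{P}^{a,b,\vec{x},\vec{y},\infty,-\infty}_{avoid} \propto \mathbf{1}\{B_k > g\}$. Once $F_\epsilon$ is in hand, letting $\epsilon \downarrow 0$ in the displayed identity and applying dominated convergence recovers $\mathbb{P}_v(A \cap E)$ on the right, completing the inductive step. I expect the main obstacle to be the design of $F_\epsilon$ and the proof of the claimed uniform approximation of $\Phi_{F_\epsilon}$; this is where the monotone couplings of avoiding line ensembles and no-touching-type estimates such as Lemma \ref{NoTouch}, developed in the technical Sections \ref{Section3} and \ref{Section5}, are expected to enter in an essential way.
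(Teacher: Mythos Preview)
Your inductive strategy and the idea of building observables on the top $k$ curves whose Gibbs-conditional expectation approximates $\prod_j\mathbf{1}\{\mathcal{L}_{k+1}(t_j)\le z_j\}$ is exactly the paper's approach. The paper's concrete choice is $H_w=\prod_{s\in S}\mathbf{1}\{\mathcal{L}_{k}(t_s)\le x_s\}\big/F(x_s;t_s-w^{-1},t_s+w^{-1},\vec{x}^{s,w},\vec{y}^{s,w})$ where $F$ is the floorless avoiding probability on the shrinking window and the Gibbs property is applied on each $[t_s-w^{-1},t_s+w^{-1}]$ separately; this observable is unbounded (handled by truncation and monotone convergence) and produces a squeeze $\mathbb{P}_v(\mathcal{L}_{k+1}(t_s)<x_s)\le\liminf_w p_w\le\limsup_w p_w\le\mathbb{P}_v(\mathcal{L}_{k+1}(t_s)\le x_s)$ rather than the uniform pointwise convergence you posit, with the non-uniformity near $\{\mathcal{L}_{k}(t_s)=x_s\}$ disposed of probabilistically via the atomlessness Lemma~\ref{LNoAtoms} and the gap between strict and weak inequalities closed by Lemma~\ref{MonEq}.
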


In plain words, Theorem \ref{ThmMain} states that if two line ensembles both satisfy the partial Brownian Gibbs property and have the same finite-dimensional distributions of the top curve, then they have the same distribution as line ensembles. Equivalently, a Brownian Gibbsian line ensemble is completely characterized by the finite-dimensional distribution of its top curve. 

One of the assumptions in Theorem \ref{ThmMain} is that $\mathcal{L}^1$ and $\mathcal{L}^2$ have the same number of curves $N$, and a natural question is whether this condition can be relaxed. That is, can two Brownian Gibbsian line ensembles with a {\em different} number of curves have the same finite-dimensional distributions of the top curve. The answer to this question is negative and we isolate this statement in the following corollary.
\begin{theorem}\label{CorMain2}  Let $\Sigma_1 = \llbracket 1, N_1 \rrbracket$ with $N_1 \in \mathbb{N}$ and $\Sigma_2 = \llbracket 1, N_2 \rrbracket$ with $N_2 \in \mathbb{N}$ or $N_2 = \infty$ such that $N_2 > N_1$. In addition, let $\Lambda \subset \mathbb{R}$ be an interval. Suppose that $\mathcal{L}^i$ are $\Sigma_i$-indexed line ensembles on $\Lambda$ for $i = 1,2$ such that $\mathcal{L}^1$ satisfies the Brownian Gibbs property and $\mathcal{L}^2$ satisfies the partial Brownian Gibbs property with laws $\mathbb{P}_1$ and $\mathbb{P}_2$ respectively.
Then there exist $k\in \mathbb{N}$,  $t_1 < t_2 < \cdots < t_k$ with $t_i \in \Lambda$ and $x_1, \dots, x_k \in \mathbb{R}$ such that
\begin{equation*}
\mathbb{P}_1 \left( \mathcal{L}^1_1(t_1) \leq x_1, \dots,\mathcal{L}^1_1(t_k) \leq x_k  \right) \neq \mathbb{P}_2 \left( \mathcal{L}^2_1(t_1) \leq x_1, \dots,\mathcal{L}^2_1(t_k) \leq x_k  \right).
\end{equation*}
\end{theorem}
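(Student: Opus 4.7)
The plan is to prove Theorem~\ref{CorMain2} by contradiction, combining Theorem~\ref{ThmMain} with a monotone coupling argument. Suppose that the finite-dimensional marginals of $\mathcal{L}^1_1$ and $\mathcal{L}^2_1$ coincide; I will show that this is incompatible with the hypotheses on $\mathcal{L}^1$ and $\mathcal{L}^2$. The first step is to set $\tilde{\mathcal{L}}^2 = (\mathcal{L}^2_1,\dots,\mathcal{L}^2_{N_1})$, the projection of $\mathcal{L}^2$ onto its top $N_1$ curves. By the projectional stability explained in Remark~\ref{RPBGP}, $\tilde{\mathcal{L}}^2$ is a $\llbracket 1,N_1\rrbracket$-indexed line ensemble with the partial Brownian Gibbs property, and $\mathcal{L}^1$ enjoys this property as well since the full Brownian Gibbs property implies the partial one. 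Applying Theorem~\ref{ThmMain} to $\mathcal{L}^1$ and $\tilde{\mathcal{L}}^2$ would then yield $\mathcal{L}^1\stackrel{d}{=}\tilde{\mathcal{L}}^2$, so that $\tilde{\mathcal{L}}^2$ inherits the full Brownian Gibbs property from $\mathcal{L}^1$.

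My next step is to fix an interval $[a,b]\subset\Lambda$ with $a<b$ and juxtapose two conditional descriptions of $\mathcal{L}^2_{N_1}|_{[a,b]}$. Let $\mathcal{G}=\sigma(\mathcal{L}^2_i(s):i\le N_1,\,(i,s)\notin\{N_1\}\times(a,b))$ and $\mathcal{H}=\mathcal{G}\vee\sigma(\mathcal{L}^2_{N_1+1}|_{[a,b]})$, and abbreviate $x=\mathcal{L}^2_{N_1}(a)$, $y=\mathcal{L}^2_{N_1}(b)$, $f=\mathcal{L}^2_{N_1-1}|_{[a,b]}$, $g=\mathcal{L}^2_{N_1+1}|_{[a,b]}$, together with $\nu:=\mathbb{P}_{\mathrm{avoid}}^{a,b,x,y,f,-\infty}$ and $\nu_g:=\mathbb{P}_{\mathrm{avoid}}^{a,b,x,y,f,g}$. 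The full Brownian Gibbs property of $\tilde{\mathcal{L}}^2$ applied with $K=\{N_1\}$ gives that the $\mathcal{G}$-conditional law of $\mathcal{L}^2_{N_1}|_{[a,b]}$ is $\nu$, while the partial Brownian Gibbs property of $\mathcal{L}^2$ applied with $K=\{N_1\}$, which is available because $N_1\le N_2-1$, gives that its $\mathcal{H}$-conditional law is $\nu_g$. These are the two distributions I will play against one another.

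The centrepiece of the argument is a monotone coupling. Since $\nu_g$ stochastically dominates $\nu$ (the former imposes the additional finite lower barrier $g$ in place of $-\infty$), invoking the monotone coupling results to be established in Section~\ref{Section5} I would enlarge the probability space to carry a random continuous function $X'$ whose $\mathcal{H}$-conditional law is $\nu$ and which satisfies $\mathcal{L}^2_{N_1}|_{[a,b]}\ge X'$ pointwise on $[a,b]$ almost surely. Since $\nu$ does not depend on $g$, averaging over $g$ given $\mathcal{G}$ shows that the $\mathcal{G}$-conditional marginal of $X'$ is also $\nu$, matching that of $\mathcal{L}^2_{N_1}|_{[a,b]}$. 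For each fixed $t\in(a,b)$, $\mathcal{L}^2_{N_1}(t)\ge X'(t)$ almost surely and both have the same one-dimensional marginal at $t$, which forces $\mathcal{L}^2_{N_1}(t)=X'(t)$ almost surely; by continuity of sample paths one then concludes $\mathcal{L}^2_{N_1}|_{[a,b]}=X'$ almost surely.

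The contradiction will arise at the level of the $\mathcal{H}$-conditional law of $X'$. On the one hand, non-intersection of $\mathcal{L}^2$ combined with the identity $\mathcal{L}^2_{N_1}|_{[a,b]}=X'$ forces $X'>g$ pointwise almost surely. On the other hand, since the $\mathcal{H}$-conditional law of $X'$ is the avoiding law $\nu$ with no lower barrier, Lemma~\ref{Spread} applied to a piecewise linear approximant of $g$ that respects the boundary conditions for $\nu$ but dips strictly below $g$ at some interior point shows that $\nu$ assigns positive mass to paths that cross $g$ from above. Integrating against the $\mathcal{H}$-distribution of $g$ then contradicts $X'>g$ almost surely. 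The main obstacle will be the monotone coupling step: constructing $X'$ with the prescribed $\mathcal{H}$-conditional law jointly and measurably with the original process $\mathcal{L}^2_{N_1}|_{[a,b]}$ is nontrivial and is precisely the subject of the coupling constructions carried out in Section~\ref{Section5}.
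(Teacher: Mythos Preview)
Your overall strategy is sound and genuinely different from the paper's. The paper's proof (Section~\ref{Section4.3}) does not build any conditional coupling; instead it reuses the observable $p_w$ from the proof of Proposition~\ref{PropMain}. Since $\mathcal{L}^1$ satisfies the \emph{full} Brownian Gibbs property, the lower barrier in $\mathbb{P}_{avoid}^{1,1,w}$ is $-\infty$, whence $\mathbb{P}_{avoid}^{1,1,w}(\mathcal{Q}_{N_1}(t_1)\le x_1)=F(x_1;a^w,b^w,\vec{x}^{1,1,w},\vec{y}^{1,1,w})$ and $p_w\equiv 1$. On the other hand, Step~3 of Section~\ref{Section4.2} (the inequality~\eqref{Domination}) gives $p_w\le\mathbb{P}_2(\mathcal{L}^2_{N_1+1}(t_1)\le x_1)$ for every $x_1$, forcing $\mathbb{P}_2(\mathcal{L}^2_{N_1+1}(t_1)\le x_1)=1$ for all $x_1\in\mathbb{R}$, a contradiction. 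This is a two-line corollary of the main machinery, whereas your route is more self-contained but requires more care.

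There are two technical points you should fix. First, with $K=\{N_1\}$ the upper barrier is $f=\mathcal{L}^2_{N_1-1}$, which is finite when $N_1\ge 2$; Lemmas~\ref{MCLxy}--\ref{MCLfg} as stated (and as proved in Section~\ref{Section5}) only treat $f=\infty$. The cleanest fix is to take $K=\llbracket 1,N_1\rrbracket$ throughout, so that $f=\infty$ and Lemma~\ref{MCLfg} applies verbatim to the $(N_1)$-th coordinate. Second, and more seriously, your claim that Section~\ref{Section5} furnishes the conditionally coupled process $X'$ is not accurate: those lemmas construct a coupling for \emph{fixed} boundary and barrier data on some auxiliary space, not a jointly measurable family indexed by the random data $(x,y,f,g)$, and the step of gluing $X'$ onto the original space with the prescribed $\mathcal{H}$-conditional law and the a.s.\ inequality needs a separate measurable-selection argument.

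You can avoid this second issue entirely. From Lemma~\ref{MCLfg} one has, for each fixed $t_0\in(a,b)$ and $r\in\mathbb{R}$, the $\mathcal{H}$-measurable inequality $\nu_g(X(t_0)\le r)\le \nu(X(t_0)\le r)$ almost surely. By the tower property and the identity $\nu(X(t_0)\le r)=\mathbb{E}[\nu_g(X(t_0)\le r)\mid\mathcal{G}]$, the nonnegative difference has zero $\mathcal{G}$-conditional expectation and hence vanishes a.s.; taking a countable dense set of $r$'s and using right-continuity gives $\nu_g(X(t_0)\le r)=\nu(X(t_0)\le r)$ for all $r$ on a single full-measure event. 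Evaluating at $r=g(t_0)$ yields $0=\nu_g(X(t_0)\le g(t_0))=\nu(X(t_0)\le g(t_0))$, contradicting the fact (from Lemma~\ref{Spread}) that $\nu(X(t_0)\le r)>0$ for every real $r$. This delivers the contradiction without ever constructing $X'$.
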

\begin{remark} It is important that $\mathcal{L}^1$ satisfies the usual rather than the partial Brownian Gibbs property in Corollary \ref{CorMain2}. Indeed, otherwise one could take $\mathcal{L}^2$ and project this line ensemble to its top $N_1$ curves. The resulting $\Sigma_1$-indexed line ensemble on $\Lambda$ will have the same top curve distribution as $\mathcal{L}^2$ and also satisfy the partial Brownian Gibbs property -- see Remark \ref{RPBGP}. In a sense $\mathcal{L}^1$ can be understood as a line ensemble with $N_1+1$ curves with the $(N_1+1)$-st curve sitting at $-\infty$, while $\mathcal{L}^2$ has a $(N_1+1)$-st curve that is finite-valued and the question that Corollary \ref{CorMain2} answers in the affirmative is whether we can distinguish between these two cases using only the top curve of the line ensemble. We are grateful to Vadim Gorin who suggested this question after reading a preliminary draft of the paper.
\end{remark}

%
%
\subsection{Basic lemmas}\label{Section2.3} 
In this section we present three lemmas, whose proof is postponed until Section~\ref{Section5}. Lemma \ref{AvoidIsGibbs} states that a line ensemble with distribution $\mathbb{P}_{avoid}^{a,b,\vec{x},\vec{y},\infty,-\infty}$ from Definition \ref{DefAvoidingLaw} satisfies the Brownian Gibbs property. Although this result looks natural, we were unable to find its proof in the literature, and so we provide it. 

\begin{lemma}\label{AvoidIsGibbs} Assume the same notation as in Definition \ref{DefAvoidingLaw}. If $\mathcal{Q}$ is a $\llbracket 1, k \rrbracket$-indexed line ensemble on $[a,b]$ with probability distribution $\mathbb{P}_{avoid}^{a,b,\vec{x},\vec{y},\infty,-\infty}$ then it satisfies the Brownian Gibbs property of Definition \ref{DefBGP}.
\end{lemma}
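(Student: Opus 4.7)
The plan is to verify the Brownian Gibbs property directly from the definition by combining the independence and Markov structure of free Brownian bridges with a Bayes-rule decomposition of the non-intersection event. Fix a finite $K = \{k_1,\dots,k_2\} \subset \llbracket 1,k\rrbracket$ and an interval $[c,d]\subset[a,b]$; let $B_1,\dots,B_k$ be the underlying free Brownian bridges with $B_i(a)=x_i$, $B_i(b)=y_i$, so that $\mathcal{Q}$ is the law under $\mathbb{P}_{free}^{a,b,\vec{x},\vec{y}}$ conditioned on the non-intersection event $E=\{B_1>B_2>\cdots>B_k \text{ on }[a,b]\}$. Write $X = (B_i|_{[c,d]})_{i\in K}$ for the ``inside'' data, and let $\mathcal{G}$ denote the $\sigma$-algebra generated by all other values of the bridges (i.e.\ $B_j|_{[a,b]}$ for $j\notin K$ together with $B_i|_{[a,b]\setminus(c,d)}$ for $i\in K$).

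First I would decompose $E = E_{in}\cap E_{out}$, where $E_{out}$ consists of all the ordering conditions that involve only the $\mathcal{G}$-data (namely, all pairs on $[a,c]\cup[d,b]$, and pairs of indices both outside $K$ on $[c,d]$), and $E_{in}$ collects the remaining conditions: the strict ordering of the curves $(B_i)_{i\in K}$ on $[c,d]$, together with $B_{k_1-1}>B_{k_1}$ and $B_{k_2}>B_{k_2+1}$ on $[c,d]$ (dropped if $k_1=1$ or $k_2=k$). The key points are that $\mathbf{1}_{E_{out}}$ is $\mathcal{G}$-measurable and that $E_{in}$ depends on $X$ and on $\mathcal{G}$ only through the boundary data $\vec{x}' = (B_i(c))_{i\in K}$, $\vec{y}' = (B_i(d))_{i\in K}$, and the two adjacent curves $f=B_{k_1-1}|_{[c,d]}$, $g=B_{k_2+1}|_{[c,d]}$ (with the convention $f=\infty$ or $g=-\infty$ if absent).

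Next I would apply Bayes' rule to write, for any bounded Borel $F$,
\begin{equation*}
\mathbb{E}_{\mathcal{Q}}\bigl[F(X)\,\big|\,\mathcal{G}\bigr] \;=\; \frac{\mathbb{E}_{free}\bigl[F(X)\mathbf{1}_{E_{out}}\mathbf{1}_{E_{in}}\,\big|\,\mathcal{G}\bigr]}{\mathbb{E}_{free}\bigl[\mathbf{1}_{E_{out}}\mathbf{1}_{E_{in}}\,\big|\,\mathcal{G}\bigr]} \;=\; \frac{\mathbb{E}_{free}\bigl[F(X)\mathbf{1}_{E_{in}}\,\big|\,\mathcal{G}\bigr]}{\mathbb{E}_{free}\bigl[\mathbf{1}_{E_{in}}\,\big|\,\mathcal{G}\bigr]},
\end{equation*}
where $\mathbf{1}_{E_{out}}$ was pulled out and cancelled (on $E_{out}\supseteq E$, a set of full $\mathbb{P}_{\mathcal{Q}}$-measure). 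The central input is now the following standard Markov statement: under $\mathbb{P}_{free}^{a,b,\vec{x},\vec{y}}$, the bridges are independent, and for each $i$, conditionally on $B_i|_{[a,b]\setminus(c,d)}$, the restriction $B_i|_{[c,d]}$ is a Brownian bridge from $(c,B_i(c))$ to $(d,B_i(d))$ of diffusion parameter $1$, independent of everything else. Hence, conditionally on $\mathcal{G}$, the law of $X$ is exactly $\mathbb{P}_{free}^{c,d,\vec{x}',\vec{y}'}$, and the displayed quotient equals $\mathbb{E}_{avoid}^{c,d,\vec{x}',\vec{y}',f,g}[F(\tilde{\mathcal{Q}})]$ by the very definition of the avoiding measure in Definition \ref{DefAvoidingLaw}.

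To conclude, I would observe that $\mathbb{E}_{avoid}^{c,d,\vec{x}',\vec{y}',f,g}[F(\tilde{\mathcal{Q}})]$ is measurable with respect to the smaller sub-$\sigma$-algebra $\sigma\{B_i(s):i\in K,\,s\in\{c,d\}\}\vee\sigma\{B_{k_1-1}(s),B_{k_2+1}(s):s\in[c,d]\}\subset\mathcal{F}_{ext}(K\times(c,d))\subset\mathcal{G}$, so the tower property yields the required identity \eqref{BGPTower}; this also establishes the claim in Remark \ref{RemMeas} in the present context. The main obstacles I anticipate are purely technical: one must justify that the denominator in Bayes' rule is a.s.\ positive (which follows from Lemma \ref{Spread} applied to the bridges $B_i|_{[c,d]}$ together with the $\mathbb{P}_{\mathcal{Q}}$-a.s.\ strict ordering $\vec{x}'\in\weyl_{|K|}$, $\vec{y}'\in\weyl_{|K|}$ and $f(c)>x'_1>\cdots>x'_{|K|}>g(c)$, $f(d)>y'_1>\cdots>y'_{|K|}>g(d)$), and one must carry out the Markov-type computation with care since $\mathcal{G}$ encodes an uncountable family of continuous observations; this is handled using the factorization of the Brownian bridge through its boundary values and standard monotone class arguments to pass from cylinder functions $F$ to arbitrary bounded Borel $F$.
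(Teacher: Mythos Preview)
Your approach is correct and genuinely different from the paper's. The paper does \emph{not} carry out the direct Bayes-plus-Markov argument you outline; instead it proves Lemma \ref{AvoidIsGibbs} by discrete approximation. Concretely, the paper introduces avoiding random walk bridges (Definitions \ref{Grids} and \ref{Grids2}), shows that they converge weakly to the avoiding Brownian line ensemble (Lemma \ref{lem:RW}, which in turn rests on the strong KMT-type coupling of Theorem \ref{DWThm}), observes that the Gibbs property for the discrete ensembles is tautological because those laws are uniform on finitely many admissible trajectories, and then passes the Gibbs identity through the limit using Skorohod representation and a monotone class argument. Your route instead stays in continuous time: you decompose $E=E_{in}\cap E_{out}$, invoke the Markov property of each Brownian bridge at $c$ and $d$ to identify the conditional law of the inside given $\mathcal{G}$ as $\mathbb{P}_{free}^{c,d,\vec{x}',\vec{y}'}$, and read off the avoiding law from the quotient. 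This is the natural heuristic the paper alludes to but chose not to formalize.

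What each approach buys: your argument is shorter and self-contained for this lemma, needing only the (standard) Markov property of a single Brownian bridge and the Bayes formula $\mathbb{E}_{\mu[\cdot|A]}[Y|\mathcal{G}]=\mathbb{E}_\mu[Y\mathbf{1}_A|\mathcal{G}]/\mathbb{E}_\mu[\mathbf{1}_A|\mathcal{G}]$; the technical points you flag (a.s.\ positivity of the denominator, measurability of the right-hand side via Lemma \ref{LemmaMeasExp}, and the monotone class step) are exactly the right ones. The paper's route is heavier for this statement alone, but the same discrete-approximation machinery is reused to prove the monotone coupling Lemmas \ref{MCLxy} and \ref{MCLfg}, where a direct continuous-path construction of the coupling would be considerably more delicate; so the overhead is amortized across Section \ref{Section5}.
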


The following two lemmas provide couplings of two line ensembles of non-intersecting Brownian bridges on the same interval, which depend monotonically on their boundary data. Schematic depictions of the couplings are provided in Figure \ref{fig:MCL}.
\vspace{-3mm}
\begin{figure}[ht]
\begin{center}
  \includegraphics[scale = 0.8]{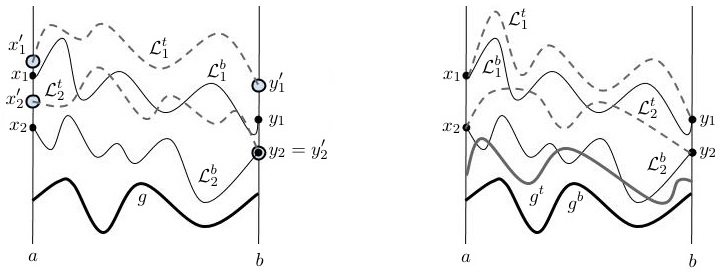}
  \vspace{-2mm}
  \caption{Two diagrammatic depictions of the monotone coupling Lemma \ref{MCLxy} (left part) and Lemma \ref{MCLfg} (right part).}
  \label{fig:MCL}
  \end{center}
\end{figure}

\begin{lemma}\label{MCLxy} Assume the same notation as in Definition \ref{DefAvoidingLaw}. Fix $k \in \mathbb{N}$, $a < b$ and a continuous function $g: [a,b] \rightarrow \mathbb{R} \cup \{- \infty \}$ and assume that $\vec{x}, \vec{y}, \vec{x}', \vec{y}' \in \weyl_k$. We assume that $g(a) < x_k$, $g(b) < y_k$ and $x_i \leq x_i'$, $y_i \leq y_i'$ for $i = 1,\dots, k$. Then there exists a probability space $(\Omega, \mathcal{F}, \mathbb{P})$, which supports two $\llbracket 1, k \rrbracket$-indexed line ensembles $\mathcal{L}^t$ and $\mathcal{L}^b$ on $[a,b]$ such that the law of $\mathcal{L}^{t}$ {\big (}resp. $\mathcal{L}^b${\big )} under $\mathbb{P}$ is given by $\mathbb{P}_{avoid}^{a,b, \vec{x}', \vec{y}', \infty, g}$ {\big (}resp. $\mathbb{P}_{avoid}^{a,b, \vec{x}, \vec{y}, \infty, g}${\big )} and such that $\mathbb{P}$-almost surely we have $\mathcal{L}_i^t(r) \geq \mathcal{L}^b_i(r)$ for all $i = 1,\dots, k$ and $r \in [a,b]$.
\end{lemma}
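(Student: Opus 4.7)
The plan is a two-level construction: first establish the monotone coupling for a single Brownian bridge conditioned to stay above a barrier (the $k=1$ case), and then lift this to $k$ non-intersecting bridges via a Gibbs-resampling Markov chain on pairs of line ensembles that uses the single-bridge coupling as its primitive update step.

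\textbf{Step 1: Single-bridge coupling with a barrier.} For $k = 1$, the task is to couple two Brownian bridges on $[a,b]$ conditioned to stay above $g$, one from $x$ to $y$ and one from $x' \geq x$ to $y' \geq y$, so that the latter stays pointwise above the former. I would build this by discrete approximation: approximate each conditioned bridge by a random walk bridge with Gaussian increments on a fine lattice, conditioned to stay above a step-function discretization of $g$. For discrete bridges above a barrier, a monotone coupling in the endpoints can be constructed directly by induction on the number of lattice points, using a one-step monotone coupling of the transition kernels (which follows from the reflection principle for barrier-hitting probabilities and basic stochastic-domination arguments for truncated Gaussians). Donsker-type invariance plus tightness produces the Brownian coupling in the limit; monotonicity is preserved because $\{\mathcal{L}^{t}(r)\geq \mathcal{L}^{b}(r) \text{ for all } r \in[a,b]\}$ is a closed subset of $C([a,b]) \times C([a,b])$ in the uniform topology.

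\textbf{Step 2: Resampling dynamics for $k \geq 2$.} For general $k$ I would introduce a Markov chain on pairs $(\mathcal{L}^t, \mathcal{L}^b)$ taking values in ordered pairs of $(\infty,g)$-avoiding ensembles. At each step one picks an index $i \in \llbracket 1, k\rrbracket$ and a sub-interval $[s,t] \subset [a,b]$, and resamples the $i$-th curve of each ensemble on $[s,t]$ from its Brownian Gibbs conditional law -- by Lemma \ref{AvoidIsGibbs}, this is the law of a single Brownian bridge from the curve's value at $s$ to its value at $t$, constrained to lie between the two neighboring curves (with $g$ below the $k$-th and $\infty$ above the $1$-st). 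Because the ordering of the two ensembles at the neighbors $i-1$ and $i+1$ (and at the endpoints $s,t$) satisfies the hypotheses of the $k=1$ coupling, the two resamplings can be performed jointly by the single-bridge monotone coupling, preserving $\mathcal{L}^t_i \geq \mathcal{L}^b_i$; the remaining curves are left untouched and the order with them is automatically maintained. Each marginal law $\mathbb{P}_{avoid}^{a,b,\vec{x},\vec{y},\infty,g}$ and $\mathbb{P}_{avoid}^{a,b,\vec{x}',\vec{y}',\infty,g}$ is invariant for the chain by the Brownian Gibbs property, and a standard invariant-coupling or compactness argument on the space of joint laws (together with a mixing/irreducibility input for the Markov chain) extracts the desired coupling.

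\textbf{Main obstacle.} The principal technical difficulty is in Step 1: carefully justifying the single-bridge monotone coupling above a continuous barrier via the discrete-to-continuum passage. One must control the probabilities of the conditioning events uniformly in the lattice spacing (these can be very small when $\vec{x}, \vec{y}$ are close to $g$), argue that the discrete couplings converge jointly rather than just marginally, and treat the degenerate boundary cases where $g(a)$ or $g(b)$ approaches $x_k$ or $y_k$. A secondary, more bookkeeping-style issue in Step 2 is establishing that the resampling chain mixes -- or, alternatively, extracting an invariant coupling from a tight family of time-$T$ distributions -- which requires verifying that the chain is irreducible enough that the sequence of chosen $(i,[s,t])$ moves eventually probes every part of every curve.
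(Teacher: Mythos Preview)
Your proposal takes a genuinely different route from the paper's. The paper (via Lemma \ref{MCLxyfg}, which proves Lemmas \ref{MCLxy} and \ref{MCLfg} simultaneously) discretizes \emph{first} to random walk bridges with $\{-1,0,1\}$ increments and then runs a single-site Glauber dynamics on the resulting \emph{finite} state space of pairs of avoiding walk ensembles: each Poisson clock tries to raise or lower one lattice point of one curve by one unit, accepting only if the result remains admissible. Monotonicity of the pair is preserved by a two-case elementary check on the increments; ergodicity is immediate from finiteness and irreducibility; and a single passage to the Brownian limit via Lemma \ref{lem:RW} finishes. There is no separate treatment of $k=1$.

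Your scheme instead isolates the $k=1$ coupling and then lifts to general $k$ via a Gibbs resampling chain directly on the \emph{continuous} ensembles. This is conceptually natural, but you have inverted the difficulty. The single-bridge coupling is not the main obstacle --- the paper's discrete argument specialized to $k=1$ already gives it (though note that for your Step 2 you actually need the $k=1$ coupling with \emph{both} an upper and a lower barrier, since the resampled curve sits between its two neighbors; your Step 1 as written only treats a lower barrier). The real technical issue is Step 2: your Markov chain lives on an infinite-dimensional state space, and to extract the coupling you need either (i) an ordered initial pair from which both marginal chains converge to their respective targets, which requires proving ergodicity or uniqueness of the invariant measure for a continuous Gibbs sampler --- not a bookkeeping matter; or (ii) a stationary ordered initial pair, which is exactly the statement you are trying to prove. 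The paper sidesteps this completely by keeping the chain on a finite set, where convergence to stationarity is automatic.
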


\begin{lemma}\label{MCLfg} Assume the same notation as in Definition \ref{DefAvoidingLaw}. Fix $k \in \mathbb{N}$, $a < b$ and two continuous functions $g^t, g^b: [a,b] \rightarrow \mathbb{R} \cup \{- \infty \}$ and assume that $\vec{x}, \vec{y} \in \weyl_k$. We assume that $g^t(r) \geq g^b(r)$ for all $r \in [a,b]$ and $g^t(a) < x_k$, $g^t(b) < y_k$. Then there exists a probability space $(\Omega, \mathcal{F}, \mathbb{P})$, which supports two $\llbracket 1, k \rrbracket$-indexed line ensembles $\mathcal{L}^t$ and $\mathcal{L}^b$ on $[a,b]$ such that the law of $\mathcal{L}^{t}$ {\big (}resp. $\mathcal{L}^b${\big )} under $\mathbb{P}$ is given by $\mathbb{P}_{avoid}^{a,b, \vec{x}, \vec{y}, \infty, g^t}$ {\big (}resp. $\mathbb{P}_{avoid}^{a,b, \vec{x}, \vec{y}, \infty, g^b}${\big )} and such that $\mathbb{P}$-almost surely we have $\mathcal{L}_i^t(r) \geq \mathcal{L}^b_i(r)$ for all $i = 1,\dots, k$ and $r \in [a,b]$.
\end{lemma}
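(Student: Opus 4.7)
The plan is to construct the coupling as a stationary point of a pair of Brownian Gibbs resamplers run in parallel on $\mathcal{L}^t$ and $\mathcal{L}^b$, with the single-curve updates coupled so as to preserve pointwise order. Concretely, I would build a discrete-time Markov chain $(\mathcal{L}^t(n), \mathcal{L}^b(n))_{n \geq 0}$ on pairs of $\llbracket 1, k\rrbracket$-indexed line ensembles on $[a,b]$ with common entrance data $\vec{x}$ and exit data $\vec{y}$ such that at every step $\mathcal{L}^t(n)$ avoids $g^t$, $\mathcal{L}^b(n)$ avoids $g^b$, and $\mathcal{L}^t_i(n)(r) \geq \mathcal{L}^b_i(n)(r)$ for all $i$ and $r$. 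The initial state is $\mathcal{L}^t(0) = \mathcal{L}^b(0)$, any fixed configuration that avoids $g^t$ (and hence $g^b$); its existence follows from Lemma \ref{Spread} using the strict inequalities $g^t(a) < x_k$ and $g^t(b) < y_k$. At step $n$ I pick $i \in \llbracket 1, k\rrbracket$ uniformly, then replace the $i$-th curve of $\mathcal{L}^t(n)$ by a Brownian bridge from $x_i$ to $y_i$ conditioned to lie between $\mathcal{L}^t_{i-1}(n)$ (above, with convention $\infty$ if $i=1$) and $\max(\mathcal{L}^t_{i+1}(n), g^t)$ (below, with $\mathcal{L}^t_{k+1}=-\infty$), and analogously for $\mathcal{L}^b(n)$ using $g^b$ in place of $g^t$.

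The technical heart of the argument is the following single-curve monotone coupling: given common endpoints $x, y \in \mathbb{R}$ and two pairs of continuous barriers $(U^t, L^t) \geq (U^b, L^b)$ pointwise, with $(x, y)$ strictly inside both strips, there exists a coupling $(B^t, B^b)$ of Brownian bridges conditioned to lie in the respective strips with $B^t \geq B^b$ pointwise. For the one-sided case (upper barriers identical, only the lower ones differing) this is a direct FKG argument: the Brownian bridge is a Gaussian process with pointwise-positive covariance, hence an FKG measure on $C([a,b])$ by Pitt's theorem; the events $\{f > L\}$ are increasing in $f$, so the conditional laws are stochastically increasing in $L$, and Strassen's theorem yields the coupling. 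The two-sided case is subtler because $\{f < U\}$ is a decreasing event, so I would handle it by sequential conditioning --- first conditioning on $\{f < U^t\}$, which by the reflection $f \mapsto -f$ again reduces to a Gaussian FKG statement --- or, as a fallback, by a discrete approximation via non-intersecting Bernoulli walks in a strip, where the monotone coupling is easily built by a coordinate-wise sequential rule, and then passing to the Brownian limit. The degenerate case $g^b \equiv -\infty$ is handled by approximating $g^b \leadsto g^b \vee (-m)$ and taking $m \to \infty$.

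By the inductive hypothesis $\mathcal{L}^t_{i\pm 1}(n) \geq \mathcal{L}^b_{i\pm 1}(n)$ together with $g^t \geq g^b$, both upper and lower envelopes for the top ensemble dominate those for the bottom at the update step, so the coupled single-curve resampling preserves order of the $i$-th curves, while all other curves remain unchanged. By Lemma \ref{AvoidIsGibbs} (suitably extended to allow a continuous lower barrier, which can be done by the same scheme), each marginal target law is stationary for its respective chain. Passing to a subsequential weak limit $(\mathcal{L}^t, \mathcal{L}^b)$ --- tightness of each marginal follows from Gaussian control of the unconditional bridges and stationarity of the resampling --- monotonicity survives by continuity, and the positivity assertion of Lemma \ref{Spread} provides enough irreducibility for the single-chain stationary distribution to be unique, forcing the marginals of the limit to be exactly $\mathbb{P}_{avoid}^{a,b,\vec{x},\vec{y},\infty,g^t}$ and $\mathbb{P}_{avoid}^{a,b,\vec{x},\vec{y},\infty,g^b}$. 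The main obstacle I anticipate is precisely the two-sided monotone coupling in the middle paragraph: the one-sided FKG step is clean, but Gaussian FKG with a decreasing constraint is delicate, so some combination of a careful sequential-conditioning argument and the discretization fallback is likely needed, alongside verifying the uniqueness of the stationary distribution when one of the barriers is $-\infty$.
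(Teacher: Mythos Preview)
Your high-level strategy --- run a Gibbs/Glauber sampler on each ensemble, couple the updates monotonically, and extract a stationary coupling --- is exactly the paper's. The difference is that you try to run it directly in the Brownian setting, whereas the paper (proving the joint generalization Lemma \ref{MCLxyfg}, of which Lemma \ref{MCLfg} is the special case $\vec{x}=\vec{x}'$, $\vec{y}=\vec{y}'$) first approximates both avoiding ensembles by non-intersecting random-walk bridges on a lattice (Definition \ref{Grids2}, Lemma \ref{lem:RW}), runs a continuous-time single-site Glauber dynamics on those \emph{finite} state spaces, and only afterwards takes the scaling limit. This is not cosmetic; it sidesteps precisely the two points you yourself flag as hard.

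For the single-update coupling, the paper's move is a $\pm 1$ flip at one lattice site attempted simultaneously on both walk configurations, and order preservation is a short case check (Step 3 of the proof of Lemma \ref{MCLxyfg}). No FKG, no two-sided Strassen argument, and no extension of Lemma \ref{AvoidIsGibbs} to a general lower barrier is required.

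The more serious gap in your write-up is the convergence step. You start the chain from a fixed deterministic configuration, so it is not in stationarity, and you propose to pass to a subsequential weak limit and force the marginals to be the avoiding laws by ``irreducibility via Lemma \ref{Spread}''. On a continuous state space, irreducibility in any reasonable sense does not by itself yield uniqueness of the invariant measure or convergence of the chain; you would need a minorization or Harris-recurrence argument, and none is supplied. In the paper's finite-state setting this collapses to standard Markov chain theory: irreducibility plus finiteness gives convergence to the unique stationary law (uniform on avoiding walk configurations), and the paper simply cites \cite[Theorems 3.5.3 and 3.6.3]{Norris}. Your own fallback for the single-curve step --- discretize to Bernoulli walks --- is essentially the paper's method applied locally; applying it globally, as the paper does, removes the gap altogether.
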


In plain words, Lemma \ref{MCLxy} states that one can couple two line ensembles $\mathcal{L}^{t}$ and $\mathcal{L}^{b}$ of non-intersecting Brownian bridges, bounded from below by the same function $g$, in such a way that if all boundary values of $\mathcal{L}^{t}$ are above the respective boundary values of $\mathcal{L}^{b}$, then all curves of $\mathcal{L}^{t}$ are almost surely above the respective curves of $\mathcal{L}^{b}$. See the left part of Figure \ref{fig:MCL}. Lemma \ref{MCLfg}, states that one can couple two line ensembles $\mathcal{L}^{t}$ and $\mathcal{L}^{b}$ that have the same boundary values, but the lower bound $g^t$ of $\mathcal{L}^{t}$ is above the lower bound $g^b$ of $\mathcal{L}^{b}$, in such a way that all curves of $\mathcal{L}^{t}$ are almost surely above the respective curves of $\mathcal{L}^{b}$. See the right part of Figure \ref{fig:MCL}.

Lemmas \ref{MCLxy} and \ref{MCLfg} can be found in \cite[Section 2]{CorHamA}. The key idea behind their proof is to approximate the Brownian bridges by random walk bridges, for which constructing the monotone couplings is easier, and perform a limit transition. Since the details surrounding that limit transition are only briefly mentioned in \cite{CorHamA}, and since these lemmas are central results that will be used throughout Sections~\ref{Section3} and \ref{Section4}, we included their proofs in Section \ref{Section5}.
\section{Preliminaries on Brownian Gibbsian line ensembles}\label{Section3}
In this section we summarize several results about Brownian Gibbsian line ensembles, which will be used in the arguments later in the text. We remark that while some of the proofs in this section are a bit technical, the statements of the various results are fairly intuitive. Consequently, readers can safely skip most of the proofs in this section without this affecting their understanding of the main argument in Section \ref{Section4} and only come back to them if interested.

%

\subsection{Properties of line ensembles}
 In this section we prove a few results about general line ensembles, which basically state that the laws of line ensembles are characterized by their finite-dimensional distributions.

 We continue with the same notation as in Section \ref{Section2.1}. In particular, we fix $\Lambda \subset \mathbb{R}$ to be an interval and $\Sigma = \llbracket 1, N\rrbracket$ with $N \in \mathbb{N}$ or $N = \infty$. Given $a,b \in \Lambda$ with $a < b$ and $k \in \Sigma$, we define $\pi_{[a,b]}^{\llbracket 1, k\rrbracket} : C (\Sigma \times \Lambda) \rightarrow C(\llbracket 1, k \rrbracket \times [a,b])$ through
\begin{equation}\label{ProjBox}
\pi_{[a,b]}^{\llbracket 1, k\rrbracket} (f)(i,x) = f(i,x) \mbox{ for $i = 1, \dots, k$ and $x \in [a,b]$.}
\end{equation}
In addition, given $n_1, \dots, n_k \in \Sigma$ and $t_1, t_2, \dots, t_k \in \Lambda$ we define $\pi_{t_1, \dots, t_k}^{n_1, \dots, n_k}  : C (\Sigma \times \Lambda) \rightarrow \mathbb{R}^k$ through
\begin{equation}\label{ProjTuple}
\pi_{t_1, \dots, t_k}^{n_1, \dots, n_k} (f)= (f(n_1, t_1), \dots, f(n_k,t_k)).
\end{equation}
Observe that since $\Sigma \times \Lambda$ is locally compact we know that the functions in (\ref{ProjBox}) and (\ref{ProjTuple}) are continuous, cf. \cite[Lemma 46.4]{Munkres}.
\begin{lemma}\label{FinDim}
Suppose that $\mathcal{A}$ is a collection of measurable subsets of $ C (\Sigma \times \Lambda)$, such that for each $k \in \mathbb{N}$, $n_1, \dots, n_k \in \Sigma$, $t_1, t_2, \dots, t_k \in \Lambda$ and $x_1, \dots, x_k \in \mathbb{R}$ we know that 
$$\left [\pi_{t_1, \dots, t_k}^{n_1, \dots, n_k}\right]^{-1} \left( (-\infty ,x_1] \times (-\infty, x_2] \times \cdots \times (-\infty, x_k] \right) \in \mathcal{A}.$$
Then the $\sigma$-algebra generated by $\mathcal{A}$, denoted by $\sigma(\mathcal{A})$, equals $\mathcal{C}_{\Sigma}$. In particular, the collection of finite-dimensional sets of $C (\Sigma \times \Lambda)$ is a {\em separating class}, cf. \cite[p. 9]{Billing}.
\end{lemma}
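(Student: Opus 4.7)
The plan is to prove the two inclusions $\sigma(\mathcal{A}) \subseteq \mathcal{C}_\Sigma$ and $\mathcal{C}_\Sigma \subseteq \sigma(\mathcal{A})$. The first is immediate from the hypothesis that every element of $\mathcal{A}$ is measurable. The content of the lemma is the reverse inclusion, which I would deduce from the general principle that the Borel $\sigma$-algebra of the Polish space $C(\Sigma \times \Lambda)$ is generated by its evaluation functionals.

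First I would realize the topology on $C(\Sigma \times \Lambda)$ by an explicit metric. Since $\Sigma$ is countable and $\Lambda$ is a $\sigma$-compact separable interval, $\Sigma \times \Lambda$ is itself $\sigma$-compact and separable. I choose an increasing exhaustion $K_1 \subseteq K_2 \subseteq \cdots$ of $\Sigma \times \Lambda$ by compact sets with $\bigcup_m K_m = \Sigma \times \Lambda$, and for each $m$ a countable dense subset $D_m \subseteq K_m$. Then the metric
$$d(f,g) = \sum_{m=1}^\infty 2^{-m} \min\Bigl(1, \sup_{(i,t) \in K_m} |f(i,t) - g(i,t)|\Bigr)$$
induces the topology of uniform convergence on compacts and makes $C(\Sigma \times \Lambda)$ into a separable complete metric space.

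Next I would show that every open ball $B_r(f_0) = \{f : d(f,f_0) < r\}$ belongs to $\sigma(\mathcal{A})$. Taking $k = 1$ in the hypothesis shows that each evaluation map $\mathrm{ev}_{i,t} : f \mapsto f(i,t)$ is $\sigma(\mathcal{A})$-measurable, since $\mathrm{ev}_{i,t}^{-1}((-\infty, x]) \in \mathcal{A}$ for every $x \in \mathbb{R}$. By continuity of $f-g$ on $K_m$, the inner supremum in the definition of $d$ coincides with the countable supremum over $D_m$, namely $\sup_{(i,t) \in D_m}|f(i,t)-g(i,t)|$. Hence for any fixed $f_0$ the map $f \mapsto d(f,f_0)$ is a countable combination of $\sigma(\mathcal{A})$-measurable maps and is itself $\sigma(\mathcal{A})$-measurable, so $B_r(f_0) \in \sigma(\mathcal{A})$.

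Finally I would invoke separability of $C(\Sigma \times \Lambda)$ to pick a countable dense subset $\{f_n\}_{n \geq 1}$; then $\{B_{1/j}(f_n) : n,j \in \mathbb{N}\}$ is a countable base for the topology, every element of which lies in $\sigma(\mathcal{A})$. Consequently every open set, and thus every Borel set, lies in $\sigma(\mathcal{A})$, yielding $\mathcal{C}_\Sigma \subseteq \sigma(\mathcal{A})$. I do not anticipate any serious obstacle: the only minor point requiring care is the reduction of the sup over the compact set $K_m$ to a sup over the countable dense subset $D_m$, which is needed to express $d(\cdot, f_0)$ as a genuinely countable measurable combination of evaluation maps.
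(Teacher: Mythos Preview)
Your proposal is correct and follows essentially the same strategy as the paper: introduce a metric via a compact exhaustion of $\Sigma \times \Lambda$, show that open (or closed) balls lie in $\sigma(\mathcal{A})$, and then invoke separability to conclude that all open sets, hence all Borel sets, lie in $\sigma(\mathcal{A})$. The only minor difference is that the paper first passes through the intermediate fact that preimages under the box projections $\pi_{[a,b]}^{\llbracket 1,k\rrbracket}$ of Borel sets lie in $\sigma(\mathcal{A})$ (citing \cite[Example~1.3]{Billing}) and then writes closed balls as countable intersections of such preimages, whereas you work more directly with single-point evaluations and the countable-dense-subset reduction of the supremum; your route is slightly more self-contained.
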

\begin{proof}
Since $\mathcal{A} \subset \mathcal{C}_{\Sigma}$ we know that $\sigma(\mathcal{A}) \subset \mathcal{C}_{\Sigma}$. In the remainder of the proof we show that $\mathcal{C}_\Sigma \subset \sigma (\mathcal{A})$. 

Since sets of the form $(-\infty, x_1] \times \cdots \times (-\infty, x_k]$ generate the Borel $\sigma$-algebra on $\mathbb{R}^k$, cf. \cite[Example~1.1, p.~9]{Billing}, we know that $\sigma(\mathcal{A})$ contains $\left [\pi_{t_1, \dots, t_k}^{n_1, \dots, n_k}\right]^{-1} (B)$ for any Borel set in $\mathbb{R}^k$. In particular, by \cite[Example 1.3, p. 11]{Billing} we conclude that 
\begin{equation}\label{CompactContainment}
\left[\pi_{[a,b]}^{\llbracket 1, k\rrbracket} \right]^{-1} (A) \in \sigma(\mathcal{A}),
\end{equation}
for any Borel set $A \subset C(\llbracket 1, k \rrbracket \times [a,b])$. If $\Sigma$ and $\Lambda$ are both compact this proves the lemma.\\

Suppose that $\Sigma$ or $\Lambda$ (or both) are not compact. Let $\llbracket 1, k_n \rrbracket \times [a_n, b_n]$ be a compact exhaustion of $\Sigma \times \Lambda$ and define $\pi_n:  C (\Sigma \times \Lambda)  \rightarrow C(\llbracket 1, k_n \rrbracket \times [a_n,b_n])$ through
$$\pi_n(f) = \pi_{[a_n,b_n]}^{\llbracket 1, k_n\rrbracket} (f),$$
where the latter function was defined in (\ref{ProjBox}). We also define for $m \geq n$ the functions $\pi_{m,n}: C(\llbracket 1, k_m \rrbracket \times [a_m,b_m]) \rightarrow C(\llbracket 1, k_n \rrbracket \times [a_n,b_n])$ through
$$\pi_{m,n}(f)(i,x) =  f(i,x) \mbox{ for $i = 1, \dots n$ and $x \in [a_n,b_n]$.} $$
The latter functions are also continuous by the local compactness of $\llbracket 1, m \rrbracket \times [a_m, b_m]$. 

We consider the metric $d_n$ on the space $C(\llbracket 1, k_n \rrbracket \times [a_n,b_n])$, given by
$$d_n(f,g) = \min \left(1, \sum_{i = 1}^{k_n} \sup_{x \in[a_n, b_n]} | f(i, x) - g(i,x)| \right),$$
and observe that the metric space topology induced by $d_n$ is the same as that of the topology of uniform convergence. We further define a metric on $C(\Sigma \times \Lambda)$ through
\begin{equation*}
d(f,g) = \sum_{n = 1}^\infty 2^{-n} \cdot d_n( \pi_n(f), \pi_n(g)),
\end{equation*}
and observe that the metric space topology induced by $d$ on $C(\Sigma \times \Lambda)$ is the same as the topology of uniform convergence over compacts. Moreover, $(C(\Sigma \times \Lambda), d)$ is easily seen to be a separable metric space, using that $C([a,b])$ with the uniform topology is separable, see e.g. \cite[Example 1.3, p. 11]{Billing}.

Let $f \in C(\Sigma \times \Lambda)$ and $\epsilon \geq 0$ be given. For $M \geq 1$ we define 
$$A_M = \left\{g \in C(\Sigma \times \Lambda):  \sum_{n = 1}^M 2^{-n} \cdot d_n( \pi_n(f), \pi_n(g)) \leq \epsilon \right\}.$$
Then we observe that 
$$A_M = \pi_{M}^{-1} \left( \left\{ h \in C(\llbracket 1, k_M \rrbracket \times [a_M,b_M]):  \sum_{n = 1}^M 2^{-n} \cdot d_n( \pi_n(f), \pi_{M,n}(h)) \leq \epsilon \right\} \right).$$
The continuity of $\pi_{M,n}$ and the functions $d_n( \pi_n(f), \cdot)$ on $C(\llbracket 1, k_M \rrbracket \times [a_M,b_M])$ and $C(\llbracket 1, k_n \rrbracket \times [a_n,b_n])$ respectively, imply that the set in the brackets above is closed in $C(\llbracket 1, k_M \rrbracket \times [a_M,b_M])$ and so $A_M \in \sigma(\mathcal{A})$ by (\ref{CompactContainment}). On the other hand, we see that 
$$\{ g \in C(\Sigma \times \Lambda) : d(f,g) \leq \epsilon \} = \bigcap_{M \geq 1} A_M,$$
and so closed balls in $C(\Sigma \times \Lambda)$ belong to $\sigma(\mathcal{A})$. This means that open balls also lie in $\sigma(\mathcal{A})$ and by the separability of the space we conclude that all open sets in $C(\Sigma \times \Lambda)$ belong to $\sigma(\mathcal{A})$. This implies that $\mathcal{C}_{\Sigma} \subset \sigma(\mathcal{A})$ and completes the proof.
\end{proof}

We next require the following elementary result from analysis.
\begin{lemma}\label{MonEq} Let $F_i: \mathbb{R} \rightarrow \mathbb{R}$ for $i = 1,2$ be increasing, right-continuous functions. Let $E_i$ denote the set of points in $\mathbb{R}$, where $F_i$ is continuous for $i = 1,2$ and suppose that $F_1(x) = F_2(x)$ for all $x\in E_1 \cap E_2$. Then $F_1(x) = F_2(x)$ for all $x \in \mathbb{R}$.
\end{lemma}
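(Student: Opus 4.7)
The plan is to use the classical fact that a monotone function on $\mathbb{R}$ has at most countably many discontinuities, combined with right-continuity, to reduce the problem to approximation from the right.

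First I would observe that since $F_1$ and $F_2$ are increasing, their discontinuity sets $E_1^c$ and $E_2^c$ are at most countable. In particular, $E_1^c \cup E_2^c$ is countable, so its complement $E_1 \cap E_2$ is dense in $\mathbb{R}$. This density is the only structural fact about $E_1 \cap E_2$ that I will use.

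Next, I would fix an arbitrary $x \in \mathbb{R}$ and choose a sequence $\{x_n\}_{n \geq 1} \subset E_1 \cap E_2$ with $x_n > x$ and $x_n \downarrow x$; such a sequence exists by density of $E_1 \cap E_2$. The hypothesis gives $F_1(x_n) = F_2(x_n)$ for every $n$. Passing to the limit $n \to \infty$ and invoking the right-continuity of $F_1$ and $F_2$ at $x$ yields
\begin{equation*}
F_1(x) = \lim_{n \to \infty} F_1(x_n) = \lim_{n \to \infty} F_2(x_n) = F_2(x),
\end{equation*}
which is the desired conclusion.

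There is no real obstacle here; the only things being used are (i) monotone functions have countable discontinuity sets, hence dense sets of continuity, and (ii) the definition of right-continuity. The argument would be almost identical with a left-approximation if one had left-continuity instead, but right-continuity is what is assumed and matches the setup used to apply this lemma (to distribution functions of the marginals). I would write the argument as a short three-sentence proof.
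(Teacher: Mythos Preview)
Your proposal is correct and follows essentially the same approach as the paper: the paper also notes that $E_1^c \cup E_2^c$ is at most countable (citing \cite[Theorem 4.30]{Rudin}), picks for each $x$ a sequence $y_k \in E_1 \cap E_2$ with $y_k > x$ and $y_k \to x$, and then applies right-continuity exactly as you do.
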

\begin{proof}
Put $S = E_1^c\cup E_2^c.$  From \cite[Theorem 4.30]{Rudin} we know that $S$ is an at most countable subset of $\mathbb{R}$. For any $x\in \mathbb{R}$ we can find a sequence $y_k \in S^c$ such that $y_k > x$ for all $k \in \mathbb{N}$ and $y_k \rightarrow x$ as $k \rightarrow \infty$. By the right continuity of $F_i$ at $x$ we conclude that 
\begin{equation*}
F_1(x) = \lim_{k \rightarrow \infty} F_1(y_k) = \lim_{k \rightarrow \infty} F_2(y_k) = F_2(x). \hfill\qedhere
\end{equation*}
\end{proof}

\begin{proposition}\label{PropFD}  
Let $\Sigma$ and $\Lambda$ be as in Theorem~\ref{ThmMain}. Suppose that $\mathcal{L}^1$ and $\mathcal{L}^2$ are $\Sigma$-indexed line ensembles on $\Lambda$ with laws $\mathbb{P}_1$ and $\mathbb{P}_2$ respectively. Suppose further that for every $k\in \mathbb{N}$,  $ t_1 < t_2 < \cdots < t_k$ with $t_i \in \Lambda^o$ (the interior of $\Lambda$) for $i = 1, \dots, k$; $n_1, \dots, n_k \in \Sigma$ and $x_1, \dots, x_k \in \mathbb{R}$ we have
\begin{equation}\label{FDStrict}
\mathbb{P}_1 \left( \mathcal{L}^1_{n_1}(t_1) \leq x_1, \dots,\mathcal{L}^1_{n_k}(t_k) \leq x_k  \right) =\mathbb{P}_2 \left( \mathcal{L}^2_{n_1}(t_1) \leq x_1, \dots,\mathcal{L}^2_{n_k}(t_k) \leq x_k  \right).
\end{equation}
Then we have that $\mathbb{P}_1 = \mathbb{P}_2$.
\end{proposition}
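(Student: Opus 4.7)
\textbf{Proof proposal for Proposition \ref{PropFD}.} The plan is to combine Lemma \ref{FinDim} with an approximation argument based on the almost sure continuity of the paths of the line ensembles. Let $\mathcal{A}$ denote the collection of finite-dimensional cylinder sets
$$\left\{f \in C(\Sigma \times \Lambda) : f(n_1, t_1) \leq x_1, \ldots, f(n_k, t_k) \leq x_k \right\},$$
for arbitrary $k \in \mathbb{N}$, $n_i \in \Sigma$, $t_i \in \Lambda$, and $x_i \in \mathbb{R}$. This is a $\pi$-system (intersecting two such sets produces one of the same form, after collapsing any duplicated $(n_i, t_i)$ pair by replacing the two $x_i$'s with their minimum), and by Lemma \ref{FinDim} it generates $\mathcal{C}_\Sigma$. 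By Dynkin's $\pi$-$\lambda$ theorem it therefore suffices to show $\mathbb{P}_1(A) = \mathbb{P}_2(A)$ for every $A \in \mathcal{A}$. The hypothesis gives this only when the $t_i$ are strictly increasing and lie in $\Lambda^o$, so the work is to remove these restrictions.

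Fix $A \in \mathcal{A}$ with parameters $(n_i, t_i, x_i)_{i=1}^k$, already reduced so that the pairs $(n_i, t_i)$ are pairwise distinct. I would approximate each $t_i$ by a sequence $t_i^{(m)} \in \Lambda^o$ with $t_i^{(m)} \to t_i$ as $m \to \infty$, chosen so that the $t_i^{(m)}$ remain pairwise distinct for each $m$; this is possible because $\Lambda^o$ (when non-degenerate) has no isolated points, so each $t_i$ can be perturbed individually. After relabeling so that $t_1^{(m)} < \cdots < t_k^{(m)}$, the hypothesis of the proposition gives, for every $(y_1, \ldots, y_k) \in \mathbb{R}^k$,
$$\mathbb{P}_1\bigl( \mathcal{L}^1_{n_1}(t_1^{(m)}) \leq y_1, \ldots, \mathcal{L}^1_{n_k}(t_k^{(m)}) \leq y_k \bigr) = \mathbb{P}_2\bigl( \mathcal{L}^2_{n_1}(t_1^{(m)}) \leq y_1, \ldots, \mathcal{L}^2_{n_k}(t_k^{(m)}) \leq y_k \bigr).$$
By almost sure continuity of the paths of $\mathcal{L}^v$, the random vectors $X^{v,m} := (\mathcal{L}^v_{n_1}(t_1^{(m)}), \ldots, \mathcal{L}^v_{n_k}(t_k^{(m)}))$ converge $\mathbb{P}_v$-a.s., hence in distribution, to $X^v := (\mathcal{L}^v_{n_1}(t_1), \ldots, \mathcal{L}^v_{n_k}(t_k))$ for $v = 1, 2$. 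Writing $F_v$ for the joint CDF of $X^v$, the portmanteau theorem yields $F_1(y) = F_2(y)$ at every common continuity point $y \in \mathbb{R}^k$ of $F_1$ and $F_2$.

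To finish I would upgrade this to agreement on all of $\mathbb{R}^k$. For each coordinate $i$ and each $v$, the atom set $\{s \in \mathbb{R} : \mathbb{P}_v(X_i^v = s) > 0\}$ is at most countable, so the common continuity set of $F_1$ and $F_2$ is dense from above: for any $(x_1, \ldots, x_k)$ one can select $y^{(n)} \downarrow (x_1, \ldots, x_k)$ coordinate-wise through common continuity points. Right-continuity of joint CDFs along such sequences gives $F_1 \equiv F_2$ on $\mathbb{R}^k$; equivalently, one may apply Lemma \ref{MonEq} iteratively in one coordinate at a time. In particular $\mathbb{P}_1(A) = \mathbb{P}_2(A)$ for our chosen $A \in \mathcal{A}$, as required. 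I expect the main technical care in the write-up to lie in this last extension, since a naive coordinate-by-coordinate iteration of the one-dimensional statement of Lemma \ref{MonEq} can break down if one fixes the already-handled coordinates at atoms of a marginal; it is cleaner to invoke the joint right-continuity from above directly.
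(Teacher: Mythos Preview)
Your proposal is correct and follows essentially the same route as the paper: reduce to finite-dimensional cylinders via Lemma \ref{FinDim} and the $\pi$--$\lambda$ theorem, perturb the times to be pairwise distinct and interior, and use path continuity to pass to the limit. The only notable difference is in the final extension from continuity points to all of $\mathbb{R}^k$: the paper sidesteps the multivariate issue you flag by setting $G_v(r)=F_v(y_1+r,\dots,y_k+r)$ and applying the one-dimensional Lemma \ref{MonEq} along this diagonal, which is a bit slicker than your direct ``avoid atoms in each marginal and use joint right-continuity from above'' argument (though yours is equally valid).
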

\begin{proof} For clarity we split the proof in three steps.

{\bf \raggedleft Step 1.} Let $M \in \Sigma$. In addition, suppose that $k_1, \dots, k_M \in \mathbb{N}$ be given. Let $D = \{ (i,j) \in \mathbb{Z}^2: j = 1, \dots, M \mbox{ and } i = 1, \dots, k_j \}$. Finally, fix $y_i^j \in \mathbb{R}$ and $t_i^j \in \Lambda$ with $t_1^j < t_2^j< \cdots < t_{k_j}^j$ for $(i,j) \in D$. We claim that 
\begin{equation}\label{FDE1}
\mathbb{P}_1 \left( \mathcal{L}^1_j(t_i^j) \leq y_{i}^j \mbox{ for $(i,j) \in D$}\right) = \mathbb{P}_2 \left( \mathcal{L}^2_j(t_i^j) \leq y_{i}^j \mbox{ for $(i,j) \in D$}\right).
\end{equation}
We prove (\ref{FDE1}) in the steps below. Here we assume its validity and finish the proof of the proposition.

Let $\mathcal{B}$ denote the collection of sets $A \in \mathcal{C}_\Sigma$ such that 
\begin{equation*}
\mathbb{P}_1 \left( \mathcal{L}^1 \in A \right) = \mathbb{P}_2 \left( \mathcal{L}^2 \in A \right).
\end{equation*}
By the monotone convergence theorem, we know that $\mathcal{B}$ is a $\lambda$-system. Further, by (\ref{FDE1}) we know that $\mathcal{B}$ contains the $\pi$-system of sets of the form
$$\left [\pi_{t_1, \dots, t_k}^{n_1, \dots, n_k}\right]^{-1} \left( (-\infty ,x_1] \times (-\infty, x_2] \times \cdots \times (-\infty, x_k] \right),$$
where we used the notation from (\ref{ProjTuple}). By the $\pi-\lambda$ Theorem, see \cite[Theorem 2.1.6]{Durrett}, we see that $\mathcal{B}$ contains the $\sigma$-algebra generated by the above sets, which by Lemma \ref{FinDim} is precisely $\mathcal{C}_\Sigma$. Consequently, $\mathcal{B} = \mathcal{C}_\Sigma$, which proves the proposition.\\

{\bf \raggedleft Step 2.} Let $x_i^j \in \mathbb{R}$ for $(i,j) \in D$ be given. We claim that there exists a sequence $\{ p_w\}_{w = 1}^\infty$, $p_w \in [0,1]$ such that for $v \in \{1,2\}$ we have
\begin{equation}\label{FDE2}
\begin{split}
&\mathbb{P}_v \left( \mathcal{L}^v_{j}(t^j_i) < x^j_i \mbox{ for $(i,j) \in D$}  \right) \leq \liminf_{w \rightarrow \infty} p_w \leq \limsup_{w \rightarrow \infty} p_w \leq \mathbb{P}_v \left(\mathcal{L}^v_j(t^j_i) \leq x^j_i \mbox{ for $(i,j) \in D$}  \right).
\end{split}
\end{equation}
We will prove (\ref{FDE2}) in the next step. For now we assume its validity and finish the proof of (\ref{FDE1}). 

For $r \in \mathbb{R}$ and $v \in \{1, 2\}$ we let
$$G_v(r)=\mathbb{P}_v \left( \mathcal{L}^v_{j}(t^j_i) \leq y^j_i + r \mbox{ for $(i,j) \in D$}  \right).$$
Observe that by basic properties of probability measures we know that $G_1$ and $G_2$ are increasing right-continuous functions. Moreover, if $G_1$ and $G_2$ are both continuous at a point $r$ then from (\ref{FDE2}) applied to $x^j_i = y^j_i + r$ for $(i,j) \in D$ we know that $G_1(r) = G_2(r)$. The latter and Lemma \ref{MonEq} imply that $G_1 = G_2$. In particular, $G_1(0) = G_2(0)$, which is precisely (\ref{FDE1}). \\

{\bf \raggedleft Step 3.} In this final step we prove (\ref{FDE2}). Let $t_i^j(w)$ for $(i,j) \in D$ be a sequence such that:
\begin{enumerate}
\item for each $w \in \mathbb{N}$ we have $t_{i_1}^{j_1}(w) \neq t_{i_2}^{j_2}(w)$ whenever $(i_1, j_1) \neq (i_2, j_2)$;
\item for each $w \in \mathbb{N}$ and $(i,j) \in D$ we have $t_i^j(w) \in \Lambda^o$ (the interior of $\Lambda$);
\item for each $(i,j) \in D$ we have $\lim_{w \rightarrow \infty} t^j_i(w) = t^j_i$.
\end{enumerate}
Then by (\ref{FDStrict}) we have for each $w \in \mathbb{N}$ that
$$ \mathbb{P}_1 \left( \mathcal{L}^1_{j}(t^j_i(w)) \leq x^j_i \mbox{ for $(i,j) \in D$}  \right) = \mathbb{P}_2 \left( \mathcal{L}^2_{j}(t^j_i(w)) \leq x^j_i \mbox{ for $(i,j) \in D$}  \right).$$
We let $p_w$ denote the above probability.

For $v \in\{1,2\}$ we denote
$$A_v = \{ \omega: \mathcal{L}^v_{j}(t^j_i) < x^j_i  \mbox{ for $(i,j) \in D$} \} \mbox{ and } B_v =  \{ \omega: \mathcal{L}^v_{j}(t^j_i) \leq x^j_i \mbox{ for $(i,j) \in D$} \} .$$
By the almost sure continuity of $\mathcal{L}^v$ we know that $\mathbb{P}_v$-almost surely
\begin{align*}
\lim_{w \to \infty} {\bf 1}\left\{ \mathcal{L}^v_{j}(t^j_i(w)) \leq x^j_i \mbox{ for $(i,j) \in D$} \right\} \cdot {\bf 1}_{A_v} &= {\bf 1}_{A_v},\\
\lim_{w \to \infty} {\bf 1}\left\{ \mathcal{L}^v_{j}(t^j_i(w)) \leq x^j_i \mbox{ for $(i,j) \in D$} \right\} \cdot {\bf 1}_{B^c_v} &= 0.
\end{align*}
The second line above and the bounded convergence theorem imply that 
$$\limsup_{w \rightarrow \infty} p_w \leq \limsup_{w \rightarrow \infty} \mathbb{E}_v \left[ { \bf 1}\left\{ \mathcal{L}^v_{j}(t^j_i(w)) \leq x^j_i \mbox{ for $(i,j) \in D$} \right\} \cdot {\bf 1}_{B^c_v}  + {\bf 1}_{B_v} \right] = \mathbb{P}_v \left(B_v\right) .$$
On the other hand, the top line and the bounded convergence theorem imply that 
$$\liminf_{w \rightarrow \infty} p_w \geq \liminf_{w \rightarrow \infty} \mathbb{E}_v \left[ { \bf 1}\left\{ \mathcal{L}^v_{j}(t^j_i(w)) \leq x^j_i \mbox{ for $(i,j) \in D$} \right\} \cdot {\bf 1}_{A_v}\right] = \mathbb{P}_v \left(A_v\right) .$$
The last two statements imply (\ref{FDE2}), which concludes the proof of the proposition.
\end{proof}

%

\subsection{Properties of avoiding Brownian line ensembles}
 In this section we prove several results about the line-ensembles from Definition \ref{DefAvoidingLaw}.

Fix $x,y,a,b \in \mathbb{R}$ with $a < b$ and let $B(t)$ denote the Brownian bridge from $B(a) = x$ to $B(b) = y$ with diffusion parameter $1$, see (\ref{BBDef}). Then by \cite[Eq.~6.28, p. 359]{KS} we know that the random vector $(B(t_1), \dots, B(t_n)) \in \mathbb{R}^n$ with $a \leq t_1 < t_2 < \cdots < t_n \leq b$ has the following density function
\begin{equation}\label{BBDensity}
f_{BB}(x_1, \dots, x_n) = \prod_{i = 1}^n p(t_i - t_{i-1}; x_{i-1}, x_i) \cdot \frac{p(b - t_n; x_n,y)}{p(b-a;x,y)}, \mbox{ where } p(t; x,y) = \frac{e^{-(x-y)^2/2t}}{\sqrt{2\pi t}},
\end{equation}
where $x_0 = x$ and we interpret $p(0; x,y)dy = \delta_{x}(y)$ as the delta function at $x$ (these expressions can occur if $t_1 = a$ or $t_n = b$ in (\ref{BBDensity})).

The following lemma explains why equation (\ref{BGPTower}) makes sense, see also Remark \ref{RemMeas}.
\begin{lemma}\label{LemmaMeasExp} Assume the same notation as in Definition \ref{DefAvoidingLaw} and suppose that $F : C( \llbracket 1, k \rrbracket \times [a,b]) \rightarrow \mathbb{R}$ is a bounded Borel-measurable function. Let 
\begin{equation*}
\begin{split}
S_{t,b} &= \{ (\vec{x}, \vec{y}, f,g) \in \weyl_k \times  \weyl_k \times C([a,b]) \times C([a,b]): f(t) > g(t) \mbox{ for $t \in [a,b]$, }\\
&\hspace{5cm} \mbox{  $f(a) > x_1$, $f(b) > y_1$, $g(a) < x_k$, $g(b)< y_k$}\}, \\
S_{t} &= \{ (\vec{x}, \vec{y}, f) \in \weyl_k \times  \weyl_k \times C([a,b]): \mbox{  $f(a) > x_1$, $f(b) > y_1$}\}, \\
S_{b} &= \{ (\vec{x}, \vec{y}, g) \in \weyl_k \times  \weyl_k \times C([a,b]): \mbox{  $g(a) < x_k$, $g(b) < y_k$}\},
\end{split}
\end{equation*}
and $S =\weyl_k \times  \weyl_k$, where each of the above sets is endowed with the subspace topology coming from the product topology and corresponding Borel $\sigma$-algebra. Then the functions $G_F: S \rightarrow \mathbb{R}$, $G_F^{t} : S_t \rightarrow \mathbb{R}$, $G_F^b: S_b \rightarrow \mathbb{R}$ and $G^{t,b}_F: S_{t,b} \rightarrow \mathbb{R}$ given by
\begin{equation}\label{MeasExpFun}
\begin{split}
G^{t,b}_F(\vec{x}, \vec{y}, f,g) &= \mathbb{E}_{avoid}^{a,b,\vec{x}, \vec{y},f,g}[F(\mathcal{Q})], \hspace{7mm} G^{t}_F(\vec{x}, \vec{y}, f) = \mathbb{E}_{avoid}^{a,b,\vec{x}, \vec{y},f,-\infty}[F(\mathcal{Q})], \\
G^{b}_F(\vec{x}, \vec{y},g) &= \mathbb{E}_{avoid}^{a,b,\vec{x}, \vec{y},\infty,g}[F(\mathcal{Q})], \hspace{7mm} G_F(\vec{x}, \vec{y}) = \mathbb{E}_{avoid}^{a,b,\vec{x}, \vec{y},\infty,-\infty}[F(\mathcal{Q})]
\end{split}
\end{equation}
are all measurable.
\end{lemma}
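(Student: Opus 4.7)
The plan is to express each of the four expectations in (\ref{MeasExpFun}) as a ratio of two integrals against a single probability measure that is independent of the parameters, then apply Fubini-Tonelli. Fix once and for all a probability space $(\tilde\Omega,\tilde{\mathcal{F}},\tilde{\mathbb{P}})$ (concretely, $C([0,1])^k$ with its sup-norm product topology and the product law) which supports $k$ independent standard Brownian bridges $\tilde B_1,\dots,\tilde B_k$. Using (\ref{BBDef}), define
\[
\Phi\colon \weyl_k\times\weyl_k\times\tilde\Omega \to C(\llbracket 1,k\rrbracket\times[a,b]),\quad \Phi(\vec x,\vec y,\omega)(i,t) = (b-a)^{1/2}\tilde B_i\!\left(\frac{t-a}{b-a}\right)(\omega) + \frac{b-t}{b-a}x_i + \frac{t-a}{b-a}y_i,
\]
so that $\Phi(\vec x,\vec y,\cdot)_*\tilde{\mathbb{P}} = \mathbb{P}^{a,b,\vec x,\vec y}_{free}$ for every fixed $(\vec x,\vec y)$. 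The explicit formula shows that $\Phi$ is jointly continuous in its three arguments, hence jointly Borel-measurable.

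For $(\vec x,\vec y,f,g)\in S_{t,b}$, let $E(\vec x,\vec y,f,g)\subset \tilde\Omega$ denote the event that the curves $\Phi(\vec x,\vec y,\cdot)(1,\cdot),\dots,\Phi(\vec x,\vec y,\cdot)(k,\cdot)$ are strictly decreasing and sandwiched between $f$ and $g$ on $[a,b]$. This event is the intersection of $k+1$ sets of the form $\{\omega:\min_{t\in[a,b]}(u(t)-v(t))>0\}$, each with $u,v$ continuous functions of $(\vec x,\vec y,f,g,\omega)$, and since the map $(u,v)\mapsto\min_{t\in[a,b]}(u(t)-v(t))$ is continuous on $C([a,b])^2$ in the sup norm, each such set, and therefore $E$ itself, is a Borel subset of $S_{t,b}\times\tilde\Omega$. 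By Definition \ref{DefAvoidingLaw},
\[
G_F^{t,b}(\vec x,\vec y,f,g) = \frac{\int_{\tilde\Omega} F\bigl(\Phi(\vec x,\vec y,\omega)\bigr)\,\mathbf{1}_{E(\vec x,\vec y,f,g)}(\omega)\,d\tilde{\mathbb{P}}(\omega)}{\int_{\tilde\Omega} \mathbf{1}_{E(\vec x,\vec y,f,g)}(\omega)\,d\tilde{\mathbb{P}}(\omega)}.
\]
The numerator integrand $F\circ\Phi\cdot\mathbf{1}_E$ is jointly Borel-measurable on $S_{t,b}\times\tilde\Omega$ and bounded, as is $\mathbf{1}_E$; Fubini-Tonelli therefore produces Borel-measurable functions of $(\vec x,\vec y,f,g)$. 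The denominator is strictly positive on $S_{t,b}$ by the argument given in Definition \ref{DefAvoidingLaw} (using Lemma \ref{Spread}), so the quotient $G_F^{t,b}$ is well-defined and Borel-measurable on $S_{t,b}$. The measurability of $G_F,G_F^t,G_F^b$ follows by the identical argument after omitting the relevant constraint from $E$ (equivalently setting $f\equiv\infty$ or $g\equiv-\infty$).

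The main technical point will be the joint Borel-measurability of the event $E$ on the product space, which ultimately reduces to continuity of finitely many $\min$-over-$[a,b]$ operations on $C([a,b])$. A secondary bookkeeping issue is to verify that the Borel $\sigma$-algebra on $S_{t,b}$ prescribed in the statement agrees with the trace on $S_{t,b}$ of the product $\sigma$-algebra on $\weyl_k^2\times C([a,b])^2$, which one needs in order to invoke Fubini; this follows from separability of $C([a,b])$ under the sup norm.
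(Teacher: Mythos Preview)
Your proof is correct and takes a genuinely different route from the paper's. The paper argues in two layers: first a monotone class argument on $F$ reduces to the case where $F$ is a finite-dimensional cylinder indicator $\prod_i \mathbf{1}\{h(n_i,t_i)\le z_i\}$; then the ratio is written out via the Brownian bridge transition densities (\ref{BBDensity}), and the avoidance event $E$ is discretized over rational times (with an auxiliary $w^{-1}$-thickening of the inequalities to make the approximation monotone) until one is left with an explicit finite-dimensional integral whose measurability follows from Fubini on $\mathbb{R}^n$. Your approach bypasses both layers by parametrizing all the free-bridge laws from a single probability space via the affine map $\Phi$, observing that $\Phi$ is jointly continuous, and noting that the avoidance event is open in the product space since it is the preimage under a continuous map of an open orthant; one application of Fubini then handles all bounded measurable $F$ at once. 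Your argument is shorter and conceptually cleaner; the paper's has the minor advantage of never leaving finite-dimensional integrals and so never invoking the coincidence of Borel and product $\sigma$-algebras on separable metric products, whereas you correctly flag and resolve that bookkeeping point.
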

\begin{proof} For clarity we split the proof into four steps. \\

{\bf \raggedleft Step 1.} We prove that $G^{t,b}_F$ is measurable in the steps below. In this step we assume that $ G^{t,b}_F$ is measurable and deduce that all the other functions in the statement of the lemma are measurable. 

Let $N_0$ be sufficiently large so that $N_0 > \max(x_1, y_1)$ and $-N_0 < \min (x_k,y_k)$. We also denote by $f_N : [a,b] \rightarrow \mathbb{R}$ the functions such that $f_N(x) = N$ and set $g_N = - f_N$. From Definition \ref{DefAvoidingLaw} we know that for $N \geq N_0$ we have that 
$$G^{t,b}_F(\vec{x}, \vec{y}, f_N, g) = \frac{\mathbb{E}_{avoid}^{a,b,\vec{x}, \vec{y},\infty ,g}[F(\mathcal{Q}) \cdot {\bf 1}\{\mathcal{Q}(x) < N \} ]}{\mathbb{E}_{avoid}^{a,b,\vec{x}, \vec{y},\infty ,g}[ {\bf 1}\{\mathcal{Q}(x) < N \} ]}.$$
Since $G^{t,b}_F$ is measurable we know that the above functions are measurable on $S^b$ for all $N \geq N_0$. By the bounded convergence theorem the above functions converge to $G^b_F$ and so the latter is also measurable on $S^b$. Analogous arguments applied to the functions $G^{t,b}_F(\vec{x}, \vec{y}, f, g_N)$ and $G^{t,b}_F(\vec{x}, \vec{y}, f_N, g_N)$ show that $G^t_F$ and $G_F$ are measurable as well.\\

{\bf \raggedleft Step 2.} Here we show $G^{t,b}_F$ is measurable. Fix $K \in \mathbb{N}$ and $n_1, \dots,n_K \in \llbracket 1, k \rrbracket$, $t_1, \dots, t_K \in [a,b]$ and $z_1, \dots, z_K \in \mathbb{R}$. We define with this data the function $H: C( \llbracket 1, k \rrbracket \times [a,b]) \rightarrow \mathbb{R}$ through
$$H(h) = \prod_{i = 1}^K {\bf 1} \{ h(n_i, t_i) \leq z_i \}.$$ 
We claim that the function
\begin{equation}\label{PiSystemMeas}
G^{s,t}_H(\vec{x}, \vec{y}, f,g) = \mathbb{E}_{avoid}^{a,b,\vec{x}, \vec{y},f,g}[H(\mathcal{Q})]
\end{equation}
is measurable. We establish the latter statement in the steps below. For now we assume its validity and conclude the proof of the lemma.

Let $\mathcal{H}$ denote the set of bounded Borel-measurable functions $F$ for which $G^{t,b}_F$ as in (\ref{MeasExpFun}) is measurable. It is clear that $\mathcal{H}$ is closed under linear combinations (by linearity of the expectation). Furthermore, if $F_n \in \mathcal{H}$ is an increasing sequence of non-negative measurable functions that increase to a bounded function $F$ then $F \in \mathcal{H}$ by the monotone convergence theorem. Finally, in view of (\ref{PiSystemMeas}) we know that ${\bf 1}_{A} \in \mathcal{H}$ for any set $A \in \mathcal{A}$, where $\mathcal{A}$ is the $\pi$-system of sets of the form
$$ \{ h \in C( \llbracket 1, k \rrbracket \times [a,b]) : h(n_i, t_i) \leq z_i \mbox{ for $i = 1, \dots, K$}\}.$$
By the monotone class theorem, see e.g. \cite[Theorem 5.2.2]{Durrett}, we have that $\mathcal{H}$ contains all bounded measurable functions with respect to $\sigma(\mathcal{A})$, and the latter is $\mathcal{C}_{\llbracket 1, k \rrbracket}$ in view of Lemma \ref{FinDim}. This proves the measurability of $G^{t,b}_F$ in (\ref{MeasExpFun}) for any bounded measurable $F$.\\

{\bf \raggedleft Step 3.} Let $\mathcal{B}$ be the $\llbracket 1, k \rrbracket$-indexed line ensemble on $[a,b]$ with distribution $\mathbb{P}^{a,b, \vec{x},\vec{y}}_{free}$ (the law of $k$ independent Brownian bridges $\{B_i: [a,b] \rightarrow \mathbb{R} \}_{i = 1}^k$ from $B_i(a) = x_i$ to $B_i(b) = y_i$ with diffusion parameter $1$, where we have rewritten $\mathcal{B}(i, \cdot) = B_i(\cdot)$). Let $E$ be the event 
$$E = \left\{ f(r) > B_1(r) > B_2(r) > \cdots > B_k(r) > g(r) \mbox{ for all $r \in[a,b]$} \right\}.$$ 
From Definition \ref{DefAvoidingLaw} we know that 
\begin{equation*}
G^{t,b}_H(\vec{x}, \vec{y}, f,g) = \frac{\mathbb{P}^{a,b, \vec{x},\vec{y}}_{free} \left( \{ B_{n_i}(t_i) \leq z_i   \mbox{ for $i = 1, \dots, K$}\} \cap E \right) }{\mathbb{P}^{a,b, \vec{x},\vec{y}}_{free}(E)},
\end{equation*}
from which we conclude that it suffices to show that 
\begin{equation}\label{setmeas}
\mathbb{P}^{a,b, \vec{x},\vec{y}}_{free} \left( \{ B_{n_i}(t_i) \leq z_i   \mbox{ for $i = 1, \dots, K$}\} \cap E \right)
\end{equation}
is a measurable function. Indeed, if we can establish the above then taking $z_i \rightarrow \infty$ for $i = 1, \dots, K$ would imply that $\mathbb{P}^{a,b, \vec{x},\vec{y}}_{free}(E)$ is positive and measurable and then $G_H(\vec{x}, \vec{y}, f,g) $ is measurable as the ratio of two measurable functions with a non-vanishing denominator. In the remainder we focus on proving that (\ref{setmeas}) is measurable.

Let $N_0$ be sufficiently large so that $3N_0^{-1} < \min_{i = 0, \dots, k} [x_{i} - x_{i+1} ]$ and $3N_0^{-1} < \min_{i = 0, \dots, k} [y_{i} - y_{i+1} ]$, where $\vec{x}=  (x_1, \dots, x_k)$, $\vec{y} = (y_1, \dots, y_k)$ and we used the convention $x_0 = f(a)$, $x_{k+1} = g(a)$, $y_0 = f(b)$ and $y_{k+1} = g(b)$. Then for $w \geq N_0$ we define
\begin{align*}
E_w &= \{ f(r) - w^{-1} \geq  B_1(r) + w^{-1} > B_1(r) - w^{-1} \geq B_2(r) + w^{-1} >B_2(r) -  w^{-1} \geq \cdots \\
&\hspace{3cm} \geq B_k(r) + w^{-1} > B_k(r) - w^{-1}  \geq g(r) + w^{-1} \mbox{ for all $r \in[a,b]$} \}.
\end{align*}
Notice that by the monotone convergence theorem we have that 
$$\mathbb{P}^{a,b, \vec{x},\vec{y}}_{free} \left( \{ B_{n_i}(t_i) \leq z_i   \mbox{ for $i = 1, \dots, K$}\} \cap E \right) = \lim_{w \rightarrow \infty} \mathbb{P}^{a,b, \vec{x},\vec{y}}_{free} \left( \{ B_{n_i}(t_i) \leq z_i   \mbox{ for $i = 1, \dots, K$}\} \cap E_w \right),$$
and so it suffices to prove that $ \mathbb{P}^{a,b, \vec{x},\vec{y}}_{free} \left( \{ B_{n_i}(t_i) \leq z_i   \mbox{ for $i = 1, \dots, K$}\} \cap E_w \right)$ are measurable functions for $w \geq N_0$. Let $\{q_n: n \in \mathbb{N} \}$ be an enumeration of the rationals in $(a,b) \setminus \{t_1, \dots, t_K\}$. Using the almost sure continuity of Brownian bridges we see that 
 $$ \mathbb{P}^{a,b, \vec{x},\vec{y}}_{free} \left( \{ B_{n_i}(t_i) \leq z_i   \mbox{ for $i = 1, \dots, K$}\} \cap E_w \right) = \lim_{N \rightarrow \infty}  \mathbb{P}^{a,b, \vec{x},\vec{y}}_{free} \left( \{ B_{n_i}(t_i) \leq z_i   \mbox{ for $i = 1, \dots, K$}\} \cap E^N_w \right) ,$$
where 
\begin{align*}
E^N_w &=  \{ f(r) - w^{-1} \geq  B_1(r) + w^{-1} > B_1(r) - w^{-1} \geq B_2(r) + w^{-1} >B_2(r) -  w^{-1} \geq \cdots\\
&\qquad\qquad \geq B_k(r) + w^{-1} > B_k(r) - w^{-1}  \geq g(r) + w^{-1} \mbox{ when $r = q_n$ with $1 \leq n \leq N$} \}.
\end{align*}
Combining the last few statements we see that we have reduced the proof that (\ref{setmeas}) is measurable to showing that
\begin{equation*}
\mathbb{P}^{a,b, \vec{x},\vec{y}}_{free} \left( \{ B_{n_i}(t_i) \leq z_i   \mbox{ for $i = 1, \dots, K$}\} \cap E_w^N \right)
\end{equation*}
is measurable for all $w \geq N_0$ and $N \in \mathbb{N}$. We prove this in the next step.\\

{\bf \raggedleft Step 4.} Let $S_N = \{t_1, \dots, t_K\} \cup \{ q_n: 1 \leq n \leq N\}$ and let $a \leq s_1 < s_2 < \cdots < s_{N+K} \leq b$ be an ordering of the elements of $S$ in increasing order. In view of (\ref{BBDensity}) we know that 
\begin{equation}\label{setmeasV3}
\begin{split}
&\mathbb{P}^{a,b, \vec{x},\vec{y}}_{free} \left( \{ B_{n_i}(t_i) \leq z_i   \mbox{ for $i = 1, \dots, K$}\} \cap E_w^N \right) = \int_{\mathbb{R}} \cdots \int_{\mathbb{R}} \prod_{j = 1}^k \frac{p(b - s_{N+K}; x^j_{N+K},y_j)}{p(b-a;x_j,y_j)} \\
& \hspace{5cm} \times \prod_{i = 1}^{N+K}  H_i(x_i^1, \dots, x_i^k) \cdot \prod_{i = 1}^{N+K}  \prod_{j = 1}^k p(s_i - s_{i-1}; x^j_{i-1}, x^j_i) dx_i^j,
\end{split}
\end{equation}
where $x_0^j = x_j$ for $j = 1, \dots, k$ and the functions $H_i$ are given by
$$ H_i(x_i^1, \dots, x_i^k) = \begin{cases}  {\bf 1}_{F_{w, q_n}}& \mbox{ if $s_i = q_n$ for $n = 1, \dots, N,$} \\  {\bf 1}\{ x_{i}^{n_u} \leq z_u\}  &\mbox{ if $s_i = t_u$ for $u = 1, \dots, K$,}\end{cases}$$
with 
\begin{align*}
F_{w,q}(x_1, \dots, x_k) &=  \{ f(q) - w^{-1} \geq x_1 + w^{-1} > x_1- w^{-1} \geq x_2 + w^{-1} >x_2 -  w^{-1} \geq \cdots\\
&\hspace{5cm} \geq x_k + w^{-1} > x_k - w^{-1}  \geq g(q) + w^{-1}  \}.
\end{align*}
From equation (\ref{setmeasV3}) we see that $\mathbb{P}^{a,b, \vec{x},\vec{y}}_{free} \left( \{ B_{n_i}(t_i) \leq z_i   \mbox{ for $i = 1, \dots, K$}\} \cap E_w^N \right) $ is the integral of a non-negative measurable function and is thus itself measurable, cf. \cite[Chapter 2, Theorem 3.2]{Stein3}.
\end{proof}

The following lemma explains how the law of $\mathbb{P}_{avoid}^{a,b,\vec{x},\vec{y},\infty,-\infty}$ from Definition \ref{DefAvoidingLaw} behaves under affine transformations.
\begin{lemma}\label{LemmaAffine} Assume the same notation as in Definition \ref{DefAvoidingLaw} and suppose that $\mathcal{Q}$ is a $\llbracket 1, k \rrbracket$-indexed line ensemble on $[a,b]$ with probability distribution $\mathbb{P}_{avoid}^{a,b,\vec{x},\vec{y},\infty,-\infty}$. Suppose that $r, u \in \mathbb{R}$ and $c > 0$ are given. With this data we define the random $\llbracket 1, k \rrbracket$-indexed line ensemble $\tilde{\mathcal{Q}}$ on $[a',b'] = [c^2 a+u, c^2b+u]$ through
\begin{equation*}
\tilde{\mathcal{Q}}(i,x) = c \cdot \mathcal{Q}(i, c^{-2} x-u) + r \mbox{ for $i = 1, \dots, k$ and $x \in [a',b']$.}
\end{equation*}
Then the law of $\tilde{\mathcal{Q}}$ is $\mathbb{P}_{avoid}^{a',b',\vec{x}',\vec{y}',\infty,-\infty}$, where $x'_i = x_i \cdot c + r$ and $y'_i = y_i \cdot c + r$ for $i = 1, \dots, k$.
\end{lemma}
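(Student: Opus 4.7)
The plan is to exploit the standard scaling and translation invariance of Brownian motion, reducing the statement to verifying it for a single bridge and then propagating through the construction in Definition \ref{DefAvoidingLaw}. Recall that $\mathbb{P}_{avoid}^{a,b,\vec{x},\vec{y},\infty,-\infty}$ is built as the law $\mathbb{P}^{a,b,\vec{x},\vec{y}}_{free}$ of $k$ independent Brownian bridges conditioned on the non-intersection event $E = \{ B_1(r) > B_2(r) > \cdots > B_k(r) \text{ for all } r \in [a,b]\}$. Denote by $\Phi : C(\llbracket 1, k \rrbracket \times [a,b]) \to C(\llbracket 1, k \rrbracket \times [a',b'])$ the continuous (and bi-measurable) map that implements the affine change of time and space described in the statement. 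Since $\mathcal{Q}$ and $\tilde{\mathcal{Q}}$ are related by $\tilde{\mathcal{Q}} = \Phi(\mathcal{Q})$, it suffices to show that $\Phi_* \mathbb{P}_{avoid}^{a,b,\vec{x},\vec{y},\infty,-\infty} = \mathbb{P}_{avoid}^{a',b',\vec{x}',\vec{y}',\infty,-\infty}$.

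The first key step is the scaling property for a single bridge: if $B$ is a Brownian bridge from $B(a)=x$ to $B(b)=y$ with diffusion parameter $1$, then the time-and-space rescaling described in the statement produces a Brownian bridge from the appropriate rescaled endpoints to the corresponding exit value, again with diffusion parameter $1$. This follows from the explicit representation \eqref{BBDef}, since the distribution of a standard Brownian bridge $\tilde{B}$ on $[0,1]$ is invariant under the reparametrization induced by the affine time change; alternatively it can be verified directly from \eqref{BBDensity} by a change of variables, where the Jacobian factors $c$ introduced by rescaling time and space in each transition kernel $p(\cdot;\cdot,\cdot)$ cancel in the product across the $n$ marginals. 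Applying this coordinate-wise to $k$ independent bridges and using independence, one concludes that $\Phi_* \mathbb{P}^{a,b,\vec{x},\vec{y}}_{free} = \mathbb{P}^{a',b',\vec{x}',\vec{y}'}_{free}$.

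To finish, I would observe that since $c > 0$ and the shift by $r$ is order-preserving, the event $E$ corresponds under $\Phi$ precisely to the analogous non-intersection event $E' = \{\tilde{B}_1(s) > \cdots > \tilde{B}_k(s) \text{ for all } s \in [a',b']\}$, and in particular $\Phi(E) = E'$. Since conditioning commutes with pushforward along a bi-measurable bijection onto its image, and Lemma \ref{Spread} guarantees that both $E$ and $E'$ have positive probability under the respective free measures, dividing by $\mathbb{P}^{a,b,\vec{x},\vec{y}}_{free}(E) = \mathbb{P}^{a',b',\vec{x}',\vec{y}'}_{free}(E')$ yields the claim. The only substantive step is the Brownian-bridge scaling in the second paragraph; this is a standard computation with no subtleties, so I do not anticipate any real obstacle.
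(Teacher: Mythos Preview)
Your proposal is correct and follows essentially the same route as the paper: first establish that the affine map $\Phi$ pushes $\mathbb{P}^{a,b,\vec{x},\vec{y}}_{free}$ to $\mathbb{P}^{a',b',\vec{x}',\vec{y}'}_{free}$, then use that $\Phi$ carries the non-intersection event $E$ to $E'$ and pass to the conditioned measures. The paper carries out Step~1 by explicitly matching finite-dimensional densities via \eqref{BBDensity} and a change of variables, and in Step~2 verifies equality of finite-dimensional distributions of $\tilde{\mathcal{Q}}$ and $\mathcal{Q}'$ before invoking Proposition~\ref{PropFD}; your more abstract phrasing (``conditioning commutes with pushforward along a bi-measurable bijection'') packages the same argument.
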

\begin{proof} We split the proof into two steps. In the first we show that if we perform the affine transformations in the statement of the lemma to the line-ensemble of independent Brownian bridges, then we have a similar result with $\mathbb{P}_{avoid}$, replaced with $\mathbb{P}_{free}$. In the second part we prove the lemma for the laws $\mathbb{P}_{avoid}$ by appealing to the definition of these laws through $\mathbb{P}_{free}$ as in Definition \ref{DefAvoidingLaw}.\\

{\bf \raggedleft Step 1.} Let $\mathcal{B}$ be the $\llbracket 1, k \rrbracket$-indexed line ensemble on $[a,b]$ with distribution $\mathbb{P}^{a,b, \vec{x},\vec{y}}_{free}$ (the law of $k$ independent Brownian bridges $\{B_i: [a,b] \rightarrow \mathbb{R} \}_{i = 1}^k$ from $B_i(a) = x_i$ to $B_i(b) = y_i$ with diffusion parameter $1$, where we have rewritten $\mathcal{B}(i, \cdot) = B_i(\cdot)$). In addition, let ${\mathcal{B}'}$ be the $\llbracket 1, k \rrbracket$-indexed line ensemble on $[a',b']$ with distribution $\mathbb{P}^{a',b', \vec{x}',\vec{y}'}_{free}$ (the law of $k$ independent Brownian bridges $\{{B}'_i: [a',b'] \rightarrow \mathbb{R} \}_{i = 1}^k$ from ${B}'_i(a) = x'_i$ to ${B}'_i(b) = y'_i$ with diffusion parameter $1$, where we have rewritten ${\mathcal{B}'}(i, \cdot) = {B}'_i(\cdot)$). Finally, we define the $\llbracket 1, k \rrbracket$-indexed line ensemble $\tilde{\mathcal{B}}$ on $[a,b]$ through
\begin{equation*}
\tilde{\mathcal{B}}(i,x) = c \cdot \mathcal{B}(i, c^{-2} x-u) + r \mbox{ for $i = 1, \dots, k$ and $x \in [a',b']$.}
\end{equation*}  

We first claim that the law of $\tilde{\mathcal{B}}$ under $\mathbb{P}^{a,b, \vec{x},\vec{y}}_{free}$ is the same as that of $\mathcal{B}'$ under $\mathbb{P}^{a',b', \vec{x}',\vec{y}'}_{free}$. To see the latter, fix $K \in \mathbb{N}$ and $n_1, \dots,n_K \in \llbracket 1, k \rrbracket$, $t_1, \dots, t_K \in [a',b']$ and $z_1, \dots, z_K \in \mathbb{R}$. We then have from (\ref{BBDensity}) that 
\begin{equation}\label{BBCDF1}
\begin{split}
&\mathbb{P}^{a,b, \vec{x},\vec{y}}_{free} \left( \tilde{{B}}_{n_i}(t_i) \leq z_i \mbox{ for $i \in \llbracket 1, K \rrbracket $ } \hspace{-1mm} \right) = \mathbb{P}^{a,b, \vec{x},\vec{y}}_{free} \left( {B}_{n_i}(c^{-2}t_i -u) \leq \frac{z_i - r}{c} \mbox{ for $i \in \llbracket 1, K \rrbracket $ } \hspace{-1mm} \right)\\ 
&= \int_{\mathbb{R}} \cdots \int_{\mathbb{R}} \prod_{j = 1}^k \frac{p(b - [c^{-2}t_{K} - u]; \tilde{x}^j_{K},{y}_j)}{p(b-a;x_j,y_j)}  \prod_{i = 1}^{K}  H_i(\tilde{x}^{n_i}_i) \cdot \prod_{i = 1}^{K}  \prod_{j = 1}^k p(c^{-2}[t_i - t_{i-1}]; \tilde{x}^j_{i-1}, \tilde{x}^j_i) d\tilde{x}_i^j,\\
\end{split}
\end{equation}
where $\tilde{x}_0^j = x_j$ for $j = 1,\dots, k$ and $H_i(x) = {\bf 1} \{ x \leq c^{-1}[z_i - r] \}$ for $i = 1, \dots, K$. On the other hand,
\begin{equation}\label{BBCDF2}
\begin{split}
&\mathbb{P}^{a',b', \vec{x}',\vec{y}'}_{free} \left( {B}'_{n_i}(t_i) \leq z_i \mbox{ for $i \in \llbracket 1, K \rrbracket $ } \hspace{-1mm} \right) \\ 
&= \int_{\mathbb{R}} \cdots \int_{\mathbb{R}} \prod_{j = 1}^k \frac{p( b' - t_{K}; x^j_{K},y'_j)}{p(b'-a';x'_j,y'_j)}  \prod_{i = 1}^{K}  H'_i(x^{n_i}_i) \cdot \prod_{i = 1}^{K}  \prod_{j = 1}^k p(t_i - t_{i-1}; x^j_{i-1}, x^j_i) dx_i^j,\\
\end{split}
\end{equation}
where $x_0^j = x'_j$ for $j = 1,\dots, k$ and $H'_i(x) = {\bf 1} \{ x \leq z_i  \}$ for $i = 1, \dots, K$. Upon performing the change of variables $\tilde{x}_i^j = \frac{x_i^j - r}{c}$ in (\ref{BBCDF1}) and using the scaling property of the heat kernel we obtain precisely the expression in the second line of (\ref{BBCDF2}). Consequently, $\tilde{\mathcal{B}}$ under $\mathbb{P}^{a,b, \vec{x},\vec{y}}_{free}$ and $\mathcal{B}'$ under $\mathbb{P}^{a',b', \vec{x}',\vec{y}'}_{free}$ have the same finite-dimensional distributions. By Proposition \ref{PropFD} we conclude that the laws of these line ensembles are the same.\\

{\bf \raggedleft Step 2.} Continuing with the notation from Step 1, we define
\begin{align*}
E &= \left\{  B_1(r) > B_2(r) > \cdots > B_k(r)  \mbox{ for all $r \in[a,b]$} \right\},\\ 
\tilde{E} &= \left\{  \tilde{B}_1(r) > \tilde{B}_2(r) > \cdots >\tilde{B}_k(r)  \mbox{ for all $r \in[a',b']$} \right\},\\
E' &= \left\{  B'_1(r) > B'_2(r) > \cdots > B'_k(r)  \mbox{ for all $r \in[a',b']$} \right\}.
\end{align*}
We also let $\mathcal{Q}'$ be a $\llbracket 1, k \rrbracket$-indexed line ensemble on $[a,b]$ with law $\mathbb{P}_{avoid}^{a',b',\vec{x}',\vec{y}',\infty,-\infty} $.

If we fix $K \in \mathbb{N}$ and $n_1, \dots,n_K \in \llbracket 1, k \rrbracket$, $t_1, \dots, t_K \in [a',b']$ and $z_1, \dots, z_K \in \mathbb{R}$ we have
\begin{equation*}
\begin{split}
&\mathbb{P}_{avoid}^{a,b,\vec{x},\vec{y},\infty,-\infty} \left( \tilde{\mathcal{Q}}_{n_i}(t_i) \leq z_i \mbox{ for $i \in \llbracket 1, K \rrbracket $ } \hspace{-2mm} \right)= \mathbb{P}_{avoid}^{a,b,\vec{x},\vec{y},\infty,-\infty} \left( \mathcal{Q}_{n_i}(c^{-2}t_i -u) \leq \frac{z_i - r}{c} \mbox{ for $i \in \llbracket 1, K \rrbracket $ }  \hspace{-2mm} \right)\\
& =  \frac{\mathbb{P}^{a,b, \vec{x},\vec{y}}_{free} \left( \{ B_{n_i}(c^{-2}t_i -u) \leq c^{-1} \cdot [z_i - r]   \mbox{ for $i\in \llbracket 1, K \rrbracket $}\} \cap E \right) }{\mathbb{P}^{a,b, \vec{x},\vec{y}}_{free}(E)}  \\
& = \frac{\mathbb{P}^{a,b, \vec{x},\vec{y}}_{free} \bigl( \{ \tilde{B}_{n_i}(t_i) \leq z_i  \mbox{ for $i\in \llbracket 1, K \rrbracket $}\} \cap \tilde{E} \bigr) }{\mathbb{P}^{a,b, \vec{x},\vec{y}}_{free}(\tilde{E})} \\
& = \frac{\mathbb{P}^{a',b', \vec{x}',\vec{y}'}_{free} \left( \{ B'_{n_i}(t_i) \leq z_i   \mbox{ for $i\in \llbracket 1, K \rrbracket $}\} \cap E' \right) }{\mathbb{P}^{a',b', \vec{x}',\vec{y}'}_{free}(E')} = \mathbb{P}_{avoid}^{a',b',\vec{x}',\vec{y}',\infty,-\infty} \left( {\mathcal{Q}}'_{n_i}(t_i) \leq z_i \mbox{ for $i \in \llbracket 1, K \rrbracket $ } \hspace{-2mm} \right),
\end{split}
\end{equation*}
where the first equality follows from the definition of $\tilde{Q}$; the second and last equality follow from Definition \ref{DefAvoidingLaw}; the third one follows from the definition of $\tilde{B}$ and the fourth one follows from the equality of laws for $\tilde{\mathcal{B}}$ and $\mathcal{B}'$ established in Step 1. The above equation shows that the finite dimensional distributions of $\tilde{\mathcal{Q}}$ under $\mathbb{P}_{avoid}^{a,b,\vec{x},\vec{y},\infty,-\infty}$ agree with those of $\mathcal{Q}'$ under $\mathbb{P}_{avoid}^{a',b',\vec{x}',\vec{y}',\infty,-\infty}$, which by Proposition \ref{PropFD} implies that the laws of these line ensembles are the same.
\end{proof}

\begin{lemma}\label{LemmaFlip} Assume the same notation as in Definition \ref{DefAvoidingLaw} and suppose that $\mathcal{Q}$ is a $\llbracket 1, k \rrbracket$-indexed line ensemble on $[a,b]$ with probability distribution $\mathbb{P}_{avoid}^{a,b,\vec{x},\vec{y},\infty,-\infty}$.  Let $\tilde{\mathcal{Q}}$ be the random $\llbracket 1, k \rrbracket$-indexed line ensemble on $[a,b]$, defined through
\begin{equation*}
\tilde{\mathcal{Q}}(i,x) = -\mathcal{Q}(k- i + 1,x)  \mbox{ for $i = 1, \dots, k$ and $x \in [a,b]$.}
\end{equation*}
Then the law of $\tilde{\mathcal{Q}}$ under $\mathbb{P}_{avoid}^{a,b,\vec{x},\vec{y},\infty,-\infty}$ is  $\mathbb{P}_{avoid}^{a,b,-\vec{x},-\vec{y},\infty,-\infty}$.
\end{lemma}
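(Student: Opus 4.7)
\textbf{Proof proposal for Lemma \ref{LemmaFlip}.}

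The plan is to mimic the two-step structure of the proof of Lemma \ref{LemmaAffine}. Let $\mathcal{B}$ denote the $\llbracket 1, k \rrbracket$-indexed line ensemble on $[a,b]$ with law $\mathbb{P}^{a,b,\vec{x},\vec{y}}_{free}$, realized as $k$ independent Brownian bridges $B_1, \dots, B_k$ with $B_i(a) = x_i$ and $B_i(b) = y_i$. Set $\tilde{B}_i(t) := -B_{k-i+1}(t)$ and let $\tilde{\mathcal{B}}$ be the corresponding line ensemble; write $\vec{x}' := (-x_k, \dots, -x_1)$ and $\vec{y}' := (-y_k, \dots, -y_1)$, which lie in $\weyl_k$ since $\vec{x}, \vec{y}$ do. (In the notation of the lemma, $\vec{x}'$ is what is meant by $-\vec{x}$, and similarly for $\vec{y}'$; this reordering is needed so that the entrance/exit data of the flipped ensemble lies in the Weyl chamber, making the target law well-defined per Definition \ref{DefAvoidingLaw}.)

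First, I would show that $\tilde{\mathcal{B}}$ has law $\mathbb{P}^{a,b,\vec{x}',\vec{y}'}_{free}$. Using the joint-density formula (\ref{BBDensity}) applied independently to each of the $k$ bridges $B_i$, together with the pointwise symmetry $p(t;u,v) = p(t;-u,-v)$ of the transition density, a direct change of variables $z \mapsto -z$ at every evaluation point shows that an arbitrary finite-dimensional density of $\tilde{\mathcal{B}}$ coincides with that of $k$ independent Brownian bridges with entrance data $\vec{x}'$ and exit data $\vec{y}'$; Proposition \ref{PropFD} then upgrades this finite-dimensional equality to equality of laws.

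Second, I would condition on the non-crossing event. Define
\[
E = \{B_1(r) > B_2(r) > \cdots > B_k(r) \text{ for all } r \in [a,b]\},
\]
and the analogous event $\tilde{E}$ for $\tilde{\mathcal{B}}$. The relation $\tilde{B}_i = -B_{k-i+1}$ shows that $\tilde{B}_i(r) > \tilde{B}_{i+1}(r)$ is equivalent to $B_{k-i}(r) > B_{k-i+1}(r)$, so $E$ and $\tilde{E}$ coincide as subsets of the underlying probability space. Consequently the conditional law of $\tilde{\mathcal{B}}$ given $\tilde{E}$, which by Step 1 and Definition \ref{DefAvoidingLaw} is $\mathbb{P}_{avoid}^{a,b,\vec{x}',\vec{y}',\infty,-\infty}$, agrees with the law of the ensemble obtained by first conditioning $\mathcal{B}$ on $E$ (giving $\mathcal{Q}$) and then applying the deterministic negation-and-index-reversal map to produce $\tilde{\mathcal{Q}}$. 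A computation of finite-dimensional CDFs of $\tilde{\mathcal{Q}}$ patterned on Step~2 of the proof of Lemma \ref{LemmaAffine}, combined with one more invocation of Proposition \ref{PropFD}, then yields the claim.

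There is no genuine analytic obstacle; the argument uses only the heat kernel's invariance under simultaneous sign-flip of its spatial arguments and is a structural analogue of the affine-covariance lemma. The only real care needed is the bookkeeping around the index reversal, ensuring that the target boundary vectors remain in $\weyl_k$ and that the non-crossing events for $\mathcal{B}$ and $\tilde{\mathcal{B}}$ are recognized as literally the same event rather than merely having equal probability.
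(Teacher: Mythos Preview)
Your proposal is correct and follows essentially the same approach as the paper: a two-step argument patterned on Lemma~\ref{LemmaAffine}, first establishing the reflection invariance at the level of free bridges via the symmetry $p(t;u,v)=p(t;-u,-v)$ and Proposition~\ref{PropFD}, then pushing this through the conditioning on the non-crossing event. Your explicit observation that $E$ and $\tilde{E}$ are literally the same event, and your remark on the $-\vec{x}$ bookkeeping, are if anything slightly cleaner than the paper's presentation.
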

\begin{proof}
Similarly, to the proof of Lemma \ref{LemmaAffine} we split the proof into two steps. In the first we show that if we perform the reflections in the statement of the lemma to the line-ensemble of independent Brownian bridges, then we have a similar result with $\mathbb{P}_{avoid}$, replaced with $\mathbb{P}_{free}$. In the second part we prove the lemma for the laws $\mathbb{P}_{avoid}$ by appealing to the definition of these laws through $\mathbb{P}_{free}$ as in Definition \ref{DefAvoidingLaw}.\\

{\bf \raggedleft Step 1.} Let $\mathcal{B}$ be the $\llbracket 1, k \rrbracket$-indexed line ensemble on $[a,b]$ with distribution $\mathbb{P}^{a,b, \vec{x},\vec{y}}_{free}$ (the law of $k$ independent Brownian bridges $\{B_i: [a,b] \rightarrow \mathbb{R} \}_{i = 1}^k$ from $B_i(a) = x_i$ to $B_i(b) = y_i$ with diffusion parameter $1$, where we have rewritten $\mathcal{B}(i, \cdot) = B_i(\cdot)$). In addition, let ${\mathcal{B}'}$ be the $\llbracket 1, k \rrbracket$-indexed line ensemble on $[a,b]$ with distribution $\mathbb{P}^{a,b, -\vec{x},-\vec{y}}_{free}$ (the law of $k$ independent Brownian bridges $\{{B}'_i: [a,b] \rightarrow \mathbb{R} \}_{i = 1}^k$ from ${B}'_i(a) = -x_{k-i+1}$ to ${B}'_i(b) = -y_{k-i+1}$ with diffusion parameter $1$, where we have rewritten ${\mathcal{B}'}(i, \cdot) = {B}'_i(\cdot)$). Finally, we define the $\llbracket 1, k \rrbracket$-indexed line ensemble $\tilde{\mathcal{B}}$ on $[a,b]$ through
\begin{equation*}
\tilde{\mathcal{B}}(i,x) =  -\mathcal{B}(k-i + 1,x) \mbox{ for $i = 1, \dots, k$ and $x \in [a,b]$.}
\end{equation*}

We first claim that the law of $\tilde{\mathcal{B}}$ under $\mathbb{P}^{a,b, \vec{x},\vec{y}}_{free}$ is the same as that of $\mathcal{B}'$ under $\mathbb{P}^{a,b, -\vec{x}, -\vec{y}}_{free}$. To see the latter, fix $K \in \mathbb{N}$ and $n_1, \dots,n_K \in \llbracket 1, k \rrbracket$, $t_1, \dots, t_K \in [a,b]$ and $z_1, \dots, z_K \in \mathbb{R}$. We then have from (\ref{BBDensity}) that 
\begin{equation}\label{FlipBBCDF1}
\begin{split}
&\mathbb{P}^{a,b, \vec{x},\vec{y}}_{free} \left( \tilde{{B}}_{n_i}(t_i) \leq z_i \mbox{ for $i \in \llbracket 1, K \rrbracket $ } \hspace{-1mm} \right) = \mathbb{P}^{a,b, \vec{x},\vec{y}}_{free} \left( -{B}_{k - n_i + 1}(t_i ) \leq z_i \mbox{ for $i \in \llbracket 1, K \rrbracket $ } \hspace{-1mm} \right) \\ 
&= \int_{\mathbb{R}} \cdots \int_{\mathbb{R}} \prod_{j = 1}^k \frac{p(b - t_{K}; \tilde{x}^j_{K},{y}_j)}{p(b-a;x_j,y_j)}  \prod_{i = 1}^{K}  H_i(\tilde{x}^{k - n_i+1}_i) \cdot \prod_{i = 1}^{K}  \prod_{j = 1}^k p(t_i - t_{i-1}; \tilde{x}^j_{i-1}, \tilde{x}^j_i) d\tilde{x}_i^j,\\
\end{split}
\end{equation}
where $\tilde{x}_0^j = x_j$ for $j = 1,\dots, k$ and $H_i(x) = {\bf 1} \{ - x \leq z_i\}$ for $i = 1, \dots, K$. On the other hand, 
\begin{equation}\label{FlipBBCDF2}
\begin{split}
&\mathbb{P}^{a,b, -\vec{x}', -\vec{y}}_{free} \left( {B}'_{n_i}(t_i) \leq z_i \mbox{ for $i \in \llbracket 1, K \rrbracket $ } \hspace{-1mm} \right) \\
&= \int_{\mathbb{R}} \cdots \int_{\mathbb{R}} \prod_{j = 1}^k \frac{p( b - t_{K}; x^j_{K},-y_{k-j+1})}{p(b-a;-x_{k-j+1},-y_{k-j+1})}  \prod_{i = 1}^{K}  H'_i(x^{n_i}_i) \cdot \prod_{i = 1}^{K}  \prod_{j = 1}^k p(t_i - t_{i-1}; x^j_{i-1}, x^j_i) dx_i^j,\\
\end{split}
\end{equation}
where $x_0^j = -x_{k -j+1}$ for $j = 1,\dots, k$ and $H'_i(x) = {\bf 1} \{ x \leq z_i  \}$ for $i = 1, \dots, K$. Upon performing the change of variables $\tilde{x}_i^j = -x_i^{k-j +1}$ in (\ref{FlipBBCDF1}) and using symmetry of the heat kernel we obtain precisely the expression in the second line of (\ref{FlipBBCDF2}). Consequently, $\tilde{\mathcal{B}}$ under $\mathbb{P}^{a,b, \vec{x},\vec{y}}_{free}$ and $\mathcal{B}'$ under $\mathbb{P}^{a,b, -\vec{x},-\vec{y}}_{free}$ have the same finite-dimensional distributions. By Proposition \ref{PropFD} we conclude that the laws of these line ensembles are the same.\\

{\bf \raggedleft Step 2.} Continuing with the notation from Step 1, we define
\begin{align*}
E &= \left\{  B_1(r) > B_2(r) > \cdots > B_k(r)  \mbox{ for all $r \in[a,b]$} \right\},\\
\tilde{E} &= \left\{  \tilde{B}_1(r) > \tilde{B}_2(r) > \cdots >\tilde{B}_k(r)  \mbox{ for all $r \in[a',b']$} \right\},\\
E' &= \left\{  B'_1(r) > B'_2(r) > \cdots > B'_k(r)  \mbox{ for all $r \in[a',b']$} \right\}.
\end{align*}
We also let $\mathcal{Q}'$ be a $\llbracket 1, k \rrbracket$-indexed line ensemble on $[a,b]$ with law $\mathbb{P}_{avoid}^{a,b,-\vec{x}, -\vec{y},\infty,-\infty} $.

If we fix $K \in \mathbb{N}$ and $n_1, \dots,n_K \in \llbracket 1, k \rrbracket$, $t_1, \dots, t_K \in [a',b']$ and $z_1, \dots, z_K \in \mathbb{R}$ we have
\begin{equation*}
\begin{split}
&\mathbb{P}_{avoid}^{a,b,\vec{x},\vec{y},\infty,-\infty} \left( \tilde{\mathcal{Q}}_{n_i}(t_i) \leq z_i \mbox{ for $i \in \llbracket 1, K \rrbracket $ } \hspace{-2mm} \right)= \mathbb{P}_{avoid}^{a,b,\vec{x},\vec{y},\infty,-\infty} \left( -\mathcal{Q}_{k - n_i + 1}(t_i) \leq z_i \mbox{ for $i \in \llbracket 1, K \rrbracket $ }  \hspace{-1mm} \right) \\ 
&= \frac{\mathbb{P}^{a,b, \vec{x},\vec{y}}_{free} \left( \{- B_{k - n_i + 1}(t_i ) \leq z_i   \mbox{ for $i\in \llbracket 1, K \rrbracket $}\} \cap E \right) }{\mathbb{P}^{a,b, \vec{x},\vec{y}}_{free}(E)} = \frac{\mathbb{P}^{a,b, \vec{x},\vec{y}}_{free} \left( \{ \tilde{B}_{n_i}(t_i) \leq z_i  \mbox{ for $i\in \llbracket 1, K \rrbracket $}\} \cap \tilde{E} \right) }{\mathbb{P}^{a,b, \vec{x},\vec{y}}_{free}(\tilde{E})}\\
 &= \frac{\mathbb{P}^{a,b, -\vec{x},-\vec{y}}_{free} \left( \{ B'_{n_i}(t_i) \leq z_i   \mbox{ for $i\in \llbracket 1, K \rrbracket $}\} \cap E' \right) }{\mathbb{P}^{a,b, -\vec{x},-\vec{y}}_{free}(E')} = \mathbb{P}_{avoid}^{a,b,-\vec{x},-\vec{y},\infty,-\infty} \left( {\mathcal{Q}}'_{n_i}(t_i) \leq z_i \mbox{ for $i \in \llbracket 1, K \rrbracket $ } \hspace{-2mm} \right),
\end{split}
\end{equation*}
where the first equality follows from the definition of $\tilde{Q}$; the second and last equality follow from Definition \ref{DefAvoidingLaw}; the third one follows from the definition of $\tilde{B}$ and the fourth one follows from the equality of laws for $\tilde{\mathcal{B}}$ and $\mathcal{B}'$ established in Step 1. The above equation shows that the finite dimensional distributions of $\tilde{\mathcal{Q}}$ under $\mathbb{P}_{avoid}^{a,b,\vec{x},\vec{y},\infty,-\infty}$ agree with those of $\mathcal{Q}'$ under $\mathbb{P}_{avoid}^{a,b,-\vec{x},-\vec{y},\infty,-\infty}$, which by Proposition \ref{PropFD} implies that the laws of these line ensembles are the same.
\end{proof}

%
%
\subsection{Auxiliary results}
 In this section we summarize some auxiliary results, which will be useful in the proof of Theorem \ref{ThmMain}.

\begin{lemma}\label{LNoAtoms}  Let $\Sigma = \llbracket 1, N \rrbracket$ with $N \in \mathbb{N}$, $N \geq 2$ and $\Lambda = [a,b] \subset \mathbb{R}$. Suppose $\mathcal{L}$ is a $\Sigma$-indexed line ensembles on $\Lambda$ that satisfies the partial Brownian Gibbs property of Definition \ref{DefPBGP}. Fix $t \in (a,b)$, $n \in \llbracket 1, N-1 \rrbracket $ and $s \in \mathbb{R}$. Then 
\begin{equation*}
\mathbb{P} \left( \mathcal{L}_{n}(t) = s  \right) = 0.
\end{equation*}
\end{lemma}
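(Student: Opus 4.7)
The plan is to apply the partial Brownian Gibbs property to the single curve $\mathcal{L}_n$ on the interval $[a,b]$ and reduce the problem to showing that an $(f,g)$-avoiding Brownian bridge has no atoms at any interior point. Since $n \leq N-1$, the curve $\mathcal{L}_n$ is resamplable: taking $K = \{n\}$ is admissible in Definition~\ref{DefPBGP}, and we may take the interval to be all of $[a,b]$ since $t \in (a,b)$ lies strictly in the interior.

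Concretely, I would apply the tower property together with \eqref{PBGPTower}: for any Borel set $B \subset \mathbb{R}$,
\begin{equation*}
\mathbb{P}\bigl(\mathcal{L}_n(t) \in B\bigr) = \mathbb{E}\Bigl[\mathbb{E}_{avoid}^{a,b,\vec{x},\vec{y},f,g}\bigl[\mathbf{1}\{\tilde{\mathcal{Q}}_1(t) \in B\}\bigr]\Bigr],
\end{equation*}
where $\vec{x}=(\mathcal{L}_n(a))$, $\vec{y}=(\mathcal{L}_n(b))$, $f = \mathcal{L}_{n-1}|_{[a,b]}$ (with $f=\infty$ if $n=1$) and $g = \mathcal{L}_{n+1}|_{[a,b]}$. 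Lemma~\ref{LemmaMeasExp} ensures the inner expectation is a measurable function of the relevant boundary data, so the right-hand side is well defined. Taking $B = \{s\}$, it suffices to prove that
\begin{equation*}
\mathbb{P}_{avoid}^{a,b,\vec{x},\vec{y},f,g}\bigl(\tilde{\mathcal{Q}}_1(t) = s\bigr) = 0
\end{equation*}
for every admissible boundary data $(\vec{x},\vec{y},f,g)$.

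For the last step, recall from Definition~\ref{DefAvoidingLaw} that $\mathbb{P}_{avoid}^{a,b,\vec{x},\vec{y},f,g}$ is the law of the single Brownian bridge $B$ from $\vec{x}$ to $\vec{y}$ under $\mathbb{P}_{free}^{a,b,\vec{x},\vec{y}}$ conditioned on the event $E = \{g(r) < B(r) < f(r) \text{ for all } r \in [a,b]\}$, and Lemma~\ref{Spread} together with the discussion in Definition~\ref{DefAvoidingLaw} guarantees $\mathbb{P}_{free}^{a,b,\vec{x},\vec{y}}(E) > 0$. Consequently,
\begin{equation*}
\mathbb{P}_{avoid}^{a,b,\vec{x},\vec{y},f,g}\bigl(\tilde{\mathcal{Q}}_1(t) = s\bigr) \leq \frac{\mathbb{P}_{free}^{a,b,\vec{x},\vec{y}}(B(t) = s)}{\mathbb{P}_{free}^{a,b,\vec{x},\vec{y}}(E)},
\end{equation*}
and the numerator vanishes because under $\mathbb{P}_{free}^{a,b,\vec{x},\vec{y}}$ the marginal $B(t)$ has the continuous Gaussian density $p(t-a;x,\cdot)\,p(b-t;\cdot,y)/p(b-a;x,y)$ given by \eqref{BBDensity}.

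I do not foresee a serious obstacle here; the only subtlety is verifying that the tower step makes sense, which is precisely what Lemma~\ref{LemmaMeasExp} supplies (with $F(h)=\mathbf{1}\{h(1,t)=s\}$, a bounded Borel-measurable function on $C(\llbracket 1,1\rrbracket\times[a,b])$). The essential content is simply that a Brownian bridge, and any conditioning of it on a positive-probability event, has a diffuse distribution at every interior time.
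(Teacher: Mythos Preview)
Your proof is correct and follows essentially the same approach as the paper: apply the partial Brownian Gibbs property, then use that the avoiding law is absolutely continuous with respect to the free Brownian bridge law, which has no atoms at interior times. The only cosmetic difference is that the paper resamples all of $\llbracket 1,N-1\rrbracket$ at once (so $f=\infty$ and $g=\mathcal{L}_N$), whereas you resample the single curve $K=\{n\}$ (so $f=\mathcal{L}_{n-1}$ and $g=\mathcal{L}_{n+1}$); both choices are valid in Definition~\ref{DefPBGP} and lead to the same conclusion by the same absolute continuity argument.
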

\begin{proof}
Fix $\vec{x}, \vec{y} \in \weyl_{N-1}$ and Let $g :[a,b] \rightarrow \mathbb{R}$ be a continuous function such that $g(a) < x_{N-1}$ and $g(b) < y_{N-1}$. From Definition \ref{DefAvoidingLaw} we know that $\mathbb{P}_{avoid}^{a,b, \vec{x}, \vec{y}, \infty ,g}$ is absolutely continuous with respect to $\mathbb{P}_{free}^{a,b,\vec{x},\vec{y}}$. Since Brownian bridges have no atoms we conclude that 
$$\mathbb{E}_{avoid}^{a,b,\vec{x},\vec{y},\infty, g} \left[{\bf 1} \{ \mathcal{Q}_{n}(t) = s \}\right] = 0.$$
Consequently, by the partial Brownian Gibbs property and the tower property for conditional expectations we deduce that
\begin{equation*}
\begin{split}
\mathbb{P} \left( \mathcal{L}_{n}(t) = s  \right) = \mathbb{E} \bigl[ \mathbb{E} \left[{\bf 1} \{ \mathcal{L}_{n}(t) = s \} \vert \mathcal{F}_{ext}(K \times (a,b)) \right]\bigr] = \mathbb{E} \bigl[   \mathbb{E}_{avoid}^{a,b, \vec{x},\vec{y}, \infty, g} \left[{\bf 1} \{ \mathcal{Q}_{n}(t) = s \}\right] \bigr] = \mathbb{E}[0] = 0,
\end{split}
\end{equation*}
where $K = \llbracket 1, N-1\rrbracket$, $\vec{x} = (\mathcal{L}_{1}(a), \dots, \mathcal{L}_{N-1}(a))$, $\vec{y} = ( \mathcal{L}_{1}(b), \dots,  \mathcal{L}_{N-1}(b))$, and $g = \mathcal{L}_{N}[a,b]$. 
\end{proof}

Let $\Phi(x)$ be the cumulative distribution function of a standard normal random variable, and $\phi(x)$ denote its density.
The following result can be found in \cite[Section 4.2]{MZ}
\begin{lemma}\label{LemmaI1}
There is a constant $c_0 > 1$ such that for all $x \geq 0$ we have
\begin{equation}\label{LI2}
 \frac{1}{c_0(1+x)} \leq \frac{1 - \Phi(x)}{\phi(x)} \leq \frac{c_0}{1 +x},
\end{equation}
\end{lemma}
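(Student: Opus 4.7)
The plan is to establish the classical Mills-ratio bounds and then combine the small-$x$ and large-$x$ regimes. Let $h(x) = (1-\Phi(x))/\phi(x)$ for $x\geq 0$; I want to compare $h(x)$ to $1/(1+x)$.

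\textbf{Step 1: Large-$x$ estimate via integration by parts.} For $x>0$, writing $1-\Phi(x) = (2\pi)^{-1/2}\int_x^\infty e^{-t^2/2}\,dt$, I would obtain the upper bound by the pointwise inequality $1 \leq t/x$ on $[x,\infty)$:
\[
1-\Phi(x) \;\leq\; \frac{1}{\sqrt{2\pi}}\int_x^\infty \frac{t}{x}\,e^{-t^2/2}\,dt \;=\; \frac{\phi(x)}{x},
\]
so $h(x)\leq 1/x$. For the matching lower bound, one integration by parts gives
\[
1-\Phi(x) \;=\; \frac{\phi(x)}{x} - \frac{1}{\sqrt{2\pi}}\int_x^\infty \frac{e^{-t^2/2}}{t^2}\,dt \;\geq\; \frac{\phi(x)}{x} - \frac{1-\Phi(x)}{x^2},
\]
which rearranges to $h(x) \geq x/(1+x^2)$. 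Together these yield, for all $x\geq 1$,
\[
\frac{1}{2(1+x)} \;\leq\; \frac{x}{1+x^2} \;\leq\; h(x) \;\leq\; \frac{1}{x} \;\leq\; \frac{2}{1+x}.
\]

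\textbf{Step 2: Small-$x$ regime by continuity.} The function $h$ is continuous on $[0,1]$ with $h(0) = \frac{1/2}{1/\sqrt{2\pi}} = \sqrt{\pi/2} > 0$ and $h(1) > 0$, and in particular is bounded and bounded away from zero on $[0,1]$. Since $1/(1+x)$ is comparable to $1$ on $[0,1]$, there exists $c_1>1$ with $c_1^{-1}(1+x)^{-1}\leq h(x)\leq c_1(1+x)^{-1}$ for every $x\in[0,1]$.

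\textbf{Step 3: Conclusion.} Taking $c_0 = \max(c_1,2)$ combines the two regimes to yield \eqref{LI2} uniformly in $x\geq 0$. No step here is a real obstacle; the only minor care is ensuring the integration-by-parts manipulation in Step 1 is applied on $[1,\infty)$ (so that $x^{-2}$ does not blow up) and deferring the region $[0,1]$ to the continuity argument of Step 2.
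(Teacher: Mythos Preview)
Your proof is correct and complete. The paper does not actually prove this lemma; it simply cites \cite[Section 4.2]{MZ} for the result, so your self-contained argument via the standard Mills-ratio bounds (integration by parts for $x\geq 1$, continuity on $[0,1]$) goes beyond what the paper supplies and is the conventional way to establish the inequality.
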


The following result can be found in \cite[Chapter 4, Eq.~3.40]{KS}. 
\begin{lemma}\label{LemmaBBmax} Let $a \in \mathbb{R}$, $T > 0$ and $\beta > 0$. Let $B: [0,T] \rightarrow \mathbb{R}$ denote a Brownian bridge from $B(0) = 0$ to $B(T) = a$ with diffusion parameter $1$. Then we have
\begin{equation*}
\mathbb{P}^{0,T,0,a}_{free}\left( \max_{0 \leq t \leq T} B(t) \geq \beta \right) = \mathbb{P}^{0,T,0,-a}_{free}\left( \min_{0 \leq t \leq T} B(t) \leq -\beta \right)  = e^{-2\beta (\beta -a)/T}.
\end{equation*}
\end{lemma}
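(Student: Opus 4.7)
The plan is to derive the first identity from the classical reflection principle for standard Brownian motion and to obtain the second by a sign symmetry. Let $W$ denote a standard Brownian motion with $W_0 = 0$. The reflection principle asserts that for any $\beta > 0$ and $x \leq \beta$,
$$\mathbb{P}\bigl( \max_{t \in [0,T]} W_t \geq \beta, \, W_T \leq x \bigr) = \mathbb{P}(W_T \geq 2\beta - x).$$
Differentiating in $x$ yields the joint density of $(\max_{[0,T]} W_t, W_T)$ on the event $\{\max \geq \beta\}$ as $p_T(2\beta - x)\,dx$, where $p_T(u) = (2\pi T)^{-1/2} e^{-u^2/(2T)}$. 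Interpreting a Brownian bridge from $0$ to $a$ as $W$ conditioned on $W_T = a$ (which is consistent with the finite-dimensional density in (\ref{BBDensity})), I would conclude
$$\mathbb{P}^{0,T,0,a}_{free}\bigl( \max_{t \in [0,T]} B(t) \geq \beta \bigr) = \frac{p_T(2\beta - a)}{p_T(a)} = \exp\!\left( -\frac{(2\beta-a)^2 - a^2}{2T}\right) = e^{-2\beta(\beta - a)/T},$$
which is the first identity.

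For the second identity, I would note that if $B$ is a Brownian bridge from $0$ to $-a$, then $-B$ is a Brownian bridge from $0$ to $a$; this is immediate from the representation (\ref{BBDef}), since $-\tilde{B}$ is again a standard Brownian bridge. Then $\{\min_{t \in [0,T]} B(t) \leq -\beta\} = \{\max_{t \in [0,T]} (-B)(t) \geq \beta\}$, so applying the first identity to $-B$ (which is a bridge from $0$ to $a$) yields
$$\mathbb{P}^{0,T,0,-a}_{free}\bigl( \min_{[0,T]} B \leq -\beta \bigr) = \mathbb{P}^{0,T,0,a}_{free}\bigl( \max_{[0,T]} (-B) \geq \beta \bigr) = e^{-2\beta(\beta - a)/T},$$
as desired.

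The main technical delicacy is that $\{W_T = a\}$ is a null event, so to make the conditioning step rigorous one should first discretize, say by $M_n := \max_{0 \leq k \leq 2^n} W_{kT/2^n}$, apply the discrete reflection principle to compute the joint law of $M_n$ with $W_T$, form the conditional density given $W_T = a$ (which is now well-defined since $W_T$ has a continuous positive density), and then pass to the limit $n \to \infty$, using the almost sure continuity of Brownian paths to identify $\lim_n M_n = \max_{[0,T]} W_t$ together with the dominated convergence theorem to interchange the limit with the conditional probability. This is the main, but standard, obstacle; all other steps are direct computations with the Gaussian kernel.
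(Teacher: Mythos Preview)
The paper does not actually prove this lemma: it is stated with the remark ``The following result can be found in \cite[Chapter 4, Eq.~3.40]{KS}'' and no argument is given. Your proposal therefore supplies a proof where the paper provides only a citation.

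Your argument is the standard one (and is essentially what is carried out in Karatzas--Shreve): derive the joint law of $(\max_{[0,T]} W, W_T)$ from the reflection principle, form the regular conditional probability given $W_T = a$, and simplify the Gaussian ratio. The symmetry reduction for the minimum is correct. One small point worth making explicit is that your computation tacitly assumes $a \leq \beta$ (the reflection identity is used with $x \leq \beta$); for $a \geq \beta$ the event has probability $1$ since $B(T) = a \geq \beta$, and the stated formula should be read with that caveat. This does not affect any of the applications in the paper, which only invoke the lemma in the regime $a \leq \beta$ (indeed, typically $a = 0$).
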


\begin{lemma}\label{LemmaBotMax} Assume the same notation as in Definition \ref{DefAvoidingLaw} and suppose that $\mathcal{Q}$ is a $\llbracket 1, k \rrbracket$-indexed line ensemble on $[a,b]$ with probability distribution $\mathbb{P}_{avoid}^{a,b,\vec{x},\vec{y},\infty,-\infty}$. Then we have for $r \geq 0$
\begin{equation}\label{BottomCurveLB}
\mathbb{P}_{avoid}^{a,b,\vec{x},\vec{y},\infty,-\infty} \left(  \mathcal{Q}_k( (a+b)/2 ) \geq \max(x_k, y_k) +  (b-a)^{1/2}r \right) \leq \frac{c_0 e^{-2r^2}}{\sqrt{2\pi} (1 + 2r)},
\end{equation}
where $c_0$ is as in Lemma \ref{LemmaI1}.
\end{lemma}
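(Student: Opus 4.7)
The strategy is to show that the bottom curve $\mathcal{Q}_k$ is stochastically dominated at each fixed time by a single Brownian bridge from $x_k$ to $y_k$, and then apply a standard Gaussian tail estimate.

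First, I would invoke Lemma \ref{LemmaFlip} to pass to the flipped ensemble $\tilde{\mathcal{Q}}$ defined by $\tilde{\mathcal{Q}}_i(t) = -\mathcal{Q}_{k-i+1}(t)$; this has law $\mathbb{P}_{avoid}^{a,b,-\vec{x},-\vec{y},\infty,-\infty}$ with top curve $\tilde{\mathcal{Q}}_1 = -\mathcal{Q}_k$. By Lemma \ref{AvoidIsGibbs}, $\tilde{\mathcal{Q}}$ satisfies the Brownian Gibbs property, so conditionally on $(\tilde{\mathcal{Q}}_2,\dots,\tilde{\mathcal{Q}}_k)$ the law of $\tilde{\mathcal{Q}}_1$ is $\mathbb{P}_{avoid}^{a,b,-x_k,-y_k,\infty,\tilde{\mathcal{Q}}_2}$. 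Then applying Lemma \ref{MCLfg} with $k=1$, lower barriers $g^t = \tilde{\mathcal{Q}}_2$ (a continuous function satisfying $g^t(a)=-x_{k-1}<-x_k$ and $g^t(b)=-y_{k-1}<-y_k$) and $g^b = -\infty$ produces a coupling in which $\tilde{\mathcal{Q}}_1 \geq B$ pointwise, where $B$ is an unconditioned Brownian bridge from $-x_k$ to $-y_k$. Taking conditional expectation and then expectation over $(\tilde{\mathcal{Q}}_2,\dots,\tilde{\mathcal{Q}}_k)$ yields, for any threshold $s$ and with $c=(a+b)/2$,
\begin{equation*}
\mathbb{P}\bigl(\mathcal{Q}_k(c) \geq s\bigr) \;=\; \mathbb{P}\bigl(\tilde{\mathcal{Q}}_1(c) \leq -s\bigr) \;\leq\; \mathbb{P}\bigl(B(c) \leq -s\bigr) \;=\; \mathbb{P}\bigl(B'(c) \geq s\bigr),
\end{equation*}
where $B'$ denotes a Brownian bridge from $x_k$ at time $a$ to $y_k$ at time $b$.

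Finally, since $B'(c) \sim N\bigl((x_k+y_k)/2,(b-a)/4\bigr)$, setting $s = \max(x_k,y_k) + (b-a)^{1/2}r$ and using $\max(x_k,y_k) - (x_k+y_k)/2 = |x_k-y_k|/2 \geq 0$, we obtain, with $Z \sim N(0,1)$,
\begin{equation*}
\mathbb{P}\bigl(B'(c) \geq s\bigr) = \mathbb{P}\left(Z \geq \frac{|x_k-y_k|}{(b-a)^{1/2}} + 2r\right) \leq 1 - \Phi(2r),
\end{equation*}
and Lemma \ref{LemmaI1} gives $1 - \Phi(2r) \leq c_0 \phi(2r)/(1+2r) = c_0 e^{-2r^2}/\bigl(\sqrt{2\pi}(1+2r)\bigr)$, exactly the claimed bound.

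The only delicate point is that Lemma \ref{MCLfg} is stated for deterministic barriers, whereas we apply it with the random barrier $g^t = \tilde{\mathcal{Q}}_2$. This is harmless because the Brownian Gibbs property from Lemma \ref{AvoidIsGibbs} lets us disintegrate with respect to $\mathcal{F}_{ext}$, reducing each fiber to a deterministic instance of the lemma whose bound on the one-point marginal at $c$ depends only on the (fixed) endpoints $-x_k,-y_k$. Integrating the resulting conditional inequality recovers the unconditional inequality used above.
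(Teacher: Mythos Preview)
Your proof is correct and follows essentially the same route as the paper's: flip via Lemma~\ref{LemmaFlip}, apply the Brownian Gibbs property from Lemma~\ref{AvoidIsGibbs} to isolate the top curve, use Lemma~\ref{MCLfg} to remove the lower barrier, and finish with the Gaussian tail bound from Lemma~\ref{LemmaI1}.

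The one genuine difference is that the paper first invokes Lemma~\ref{MCLxy} to replace the endpoints $\vec{x},\vec{y}$ by the symmetrized vector $\vec{z}$ with $z_i=\max(x_i,y_i)$, and only then flips and compares to a Brownian bridge with equal endpoints $-z_k,-z_k$. You skip this symmetrization and compare directly to a Brownian bridge from $x_k$ to $y_k$, absorbing the asymmetry into the Gaussian threshold via the extra nonnegative term $|x_k-y_k|/(b-a)^{1/2}$. Your route is slightly more economical (one fewer coupling lemma) and in fact yields the marginally sharper intermediate bound $1-\Phi\bigl(|x_k-y_k|/(b-a)^{1/2}+2r\bigr)$ before discarding the extra term; the paper's symmetrization buys nothing here beyond making the final Brownian bridge have equal endpoints.
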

\begin{proof}Let $A$ denote the left side of (\ref{BottomCurveLB}). Define $\vec{z} \in \weyl_k$ through $z_i = \max(x_i, y_i) $. By Lemma~\ref{MCLxy} we have that
\begin{equation}\label{UBW1}
\begin{split}
A &\leq \mathbb{P}_{avoid}^{a,b,\vec{z},\vec{z},\infty,-\infty} \left(   \mathcal{Q}_k( (a+b)/2 ) \geq z_k +  (b-a)^{1/2}r \right) \\
&= \mathbb{P}_{avoid}^{a,b,-\vec{z},-\vec{z},\infty,-\infty} \left(   \mathcal{Q}_1( (a+b)/2 ) \leq -z_k -  (b-a)^{1/2}r \right),
\end{split}
\end{equation}
where the equality follows from Lemma \ref{LemmaFlip}. By Lemma \ref{AvoidIsGibbs} we know that $\llbracket 1, k \rrbracket$-indexed line ensembles distributed according to $\mathbb{P}_{avoid}^{a,b,-\vec{z},-\vec{z},\infty,-\infty}$ satisfy the Brownian Gibbs property and so by Definition \ref{DefBGP} we have
\begin{equation}\label{UBW2}
\begin{split}
&\mathbb{P}_{avoid}^{a,b,-\vec{z},-\vec{z},\infty,-\infty} \left(   \mathcal{Q}_1( (a+b)/2 ) \leq -z_k -  (b-a)^{1/2}r \right) \\
&= \mathbb{E}_{avoid}^{a,b,-\vec{z},-\vec{z},\infty,-\infty} \left[  \mathbb{E} \left[  {\bf 1} \{\mathcal{Q}_1( (a+b)/2 ) \leq -z_k - (b-a)^{1/2}r \} {\Big \vert} \mathcal{F}_{ext} (\{1\} \times (a,b))  \right]  \right] \\
&= \mathbb{E}_{avoid}^{a,b,-\vec{z},-\vec{z},\infty,-\infty} \left[  \mathbb{E}_{avoid}^{a,b, -z_k, -z_k, \infty, \mathcal{Q}_2[a,b]} \left[  {\bf 1} \{\mathcal{Q}_1( (a+b)/2 ) \leq -z_k - (b-a)^{1/2}r \} \right]  \right] \\
&\leq \mathbb{E}_{avoid}^{a,b,-\vec{z},-\vec{z},\infty,-\infty} \left[  \mathbb{E}_{avoid}^{a,b, -z_k, -z_k, \infty, -\infty} \left[  {\bf 1} \{B( (a+b)/2 ) \leq -z_k - (b-a)^{1/2}r \} \right]  \right] \\
&= \mathbb{E}_{avoid}^{a,b,-\vec{z},-\vec{z},\infty,-\infty} \left[  \Phi(-2r)  \right]=  \Phi(-2r) = 1 - \Phi(2r),
\end{split}
\end{equation}
where in going from the third to the fourth line we use Lemma \ref{MCLfg} and in going from the fourth to the fifth line we used that under $\mathbb{P}_{avoid}^{a,b, -z_k, -z_k, \infty, -\infty} $ the curve $B$ is precisely a Brownian bridge from $B(a) = -z_k$ to $B(b) = -z_k$ with diffusion parameter $1$. The latter and (\ref{BBDensity}) imply that $B((a+b)/2)$ is distributed like a Gaussian random variable with mean $0$ and variance $(b-a)/4$, which implies the formulas above. Combining (\ref{UBW1}), (\ref{UBW2}) and (\ref{LI2}) we conclude (\ref{BottomCurveLB}).
\end{proof}

\begin{lemma}\label{LemmaBotMin} Assume the same notation as in Definition \ref{DefAvoidingLaw} and suppose that $\mathcal{Q}$ is a $\llbracket 1, k \rrbracket$-indexed line ensemble on $[a,b]$ with probability distribution $\mathbb{P}_{avoid}^{a,b,\vec{x},\vec{y},\infty,-\infty}$. Then we have for $r \geq 0$
\begin{equation}\label{BottomCurveUB}
\mathbb{P}_{avoid}^{a,b,\vec{x},\vec{y},\infty,-\infty} \left(  \mathcal{Q}_k( (a+b)/2 ) \leq \max(x_k, y_k) -  (b-a)^{1/2}r \right) \geq \frac{e^{-2r^2}}{c_0 \sqrt{2\pi} (1 + 2r)},
\end{equation}
where $c_0$ is as in Lemma \ref{LemmaI1}.
\end{lemma}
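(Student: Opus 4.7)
The proof will mirror the proof of Lemma \ref{LemmaBotMax}, reversing the direction of each monotonicity comparison in order to obtain a lower bound rather than an upper bound. The key point is that all three tools used in the upper bound argument (Lemmas \ref{MCLxy}, \ref{LemmaFlip}, \ref{AvoidIsGibbs}, \ref{MCLfg}, and the Gaussian two-sided tail estimate in Lemma \ref{LemmaI1}) admit both a ``$\geq$'' and a ``$\leq$'' version, so no new ideas are required.

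First I would set $\vec{z} \in \weyl_k$ by $z_i = \max(x_i,y_i)$ (this is in $\weyl_k$ for the same reason noted implicitly in the proof of Lemma \ref{LemmaBotMax}: if $i<j$ then $z_i \geq x_i > x_j$ and $z_i \geq y_i > y_j$, so $z_i > z_j$). Since $\vec{x} \leq \vec{z}$ and $\vec{y} \leq \vec{z}$ componentwise, Lemma \ref{MCLxy} (applied with $g \equiv -\infty$) produces a coupling in which $\mathcal{Q}_k^{\vec{x},\vec{y}}(r) \leq \mathcal{Q}_k^{\vec{z},\vec{z}}(r)$ for every $r\in[a,b]$, almost surely. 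Consequently
\begin{equation*}
\mathbb{P}_{avoid}^{a,b,\vec{x},\vec{y},\infty,-\infty}\!\left( \mathcal{Q}_k((a+b)/2) \leq \max(x_k,y_k) - (b-a)^{1/2}r \right) \geq \mathbb{P}_{avoid}^{a,b,\vec{z},\vec{z},\infty,-\infty}\!\left( \mathcal{Q}_k((a+b)/2) \leq z_k - (b-a)^{1/2}r \right),
\end{equation*}
which reduces the problem to symmetric boundary data.

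Next I would flip using Lemma \ref{LemmaFlip}: the event $\{\mathcal{Q}_k((a+b)/2) \leq z_k - (b-a)^{1/2}r\}$ translates, under the involution $\tilde{\mathcal{Q}}(i,\cdot) = -\mathcal{Q}(k-i+1,\cdot)$, into the event $\{\tilde{\mathcal{Q}}_1((a+b)/2) \geq -z_k + (b-a)^{1/2}r\}$, with $\tilde{\mathcal{Q}}$ distributed according to $\mathbb{P}_{avoid}^{a,b,-\vec{z},-\vec{z},\infty,-\infty}$ (where $-\vec{z}$ stands for the reversed-and-negated vector $(-z_k,\dots,-z_1)$, as in the proof of Lemma \ref{LemmaBotMax}). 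By Lemma \ref{AvoidIsGibbs} the ensemble $\tilde{\mathcal{Q}}$ satisfies the Brownian Gibbs property, so conditioning on $\mathcal{F}_{ext}(\{1\}\times(a,b))$ and applying the tower property gives
\begin{equation*}
\mathbb{P}_{avoid}^{a,b,-\vec{z},-\vec{z},\infty,-\infty}\!\left( \tilde{\mathcal{Q}}_1((a+b)/2) \geq -z_k + (b-a)^{1/2}r \right) = \mathbb{E}_{avoid}^{a,b,-\vec{z},-\vec{z},\infty,-\infty}\!\left[\mathbb{E}_{avoid}^{a,b,-z_k,-z_k,\infty,\tilde{\mathcal{Q}}_2[a,b]}\!\bigl[ {\bf 1}\{B((a+b)/2) \geq -z_k + (b-a)^{1/2}r\}\bigr]\right],
\end{equation*}
where $B$ is the single resampled curve. (When $k=1$ this step is vacuous and the inner expectation is simply over a free Brownian bridge from $-z_1$ to $-z_1$.)

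Finally, to pass from the conditioned bridge to a free one I would invoke Lemma \ref{MCLfg} with $g^t = \tilde{\mathcal{Q}}_2[a,b]$ and $g^b \equiv -\infty$; the hypotheses hold since $\tilde{\mathcal{Q}}_2(a) = -z_{k-1} < -z_k$ and similarly at $b$. The monotone coupling provides $B^t \geq B^b$ almost surely, so in particular
\begin{equation*}
\mathbb{E}_{avoid}^{a,b,-z_k,-z_k,\infty,\tilde{\mathcal{Q}}_2[a,b]}\!\bigl[ {\bf 1}\{B((a+b)/2) \geq -z_k + (b-a)^{1/2}r\}\bigr] \geq \mathbb{E}_{avoid}^{a,b,-z_k,-z_k,\infty,-\infty}\!\bigl[ {\bf 1}\{B((a+b)/2) \geq -z_k + (b-a)^{1/2}r\}\bigr].
\end{equation*}
Under the right-hand measure $B((a+b)/2)$ is Gaussian with mean $-z_k$ and variance $(b-a)/4$ by (\ref{BBDensity}), so the probability equals $1 - \Phi(2r)$, which by the lower bound in Lemma \ref{LemmaI1} is at least $\phi(2r)/(c_0(1+2r)) = e^{-2r^2}/(c_0\sqrt{2\pi}(1+2r))$. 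Chaining these inequalities yields (\ref{BottomCurveUB}). There is no real obstacle in this proof; the only place that requires minor care is checking that the flipped boundary data $\vec{z}$ belongs to $\weyl_k$ and that the hypothesis $g^t(a)<x_k$ of Lemma \ref{MCLfg} is satisfied for the single-line resampling (both follow from strict monotonicity of $\vec{z}$).
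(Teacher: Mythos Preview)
Your proposal is correct and follows essentially the same approach as the paper's proof: reduce to symmetric boundary data $\vec{z}$ via Lemma \ref{MCLxy}, flip using Lemma \ref{LemmaFlip}, apply the Brownian Gibbs property from Lemma \ref{AvoidIsGibbs} to isolate the top curve, remove the lower barrier via Lemma \ref{MCLfg}, and finish with the Gaussian tail bound from Lemma \ref{LemmaI1}. Your additional remarks verifying $\vec{z}\in\weyl_k$ and the hypothesis $g^t(a)<x_k$ of Lemma \ref{MCLfg} are correct and slightly more explicit than the paper.
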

\begin{proof}Let $A$ denote the left side of (\ref{BottomCurveUB}). Define $\vec{z} \in \weyl_k$ through $z_i = \max(x_i, y_i) $. By Lemma~\ref{MCLxy} we have that
\begin{equation}\label{LBW1}
\begin{split}
A &\geq \mathbb{P}_{avoid}^{a,b,\vec{z},\vec{z},\infty,-\infty} \left(   \mathcal{Q}_k( (a+b)/2 ) \leq z_k -  (b-a)^{1/2}r \right) \\
&= \mathbb{P}_{avoid}^{a,b,-\vec{z},-\vec{z},\infty,-\infty} \left(   \mathcal{Q}_1( (a+b)/2 ) \geq -z_k +  (b-a)^{1/2}r \right),
\end{split}
\end{equation}
where the equality follows from Lemma \ref{LemmaFlip}. By Lemma \ref{AvoidIsGibbs} we know that $\llbracket 1, k \rrbracket$-indexed line ensembles distributed according to $\mathbb{P}_{avoid}^{a,b,-\vec{z},-\vec{z},\infty,-\infty}$ satisfy the Brownian Gibbs property and so by Definition \ref{DefBGP} we have
\begin{equation}\label{LBW2}
\begin{split}
&\mathbb{P}_{avoid}^{a,b,-\vec{z},-\vec{z},\infty,-\infty} \left(   \mathcal{Q}_1( (a+b)/2 ) \geq -z_k +  (b-a)^{1/2}r \right) \\
&= \mathbb{E}_{avoid}^{a,b,-\vec{z},-\vec{z},\infty,-\infty} \left[  \mathbb{E} \left[  {\bf 1} \{\mathcal{Q}_1( (a+b)/2 ) \geq -z_k + (b-a)^{1/2}r \} {\Big \vert} \mathcal{F}_{ext} (\{1\} \times (a,b))  \right]  \right] \\
 &= \mathbb{E}_{avoid}^{a,b,-\vec{z},-\vec{z},\infty,-\infty} \left[  \mathbb{E}_{avoid}^{a,b, -z_k, -z_k, \infty, \mathcal{Q}_2[a,b]} \left[  {\bf 1} \{\mathcal{Q}_1( (a+b)/2 ) \geq -z_k + (b-a)^{1/2}r \} \right]  \right]\\
 &\geq \mathbb{E}_{avoid}^{a,b,-\vec{z},-\vec{z},\infty,-\infty} \left[  \mathbb{E}_{avoid}^{a,b, -z_k, -z_k, \infty, -\infty} \left[  {\bf 1} \{B( (a+b)/2 ) \geq -z_k + (b-a)^{1/2}r \} \right]  \right] \\
&= \mathbb{E}_{avoid}^{a,b,-\vec{z},-\vec{z},\infty,-\infty} \left[ 1-  \Phi(2r)  \right]= 1-  \Phi(2r),
\end{split}
\end{equation}
where in going from the thirf to the fourth line we use Lemma \ref{MCLfg} and in going from the fourth to the fifth line we used that under $\mathbb{P}_{avoid}^{a,b, -z_k, -z_k, \infty, -\infty} $ the curve $B$ is precisely a Brownian bridge from $B(a) = -z_k$ to $B(b) = -z_k$ with diffusion parameter $1$. The latter and (\ref{BBDensity}) imply that $B((a+b)/2)$ is distributed like a Gaussian random variable with mean $0$ and variance $(b-a)/4$, which implies the formulas above. Combining (\ref{LBW1}), (\ref{LBW2}) and (\ref{LI2}) we conclude (\ref{BottomCurveUB}).
\end{proof}

The following result can be found in \cite[Lemma 2.25]{Ham4}. We give a proof for the sake of completeness. 
\begin{lemma}\label{LemmaAvoidMax} Assume the same notation as in Definition \ref{DefAvoidingLaw} and suppose that $\mathcal{Q}$ is a $\llbracket 1, k \rrbracket$-indexed line ensemble on $[a,b]$ with probability distribution $\mathbb{P}_{avoid}^{a,b,\vec{x},\vec{y},\infty,-\infty}$. Then we have for $r \geq 0$ that
\begin{equation}\label{BottomCurveLB2}
\mathbb{P}_{avoid}^{a,b,\vec{x},\vec{y},\infty,-\infty} \left(  \inf_{x \in [a,b]} \mathcal{Q}_k(x) \leq \min(x_k, y_k) - \sqrt{2} (b-a)^{1/2} (k + r -1) \right) \leq (1 - 2e^{-1})^{-k} e^{-4r^2}.
\end{equation}
\end{lemma}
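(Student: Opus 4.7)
The plan is a sequence of monotone-coupling and scaling reductions that brings us to a uniformly separated configuration, followed by a direct estimate via Brownian bridge tail bounds. By Lemma \ref{LemmaFlip}, the reflection $\tilde{\mathcal{Q}}(i,\cdot) = -\mathcal{Q}(k-i+1,\cdot)$ converts our event into $\{\sup \tilde{\mathcal{Q}}_1 \geq \max(-x_k,-y_k) + \sqrt{2}(b-a)^{1/2}(k+r-1)\}$ for the flipped ensemble with law $\mathbb{P}_{avoid}^{a,b,-\vec{x},-\vec{y},\infty,-\infty}$. Setting $w_i = \max(-x_{k-i+1},-y_{k-i+1})$ produces a vector in $\weyl_k$ which coordinate-wise dominates both $-\vec{x}$ and $-\vec{y}$, so Lemma \ref{MCLxy} gives an upper bound by the analogous event under $\mathbb{P}_{avoid}^{a,b,\vec{w},\vec{w},\infty,-\infty}$. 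A rescaling via Lemma \ref{LemmaAffine} to $[0,1]$ and a vertical shift sending $w_1$ to $0$ (the remaining $w_i$ then being non-positive) reduces the task to bounding
\[
p(\vec{w},r) := \mathbb{P}_{avoid}^{0,1,\vec{w},\vec{w},\infty,-\infty}\bigl(\sup_{x \in [0,1]} \tilde{\mathcal{Q}}_1(x) \geq \sqrt{2}(k+r-1)\bigr)
\]
by $(1-2e^{-1})^{-k} e^{-4r^2}$ for each such $\vec{w} \in \weyl_k$ with $w_1 = 0$.

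The heart of the argument is a second application of Lemma \ref{MCLxy} to pass to uniformly separated boundary data. Set
\[
M := \max_{1 \leq i \leq k}\bigl[w_i + (i-1)\sqrt{2}\bigr], \qquad w^*_i := M - (i-1)\sqrt{2}.
\]
Then $\vec{w}^* \in \weyl_k$ has gaps exactly $\sqrt{2}$, the definition of $M$ gives $w^*_i \geq w_i$ for every $i$, and from $w_1 = 0$ with $w_i \leq 0$ for $i \geq 2$ one reads off $0 \leq M \leq (k-1)\sqrt{2}$. Lemma \ref{MCLxy} now yields $p(\vec{w},r) \leq p(\vec{w}^*,r)$.

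To bound $p(\vec{w}^*,r)$, I will rewrite it via Definition \ref{DefAvoidingLaw} as a quotient of two $\mathbb{P}_{free}^{0,1,\vec{w}^*,\vec{w}^*}$-probabilities. The numerator is at most the probability that a Brownian bridge $\mathcal{B}^*_1$ from $M$ to $M$ on $[0,1]$ exceeds $\sqrt{2}(k+r-1)$; Lemma \ref{LemmaBBmax} evaluates this to $\exp(-2(\sqrt{2}(k+r-1)-M)^2)$, which is at most $e^{-4r^2}$ since $M \leq (k-1)\sqrt{2}$. For the denominator, consider the tube events $E_i = \{\sup_{x \in [0,1]}|\mathcal{B}^*_i(x) - w^*_i| \leq 1/\sqrt{2}\}$ for the $k$ independent bridges: two applications of Lemma \ref{LemmaBBmax} to the centered bridge $\mathcal{B}^*_i - w^*_i$ give $\mathbb{P}(E_i) \geq 1 - 2e^{-1}$; on $\bigcap_i E_i$ one has $\mathcal{B}^*_i(x) \geq w^*_i - 1/\sqrt{2} = w^*_{i+1} + 1/\sqrt{2} \geq \mathcal{B}^*_{i+1}(x)$, with equality confined to a null event, and the $E_i$ are independent, so the non-intersection probability is at least $(1-2e^{-1})^k$. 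Combining these estimates yields $p(\vec{w}^*,r) \leq (1-2e^{-1})^{-k} e^{-4r^2}$, as required.

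The main obstacle is the second monotone reduction: enlarging $\vec{w}$ via Lemma \ref{MCLxy} raises the top boundary by $M$, which a priori weakens the threshold for the supremum of the top curve. The specific choice $w^*_i = M - (i-1)\sqrt{2}$ is tuned so that the worst-case $M = (k-1)\sqrt{2}$ is exactly absorbed by the $(k-1)\sqrt{2}$ piece of the $\sqrt{2}(k+r-1)$ threshold, leaving the clean residual $\sqrt{2}r$ that produces the Gaussian factor $e^{-4r^2}$.
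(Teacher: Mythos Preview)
Your proof is correct and rests on the same two ingredients as the paper's: a monotone-coupling reduction to boundary data with gaps of size $\sqrt{2}(b-a)^{1/2}$, followed by the free-bridge estimate in which the numerator is handled by Lemma~\ref{LemmaBBmax} and the denominator by independent tube events of half-width $(b-a)^{1/2}/\sqrt{2}$. The difference is purely in how you reach the well-separated configuration. The paper does it in a single application of Lemma~\ref{MCLxy}, lowering directly to $z_i = \min(x_i,y_i) - \sqrt{2}(b-a)^{1/2}(i-1)$; since $\min(x_i,y_i) > \min(x_{i+1},y_{i+1})$, this $\vec z$ already lies in $\weyl_k$ with gaps exceeding $\sqrt{2}(b-a)^{1/2}$, and the threshold becomes exactly $z_k - \sqrt{2}(b-a)^{1/2}r$. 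Your route---flip via Lemma~\ref{LemmaFlip}, raise to coordinatewise maxima, rescale via Lemma~\ref{LemmaAffine}, then raise a second time to $w_i^* = M - (i-1)\sqrt{2}$---reaches an equivalent endpoint but at the cost of the auxiliary parameter $M$, whose bound $M \le (k-1)\sqrt{2}$ you then have to feed back into the numerator estimate. The flip is not needed (one can couple downward on $\mathcal{Q}_k$ just as well as upward on $\tilde{\mathcal{Q}}_1$), and building the $\sqrt{2}$ gaps into the \emph{first} coupling, as the paper does, eliminates the second one entirely.
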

\begin{proof}
Let $A$ denote the left side of (\ref{BottomCurveLB2}). Define $\vec{z} \in \weyl_k$ through $z_i = \min(x_i, y_i) - \sqrt{2} (b-a)^{1/2} (i-1)$. By Lemma \ref{MCLxy} we have that
\begin{equation}\label{UBH1}
A \leq \mathbb{P}_{avoid}^{a,b,\vec{z},\vec{z},\infty,-\infty} \left(  \inf_{x \in [a,b]} \mathcal{Q}_k(x) \leq \min(x_k, y_k) - \sqrt{2} (b-a)^{1/2} (k + r -1) \right). 
\end{equation}
 Let $\mathcal{B}$ be the $\llbracket 1, k \rrbracket$-indexed line ensemble on $[a,b]$ with distribution $\mathbb{P}^{a,b, \vec{z},\vec{z}}_{free}$ (the law of $k$ independent Brownian bridges $\{B_i: [a,b] \rightarrow \mathbb{R} \}_{i = 1}^k$ from $B_i(a) = z_i$ to $B_i(b) = z_i$ with diffusion parameter $1$, where we have rewritten $\mathcal{B}(i, \cdot) = B_i(\cdot)$). Let
$$E = \left\{  B_1(r) > B_2(r) > \cdots > B_k(r)  \mbox{ for all $r \in[a,b]$} \right\}, $$ 
Then from (\ref{UBH1}) and Definition \ref{DefAvoidingLaw} we have that 
\begin{equation}\label{UBH2}
\begin{split}
A &\leq \frac{\mathbb{P}_{free}^{a,b,\vec{z},\vec{z}} \left( \inf_{x \in [a,b]} B_k(x) \leq \min(x_k, y_k) - \sqrt{2} (b-a)^{1/2} (k + r -1) \right)}{\mathbb{P}_{free}^{a,b,\vec{z},\vec{z}} \left(E \right)} \\
&= \frac{\mathbb{P}_{free}^{a,b,\vec{z},\vec{z}} \left( \inf_{x \in [a,b]} B_k(x) \leq z_k - \sqrt{2} (b-a)^{1/2}r \right)}{\mathbb{P}_{free}^{a,b,\vec{z},\vec{z}} \left(E \right)} = \frac{e^{-4r^2}}{\mathbb{P}_{free}^{a,b,\vec{z},\vec{z}} \left(E \right)},
\end{split}
\end{equation}
where in the first equality we used the definition of $z_k$, while in the second one we used Lemma \ref{LemmaBBmax} and the fact that $\tilde{B}(x) =B_k(x - a) - z_k$ has law $ \mathbb{P}^{0,b -a ,0,0}_{free}$ as follows from Step 1 in the proof of Lemma \ref{LemmaAffine}. Finally, we observe that
\begin{equation*}
\begin{split}
&\mathbb{P}_{free}^{a,b,\vec{z},\vec{z}} \left(E \right) \geq \mathbb{P}_{free}^{a,b,\vec{z},\vec{z}} \left( \sup_{x \in [a,b]} |B_i(x) - z_i| < [(b-a)/2]^{1/2} \mbox{ for $i = 1, \dots, k$}  \right) \\
&= \prod_{i = 1}^k \left[1 -  \mathbb{P}_{free}^{a,b,\vec{z},\vec{z}} \left( \sup_{x \in [a,b]} |B_i(x) - z_i| \geq[(b-a)/2]^{1/2}  \right) \right] \\
&\geq \prod_{i = 1}^k \left[1 -  \mathbb{P}_{free}^{a,b,\vec{z},\vec{z}} \left( \sup_{x \in [a,b]} B_i(x) - z_i \geq [(b-a)/2]^{1/2} \hspace{-1mm}  \right) \hspace{-1mm} -  \mathbb{P}_{free}^{a,b,\vec{z},\vec{z}} \left( \inf_{x \in [a,b]} B_i(x) - z_i \leq -[(b-a)/2]^{1/2}  \hspace{-1mm} \right) \hspace{-0.5mm}\right] \\
& = (1 - 2e^{-1})^k,
\end{split}
\end{equation*}
where in the last equality we used Lemma \ref{LemmaBBmax} and the fact that $\tilde{B}_i(x) =B_i(x - a) - z_i$ has law $ \mathbb{P}^{0,b -a ,0,0}_{free}$ as follows from Step 1 in the proof of Lemma \ref{LemmaAffine}. Combining the last inequality with (\ref{UBH2}) we arrive at (\ref{BottomCurveLB2}).
\end{proof}

%
%
\section{Proof of Theorem \ref{ThmMain} }\label{Section4} The purpose of this section is to prove Theorem \ref{ThmMain}. We first state the main result of this section as Proposition \ref{PropMain} and deduce Theorem \ref{ThmMain} from it.  In Section \ref{Section4.1} we present the proof of a basic case of Proposition \ref{PropMain} to illustrate some of the key ideas and we give the full proof in Section \ref{Section4.2}.

\begin{proposition}\label{PropMain}  Let $\Sigma = \llbracket 1, N \rrbracket$ with $N \in \mathbb{N}$ and $\Lambda = [a,b] \subset \mathbb{R}$.  Suppose that $\mathcal{L}^1$ and $\mathcal{L}^2$ are $\Sigma$-indexed line ensembles on $\Lambda$ that satisfy the partial Brownian Gibbs property with laws $\mathbb{P}_1$ and $\mathbb{P}_2$ respectively. Suppose further that for every $k\in \mathbb{N}$,  $a = t_0 < t_1 < t_2 < \cdots < t_k < t_{k+1} = b$ and $x_1, \dots, x_k \in \mathbb{R}$ we have that 
\begin{equation}\label{FDEPM}
\mathbb{P}_1 \left( \mathcal{L}^1_1(t_1) \leq x_1, \dots,\mathcal{L}^1_1(t_k) \leq x_k  \right) =\mathbb{P}_2 \left( \mathcal{L}^2_1(t_1) \leq x_1, \dots,\mathcal{L}^2_1(t_k) \leq x_k  \right).
\end{equation}
Then for every $k\in \mathbb{N}$,  $a = t_0 < t_1 < t_2 < \cdots < t_k < t_{k+1} = b$, $n_1, \dots, n_k \in \llbracket 1, N \rrbracket$ and $x_1, \dots, x_k \in \mathbb{R}$ we have
\begin{equation}\label{FDEPM2}
\mathbb{P}_1 \left( \mathcal{L}^1_{n_1}(t_1) \leq x_1, \dots,\mathcal{L}^1_{n_k}(t_k) \leq x_k  \right) =\mathbb{P}_2 \left( \mathcal{L}^2_{n_1}(t_1) \leq x_1, \dots,\mathcal{L}^2_{n_k}(t_k) \leq x_k  \right).
\end{equation}
\end{proposition}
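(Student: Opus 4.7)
I would prove Proposition \ref{PropMain} by induction on the level $k$, showing that for each $k\in\llbracket 1,N\rrbracket$ all finite-dimensional marginals of $(\mathcal{L}^v_1,\dots,\mathcal{L}^v_k)$ at interior times agree for $v=1,2$. The base case $k=1$ is the hypothesis (\ref{FDEPM}), and the case $k=N$ is the desired conclusion (\ref{FDEPM2}). Suppose the statement holds at some level $k<N$; combining it with Proposition \ref{PropFD} applied to the projected $\llbracket 1,k\rrbracket$-indexed ensembles (and using right-continuity of marginal cumulative distribution functions to extend agreement from $(a,b)$ to $[a,b]$) yields that the laws of $(\mathcal{L}^v_1|_{[a,b]},\dots,\mathcal{L}^v_k|_{[a,b]})$ on $C(\llbracket 1,k\rrbracket\times[a,b])$ coincide for $v=1,2$.

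To pass from level $k$ to level $k+1$, I would apply the partial Brownian Gibbs property with $K=\llbracket 1,k\rrbracket$ (permissible since $k\le N-1$): for every bounded Borel $F\colon C(K\times[a,b])\to\R$,
\begin{equation*}
\E_v\bigl[F(\mathcal{L}^v|_{K\times[a,b]})\bigr] \;=\; \E_v\Bigl[G^{b}_{F}\bigl(\vec{X}^v,\vec{Y}^v,\mathcal{L}^v_{k+1}|_{[a,b]}\bigr)\Bigr],
\end{equation*}
where $\vec{X}^v=(\mathcal{L}^v_1(a),\dots,\mathcal{L}^v_k(a))$, $\vec{Y}^v=(\mathcal{L}^v_1(b),\dots,\mathcal{L}^v_k(b))$, and $G^{b}_F$ is the measurable functional from Lemma \ref{LemmaMeasExp}. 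By the previous paragraph the left-hand side is the same for $v=1,2$, so for every such $F$ the joint laws of $(\vec{X}^v,\vec{Y}^v,\mathcal{L}^v_{k+1}|_{[a,b]})$ assign the same expectation to $G^b_F$.

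The crux is to exhibit a sufficiently rich family of observables $F$ so that the resulting $\{G^b_F\}$ separates the joint laws of the triple $(\vec{x},\vec{y},g)$. The guiding picture is that $\P_{avoid}^{a,b,\vec{x},\vec{y},\infty,g}$ differs from $\P_{avoid}^{a,b,\vec{x},\vec{y},\infty,-\infty}$ by a Radon--Nikodym factor proportional to ${\bf 1}\{\mathcal{Q}_k\ge g\}$; this factor is negligible when $g$ sits safely below the typical trajectory of $\mathcal{Q}_k$ and appreciable when $g$ rises above it. Concretely, for interior test times $s_1<\dots<s_l$ and thresholds $z_1,\dots,z_l$, I would construct $F$'s as smoothed indicators of events such as $\{\mathcal{Q}_k(s_j)\le z_j-\delta\}$ and use the monotone couplings of Lemmas \ref{MCLxy}--\ref{MCLfg} together with the tail estimates on the bottom curve of Lemmas \ref{LemmaBotMax}, \ref{LemmaBotMin}, \ref{LemmaAvoidMax} to argue that $G^b_F(\vec{x},\vec{y},g)$ concentrates, uniformly in $(\vec{x},\vec{y})\in\weyl_k\times\weyl_k$, around a continuous function of $(g(s_1),\dots,g(s_l))$ approximating $\prod_{j=1}^l {\bf 1}\{g(s_j)\le z_j\}$.

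Feeding these approximating families into the displayed identity and passing to the limit would yield
\begin{equation*}
\E_v\!\left[h(\vec{X}^v,\vec{Y}^v)\prod_{j=1}^l{\bf 1}\{\mathcal{L}^v_{k+1}(s_j)\le z_j\}\right]
\end{equation*}
agreeing for $v=1,2$ on a separating class of bounded measurable $h\colon\R^{2k}\to\R$. A $\pi$--$\lambda$ argument mirroring the proof of Proposition \ref{PropFD}, combined with Lemma \ref{LNoAtoms} to dispense with the at-most-countable set of atomic thresholds, then upgrades this to equality of the joint finite-dimensional distributions of $(\vec{X}^v,\vec{Y}^v,\mathcal{L}^v_{k+1}(s_1),\dots,\mathcal{L}^v_{k+1}(s_l))$, which together with the induction hypothesis closes the step. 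The principal obstacle is the third paragraph: one must construct the $F$'s so that $G^b_F(\vec{x},\vec{y},g)$ carries usable information about $g$ \emph{uniformly} in $(\vec{x},\vec{y})$ --- which a priori can take arbitrary values in $\weyl_k$ --- so that the approximation error stays controlled even as the random boundary data $(\vec{X}^v,\vec{Y}^v)$ ranges over all admissible configurations.
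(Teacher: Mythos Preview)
Your inductive framework and the heuristic that the Gibbs conditioning ``feels'' the bottom curve $g$ are both correct and match the paper. However, the obstacle you flag in your final paragraph is genuine and your proposal does not overcome it: with the Gibbs property applied on the fixed window $K\times[a,b]$, asking that $G^b_F(\vec x,\vec y,g)$ approximate $\prod_j\mathbf 1\{g(s_j)\le z_j\}$ uniformly in $(\vec x,\vec y)\in\weyl_k\times\weyl_k$ is essentially hopeless. If $x_k,y_k$ are large then $\mathbb P^{a,b,\vec x,\vec y,\infty,g}_{avoid}$ is close to $\mathbb P^{a,b,\vec x,\vec y,\infty,-\infty}_{avoid}$ and $G^b_F$ carries almost no information about $g$; no single choice of $F$ can compensate for this across all boundary configurations.

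The paper resolves the uniformity problem with two coupled ideas that are absent from your sketch. First, instead of conditioning on $K\times[a,b]$, it applies the Gibbs property on \emph{shrinking} windows $[a^w_s,b^w_s]=[t_s-w^{-1},t_s+w^{-1}]$ around each test time $t_s$ at which one wants to probe $\mathcal L_{k+1}$; the boundary data $\vec x^{s,v,w}=(\mathcal L^v_1(a^w_s),\dots,\mathcal L^v_k(a^w_s))$ and $\vec y^{s,v,w}$ are themselves measurable in the top $k$ curves, so their laws agree for $v=1,2$ by the induction hypothesis. Second, and crucially, the observable is \emph{self-normalized}: writing $S=\{s:n_s=k+1\}$ and $F(r;\cdot)=\mathbb P^{\,\cdot\,,\infty,-\infty}_{avoid}(\mathcal Q_{k}(t_s)\le r)$ for the unconstrained midpoint probability, one takes
\[
H_w(\mathcal L^v)\;=\;\prod_{s\in S^c}\mathbf 1\{\mathcal L^v_{n_s}(t_s)\le x_s\}\cdot\prod_{s\in S}\frac{\mathbf 1\{\mathcal L^v_{k}(t_s)\le x_s\}}{F\bigl(x_s;a^w_s,b^w_s,\vec x^{s,v,w},\vec y^{s,v,w}\bigr)}.
\]
After applying the Gibbs property on each small window, the $s$-th factor becomes the ratio $\mathbb P^{s,v,w}_{avoid}(\mathcal Q_k(t_s)\le x_s)\big/F(x_s;\cdot)$, which the monotone coupling (Lemma \ref{MCLfg}) bounds above by $\mathbf 1\{\mathcal L^v_{k+1}(t_s)\le x_s\}$, giving the $\limsup$ inequality immediately. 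For the $\liminf$, on $\{\mathcal L^v_{k+1}(t_s)<x_s\}$ one uses continuity to put $\mathcal L^v_{k+1}|_{[a^w_s,b^w_s]}$ well below $x_s$, atomlessness (Lemma \ref{LNoAtoms}) to ensure $\mathcal L^v_k(t_s)$ is bounded away from $x_s$, and then Lemmas \ref{LemmaBotMax}--\ref{LemmaAvoidMax} to show that the ratio tends to $1$ as $w\to\infty$ \emph{regardless} of whether the boundary data sit above or below $x_s$. The division by the free probability is precisely what cancels the dependence on the magnitude of $(\vec x^{s,v,w},\vec y^{s,v,w})$ and dissolves the uniformity obstacle you identified.
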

The proof of Proposition \ref{PropMain} is given in Section \ref{Section4.2} below. In the remainder of this section we assume its validity and prove Theorem \ref{ThmMain}
\begin{proof}[Proof of Theorem \ref{ThmMain}] We assume the same notation as in Theorem \ref{ThmMain}. Let $a, b \in \Lambda$ with $a < b$ and $K \in \Sigma$ be given. Let $\pi_{[a,b]}^{\llbracket 1, K \rrbracket}$ be as in (\ref{ProjBox}) and note that by Definition \ref{DefPBGP} we have that under $\mathbb{P}_v$ the $\llbracket 1, K \rrbracket$-indexed line ensembles $\pi_{[a,b]}^{\llbracket 1, K \rrbracket}(\mathcal{L}^v)$ on $[a,b]$ satisfies the partial Brownian Gibbs property, where $v \in \{1,2\}$. Here it is important that we work with the partial Brownian Gibbs property and not the usual Brownian Gibbs property, cf. Remark \ref{RPBGP}. Consequently, by Proposition \ref{PropMain} we conclude that for every $k\in \mathbb{N}$,  $a = t_0 < t_1 < t_2 < \cdots < t_k < t_{k+1} = b$, $n_1, \dots, n_k \in \llbracket 1, K \rrbracket$ and $x_1, \dots, x_k \in \mathbb{R}$ we have
\begin{equation*}
\mathbb{P}_1 \left( \mathcal{L}^1_{n_1}(t_1) \leq x_1, \dots,\mathcal{L}^1_{n_k}(t_k) \leq x_k  \right) =\mathbb{P}_2 \left( \mathcal{L}^2_{n_1}(t_1) \leq x_1, \dots,\mathcal{L}^2_{n_k}(t_k) \leq x_k  \right).
\end{equation*}
 Since $[a,b] \subset \Lambda$ and $K \in \Sigma$ were arbitrary we conclude that the latter equality holds for any $k\in \mathbb{N}$;  $ t_1 < t_2 < \cdots < t_k $, with $t_i \in \Lambda^o$ for $i =1 ,\dots, k$; $n_1, \dots, n_k \in \Sigma$ and $x_1, \dots, x_k \in \mathbb{R}$ and then from Proposition \ref{PropFD} we conclude that $\mathbb{P}_1 = \mathbb{P}_2$. 
\end{proof}

%
%
\subsection{Basic case of Proposition \ref{PropMain}}\label{Section4.1} In this section we work under the same assumptions as in Proposition \ref{PropMain} when $N = 2$ and prove (\ref{FDEPM2}) in the simplest non-trivial case when $k = 1$ and $n_1 = 2$. As we will see many of the key ideas that go into the proof of Proposition \ref{PropMain} are already present in this simple case. The goal is to illustrate the main arguments and explain the meaning and significance of different constructions, so that the reader is better equipped before proceeding with the general proof in the next section.

The special case above consists of proving that for $t_1 \in (a,b)$ and $y_1 \in \mathbb{R}$ we have
\begin{equation}\label{Special}
\mathbb{P}_1( \mathcal{L}^1_2(t_1) \leq y_1 ) = \mathbb{P}_2( \mathcal{L}^2_2(t_1) \leq y_1 ).
\end{equation}

Equation (\ref{FDEPM}) implies by virtue of Proposition \ref{PropFD} that $\mathcal{L}^1_1$ under $\mathbb{P}_1$ has the same law as $\mathcal{L}^2_1$ under $\mathbb{P}_2$ as $\{1\}$-indexed line ensembles on $[a,b]$ or equivalently, as random variables taking values in $(C([a,b]), \mathcal{C})$. In particular, if $H: C([a,b]) \rightarrow \mathbb{R}$ is any bounded measurable function we have
\begin{equation}\label{S41}
\mathbb{E} \left[ H(\mathcal{L}_1^1) \right] = \mathbb{E} \left[ H(\mathcal{L}_1^2) \right],
\end{equation}
where we will use $\mathbb{E}$ to denote the expectation with respect to $\mathbb{P}_1$ or $\mathbb{P}_2$. It will be clear which measure is meant by the expression inside of the expectation. 

The main idea of the argument is to construct a sequence of measurable functions $H_{w} : C([a,b]) \rightarrow \mathbb{R}$, $w\in \mathbb{N}$, for which the equality in (\ref{S41}) holds and such that the left (resp. right) side of (\ref{S41}) approximates the left (resp. right) side of (\ref{Special}) as $w \rightarrow \infty$. Specifically, we will construct sequences $H_{w}$ such that for a given $x_1 \in \mathbb{R}$ we have
\begin{equation}\label{S42}
\begin{split}
&p_w = \mathbb{E} \left[ H_w(\mathcal{L}_1^1) \right] = \mathbb{E} \left[ H_w(\mathcal{L}_1^2) \right] \mbox{ for $w \in \mathbb{N}$ and } \\
&\mathbb{P}_v( \mathcal{L}^v_2(t_1) < x_1 ) \leq \liminf_{w \rightarrow \infty} p_w \leq \limsup_{w \rightarrow \infty} p_w \leq  \mathbb{P}_v( \mathcal{L}^v_2(t_1) \leq x_1 ) \mbox{ where $v \in \{1,2\}$}.
\end{split}
\end{equation}
The second line in (\ref{S42}) is what we mean by ``approximate''. 

The hard part of the proof is finding functions $H_w$ that satisfy (\ref{S42}), but once we have them concluding (\ref{Special}) is easy. Indeed, if we set for $x_1 \in \mathbb{R}$ and $v \in \{1, 2\}$ 
$$G_v(x_1)=\mathbb{P}_v \left( \mathcal{L}^v_{2}(t_1) \leq x_1 \right),$$
then by basic properties of probability measures we know that $G_1$ and $G_2$ are increasing right-continuous functions. Moreover, if $G_1$ and $G_2$ are both continuous at a point $x_1$ then from (\ref{S42}) we know that $G_1(x_1) = G_2(x_1)$. The latter and Lemma \ref{MonEq} imply that $G_1 = G_2$. In particular, $G_1(y_1) = G_2(y_1)$, which is precisely (\ref{Special}).\\

In the remainder of the section we detail our choice of $H_w$ and show that it satisfies (\ref{S42}). Given $s,t,r,x,y \in \mathbb{R}$ with $s < t$ we define 
\begin{equation*}
F(r; s,t,x,y)= \mathbb{P}_{free}^{s,t,x,y} \left( B((s+t)/2) \leq r\right),
\end{equation*}
which is the probability that a Brownian bridge from $B(s) = x$ to $B(t) = y$ with diffusion parameter $1$ has its midpoint below $r$. We also let $a_w = t_1 - w^{-1}$ and $b_w = t_1 + w^{-1}$ for $w \geq W_0$, where $W_0$ is sufficiently large so that $a_w, b_w \in (a,b)$. Here it is important that $t_1 \in (a,b)$ and is not one of the end-points. With the latter data we define for $f \in C([a,b])$ the functions
\begin{equation*}
H_w(f) = \frac{{\bf 1} \{ f(t_1) \leq x_1 \}}{F(x_1; a_w, b_w, f(a_w), f(b_w))} \mbox{, where $w \geq W_0$}.
\end{equation*}
This is the choice of $H_w$ that satisfies (\ref{S42}). In order to see why this choice of functions is suitable for proving (\ref{S42}) we need to apply the partial Brownian Gibbs property, and a technical aspect of the latter, is that it requires that we work with bounded functions, and the $H_w$ are not bounded. Consequently, we define the sequence 
$$H^M_w(f) ={\bf 1} \{ f(t_1) \leq x_1 \} \cdot  \min\left(M, \frac{1}{F(x_1; a_w, b_w, f(a_w), f(b_w))}\right)$$
 for $M \in \mathbb{N}$. For each $M \in \mathbb{N}$ we have from (\ref{S41}) that 
$$\mathbb{E} \left[ H_w^M(\mathcal{L}_1^1) \right] = \mathbb{E} \left[ H_w^M(\mathcal{L}_1^2) \right].$$
We remark that the measurability of $H_w^M$ is a consequence of Lemma \ref{LemmaMeasExp}. Taking the limit as $M \rightarrow \infty$ and applying the monotone convergence theorem gives
\begin{equation}\label{S45}
p_w = \mathbb{E} \left[ H_w(\mathcal{L}_1^1) \right] = \lim_{M\rightarrow \infty} \mathbb{E} \left[ H_w^M(\mathcal{L}_1^1) \right] = \lim_{M\rightarrow \infty} \mathbb{E} \left[ H_w^M(\mathcal{L}_1^2) \right]=  \mathbb{E} \left[ H_w(\mathcal{L}_1^2) \right].
\end{equation}
On the other hand, by the partial Brownian Gibbs property, cf. Definition \ref{DefPBGP}, and the tower property we have for $v \in \{1, 2\}$ that
\begin{equation*}
\begin{split}
\mathbb{E} \left[ H_w^M(\mathcal{L}_1^v) \right] &=  \mathbb{E} \left[ \mathbb{E} \left[ H_w^M(\mathcal{L}_1^v) \big{\vert} \mathcal{F}_{ext}(\{1\} \times (a_w, b_w)) \right] \right] \\
&= \mathbb{E} \left[ \mathbb{P}_{avoid}^{v,w}\left(\mathcal{Q}(t_1) \leq x_1 \right) \cdot \min\left(M, \frac{1}{F(x_1; a_w, b_w, \mathcal{L}^v_1(a_w), \mathcal{L}^v_1(b_w))}\right) \right],
\end{split}
\end{equation*}
where we wrote $ \mathbb{P}_{avoid}^{v,w}$ in place of $\mathbb{P}_{avoid}^{a_w, b_w, \mathcal{L}^v_1(a_w), \mathcal{L}^v_1(b_w), \infty, \mathcal{L}^v_2[a_w,b_w]}$ to simplify the expression, and where $\mathcal{Q}$ is a Brownian bridge, going from $\mathcal{L}^v_1(a_w)$ to $\mathcal{L}^v_1(b_w)$ on the time interval $[a_w, b_w]$ and staying above $\mathcal{L}^v_2[a_w,b_w]$. Taking the limit as $M \rightarrow \infty$ and utilizing the monotone convergence theorem again we see that for $v \in \{1,2\}$
\begin{equation*}
p_w =  \mathbb{E} \left[  \frac{\mathbb{P}_{avoid}^{v,w}\left( \mathcal{Q}(t_1) \leq x_1 \right)}{F(x_1; a_w, b_w, \mathcal{L}^v_1(a_w), \mathcal{L}^v_1(b_w))} \right].
\end{equation*}

The key observation that motivates much of the proof is that in a sense, which will be made precise later, we have for large enough $w$ that
\begin{equation}\label{KeyObservation}
{\bf 1} \{ \mathcal{L}_2^v(t_1) \leq x_1 \} \approx \frac{\mathbb{P}_{avoid}^{v,w}\left( \mathcal{Q}(t_1) \leq x_1 \right)}{F(x_1; a_w, b_w, \mathcal{L}^v_1(a_w), \mathcal{L}^v_1(b_w))}.
\end{equation}
 To begin understanding (\ref{KeyObservation}) we note that if $\mathcal{L}_2^v(t_1) > x_1$ we know that $\mathbb{P}_{avoid}^{v,w}\left( \mathcal{Q}(t_1) \leq x_1 \right) = 0$, since $\mathcal{Q}(t_1) \geq \mathcal{L}_2^v(t_1) > x_1$. In addition,  by Lemma \ref{MCLfg} applied to $a = a_w$, $b = b_w$, $\vec{x} = \mathcal{L}^v_1(a_w)$, $\vec{y} = \mathcal{L}^v_1(a_w)$, $g^t = \mathcal{L}^v_2[a_w, b_w]$ and $g^b = -\infty$ we know that 
$$\mathbb{P}_{avoid}^{v,w}\left( \mathcal{Q}(t_1) \leq x_1 \right) \leq F(x_1; a_w, b_w, \mathcal{L}^v_1(a_w), \mathcal{L}^v_1(b_w)).$$
Explained in simple words, the quantities on the left and right side of the above inequality both measure the probability that a Brownian bridge from $B(a_w) = \mathcal{L}^v_1(a_w)$ to $B(b_w) =  \mathcal{L}^v_1(b_w)$ has its midpoint below $x_1$, with the difference that on the left side the Brownian bridge is conditioned on staying above the curve $\mathcal{L}^v_2[a_w, b_w]$. The content of Lemma \ref{MCLfg} is that such a conditioning stochastically pushes the bridge up, making it less likely to fall below the point $x_1$. Combining the last two arguments we conclude that $\mathbb{P}_v$-almost surely we have
\begin{equation}\label{DominationV1}
\frac{\mathbb{P}_{avoid}^{v,w}\left( \mathcal{Q}(t_1) \leq x_1 \right)}{F(x_1; a_w, b_w, \mathcal{L}^v_1(a_w), \mathcal{L}^v_1(b_w))} \leq {\bf 1} \{ \mathcal{L}^v_2(t_1) \leq x_1\}.\end{equation}
This establishes a one-sided inequality for (\ref{KeyObservation}). The reverse inequality will be weaker in two ways. Firstly, we will replace $\{ \mathcal{L}_2^v(t_1) \leq x_1 \}$ with $\{  \mathcal{L}_2^v(t_1) < x_1 - \epsilon \}$ and secondly, the inequality will not be in the almost sure sense, but in some average sense for large enough $w$. We will make these statements precise later, and continue discussing the heuristics behind the fact that on the event $\{  \mathcal{L}_2^v(t_1) < x_1 - \epsilon \}$ the right side of (\ref{KeyObservation}) is approximately $1$ with high probability.\\

Suppose that $\mathcal{L}_2^v(t_1) < x_1 - 2\epsilon_2$ for some small $\epsilon_2$. Then the continuity of $\mathcal{L}^v_2$ implies that with high probability the whole curve $\mathcal{L}^v_2[a_w, b_w]$ lies below $x_1 - \epsilon_2$ (as long as $w$ is sufficiently large). In addition, by making $\epsilon_2$ small we can make the event $\mathcal{L}_1^v(t_1) \in (x_1 - \epsilon_2, x_1 + \epsilon_2)$ very unlikely. The latter is true since by Lemma \ref{LNoAtoms} the random variable $\mathcal{L}^v_1(t_1)$ has no atoms. Also, since $\mathcal{L}^v_1$ is continuous, we know that the whole curve $\mathcal{L}^v_1[a_w, b_w]$ will be bounded away from $x_1$ for large enough $w$. We are thus naturally split into the two situations of arguing that 
\begin{equation*}
 \frac{\mathbb{P}_{avoid}^{v,w}\left( \mathcal{Q}(t_1) \leq x_1 \right)}{F(x_1; a_w, b_w, \mathcal{L}^v_1(a_w), \mathcal{L}^v_1(b_w))} \approx 1,
\end{equation*}
when $\mathcal{L}^v_1[a_w, b_w]$ stays above $x_1 + 2\epsilon_2/5$ or below $x_1 - 2\epsilon_2/5$. 

If $\mathcal{L}_1^v[a_w,b_w]$ is below $x_1 - 2\epsilon_2/5$ then both $\mathbb{P}_{avoid}^{v,w}\left( \mathcal{Q}(t_1) \leq x_1 \right)$ and $F(x_1; a_w, b_w, \mathcal{L}^v_1(a_w), \mathcal{L}^v_1(b_w))$ can be shown to be close to $1$. In words, if a Brownian bridge is started very low, then its mid-point will also be low with high probability. If $\mathcal{L}_1^v[a_w,b_w]$ is above $x_1 + 2\epsilon_2/5$ then both $\mathbb{P}_{avoid}^{v,w}\left( \mathcal{Q}(t_1) \leq x_1 \right)$ and $F(x_1; a_w, b_w, \mathcal{L}^v_1(a_w), \mathcal{L}^v_1(b_w))$ are very small, but their ratio is actually close to $1$. The reason for this is that $\mathcal{L}^v_2[a_w, b_w]$ is below the line $x_1 - \epsilon_2$, which is very low. Consequently, a Brownian bridge that is started very high, and is conditioned on staying above a curve $\mathcal{L}^v_2[a_w, b_w]$, which is very low, will asymptotically (as $w\rightarrow \infty$) not {\em feel} the effect of this conditioning and behave as if it is a regular Brownian bridge. The latter implies that the probability of any event under the unconditional and conditional Brownian bridge asymptotically are equal. Of course, in this setting both $\mathbb{P}_{avoid}^{v,w}\left( \mathcal{Q}(t_1) \leq x_1 \right)$ and $F(x_1; a_w, b_w, \mathcal{L}^v_1(a_w), \mathcal{L}^v_1(b_w))$ are tail probabilities, but one can show that their ratio is still going to $1$ as $w\rightarrow \infty$. Justifying carefully the heuristic in this paragraph is the focus of the rest of this section.\\

Summarizing the work done so far, we have that $p_w$ as defined in (\ref{S45}) satisfy the first line in (\ref{S42}) and the third inequality of the second line in (\ref{S42}). What remains to be seen is that for $v \in \{1, 2\}$
\begin{equation}\label{Red1V1}
\begin{split}
\mathbb{P}_v( \mathcal{L}^v_2(t_1) < x_1 ) \leq \liminf_{w \rightarrow \infty} p_w\mbox{ where $v \in \{1,2\}$}.
\end{split}
\end{equation}
We will establish (\ref{Red1V1}) in the four steps below, but first we make a couple of remarks. The work done above corresponds to the first three steps in the general proof of Proposition \ref{PropMain} in the next section. The arguments we present below correspond to Steps 4-7. The main flow of the argument of our work here is the same as the general proof, except that the functions $F(r;s,t,x,y)$ get replaced by more involved expressions that are necessary from the fact that we work with general $N \in \mathbb{N}$ and not just $N = 2$. Also in the end of Steps 3 and 4, we will use some exact results about Brownian bridges, which in the general proof get replaced with Lemmas \ref{LemmaBotMax}, \ref{LemmaBotMin} and \ref{LemmaAvoidMax} in Steps 6 and 7. \\

{\bf \raggedleft Step 1.} In this step we state a simple reduction of (\ref{Red1V1}). Afterwards we define two sequences $p_w^v$ for $v \in \{1,2\}$, which will play an important role in our arguments.

 Firstly, we claim that for any $\epsilon_3 > 0$ and $v \in \{1, 2\}$ we have
\begin{equation}\label{Red2V1}
\mathbb{P}_v \left( \mathcal{L}^v_{2}(t_1) < x_1 \right) - \epsilon_3 \leq \liminf_{w \rightarrow \infty} p_w.
\end{equation}
It is clear that if (\ref{Red2V1}) is true then (\ref{Red1V1}) would follow. We thus focus on establishing (\ref{Red2V1}) and fix $\epsilon_3 > 0$ in the sequel.

We know by Lemma \ref{LNoAtoms} that $\mathbb{P}_v(\mathcal{L}^v_{1}(t_1) = x_1) = 0.$
Consequently, we can find $\epsilon_2 > 0$ (depending on $\epsilon_3$) such that 
\begin{equation}\label{CloseToBarrierV1}
\mathbb{P}_v(\mathcal{L}^v_{1}(t_1) \in [x_1 -\epsilon_2, x_1 + \epsilon_2]) < \epsilon_3/8.
\end{equation}
In addition, by possibly making $\epsilon_2$ smaller we can also ensure that
\begin{equation}\label{SecondCurveBoundV1}
\mathbb{P}_v \left( \mathcal{L}^v_{2}(t_1) < x_1 \right)  - \mathbb{P}_v \left( \mathcal{L}^v_{2}(t_1)< x_1 - 2\epsilon_2 \right) < \epsilon_3/8. 
\end{equation}
This fixes our choice of $\epsilon_2$. 

For a function $f \in C([a,b])$ we define the {\em modulus of continuity} by
\begin{equation*}
w(f,\delta) = \sup_{\substack{x,y \in [a,b]\\ |x-y| \leq \delta}} |f(x) - f(y)|.
\end{equation*}
Since $\mathcal{L}^v_1$ and $\mathcal{L}^v_2$ are continuous on $[a,b]$ almost surely we conclude that there exits $W_0^{-1} > \epsilon_1 > 0$ (depending on $\epsilon_3$ and $\epsilon_2$) such that 
\begin{equation}\label{MOCBoundV1}
\mbox{ if $ E_v  = \{ w( \mathcal{L}^v_i, \epsilon_1) > \epsilon_2/10 \mbox{ for some $i \in \{ 1, 2 \}$} \}$ then $\mathbb{P}_v \left( E_v  \right)  < \epsilon_3/8$. }
\end{equation}
 For $v \in \{1,2\}$ we define the event
$$F_v = \{  \mathcal{L}^v_{2}(x) < x_1 -  \epsilon_2 \mbox{ for $x \in [t_1- \epsilon_1, t_1 + \epsilon_1]$}    \}.$$

Define sequences $p_w^v$ for $v \in \{1, 2\}$ through
\begin{equation*}
p_w^v = \mathbb{E} \left[ {\bf 1}_{E^c_v} \cdot {\bf 1}_{F_v} \cdot \frac{ \mathbb{P}_{avoid}^{v,w}\left(\mathcal{Q}( t_1) \leq x_1 \right)}{F(x_1; a_w,b_w,\mathcal{L}^v_1(a_w),\mathcal{L}^{v}_1(b_w))}   \right].
\end{equation*}
 We claim that 
\begin{equation}\label{Red3V1}
\mathbb{P}_v \left( F_v  \right) - 3\epsilon_3/4\leq \liminf_{w \rightarrow \infty} p^v_w.
\end{equation}
We will prove (\ref{Red3V1}) in the steps below. For now we assume its validity and prove (\ref{Red2V1}). Observe that by definition we have $p_w \geq p_w^v$ and so by (\ref{Red3V1}) we have
$$\mathbb{P}_v \left( F_v \right) - 3\epsilon_3/4 \leq \liminf_{w \rightarrow \infty} p_w,$$
In addition, by the definition of $\epsilon_1$ we know that 
\begin{align*}
\mathbb{P}_v \left( F_v \right) &= \mathbb{P}_v \left( F_v \cap E_v \right) + \mathbb{P}_v \left( F_v \cap E^c_v \right) \geq  \mathbb{P}_v \left( F_v \cap E^c_v \right)\\
&\geq \mathbb{P}_v \left( \left\{ \mathcal{L}^v_{2}(t_1) <  x_1 - 2\epsilon_2 \right\} \cap E^c_v \right) \geq \mathbb{P}_v \left( \mathcal{L}^v_{2}(t_1) <  x_1 \right)  - \epsilon_3/4,
\end{align*}
where in the last inequality we used (\ref{SecondCurveBoundV1}) and (\ref{MOCBoundV1}). The last two inequalities imply (\ref{Red2V1}).\\

{\bf \raggedleft Step 2.} Our focus in the remaining steps is to prove (\ref{Red3V1}). We define the events 
$$A_v = \{\mathcal{L}^v_{1}(t_1) \in [x_1 -\epsilon_2/2, x_1 + \epsilon_2/2] \}.$$ 
We claim that $\mathbb{P}_v$-almost surely for all $w$ sufficiently large we have 
\begin{equation}\label{Red4V1}
{\bf 1}_{E^c_v \cap F_v \cap A_v^c} \cdot\frac{ \mathbb{P}_{avoid}^{v,w}\left(\mathcal{Q}( t_1) \leq x_1 \right)}{F(x_1; a_w,b_w,\mathcal{L}^v_1(a_w),\mathcal{L}^v_1(b_w))} \geq {\bf 1}_{E^c_v \cap F_v \cap A_v^c} \cdot \left( 1- \epsilon_3/4 \right).
\end{equation}
We will prove (\ref{Red4V1}) in the steps below. For now we assume its validity and conclude the proof of (\ref{Red3V1}). In view of (\ref{Red4V1}) we know that 
$$\liminf_{w \rightarrow \infty} p^v_w \geq \mathbb{P}_v \left(E_v^c \cap F_v \cap A^c_v\right) - \epsilon_3/4 \geq \mathbb{P}_v(F_v) -  \mathbb{P}_v(E_v) - \mathbb{P}_v(A_v) - \epsilon_3/4 \geq  \mathbb{P}_v(F_v) - \epsilon_3/2,$$
where in the last inequality we used (\ref{CloseToBarrierV1}) and (\ref{MOCBoundV1}). The above clearly implies (\ref{Red3V1}).\\

{\bf \raggedleft Step 3.} We claim that $\mathbb{P}_v$-almost surely
\begin{equation*}
\lim_{w \rightarrow \infty} {\bf 1}_{E^c_v \cap F_v \cap A_v^c} \cdot\frac{ \mathbb{P}_{avoid}^{v,w}\left(\mathcal{Q}( t_1) \leq x_1 \right)}{F(x_1; a_w,b_w,\mathcal{L}^v_1(a_w),\mathcal{L}^{v}_1(b_w))} = {\bf 1}_{E^c_v \cap F_v \cap A_v^c},
\end{equation*}
which clearly implies (\ref{Red4V1}). In view of (\ref{DominationV1}) we know that the right side above is greater than or equal to each term on the left. Consequently, it suffices to show that $\mathbb{P}_v$-almost surely
\begin{equation}\label{Red6InfV1}
\liminf_{w \rightarrow \infty} {\bf 1}_{E^c_v \cap F_v \cap A_v^c} \cdot\frac{ \mathbb{P}_{avoid}^{v,w}\left(\mathcal{Q}( t_1) \leq x_1 \right)}{F(x_1; a_w,b_w,\mathcal{L}^v_1(a_w),\mathcal{L}^{v}_1(b_w))}\geq {\bf 1}_{E^c_v \cap F_v \cap A_v^c}.
\end{equation}
Let $\omega \in E^c_v \cap F_v \cap A_v^c$ be fixed. Then $\omega \in A_v^c$ and so $\mathcal{L}_{1}^v(t_1) > x_1 + \epsilon_2/2$ or $\mathcal{L}_{1}^v(t_1) < x_1 - \epsilon_2/2$, which we treat separately. We will handle the case when $\mathcal{L}_1^v(t_1) < x_1 - \epsilon_2/2$ in this step, and postpone the other case to the next step. See also Figure \ref{S4_11}. 
\begin{figure}[ht]
\begin{center}
  \includegraphics[scale = 0.8]{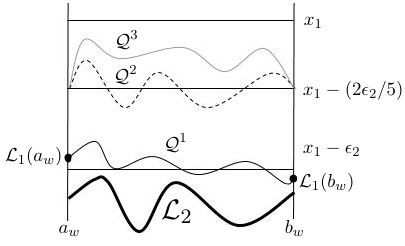}
  \captionsetup{width= 0.9\linewidth}
  \vspace{-2mm}
  \caption{The figure represents schematically the situation when $\omega \in E^c_v \cap F_v \cap A_v^c$ and $\mathcal{L}_1^v(t_1) < x_1 - \epsilon_2/2$. We remark that to make the picture comprehensible we have distorted it and in fact one has that $b_w - a_w$ is much smaller than $\epsilon_2$. In addition, to ease the notation we have removed the dependence on $v \in \{1,2\}$. The curve $\mathcal{Q}^1$ is a Brownian bridge between the points $\mathcal{L}_1(a_w)$ and $\mathcal{L}_1(b_w)$ conditioned on staying above $\mathcal{L}_2$ and proving (\ref{Red6InfV1}) in the case we consider in Step 3 boils down to showing that $\mathbb{P} (\mathcal{Q}^1(t_1) \leq x_1)$ converges to $1$ as $w \rightarrow \infty$ (recall that $t_1 = (b_w + a_w)/2$). This reduction is the first line of (\ref{MEst0V1}). What one observes further is that if $\mathcal{L}_1^v(t_1) < x_1 - \epsilon_2/2$ and $\omega \in E^c_v$ then $\mathcal{L}^v_1(a_w) \leq x_1 - (2\epsilon_2/5)$ and $\mathcal{L}^v_1(b_w) \leq x_1 - (2\epsilon_2/5)$. If we thus construct a Brownian bridge $\mathcal{Q}^2$ starting and ending at $x_1 - (2\epsilon_2/5)$ and conditioned to stay above $\mathcal{L}_2$, then this bridge can be coupled with $\mathcal{Q}^1$ in view of Lemma \ref{MCLxy} so that it sits above it. Finally, on the event $F_v$ the curve $\mathcal{L}^v_2$ lies below the line $x_1 - \epsilon_2$. This means that we can construct a bridge $\mathcal{Q}^3$ with the same starting and ending points as $\mathcal{Q}^2$ but conditioned to stay above the line $x_1 - \epsilon_2$, and this bridge can be coupled with $\mathcal{Q}^2$ in view of Lemma \ref{MCLfg} so that it sits above it. Since each construction pushes the curves upwards, the probability $\mathbb{P}(\mathcal{Q}^3(t_1) \leq x_1)$ is a lower bound for $\mathbb{P}(\mathcal{Q}^1(t_1) \leq x_1)$ and hence it suffices to show that the former is going to $1$ as $w\rightarrow \infty$. This reduction is the content of (\ref{MEst0V1}), where $\mathcal{Q}$ is used to denote all three of the above random curves the distinction being obvious from the notation used for $\mathbb{P}$. Showing that $\mathbb{P}(\mathcal{Q}^3(t_1) \leq x_1)$ converges to $1$ can be seen as follows. The curve $\mathcal{Q}^3$ is a Brownian bridge that is pinned at level $x_1 - 2\epsilon_2/5$, and is conditioned to stay above $x_1 - \epsilon_2$. The order of its typical fluctuation is $w^{-1/2}$ and this is much lower than $\epsilon_2$, which effectively means that $\mathcal{Q}^3$ does not feel its conditioning on staying above $x_1 - \epsilon_2$ (as this level is extremely low in the scale of the fluctuations) and behaves like a regular Brownian bridge. But a regular Brownian bridge started very low is very likely to have a low midpoint as well. The latter heuristic can be justified with simple exact computations, which are done in (\ref{MEst1V1}).
}
  \label{S4_11}
  \end{center}
\end{figure}

Suppose that $\omega \in E^c_v \cap F_v \cap A_v^c$ is such that $\mathcal{L}_{1}^v(t_1)< x_1 - \epsilon_2/2$ and let $W_1 \geq W_0$ be sufficiently large so that $W_1^{-1} < \epsilon_1$. Then for $w \geq W_1$ we have
\begin{equation}\label{MEst0V1}
\begin{split}
&\frac{ \mathbb{P}_{avoid}^{v,w}\left(\mathcal{Q}( t_1) \leq x_1 \right)}{F(x_1; a_w,b_w,\mathcal{L}^v_1(a_w),\mathcal{L}^{v}_1(b_w))}  \geq  \mathbb{P}_{avoid}^{a_w, b_w, \mathcal{L}^v_1(a_w),\mathcal{L}^{v}_1(b_w), \infty, \mathcal{L}^v_2[a_w, b_w]}\left(\mathcal{Q}( t_1) \leq x_1 \right)  \geq \\
&  \geq \mathbb{P}_{avoid}^{a_w, b_w, x_1 -  2\epsilon_2/5,x_1 -  2\epsilon_2/5, \infty, \mathcal{L}^v_2[a_w, b_w]}\left(\mathcal{Q}( t_1) \leq x_1 \right) \geq \\
& \geq \mathbb{P}_{avoid}^{a_w, b_w,x_1 -  2\epsilon_2/5,x_1 -  2\epsilon_2/5, \infty,x_1 - \epsilon_2}\left(\mathcal{Q}( t_1) \leq x_1 \right).
\end{split}
\end{equation}
In the first inequality we used that $F \in (0, 1]$ and the definition of $ \mathbb{P}_{avoid}^{v,w}$. To see the second inequality we note that since $\omega \in E^c_v$ we know that 
$$|\mathcal{L}^v_{1}(a_w) - \mathcal{L}^v_{1}(t_1)| \leq \epsilon_2/10 \mbox{ and }|\mathcal{L}^v_{1}(b_w) - \mathcal{L}^v_{1}(t_1)| \leq \epsilon_2/10,$$
which implies that 
$$\mathcal{L}^v_{1}(a_w)  \leq x_1 -  2\epsilon_2/5  \mbox{ and } \mathcal{L}^v_{1}(b_w) \leq x_1 -  2\epsilon_2/5 .$$
The above inequalities and Lemma \ref{MCLxy} imply the second inequality in (\ref{MEst0V1}). In deriving the third inequality we used that on $F_v$ the curve $ \mathcal{L}^v_2[a_w, b_w]$ is upper bounded by $x_1 - \epsilon_2$ and Lemma \ref{MCLfg}.\\

We consequently observe that
\begin{equation}\label{MEst1V1}
\begin{split}
&  \mathbb{P}_{avoid}^{a_w, b_w,x_1 - 2\epsilon_2/5, x_1 - 2\epsilon_2/5, \infty,x_1 - \epsilon_2}\left(\mathcal{Q}( t_1) \leq x_1 \right) = \\
&= \frac{ \mathbb{P}_{free}^{a_w, b_w,x_1 - 2\epsilon_2/5, x_1 - 2\epsilon_2/5 }\left(\mathcal{Q}( t_1) \leq x_1 \mbox{ and } \inf_{x \in [a_w, b_w]}\mathcal{Q}(x) \geq x_1 - \epsilon_2 \right)}{ \mathbb{P}_{free}^{a_w, b_w,x_1 - 2\epsilon_2/5, x_1 - 2\epsilon_2/5 }\left( \inf_{x \in [a_w, b_w]}\mathcal{Q}(x) \geq x_1 - \epsilon_2 \right)} \\
&\geq 1 - \frac{ \mathbb{P}_{free}^{a_w, b_w,x_1 - 2\epsilon_2/5, x_1 - 2\epsilon_2/5 }\left(\mathcal{Q}( t_1) > x_1  \right)}{ \mathbb{P}_{free}^{a_w, b_w,x_1 - 2\epsilon_2/5, x_1 - 2\epsilon_2/5 }\left( \inf_{x \in [a_w, b_w]}\mathcal{Q}(x) \geq x_1 - \epsilon_2 \right)} \\
&= 1 - \frac{ \mathbb{P}_{free}^{-1, 1,0, 0 }\left(\mathcal{Q}(0) > 2q \sqrt{w}  \right)}{ \mathbb{P}_{free}^{-1, 1,0, 0 }\left( \inf_{x \in [-1,1]}\mathcal{Q}(x) \geq - 3q \sqrt{w} \right)} = 1 - \frac{1 - \Phi( 2q \sqrt{2w}) }{1-  \exp(-9q^2w)}.
\end{split}
\end{equation}
where in the first equality we used Definition \ref{DefAvoidingLaw}, in the next to last equality follows from a simple change of variables (here $q = \epsilon_2/5$), cf. Lemma \ref{LemmaAffine} for $k = 1$. In the last equality, the denominators are equal by Lemma \ref{LemmaBBmax} and the numerators are equal, since $\mathcal{Q}(0)$ is normally distributed with mean $0$ and variance $1/2$ (recall that $\Phi$ was the cdf of a standard Gaussian random variable). Combining (\ref{MEst0V1}) and (\ref{MEst1V1}) we conclude (\ref{Red6InfV1}) when $\omega \in E^c_v \cap F_v \cap A_v^c$ is such that $\mathcal{L}_{1}^v(t_1)< x_1 - \epsilon_2/2$.\\

{\bf \raggedleft Step 4.}  Suppose that $\omega \in E^c_v \cap F_v \cap A_v^c$ is such that $\mathcal{L}_1^v(t_1) > x_1 + \epsilon_2/2$ and let $W_1 \geq W_0$ be sufficiently large so that $W_1^{-1} < \epsilon_1$. See also Figure \ref{S4_12}.

\begin{figure}[ht]
\begin{center}
  \includegraphics[scale = 0.8]{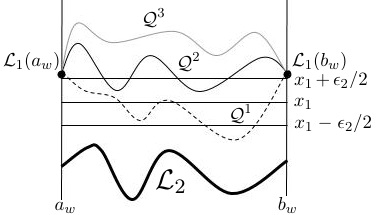}
  \captionsetup{width= 0.9\linewidth}
  \vspace{-2mm}
  \caption{The figure represents schematically the situation when $\omega \in E^c_v \cap F_v \cap A_v^c$ and $\mathcal{L}_1^v(t_1) > x_1 + \epsilon_2/2$. We remark that to make the picture comprehensible we have distorted it and in fact one has that $b_w - a_w$ is much smaller than $\epsilon_2$. In addition, to ease the notation we have removed the dependence on $v \in \{1,2\}$. The curve $\mathcal{Q}^1$ is a Brownian bridge between the points $\mathcal{L}_1(a_w)$ and $\mathcal{L}_1(b_w)$ and $\mathcal{Q}^2$ is a Brownian bridge between the same points but conditioned on staying above $\mathcal{L}_2$. Proving (\ref{Red6InfV1}) in the case we consider in Step 4 boils down to showing that $\mathbb{P} (\mathcal{Q}^2(t_1) \leq x_1) / \mathbb{P} (\mathcal{Q}^1(t_1) \leq x_1)$ is lower bounded by $1$ as $w \rightarrow \infty$ (recall that $t_1 = (b_w + a_w)/2$). On the event $F_v$ the curve $\mathcal{L}^v_2$ lies below the line $x_1 - \epsilon_2$. This means that we can construct a bridge $\mathcal{Q}^3$ with the same starting and ending points as $\mathcal{Q}^2$ but conditioned to stay above the line $x_1 - \epsilon_2$, and this bridge can be coupled with $\mathcal{Q}^2$ in view of Lemma \ref{MCLfg} so that it sits above it. Since this construction pushes the curve upwards, the probability $\mathbb{P}(\mathcal{Q}^3(t_1) \leq x_1)$ is a lower bound for $\mathbb{P}(\mathcal{Q}^2(t_1) \leq x_1)$ and hence it suffices to show that $\mathbb{P} (\mathcal{Q}^3(t_1) \leq x_1) / \mathbb{P} (\mathcal{Q}^1(t_1) \leq x_1)$ is lower bounded by $1$ as $w \rightarrow \infty$. This reduction is the content of (\ref{MSP1}). The curve $\mathcal{Q}^3$ is a Brownian bridge that is pinned at level $x_1 + \epsilon_2/2$, and is conditioned to stay above $x_1 - \epsilon_2$. The order of its typical fluctuation is $w^{-1/2}$ and this is much lower than $\epsilon_2$, which effectively means that $\mathcal{Q}^3$ does not feel its conditioning on staying above $x_1 - \epsilon_2$ (as this level is extremely low in the scale of the fluctuations) and behaves like $\mathcal{Q}^1$. Of course, $\mathbb{P} (\mathcal{Q}^3(t_1) \leq x_1)$ and $ \mathbb{P} (\mathcal{Q}^1(t_1) \leq x_1)$ are tail probabilities but one can still show that their ratio is close to $1$. This is done in (\ref{S48V1}) and the equations that follow it.
}
  \label{S4_12}
  \end{center}
\end{figure}

For $w \geq W_1$ we have
\begin{equation}\label{MSP1}
\frac{ \mathbb{P}_{avoid}^{v,w}\left(\mathcal{Q}( t_1) \leq x_1 \right)}{F(x_1; a_w,b_w,\mathcal{L}_1^v(a_w), \mathcal{L}^v_1(b_w))} \geq  \frac{\mathbb{P}_{avoid}^{a_w, b_w, \mathcal{L}_1^v(a_w), \mathcal{L}^v_1(b_w), \infty,x_1 - \epsilon_2}\left(\mathcal{Q}( t_1) \leq x_1 \right)}{F(x_1; a_w,b_w,\mathcal{L}_1^v(a_w), \mathcal{L}^v_1(b_w))}, 
\end{equation}
where we used that on $F_v$ the curve $ \mathcal{L}^v_2[a_w, b_w]$ is upper bounded by $x_1 - \epsilon_2$ and  Lemma \ref{MCLfg}.
We next notice that by Definition \ref{DefAvoidingLaw} the numerator on the right side equals
$$\frac{\mathbb{P}^{a_w, b_w, \mathcal{L}_1^v(a_w), \mathcal{L}^v_1(b_w)}_{free}\left(\inf_{x\in[a_w, b_w]} \mathcal{Q}(x) \geq x_1 - \epsilon_2 \mbox{ and } \mathcal{Q}(t_1) \leq x_1 \right)}{\mathbb{P}^{a_w, b_w, \mathcal{L}_1^v(a_w), \mathcal{L}^v_1(b_w)}_{free}\left(\inf_{x\in[a_w, b_w]} \mathcal{Q}(x) \geq x_1 - \epsilon_2 \right)}. $$
Combining the last two statements, and performing a change of variables (we use Lemma \ref{LemmaAffine} for $k = 1$) we conclude that
\begin{equation}\label{S48V1}
\begin{split}
&\frac{ \mathbb{P}_{avoid}^{v,w}\left(Q( t_1) \leq x_1 \right)}{F(x_1; a_w,b_w,\mathcal{L}_1^v(a_w), \mathcal{L}^v_1(b_w))} \geq \frac{\tilde{\mathbb{P}}_{v,w}(\inf_{x \in [-1,1]}\tilde{Q}(x) \geq -\epsilon_2 \cdot \sqrt{w} \mbox{ and }\tilde{Q}(0) \leq 0 ) }{\tilde{\mathbb{P}}_{v,w}(\tilde{Q}(0) \leq 0) \cdot \tilde{\mathbb{P}}_{v,w}(\inf_{x \in [-1,1]}\tilde{Q}(x) \geq -\epsilon_2 \cdot \sqrt{w} ) } \geq \\
& \geq \frac{\tilde{\mathbb{P}}_{v,w}(\inf_{x \in [-1,1]}\tilde{Q}(x) \geq -\epsilon_2  \sqrt{w} \mbox{ and }\tilde{Q}(0) \leq 0 ) }{\tilde{\mathbb{P}}_{v,w}(\tilde{Q}(0) \leq 0) } \geq 1 -  \frac{\tilde{\mathbb{P}}_{v,w}(\inf_{x \in [-1,1]}\tilde{Q}(x) < -\epsilon_2  \sqrt{w})}{\tilde{\mathbb{P}}_{v,w}(\tilde{Q}(0) \leq 0) }  ,
\end{split}
\end{equation}
where $\tilde{\mathbb{P}}_{v,w}$ denotes $\mathbb{P}_{free}^{-1,1,A,B}$ with $A_{v,w} = [\mathcal{L}^v_1(a_w) - x_1] \cdot \sqrt{w}$ and $B_{v,w}= [\mathcal{L}^v_1(b_w) - x_1] \cdot \sqrt{w} $ and $\tilde{Q}$ is a $\tilde{\mathbb{P}}_{v,w}$-distributed Brownian bridge.

We now have by the Gaussianity of $\tilde{Q}(0)$ that 
$$\tilde{\mathbb{P}}_{v,w}(\tilde{Q}(0) \leq 0) = 1 - \Phi \left( \frac{A_{v,w} + B_{v,w} }{\sqrt{2}}\right). $$
 Also we have since $\omega \in E^c_v$ that $A_{v,w} \geq (2\epsilon_2/5) \sqrt{w}, B_{v,w} \geq  (2\epsilon_2/5) \sqrt{w}$. Consequently,
$$\tilde{\mathbb{P}}_{v,w}\left(\inf_{x \in [-1,1]}\tilde{Q}(x) < -\epsilon_2 \cdot \sqrt{w}\right) = \exp \left( - [\epsilon_2\sqrt{w} + A_{v,w}][ \epsilon_2 \sqrt{w} + B_{v.w}] \right),$$
where the last equality follows from Lemma \ref{LemmaBBmax}. Let $M_{v,w} = \max(A_{v,w}, B_{v,w})$ and $m_{v,w} = \min(A_{v,w}, B_{v,w})$. Since $\omega \in E^c_v$ we know that $M_{v,w} - m_{v,w} \leq (\epsilon_2/5 ) \sqrt{w}$. Consequently, the above two equalities imply that 
\begin{align*}
\tilde{\mathbb{P}}_{v,w}(\tilde{Q}(0) \leq 0) &\geq \frac{\exp \left( - M_{v,w}^2  \right)}{c_0 \sqrt{2\pi} [1 + \sqrt{2} M_{v,w}]}, \\
\tilde{\mathbb{P}}_{v,w}\left(\inf_{x \in [-1,1]}\tilde{Q}(x) < -\epsilon_2 \cdot \sqrt{w}\right) &\leq \exp \left( - [M_{v,w} + 5 q \sqrt{w}] [M_{v,w} + 4q \sqrt{w}] \right),
\end{align*}
where in the first inequality we used  Lemma \ref{LemmaI1} (here $c_0$ is as in this lemma and is a universal constant), and also $q = \epsilon_2/5.$ Combining the last two inequalities with (\ref{S48V1}) we see that 
$$\frac{ \mathbb{P}_{avoid}^{v,w}\left(Q( t_1) \leq x_1 \right)}{F(x_1; a_w,b_w,\mathcal{L}_1^v(a_w), \mathcal{L}^v_1(b_w))} \geq1 - c_0 [1 + \sqrt{2} M_{v,w}] \cdot \exp \left(- [M_{v,w} + 5 q \sqrt{w}] [M_{v,w} + 4q \sqrt{w}] + M_{v,w}^2 \right) . $$
Since $M_{v,w} \geq 2q \sqrt{w}$ we see that the above expression converges to $1$ as $w \rightarrow \infty$, which proves (\ref{Red6InfV1}) when $\omega \in E^c_v \cap F_v \cap A_v^c$ is such that $\mathcal{L}_{1}^v(t_1)> x_1 + \epsilon_2/2$. This concludes the proof of (\ref{Red6InfV1}) and hence the proposition in this basic case.

%
%
\subsection{Proof of Proposition \ref{PropMain}}\label{Section4.2} Here we present the proof of Proposition \ref{PropMain}. We assume the same notation as in Sections \ref{Section2} and \ref{Section3}. We proceed by induction on $N$ with base case $N = 1$, being obvious. Suppose that we know the result for $N - 1$ and wish to prove it for $N$. 
Suppose that $k \in \mathbb{N}$ and $a = t_0 < t_1 < \cdots < t_k < t_{k+1} = b$, $n_1, \dots, n_k \in \llbracket 1, N \rrbracket$ and $y_1, \dots, y_k \in \mathbb{R}$ are all given. We wish to prove that 
\begin{equation}\label{FDEPMr1}
\mathbb{P}_1 \left( \mathcal{L}^1_{n_1}(t_1) \leq y_1, \dots,\mathcal{L}^1_{n_k}(t_k) \leq y_k  \right) =\mathbb{P}_2 \left( \mathcal{L}^2_{n_1}(t_1) \leq y_1, \dots,\mathcal{L}^2_{n_k}(t_k) \leq y_k \right).
\end{equation}

For clarity we split the proof into several steps. \\

{\bf \raggedleft Step 1.} Let $x_1, \dots, x_k \in \mathbb{R}$ be fixed. We claim that there exists a sequence $\{ p_w\}_{w = 1}^\infty$ with $p_w \in [0,1]$ such that for $v \in \{1,2\}$ we have
\begin{equation}\label{Red1}
\begin{split}
&\mathbb{P}_v \left( \mathcal{L}^v_{n_1}(t_1) < x_1, \dots,\mathcal{L}^v_{n_k}(t_k) < x_k  \right) \leq \liminf_{w \rightarrow \infty} p_w \\
&\leq \limsup_{w \rightarrow \infty} p_w \leq \mathbb{P}_v \left( \mathcal{L}^v_{n_1}(t_1) \leq x_1, \dots,\mathcal{L}^v_{n_k}(t_k) \leq x_k  \right).
\end{split}
\end{equation}
We will prove (\ref{Red1}) in the steps below. For now we assume its validity and finish the proof of (\ref{FDEPMr1}). For $x_1, \dots, x_k \in \mathbb{R}$ and $v \in \{1, 2\}$ we let
$$F_v(x_1, \dots, x_k)=\mathbb{P}_v \left( \mathcal{L}^v_{n_1}(t_1) \leq x_1, \dots,\mathcal{L}^v_{n_k}(t_k) \leq x_k  \right).$$
We also define for $v \in \{1,2\}$ and $r \in \mathbb{R}$
$$G_v(r)= F_v(y_1 + r, y_2 + r, \cdots ,y_k +r).$$
Observe that by basic properties of probability measures we know that $G_1$ and $G_2$ are increasing right-continuous functions. Moreover, if $G_1$ and $G_2$ are both continuous at a point $r$ then from (\ref{Red1}) applied to $x_i = y_i + r$ for $i = 1,\dots, k$ we know that $G_1(r) = G_2(r)$. The latter and Lemma \ref{MonEq} imply that $G_1 = G_2$. In particular, $G_1(0) = G_2(0)$, which is precisely (\ref{FDEPMr1}). \\

{\bf \raggedleft Step 2.} In this step we define the sequence $p_w$ that satisfies (\ref{Red1}). We also introduce some notation that will be used in the rest of the proof.

 Given points $s,t,r \in \mathbb{R}$ with $s < t$ and $\vec{x}, \vec{y} \in W_k^{\circ}$ we define 
\begin{equation*}
F(r; s,t,\vec{x},\vec{y})= \mathbb{P}^{s,t,\vec{x},\vec{y}, \infty, -\infty}_{avoid}(\mathcal{Q}_{N-1}((s +t)/2) \leq r),
\end{equation*}
where $(\mathcal{Q}_{1}, \dots, \mathcal{Q}_{N-1})$ is $ \mathbb{P}^{s,t,\vec{x},\vec{y}, \infty, -\infty}_{avoid}$-distributed. Observe that for fixed $s,t,r$ the function $F(r; s,t,\vec{x},\vec{y})$ is a measurable function of $\vec{x},\vec{y}$ as follows from Lemma \ref{LemmaMeasExp}. For $M \in \mathbb{N}$ we denote 
$$G_M(r; s,t,\vec{x},\vec{y}) = \min \left(M, \frac{1}{F(r; s,t,\vec{x},\vec{y})} \right),$$
and note that $G_M$ is a non-negative bounded measurable function. Let $S = \{ s \in \{1, \dots, k\} : n_s = N\}$.  For $w \in \mathbb{N}$ and $s \in S$ we define $a_s^w = t_s - w^{-1}$ and $b_s^w = t_s + w^{-1}$. We also fix $W_0 \in \mathbb{N}$ sufficiently large so that $w \geq W_0$ implies $2w^{-1} \leq \min_{1 \leq i \leq k+1} (t_i - t_{i-1})$.  

By induction hypothesis, we know that (\ref{FDEPMr1}) holds provided $n_1, \dots, n_k \in \llbracket 1, N-1 \rrbracket$. The latter and Proposition \ref{PropFD} imply that $\pi_{[a,b]}^{\llbracket 1, N-1 \rrbracket}(\mathcal{L}^1)$ under $\mathbb{P}_1$ and $\pi_{[a,b]}^{\llbracket 1, N-1 \rrbracket}(\mathcal{L}^2)$ under $\mathbb{P}_2$ have the same distribution as $\llbracket 1, N-1 \rrbracket$-indexed line ensembles on $[a,b]$. We conclude that for $w \geq W_0$ 
\begin{equation}\label{Trunc1}
\mathbb{E} \left[ H^M_w(\mathcal{L}^1; \vec{t}, \vec{n}, \vec{x}) \right] =  \mathbb{E} \left[ H^M_w(\mathcal{L}^2; \vec{t}, \vec{n}, \vec{x}) \right],
\end{equation}
with
\begin{equation*}
H^M_w(\mathcal{L}^v; \vec{t}, \vec{n}, \vec{x}) = \prod_{s \in S^c} {\bf 1}\{\mathcal{L}_{n_s}^v(t_s) \leq x_s \} \cdot \prod_{s \in S} {\bf 1}\{ \mathcal{L}_{N-1}^v(t_s) \leq x_s\} G_M\left(x_{s}; a^w_s, b^w_s, \vec{x}^{s,v,w}, \vec{y}^{s,v,w} \right),
\end{equation*}
where $\vec{x}^{s,v,w} = (\mathcal{L}_1^v(a_s^w), \dots, \mathcal{L}_{N-1}^v(a_s^w))$ and $\vec{y}^{s,v,w} = (\mathcal{L}_1^v(b_s^w), \dots, \mathcal{L}_{N-1}^v(b_s^w))$ for $v = 1,2$. Some of the notation we defined above is illustrated in Figure \ref{S4_2}.
\begin{figure}[ht]
\begin{center}
  \includegraphics[scale = 0.8]{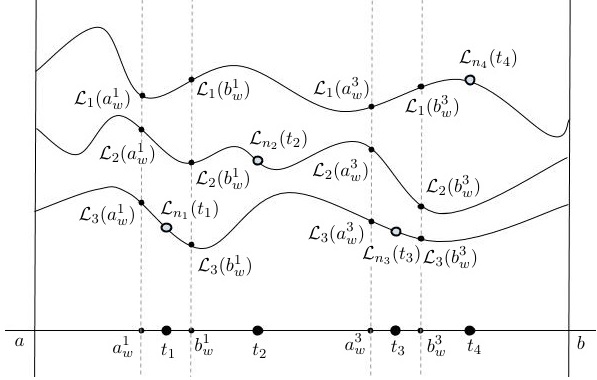}
  \vspace{-2mm}
  \caption{The figure schemiatically represents $\mathcal{L}^v$ where we have suppressed $v$ from the notation. In the figure, $N = 3$, $S = \{1,3\}$ and $n_1 = 3, n_2 = 2, n_3 = 3, n_4 = 1$. }
  \label{S4_2}
  \end{center}
\end{figure}

For $s \in S$ define $\mathcal{F}_{ext}^{s,w} = \mathcal{F}_{ext} (\llbracket 1, N-1 \rrbracket \times (a^w_s,b^w_s))$ as in Definition \ref{DefPBGP} and observe that by the tower property for conditional expectations and the partial Brownian Gibbs property 
\begin{equation}\label{Trunc2}
\begin{split}
&\mathbb{E} \left[H^M_w(\mathcal{L}^v; \vec{t}, \vec{n}, \vec{x})  \right] = \mathbb{E} \left[ \mathbb{E} \left[ \cdots \mathbb{E}\left[ H^M_w(\mathcal{L}^1; \vec{t}, \vec{n}, \vec{x})\Big{|} \mathcal{F}_{ext}^{s_1,w}  \right] \cdots \Big{|} \mathcal{F}_{ext}^{s_m,w} \right] \right] \\
&=\mathbb{E} \left[ \prod_{s \in S^c} {\bf 1}\{\mathcal{L}_{n_s}^v(t_s) \leq x_s \} \prod_{s \in S} \mathbb{P}_{avoid}^{s,v,w}\left(\mathcal{Q}_{N-1}( t_s) \leq x_s\right) \cdot G_M\left(x_{s}; a^w_s, b^w_s, \vec{x}^{s,v,w}, \vec{y}^{s,v,w}\right)  \right],
\end{split}
\end{equation}
where $v \in \{1, 2\}$ and we have written $\mathbb{P}_{avoid}^{s,v,w}$ in place of $\mathbb{P}_{avoid}^{a^w_s, b^w_s,\vec{x}^{s,v,w},\vec{y}^{s,v,w}, \infty, \mathcal{L}_N^v[a^w_s,b^w_s]} $ to simplify the expression, also $(\mathcal{Q}_{1}, \dots, \mathcal{Q}_{N-1})$ is $\mathbb{P}_{avoid}^{s,v,w}$-distributed. In addition, $s_1, \dots, s_m$ is an enumeration of the elements of $S$ and in deriving the above expression, we also used Lemma \ref{LemmaMeasExp}, which implies that 
$\mathbb{P}_{avoid}^{s,v,w}\left(\mathcal{Q}_{N-1}( t_s) \leq x_s\right)$
is measurable with respect to the $\sigma$-algebra
$$ \sigma \left\{ \mathcal{L}^v_i(s) : \mbox{  $i \in \llbracket 1, N-1 \rrbracket$ and $s \in \{a_s^w,b_s^w\}$, or $s \in [a_s^w,b_s^w]$ and $i = N$} \right\}.$$

Combining (\ref{Trunc1}) and (\ref{Trunc2}) and taking the limit as $M \rightarrow \infty$ (using the monotone convergence theorem) we conclude that for any $w \geq W_0$ we have
\begin{equation}\label{Obs2}
\begin{split}
p_w &= \mathbb{E} \left[ \prod_{s \in S}\frac{ \mathbb{P}_{avoid}^{s,1,w}\left(\mathcal{Q}_{N-1}( t_s) \leq x_s \right)}{F(x_s; a^w_s,b^w_s,\vec{x}^{s,1,w},\vec{y}^{s,1,w})} \cdot \prod_{s \in S^c} {\bf 1}\{\mathcal{L}_{n_s}^1(t_s) \leq x_s \}  \right] \\
&= \mathbb{E} \left[ \prod_{s \in S}\frac{ \mathbb{P}_{avoid}^{s,2,w}\left(\mathcal{Q}_{N-1}( t_s) \leq x_s \right)}{F(x_s; a^w_s,b^w_s,\vec{x}^{s,2,w},\vec{y}^{s,2,w})} \cdot \prod_{s \in S^c} {\bf 1}\{\mathcal{L}_{n_s}^2(t_s) \leq x_s \}  \right].
\end{split}
\end{equation}
Equation (\ref{Obs2}) defines the sequence $p_w$ and we show that it satisfies (\ref{Red1}) in the steps below.\\

{\bf \raggedleft Step 3.} In this step we prove the second line in (\ref{Red1}). By Lemma \ref{MCLfg} applied to $a = a_s^w$, $b = b_s^w$, $\vec{x} = \vec{x}^{s,v,w}$, $\vec{y} = \vec{y}^{s,v,w}$, $g^t = \mathcal{L}^v_N[a_s^w, b_s^w]$ and $g^b = -\infty$ we know that 
\begin{equation}\label{Domination}
 \mathbb{P}_{avoid}^{s,v,w} \hspace{-0.5mm}\left(\hspace{-0.5mm}\mathcal{Q}_{N-1}( t_s) \hspace{-0.5mm} \leq \hspace{-0.5mm} x_s\hspace{-0.5mm} \right)\hspace{-0.5mm} \leq  \hspace{-0.5mm} \mathbb{P}_{avoid}^{a^w_s, b^w_s, \vec{x}^{s,v,w},\vec{y}^{s,v,w}\hspace{-1mm} , \infty, -\infty}\hspace{-0.5mm} \hspace{-0.5mm} \left( \hspace{-0.5mm} \mathcal{Q}_{N-1}( t_s) \hspace{-0.5mm} \leq \hspace{-0.5mm} x_s \right) = F(x_s; a^w_s,b^w_s,\vec{x}^{s,v,w}\hspace{-0.5mm},\vec{y}^{s,v,w}).
\end{equation} 
In addition, we observe that on the event $\{ \mathcal{L}^v_N(t_s) > x_s \}$ we have
$ \mathbb{P}_{avoid}^{s,v,w}\left(\mathcal{Q}_{N-1}( t_s) \leq x_s \right) = 0$.
The latter two statements imply that for any $w \geq W_0$ and $v \in \{1,2\}$ we have that 
$$p_w \leq \mathbb{E} \left[ \prod_{s \in S}{\bf 1}\{\mathcal{L}_N^v(t_s) \leq x_s \}  \cdot \prod_{s \in S^c} {\bf 1}\{\mathcal{L}_{n_s}^v(t_s) \leq x_s \}  \right],$$
which clearly implies the second line in (\ref{Red1}). \\

{\bf \raggedleft Step 4.} In this step we state a simple reduction of the first line of (\ref{Red1}). Afterwards we define two sequences $p_w^v$ for $v \in \{1,2\}$, which will play an important role in our arguments.

 Firstly, we claim that for any $\epsilon_3 > 0$ and $v \in \{1, 2\}$ we have
\begin{equation}\label{Red2}
\mathbb{P}_v \left( \mathcal{L}^v_{n_1}(t_1) < x_1, \dots,\mathcal{L}^v_{n_k}(t_k) < x_k  \right) - \epsilon_3 \leq \liminf_{w \rightarrow \infty} p_w.
\end{equation}
It is clear that if (\ref{Red2}) is true then the first line of (\ref{Red1}) would follow. We thus focus on establishing (\ref{Red2}) and fix $\epsilon_3 > 0$ in the sequel.

We know by Lemma \ref{LNoAtoms} that for any $s \in S$ we have 
$\mathbb{P}_v(\mathcal{L}^v_{N-1}(t_s) = x_s) = 0$.
Consequently, we can find $\epsilon_2 > 0$ (depending on $\epsilon_3$) such that 
\begin{equation}\label{CloseToBarrier}
\sum_{s \in S} \mathbb{P}_v(\mathcal{L}^v_{N-1}(t_s) \in [x_s -\epsilon_2, x_s + \epsilon_2]) < \epsilon_3/8.
\end{equation}
In addition, by possibly making $\epsilon_2$ smaller we can also ensure that
\begin{equation}\label{SecondCurveBound}
\mathbb{P}_v \left( \mathcal{L}^v_{n_1}(t_1) \hspace{-0.5mm}  < \hspace{-0.5mm} x_1, \dots,\mathcal{L}^v_{n_k}(t_k)  \hspace{-0.5mm} < \hspace{-0.5mm}x_k  \right)  - \mathbb{P}_v \left( \mathcal{L}^v_{n_1}(t_1) \hspace{-0.5mm} < \hspace{-0.5mm} x_1 - 2\epsilon_2, \dots,\mathcal{L}^v_{n_k}(t_k) \hspace{-0.5mm} < \hspace{-0.5mm}x_k -2\epsilon_2 \right) < \epsilon_3/8. 
\end{equation}
This fixes our choice of $\epsilon_2$. 

For a function $f \in C([a,b])$ we define the {\em modulus of continuity} by
\begin{equation*}
w(f,\delta) = \sup_{\substack{x,y \in [a,b]\\ |x-y| \leq \delta}} |f(x) - f(y)|.
\end{equation*}
Since $\mathcal{L}^v_1, \dots, \mathcal{L}^v_N$ are continuous on $[a,b]$ almost surely we conclude that there exits $W_0^{-1} > \epsilon_1 > 0$ (depending on $\epsilon_3$ and $\epsilon_2$) such that 
\begin{equation}\label{MOCBound}
\mbox{ if $ E_v  = \{ w( \mathcal{L}^v_i, \epsilon_1) > \epsilon_2/10 \mbox{ for some $i \in \llbracket 1, N \rrbracket$} \}$ then $\mathbb{P}_v \left( E_v  \right)  < \epsilon_3/8$. }
\end{equation}
 For $v \in \{1,2\}$ we define the event
$$F_v = \{  \mathcal{L}^v_{n_i}(x) < x_i -  \epsilon_2 \mbox{ for $x \in [t_i- \epsilon_1, t_i + \epsilon_1]$ for $i = 1,\dots, k$}    \}.$$

Define sequences $p_w^v$ for $v \in \{1, 2\}$ through
\begin{equation*}
p_w^v = \mathbb{E} \left[ {\bf 1}_{E^c_v} \cdot {\bf 1}_{F_v} \cdot \prod_{s \in S}\frac{ \mathbb{P}_{avoid}^{s,v,w}\left(\mathcal{Q}_{N-1}( t_s) \leq x_s \right)}{F(x_s; a^w_s,b^w_s,\vec{x}^{s,v,w},\vec{y}^{s,v,w})}   \right].
\end{equation*}
 We claim that 
\begin{equation}\label{Red3}
\mathbb{P}_v \left( F_v  \right) - 3\epsilon_3/4\leq \liminf_{w \rightarrow \infty} p^v_w.
\end{equation}
We will prove (\ref{Red3}) in the steps below. For now we assume its validity and prove (\ref{Red2}). Observe that by definition we have $p_w \geq p_w^v$ and so by (\ref{Red3}) we have
$$\mathbb{P}_v \left( F_v \right) - 3\epsilon_3/4 \leq \liminf_{w \rightarrow \infty} p_w,$$
In addition, by the definition of $\epsilon_1$ we know that 
\begin{align*}
\mathbb{P}_v \left( F_v \right) &= \mathbb{P}_v \left( F_v \cap E_v \right) + \mathbb{P}_v \left( F_v \cap E^c_v \right) \geq  \mathbb{P}_v \left( F_v \cap E^c_v \right)\\
&\geq \mathbb{P}_v \left( \left\{ \mathcal{L}^v_{n_1}(t_1) \hspace{-0.5mm} < \hspace{-0.5mm} x_1 - 2\epsilon_2, \dots,\mathcal{L}^v_{n_k}(t_k) \hspace{-0.5mm} < \hspace{-0.5mm}x_k -2\epsilon_2 \right\} \cap E^c_v \right) \\
&\geq \mathbb{P}_v \left( \mathcal{L}^v_{n_1}(t_1) \hspace{-0.5mm}  < \hspace{-0.5mm} x_1, \dots,\mathcal{L}^v_{n_k}(t_k)  \hspace{-0.5mm} < \hspace{-0.5mm}x_k  \right)  - \epsilon_3/4,
\end{align*}
where in the last inequality we used (\ref{SecondCurveBound}) and (\ref{MOCBound}). The last two inequalities imply (\ref{Red2}).\\

{\bf \raggedleft Step 5.} Our focus in the remaining steps is to prove (\ref{Red3}). We define the events 
$$A_v = \{\mathcal{L}^v_{N-1}(t_s) \in [x_s -\epsilon_2/2, x_s + \epsilon_2/2] \mbox{ for some $s \in S$}\}.$$ 
We claim that $\mathbb{P}_v$-almost surely we have for all $w$ sufficiently large that 
\begin{equation}\label{Red4}
{\bf 1}_{E^c_v \cap F_v \cap A_v^c} \cdot \prod_{s \in S}\frac{ \mathbb{P}_{avoid}^{s,v,w}\left(\mathcal{Q}_{N-1}( t_s) \leq x_s \right)}{F(x_s; a^w_s,b^w_s,\vec{x}^{s,v,w},\vec{y}^{s,v,w})} \geq {\bf 1}_{E^c_v \cap F_v \cap A_v^c} \cdot \left( 1- \epsilon_3/4 \right).
\end{equation}
We will prove (\ref{Red4}) in the steps below. For now we assume its validity and conclude the proof of (\ref{Red3}). In view of (\ref{Red4}) we know that 
$$\liminf_{w \rightarrow \infty} p^v_w \geq \mathbb{P}_v \left(E_v^c \cap F_v \cap A^c_v\right) - \epsilon_3/4 \geq \mathbb{P}_v(F_v) -  \mathbb{P}_v(E_v) - \mathbb{P}_v(A_v) - \epsilon_3/4 \geq  \mathbb{P}_v(F_v) - \epsilon_3/2,$$
where in the last inequality we used (\ref{CloseToBarrier}) and (\ref{MOCBound}). The above clearly implies (\ref{Red3}).\\

{\bf \raggedleft Step 6.} We claim that for each $s \in S$ we have $\mathbb{P}_v$-almost surely
\begin{equation*}
\lim_{w \rightarrow \infty} {\bf 1}_{E^c_v \cap F_v \cap A_v^c} \cdot \frac{ \mathbb{P}_{avoid}^{s,v,w}\left(\mathcal{Q}_{N-1}( t_s) \leq x_s \right)}{F(x_s; a^w_s,b^w_s,\vec{x}^{s,v,w},\vec{y}^{s,v,w})} = {\bf 1}_{E^c_v \cap F_v \cap A_v^c},
\end{equation*}
which clearly implies (\ref{Red4}). In view of (\ref{Domination}) we know that the right side above is greater than or equal to each term on the left. Consequently, it suffices to show that $\mathbb{P}_v$-almost surely
\begin{equation}\label{Red6Inf}
\liminf_{w \rightarrow \infty} {\bf 1}_{E^c_v \cap F_v \cap A_v^c} \cdot \frac{ \mathbb{P}_{avoid}^{s,v,w}\left(\mathcal{Q}_{N-1}( t_s) \leq x_s \right)}{F(x_s; a^w_s,b^w_s,\vec{x}^{s,v,w},\vec{y}^{s,v,w})} \geq {\bf 1}_{E^c_v \cap F_v \cap A_v^c}.
\end{equation}
Let $\omega \in E^c_v \cap F_v \cap A_v^c$ be fixed. Then $\omega \in A_v^c$ and so $\mathcal{L}_{N-1}^v(t_s) > x_s + \epsilon_2/2$ or $\mathcal{L}_{N-1}^v(t_s) < x_s - \epsilon_2/2$, which we treat separately. We will handle the case when $\mathcal{L}_1^v(t_s) < x_s - \epsilon_2/2$ in this step, and postpone the other case to the next step.\\

Suppose that $\omega \in E^c_v \cap F_v \cap A_v^c$ is such that $\mathcal{L}_{N-1}^v(t_s) < x_s - \epsilon_2/2$ and let $W_1 \geq W_0$ be sufficiently large so that $W_1^{-1} < \epsilon_1$. Then for $w \geq W_1$ we have
\begin{equation}\label{MEst0}
\begin{split}
&\frac{ \mathbb{P}_{avoid}^{s,v,w}\left(\mathcal{Q}_{N-1}( t_s) \leq x_s \right)}{F(x_s; a^w_s,b^w_s,\vec{x}^{s,v,w},\vec{y}^{s,v,w})} \geq  \mathbb{P}_{avoid}^{a_s^w, b_s^w, \vec{x}^{s,v,w},\vec{y}^{s,v,w}, \infty, \mathcal{L}^v_N[a_s^w, b_s^w]}\left(\mathcal{Q}_{N-1}( t_s) \leq x_s \right),
\end{split}
\end{equation}
where we used that $F \in (0, 1]$ and the definition of $\mathbb{P}_{avoid}^{s,v,w}$.
Since $\omega \in E^c_v$ we know 
$$|\mathcal{L}^v_{N-1}(a_s^w) - \mathcal{L}^v_{N-1}(t_s)| \leq \epsilon_2/10 \mbox{ and }|\mathcal{L}^v_{N-1}(b_s^w) - \mathcal{L}^v_{N-1}(t_s)|  \leq \epsilon_2/10,$$
which implies that 
$$l_s^w =  x_s -  2\epsilon_2/5 - \mathcal{L}^v_{N-1}(a_s^w) \geq 0 \mbox{ and } r_{s}^w = x_s -  2\epsilon_2/5 -\mathcal{L}^v_{N-1}(b_s^w) \geq 0.$$
Let $\vec{\lambda}^{s,v,w}$ be defined as $\vec{\lambda}^{s,v,w}_i = \vec{x}^{s,v,w}_i + l_s^w $ and $\vec{\rho}^{s,v,w}$ be defined as $\vec{\rho}^{s,v,w}_i = \vec{y}^{s,v,w}_i + r_s^w $ for $i = 1, \dots, N-1$. In particular, we obtain 
\begin{equation}\label{MEst1}
\begin{split}
&  \mathbb{P}_{avoid}^{a_s^w, b_s^w, \vec{x}^{s,v,w},\vec{y}^{s,v,w}, \infty, \mathcal{L}^v_N[a_s^w, b_s^w]}\left(\mathcal{Q}_{N-1}( t_s) \leq x_s \right) \geq  \\
& \mathbb{P}_{avoid}^{a_s^w, b_s^w,\vec{\lambda}^{s,v,w},\vec{\rho}^{s,v,w}, \infty,\mathcal{L}^v_N[a_s^w, b_s^w]}\left(\mathcal{Q}_{N-1}( t_s) \leq x_s \right) \geq   \\
&  \mathbb{P}_{avoid}^{a_s^w, b_s^w,\vec{\lambda}^{s,v,w},\vec{\rho}^{s,v,w}, \infty,x_s - \epsilon_2}\left(\mathcal{Q}_{N-1}( t_s) \leq x_s \right) = \\
& \frac{ \mathbb{P}_{avoid}^{a_s^w, b_s^w,\vec{\lambda}^{s,v,w},\vec{\rho}^{s,v,w}, \infty, - \infty }\left(\mathcal{Q}_{N-1}( t_s) \leq x_s \mbox{ and } \inf_{x \in [a_s^w, b_s^w]}\mathcal{Q}_{N-1}(x) \geq x_s - \epsilon_2 \right)}{ \mathbb{P}_{avoid}^{a_s^w, b_s^w,\vec{\lambda}^{s,v,w},\vec{\rho}^{s,v,w}, \infty, - \infty }\left( \inf_{x \in [a_s^w, b_s^w]}\mathcal{Q}_{N-1}(x) \geq x_s - \epsilon_2 \right)} \\
&\geq 1 - \frac{ \mathbb{P}_{avoid}^{a_s^w, b_s^w,\vec{\lambda}^{s,v,w},\vec{\rho}^{s,v,w}, \infty, - \infty }\left(\mathcal{Q}_{N-1}( t_s) > x_s  \right)}{ \mathbb{P}_{avoid}^{a_s^w, b_s^w,\vec{\lambda}^{s,v,w},\vec{\rho}^{s,v,w}, \infty, - \infty }\left( \inf_{x \in [a_s^w, b_s^w]}\mathcal{Q}_{N-1}(x) \geq x_s - \epsilon_2 \right)}
\end{split}
\end{equation}
where in the first inequality we used Lemma \ref{MCLxy} and in the second one we used Lemma \ref{MCLfg}. The equality in going from the third to the fourth line uses Definition \ref{DefAvoidingLaw} twice. It follows from Lemma \ref{LemmaAvoidMax} applied to $a = a_s^w, b = b_s^w, \vec{x} = \vec{\lambda}^{s,v,w}, \vec{y} = \vec{\rho}^{s,v,w}, k = N-1$ and $r = r_{1,w} = \frac{3\epsilon_2 w^{1/2}}{10} - k + 1$ that if $w$ is sufficiently large (so that $r \geq 0$) we have 
\begin{equation}\label{MEst2}
\mathbb{P}_{avoid}^{a_s^w, b_s^w,\vec{\lambda}^{s,v,w},\vec{\rho}^{s,v,w}, \infty, - \infty }\left( \inf_{x \in [a_s^w, b_s^w]}\mathcal{Q}_{N-1}(x) \geq x_s - \epsilon_2 \right) \geq 1 - (1 - 2e^{-1})^{-N+1} \cdot e^{-4r_{1,w}^2}.
\end{equation}
In addition, it follows from Lemma \ref{LemmaBotMax} applied to $a = a_s^w, b = b_s^w, \vec{x} = \vec{\lambda}^{s,v,w}, \vec{y} = \vec{\rho}^{s,v,w}, k = N-1$ and $r = r_{2,w} = \frac{\sqrt{2} \epsilon_2 w^{1/2}}{5} $ that if $w$ is sufficiently large (so that $r \geq 0$) we have 
 \begin{equation}\label{MEst3}
\mathbb{P}_{avoid}^{a_s^w, b_s^w,\vec{\lambda}^{s,v,w},\vec{\rho}^{s,v,w}, \infty, - \infty }\left(\mathcal{Q}_{N-1}( t_s) > x_s  \right) \leq \frac{c_0 e^{-2r^2_{2,w}}}{\sqrt{2\pi} [1 + 2 r_{2,w}]}.
\end{equation}
Since $r_{1,w}$ and $r_{2,w}$ both converge to $\infty$ as $w \rightarrow \infty$, we see that (\ref{MEst0}), (\ref{MEst1}), (\ref{MEst2}) and (\ref{MEst3}) together imply (\ref{Red6Inf})  when $\omega \in E^c_v \cap F_v \cap A_v^c$ is such that $\mathcal{L}_{N-1}^v(t_s) < x_s - \epsilon_2/2$.\\

{\bf \raggedleft Step 7.} Suppose that $\omega \in E^c_v \cap F_v \cap A_v^c$ is such that $\mathcal{L}_{N-1}^v(t_s) > x_s + \epsilon_2/2$ and let $W_1 \geq W_0$ be sufficiently large so that $W_1^{-1} < \epsilon_1$. Then for $w \geq W_1$ we have
\begin{equation*}
\frac{ \mathbb{P}_{avoid}^{s,v,w}\left(\mathcal{Q}_{N-1}( t_s) \leq x_s \right)}{F(x_s; a^w_s,b^w_s,\vec{x}^{s,v,w},\vec{y}^{s,v,w})} \geq  \frac{\mathbb{P}_{avoid}^{a_s^w, b_s^w, \vec{x}^{s,v,w},\vec{y}^{s,v,w}, \infty,x_s - \epsilon_2}\left(\mathcal{Q}_{N-1}( t_s) \leq x_s \right)}{F(x_s; a^w_s,b^w_s,\vec{x}^{s,v,w},\vec{y}^{s,v,w})}, 
\end{equation*}
where we used that on $F_v$ the curve $ \mathcal{L}^v_N[a_s^w, b_s^w]$ is upper bounded by $x_s - \epsilon_2$ and  Lemma \ref{MCLfg}.
We next notice that 
\begin{align*}
&\mathbb{P}_{avoid}^{a_s^w, b_s^w, \vec{x}^{s,v,w},\vec{y}^{s,v,w}, \infty,x_s - \epsilon_2}\left(\mathcal{Q}_{N-1}( t_s) \leq x_s \right) \\
&=\frac{ \mathbb{P}_{avoid}^{a_s^w, b_s^w,\vec{x}^{s,v,w},\vec{y}^{s,v,w}, \infty, - \infty }\left(\mathcal{Q}_{N-1}( t_s) \leq x_s \mbox{ and } \inf_{x \in [a_s^w, b_s^w]}\mathcal{Q}_{N-1}(x) \geq x_s - \epsilon_2 \right)}{ \mathbb{P}_{avoid}^{a_s^w, b_s^w,\vec{x}^{s,v,w},\vec{y}^{s,v,w}, \infty, - \infty }\left( \inf_{x \in [a_s^w, b_s^w]}\mathcal{Q}_{N-1}(x) \geq x_s - \epsilon_2 \right)}.
\end{align*}
Combining the last two statements, and performing a change of variables we conclude that
\begin{equation}\label{MEst4}
\begin{split}
&\frac{ \mathbb{P}_{avoid}^{s,v,w}\left(\mathcal{Q}_{N-1}( t_s) \leq x_s \right)}{F(x_s; a^w_s,b^w_s,\vec{x}^{s,v,w},\vec{y}^{s,v,w})} \geq \frac{\tilde{\mathbb{P}}_{v,w}(\inf_{x \in [-1,1]}\tilde{\mathcal{Q}}_{N-1}(x) \geq -\epsilon_2 \cdot \sqrt{w} \mbox{ and }\tilde{\mathcal{Q}}_{N-1}(0) \leq 0 ) }{\tilde{\mathbb{P}}_{v,w}(\tilde{\mathcal{Q}}_{N-1}(0) \leq 0) \cdot \tilde{\mathbb{P}}_{v,w}(\inf_{x \in [-1,1]}\tilde{\mathcal{Q}}_{N-1}(x) \geq -\epsilon_2 \cdot \sqrt{w} ) } \\
&\geq \frac{\tilde{\mathbb{P}}_{v,w}(\inf_{x \in [-1,1]}\tilde{\mathcal{Q}}_{N-1}(x) \geq -\epsilon_2 \cdot \sqrt{w} \mbox{ and }\tilde{\mathcal{Q}}_{N-1}(0) \leq 0 ) }{\tilde{\mathbb{P}}_{v,w}(\tilde{\mathcal{Q}}_{N-1}(0) \leq 0)  } \\
&\geq 1 -  \frac{\tilde{\mathbb{P}}_{v,w}(\inf_{x \in [-1,1]}\tilde{\mathcal{Q}}_{N-1}(x) \leq -\epsilon_2 \cdot \sqrt{w})}{\tilde{\mathbb{P}}_{v,w}(\tilde{\mathcal{Q}}_{N-1}(0) \leq 0) }  ,
\end{split}
\end{equation}
where $\tilde{P}_{v,w} = \mathbb{P}_{avoid}^{-1,1,\vec{A}, \vec{B}, -\infty, \infty}$ with $\vec{A}_{v,w} = [\vec{x}^{s,v,w} - x_s \cdot \vec{1}] \cdot \sqrt{w}$ and $\vec{B}_{v,w}= [\vec{y}^{s,v,w} - x_s \cdot \vec{1}] \cdot \sqrt{w} $ and $\vec{1}$ is the vector in $\mathbb{R}^{N-1}$ with all entries equal to $1$. The change of variables we used above comes from Lemma \ref{LemmaAffine} applied to $r = x_s$, $u = t_s$, $c = \sqrt{w}$, $a = - 1$, $b = 1$, $\vec{x} =  \vec{A}_{v,w}$ and $\vec{y} =  \vec{B}_{v,w}$. 

Put $\vec{A}_{v,w} = (A^{v,w}_1, \dots, A^{v,w}_{N-1})$ and $\vec{B}_{v,w}= (B^{v,w}_1, \dots, B^{v,w}_{N-1})$. We also let $M_{v,w} = \max(A^{v,w}_{N-1}, B^{v,w}_{N-1})$ and $m_{v,w} = \min (A^{v,w}_{N-1}, B^{v,w}_{N-1}).$  Since $\omega \in E_v^c$ and by assumption $\mathcal{L}_{N-1}^v(t_s) > x_s + \epsilon_2/2$, we know that $m_{v,w} \geq \sqrt{w} \cdot (2\epsilon_2/5)$ and $M_{v,w} - m_{v,w} \leq \sqrt{w} (\epsilon_2/5)$.

It follows from Lemma \ref{LemmaAvoidMax} applied to $a = -1, b = 1, \vec{x} = \vec{A}_{v,w} , \vec{y} = \vec{B}_{v,w}, k = N-1$ and $r = r_{1,w} = \frac{\epsilon_2 \sqrt{w} + m_{v,w} }{2} - k + 1$ that if $w$ is sufficiently large (so that $r \geq 0$) we have 
\begin{equation}\label{MEst5}
\tilde{\mathbb{P}}_{v,w}\left(\inf_{x \in [-1,1]}\tilde{\mathcal{Q}}_{N-1}(x) \leq -\epsilon_2 \cdot \sqrt{w}\right)  \leq  (1 - 2e^{-1})^{-N+1} \cdot e^{-4r_{1,w}^2}.
\end{equation}
In addition, it follows from Lemma \ref{LemmaBotMin} applied to $a = -1, b = 1, \vec{x} = \vec{A}_{v,w} , \vec{y} = \vec{B}_{v,w}, k = N-1$ and $r = r_{2,w} = \frac{M_{v,w}}{\sqrt{2}} $ that 
 \begin{equation}\label{MEst6}
\tilde{\mathbb{P}}_{v,w}(\tilde{\mathcal{Q}}_{N-1}(0) \leq 0)  \geq  \frac{c_0 e^{-2r_{2,w}^2}}{\sqrt{2\pi} [1 + 2r_{2,w}]}.
\end{equation}
Combining (\ref{MEst4}), (\ref{MEst5}) and (\ref{MEst6}) we see that for all $w$ sufficiently large we have
\begin{equation}\label{MEst7}
\begin{split}
&\frac{ \mathbb{P}_{avoid}^{s,v,w}\left(\mathcal{Q}_{N-1}( t_s) \leq x_s \right)}{F(x_s; a^w_s,b^w_s,\vec{x}^{s,v,w},\vec{y}^{s,v,w})}  \geq 1 - [1- 2e^{-1}]^{-N+1}e^{-4r_{1,w}^2 + 2 r_{2,w}^2} \cdot \frac{\sqrt{2\pi} [1 + 2 r_{2,w}]}{c_0} \\
&= 1 - \frac{\sqrt{2\pi}[1- 2e^{-1}]^{-N+1}}{c_0} \cdot [1 + \sqrt{2} M_{v,w}]e^{-[\epsilon_2 \sqrt{w} + m_{v,w} - 2N + 4]^2 + M_{v,w}^2}  \\
&\geq  1 - \frac{\sqrt{2\pi}[1- 2e^{-1}]^{-N+1}}{c_0} \cdot [1 + \sqrt{2} M_{v,w}]e^{-[(4\epsilon_2/5) \sqrt{w} + M_{v,w} - 2N + 4]^2 + M_{v,w}^2}  \\
&\geq 1 -   [1 + \sqrt{2} M_{v,w}]e^{-[(\epsilon_2/2) \sqrt{w} + M_{v,w}]^2 + M_{v,w}^2} \geq 1 - [1 + \sqrt{2} M_{v,w}]e^{-\epsilon_2 M_{v,w}},
\end{split}
\end{equation}
where the first equality used the definition of $r_{1,w}$ and $r_{2,w}$; in  going from the second to the third line we used that $M_{v,w} - m_{v,w} \leq \sqrt{w} (\epsilon_2/5)$, and the inequalities at the end of the second and third lines hold for all large enough $w$. Since $M_{v,w} \geq \sqrt{w} (2\epsilon_2/5)$ we know it converges to infinity as $w \rightarrow \infty$ and so we see that (\ref{MEst7}) implies (\ref{Red6Inf}) when $\omega \in E^c_v \cap F_v \cap A_v^c$ is such that $\mathcal{L}_{N-1}^v(t_s) > x_s + \epsilon_2/2$.. This concludes the proof of (\ref{Red6Inf}) and hence the proposition.

%
%
\subsection{Proof of Corollary \ref{CorMain2}}\label{Section4.3} In this section we give the proof of Corollary \ref{CorMain2}. We will use the same notation as in the statement of the corollary and Section \ref{Section4.2} above.\\

The proof is by contradiction and we assume that for every
$k\in \mathbb{N}$,  $t_1 < t_2 < \cdots < t_k$ with $t_i \in \Lambda$ and $x_1, \dots, x_k \in \mathbb{R}$ we have
\begin{equation}\label{S43E1}
\mathbb{P}_1 \left( \mathcal{L}^1_1(t_1) \leq x_1, \dots,\mathcal{L}^1_1(t_k) \leq x_k  \right) = \mathbb{P}_2 \left( \mathcal{L}^2_1(t_1) \leq x_1, \dots,\mathcal{L}^2_1(t_k) \leq x_k  \right).
\end{equation}
We know that the projection of $\mathcal{L}^2$ to the top $N_1$ curves is a $\Sigma_1$-indexed line ensemble on $\Lambda$ that satisfies the partial Brownian Gibbs property, cf. Remark \ref{RPBGP}. By our assumption above we have that this line ensemble under $\mathbb{P}_2$ has the same top curve distribution as $\mathcal{L}^1$ under $\mathbb{P}_1$ and so by Theorem \ref{ThmMain} we conclude that for any $a < b$ with $a, b \in \Lambda$ we have that $\pi^{\llbracket 1, N_1 \rrbracket}_{[a,b]}(\mathcal{L}^1)$ under $\mathbb{P}_1$ has the same distribution as $\pi^{\llbracket 1, N_1 \rrbracket}_{[a,b]}(\mathcal{L}^2)$ under $\mathbb{P}_2$ as $\Sigma_1$-indexed line ensembles. This allows us to repeat the arguments in Step 2 of Section \ref{Section4.2} and we let $p_w$ be as in (\ref{Obs2}) for the case $k = 1$, $t_1 = (b+a)/2$, $ N- 1 =  N_1$, $S = \{1\}$, $x_1 \in \mathbb{R}$ and $W_0$ is sufficiently large so that $ a^w = t_1 - 1/w \in [a,b]$ and $b^w = t_1 + 1/w \in [a,b]$ for $w \geq W_0$. In particular we have
\begin{equation}\label{Obs2S43}
\begin{split}
p_w = \mathbb{E} \left[ \frac{ \mathbb{P}_{avoid}^{1,1,w}\left(\mathcal{Q}_{N_1}( t_1) \leq x_1 \right)}{F(x_1; a^w,b^w,\vec{x}^{1,1,w},\vec{y}^{1,1,w})}   \right]= \mathbb{E} \left[ \frac{ \mathbb{P}_{avoid}^{1,2,w}\left(\mathcal{Q}_{N_1}( t_1) \leq x_1 \right)}{F(x_1; a^w,b^w,\vec{x}^{s,2,w},\vec{y}^{1,2,w})}\right],
\end{split}
\end{equation}
where in the left expectation $\mathcal{L}_{N_1 + 1}^1[ a^w, b^w] = - \infty$ (here we used that $\mathcal{L}^1$ satisfies the Brownian Gibbs rather than the partial Brownian Gibbs property). In particular, we have by definition 
$$ \mathbb{P}_{avoid}^{1,1,w}\left(\mathcal{Q}_{N_1}( t_1) \leq x_1 \right) = F(x_1; a^w,b^w,\vec{x}^{1,1,w},\vec{y}^{1,1,w})$$
and so $p_w = 1$. On the other hand, by repeating the arguments in Step 3 of Section \ref{Section4.2} we have the second line of (\ref{Red1}), namely that 
$$\limsup_{w \rightarrow \infty} p_w \leq \mathbb{P}_2 \left( \mathcal{L}^2_{N_1+1}(t_1) \leq x_1 \right).$$
This shows that $ \mathbb{P}_2 \left( \mathcal{L}^2_{N_1+1}(t_1) \leq x_1 \right) = 1$ for all $x_1 \in \mathbb{R}$, which is our desired contradiction. Hence (\ref{S43E1}) cannot hold for every
$k\in \mathbb{N}$,  $t_1 < t_2 < \cdots < t_k$ with $t_i \in \Lambda$ and $x_1, \dots, x_k \in \mathbb{R}$, which is what we wanted to prove.

\section{Appendix}\label{Section5}
In this section we prove the three lemmas stated in Section \ref{Section2.3}. Our approach goes through proving analogous results for non-intersecting symmetric random walks and taking scaling limits. We first isolate some preliminary results in Section \ref{Section5.1}. The proof of Lemma \ref{AvoidIsGibbs} is given in Section~\ref{Section5.2} and the ones for Lemmas \ref{MCLxy} and \ref{MCLfg} are given in Section \ref{Section5.3}.

%
\subsection{Preliminaries}\label{Section5.1} Let $X_i$ be i.i.d. random variables such that $\mathbb{P}(X_1 = - 1) = \mathbb{P}(X_1 = 0) = \mathbb{P}(X_1 = 1) = 1/3$. In addition, we let $S_N = X_1 + \cdots + X_N$ and for $z \in \llbracket -N, N\rrbracket$ we let $S^{(N,z)} = \{S_m^{(N,z)}\}_{m = 0}^N$ denote the process $\{S_m \}_{m = 0}^N$ with law conditioned so that $S_N = z$. We extend the definition of $S^{(N,z)}_t$ to non-integer values of $t$ by linear interpolation. 

We have the following theorem, which is a special case of \cite[Theorem 2.6]{DW} when $p = 0$.
\begin{theorem}\label{DWThm} There exist constants $0 < C,c,\alpha < \infty$ such that for every positive integer $N$ there is a probability space on which are defined a standard Brownian bridge $\tilde{B}(t)$ and a family of processes $S^{(N,z)}$ for $z \in \llbracket -N, N \rrbracket$ such that 
\begin{equation*}
\mathbb{E} \left[ e^{c \Delta(N,z)} \right] \leq C e^{\alpha (\log N)} e^{z^2/N},
\end{equation*}
where $\Delta(n,z) = \sup_{0 \leq t \leq N} \left| \sqrt{2N/3} \cdot \tilde{B}(t/N) + \frac{t}{N}z - S_t^{(N,z)} \right|$.
\end{theorem}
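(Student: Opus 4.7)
The plan is to derive this bridge-level strong coupling from the Komlós-Major-Tusnády (KMT) embedding for unconditioned sums, and then transfer the resulting error to the bridge law via a conditioning argument.

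First I would invoke the KMT embedding to construct, on a single probability space, the walk $S$ together with a standard Brownian motion $W$ such that $M_N := \max_{0 \le t \le N}|S_t - \sqrt{2/3}\,W_t|$ satisfies $\mathbb{E}[e^{2c_0 M_N}] \le C_0 N^{\alpha_0}$ uniformly in $N$, for universal positive constants. On the same space, define the Brownian bridge $\tilde B(s) := \sqrt{3/(2N)}\,(W_{Ns} - sW_N)$ for $s \in [0,1]$. For each $z$ in the range, construct the coupled pair $(\tilde B, S^{(N,z)})$ as the joint conditional distribution of $(\tilde B, S)$ given $\{S_N = z\}$ (which may be done on an independent copy of the space for each $z$, since the theorem constrains only the marginal law of each pair). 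Under this construction, on the conditioning event one has $S^{(N,z)} \equiv S$ and $S_N = z$, so
\[
\sqrt{2N/3}\,\tilde B(t/N) + \tfrac{t}{N}z - S^{(N,z)}_t = \bigl(\sqrt{2/3}\,W_t - S_t\bigr) - \tfrac{t}{N}\bigl(\sqrt{2/3}\,W_N - S_N\bigr),
\]
which is at most $2 M_N$ in sup norm over $t \in [0,N]$.

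Second I would translate this pointwise bound into the advertised exponential moment. In the Gaussian range $|z| \le K\sqrt{N\log N}$, the local central limit theorem for the aperiodic integer walk with variance $2/3$ gives $\mathbb{P}(S_N = z) \ge c_1 N^{-1/2}e^{-3z^2/(4N)}$, so
\[
\mathbb{E}\bigl[e^{c_0 \Delta(N,z)}\bigr] \le \mathbb{E}\bigl[e^{2c_0 M_N}\,\big|\,S_N = z\bigr] \le \frac{\mathbb{E}[e^{2c_0 M_N}]}{\mathbb{P}(S_N = z)} \le C\,e^{\alpha \log N}\,e^{z^2/N}.
\]
For $|z|$ in the large deviation regime $|z| \gtrsim \sqrt{N\log N}$, the local CLT denominator is replaced by the Cramér rate $\mathbb{P}(S_N = z) \asymp e^{-N I(z/N)}$, where $I$ is the Legendre transform of the log moment generating function of $X_1$. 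To recover the looser bound $e^{z^2/N}$ rather than the sharper $e^{N I(z/N)}$, one would exponentially tilt the step distribution to the mean $z/N$ before applying KMT, using the fact that the bridge law $S^{(N,z)}$ is invariant under this tilt while the Radon-Nikodym factor from the tilt contributes $O(z^2/N)$ to the exponent in the relevant range.

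The main obstacle is the large-$|z|$ regime just described: a naive conditioning argument through the local CLT denominator yields a factor governed by the Cramér function $I(z/N)$, which for $|z| = \Theta(N)$ is strictly larger than $z^2/N$. The point of the exponential tilt is that $\tilde B$ is constructed solely from $W$ and is unaffected by the tilt, whereas the tilted walk has its endpoint mean at $z$ and so can be coupled to the Brownian bridge via a straightforward KMT argument around the tilted mean. Matching constants and interpolating the moderate- and large-deviation regimes is a technical but routine step that produces the uniform bound of the theorem, carried out in detail in \cite{DW}.
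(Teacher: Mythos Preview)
The paper does not prove this theorem; it is quoted as a special case of \cite[Theorem 2.6]{DW} (with $p=0$) and used as a black box thereafter. So there is no ``paper's own proof'' to compare against.

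Your sketch is the right conceptual skeleton --- KMT for the free walk, conditioning on the endpoint, local CLT in the Gaussian window, exponential tilt in the large-deviation regime --- and this is indeed the architecture of the argument in \cite{DW}. One point deserves care: when you define the coupled pair as the conditional law of $(\tilde B, S)$ given $\{S_N=z\}$, the marginal of $\tilde B$ under that conditioning is \emph{not} a priori a standard Brownian bridge, because $\tilde B$ is built from $W$ and $W$ is strongly correlated with $S_N$ through the KMT coupling. The construction in \cite{DW} resolves this by building the bridge coupling directly (via a dyadic/quantile scheme on the bridge increments) rather than by conditioning an already-coupled pair; your remark that ``the theorem constrains only the marginal law of each pair'' is correct in spirit but does not by itself fix the marginal of $\tilde B$. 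Since you already defer the details to \cite{DW}, this is a refinement rather than a gap.
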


We summarize some useful notation in the following definition.

\begin{definition}\label{Grids}
Fix $a,b \in \mathbb{R}$ with $b > a$ and a scaling parameter $n \in \N$. With the latter data we define two quantities $\Delta_n^t = (b-a) / n^2$ and $\Delta_n^x = \sqrt{3 \Delta^t_n / 2}$. Furthermore, we introduce two grids $\R_{n} = (\Delta_n^x) \cdot \Z$ and $\Lambda_{n^2} = \{a + m \cdot \Delta_n^t : m \in \mathbb{Z}\}$. Given $u, v \in \Lambda_{n^2}$ with $u < v$ and $x,y \in \mathbb{R}_n$ with $|x - y| \leq \frac{\Delta_n^x}{\Delta_n^t} \cdot (v -u)$, we define the $C([u,v])$-valued random variable
$$Y(t) = x + \Delta_n^x \cdot S^{((v -u)/\Delta_n^t,  (y-x)/ \Delta_n^x)}_{(t -u)/\Delta_n^t}  \mbox{ for $t \in [u,v]$}.$$
As defined $Y(t)$ is a continuous function on $[u,v]$ such that $Y(u) = x$ and $Y(v) = y$. We denote the law of $Y$ by $\mathbb{P}^{u,v,x,y}_{free, n}$.
\end{definition}

The following result roughly states that the laws $\mathbb{P}^{u,v,x,y}_{free, n}$ weakly converge to the law of a Brownian bridge as $n \rightarrow \infty$  if the quantities $u,v,x,y$ converge.
\begin{lemma}\label{ConvToBridge} Let $x,y, a', b' \in \mathbb{R}$ with $a' < b'$. In addition, let $a < b$ and for $n \in \mathbb{N}$ let $x_n, y_n \in \mathbb{R}_n$ and $a_n, b_n \in \Lambda_{n^2}$ with $a_n \leq a'$, $b_n \geq b'$ and $|x_n - y_n| \leq \frac{\Delta_n^x}{\Delta_n^t} \cdot [b_n -a_n]$ (here we used the notation from Definition \ref{Grids}). Suppose $a_n \rightarrow a'$, $b_n \rightarrow b'$, $x_n \rightarrow x$ and $y_n \rightarrow y$ as $n \rightarrow \infty$. Let $Y^n$ be a random variable with law $\mathbb{P}_{free,n}^{a_n,b_n,x_n,y_n}$ and let $Z^n$ be a $C([a',b'])$-valued random variable, defined through $Z^n(t)= Y^n(t)$ for $t \in [a',b']$. Then the law of $Z^n$ converges weakly to $\mathbb{P}_{free}^{a',b', x,y}$ as $n \rightarrow \infty$.
\end{lemma}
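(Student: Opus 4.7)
The plan is to invoke the strong coupling of Theorem \ref{DWThm} to compare $Y^n$ with a Brownian bridge on $[a_n,b_n]$, show that their uniform distance tends to zero, and then identify the weak limit after restriction to $[a',b']$. Concretely, I would set $N_n = (b_n-a_n)/\Delta_n^t$ and $z_n = (y_n - x_n)/\Delta_n^x$; the hypothesis $|x_n - y_n| \le (\Delta_n^x/\Delta_n^t)(b_n - a_n)$ ensures $|z_n| \le N_n$, so Theorem \ref{DWThm} applies and yields a probability space on which are jointly defined a standard Brownian bridge $\tilde B^n$ and the walk bridge $S^{(N_n,z_n)}$. On this space I would let $Y^n$ be built from $S^{(N_n,z_n)}$ as in Definition \ref{Grids}, and let $B^n$ be the Brownian bridge on $[a_n,b_n]$ from $x_n$ to $y_n$ obtained from $\tilde B^n$ via the affine transformation (\ref{BBDef}).

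The key algebraic observation is that the defining identity $\Delta_n^x = \sqrt{3\Delta_n^t/2}$ forces $\sqrt{b_n - a_n} = \Delta_n^x\sqrt{2N_n/3}$, so a direct substitution gives the exact identity
\begin{equation*}
\sup_{t \in [a_n,b_n]} |Y^n(t) - B^n(t)| \;=\; \Delta_n^x \cdot \Delta(N_n, z_n),
\end{equation*}
where $\Delta(N_n,z_n)$ is the quantity controlled by Theorem \ref{DWThm}. Since $z_n^2/N_n = 2(y_n - x_n)^2/(3(b_n - a_n))$ stays bounded as $n \to \infty$, Markov's inequality combined with the theorem's moment bound gives $\mathbb{P}(\Delta(N_n,z_n) > M) \le C' N_n^{\alpha} e^{-cM}$ uniformly in $n$. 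Choosing $M_n = (\log n)^2$ (which satisfies $\Delta_n^x M_n \to 0$ since $\Delta_n^x$ is of order $1/n$, while the tail bound becomes $N_n^{\alpha} e^{-c(\log n)^2} \to 0$ since $N_n$ is of order $n^2$) then shows $\sup_{t \in [a_n,b_n]} |Y^n(t) - B^n(t)| \to 0$ in probability. After restricting to $[a',b']$, this forces $Z^n$ and $B^n|_{[a',b']}$ to share any weak limit in $C([a',b'])$.

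It remains to check that $B^n|_{[a',b']}$ converges weakly to $\mathbb{P}_{free}^{a',b',x,y}$. For this I would work on an auxiliary probability space carrying a single standard Brownian bridge $\tilde B$ on $[0,1]$ and, again via (\ref{BBDef}), build both $B^n$ (with parameters $(a_n,b_n,x_n,y_n)$) and a limit bridge $B$ (with parameters $(a',b',x,y)$) from this common $\tilde B$. Uniform continuity of $\tilde B$ on $[0,1]$, together with the convergence of the affine parameters, gives $\sup_{t \in [a',b']} |B^n(t) - B(t)| \to 0$ almost surely, hence in distribution, and $B$ has the target law $\mathbb{P}_{free}^{a',b',x,y}$. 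The main technical point is the bookkeeping in the first two paragraphs: correctly tracking how the scalings $\Delta_n^x, \Delta_n^t$ interact with the KMT bound of Theorem \ref{DWThm}, so that the $\Delta_n^x$ prefactor dominates the polynomial $N_n^{\alpha}$ growth while leaving enough room for the exponential decay in $M$ to beat it. Everything else is a standard manipulation.
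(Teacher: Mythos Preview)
Your proposal is correct and follows essentially the same route as the paper's proof: couple $Y^n$ to a Brownian bridge $B^n$ on $[a_n,b_n]$ via Theorem~\ref{DWThm}, use the scaling identity $\sqrt{b_n-a_n}=\Delta_n^x\sqrt{2N_n/3}$ together with the boundedness of $z_n^2/N_n$ and a Markov/Chebyshev bound to force $\rho(Y^n,B^n)\to 0$ in probability, and separately check that $B^n|_{[a',b']}\Rightarrow \mathbb{P}_{free}^{a',b',x,y}$ by building both bridges from a common standard bridge $\tilde B$. The only cosmetic difference is that the paper applies Chebyshev directly at a fixed level $\epsilon$ rather than choosing a sequence $M_n=(\log n)^2$, but the underlying estimate is the same.
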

\begin{proof} Let $z_n = [\Delta_n^x]^{-1} \cdot (y_n - x_n)$ and note that $z_n \in \llbracket -N, N \rrbracket$, where $N = \frac{[b_n - a_n] }{\Delta_n^t}.$ Let $\tilde{B}$ be a standard Brownian bridge and define random $C([a',b'])$-valued random variables $B^n$ and $B$ through
\begin{align*}
B^n(t) &= \sqrt{b_n-a_n} \cdot  \tilde{B}\left( \frac{t - a_n}{b_n-a_n} \right) + \frac{t -a_n}{b_n-a_n} \cdot y_n + \frac{b_n-t}{b_n-a_n} \cdot x_n,\\
{B}(t) &=  \sqrt{b-a} \cdot \tilde{B}\left( \frac{t - a}{b-a} \right) + \frac{t -a}{b-a} \cdot y + \frac{b-t}{b-a} \cdot x.
\end{align*}
Clearly, $B$ has law $\mathbb{P}_{free}^{a,b, x,y}$ and $B^n \implies {B}$ as $n \rightarrow \infty$. It follows from \cite[Theorem 3.1]{Billing}, that to conclude that $Z^n \implies B$ as $n \rightarrow \infty$ it suffices to show that we can construct a sequence of probability spaces that support $Y^n, B^n$ so that
\begin{equation}\label{Suff1}
\rho( Y^n, B^n) \implies 0 \mbox{ as $n \rightarrow \infty$, where $\rho(f,g) = \sup_{x \in [a_n,b_n]} |f(x) - g(x)|$.} 
\end{equation}

From Theorem \ref{DWThm} we can construct a probability space that supports $Y^n$ and $B^n$ (for each fixed $n \in \mathbb{N}$) such that 
\begin{equation}\label{DWeqv2}
\mathbb{E} \left[ e^{c \tilde{\Delta}(N,x_n,y_n)} \right] \leq C e^{\alpha (\log N)} e^{z_n^2/N},
\end{equation}
where
$$ \tilde{\Delta}(N,x_n,y_n)=  [\Delta_n^x]^{-1} \cdot  \rho (B^n , Y^n).$$
Let $\epsilon > 0$ be given. Since $y_n - x_n \rightarrow y-x$ as $n \rightarrow \infty$, we know that we can find $N_1 \in \mathbb{N}$ and $C_1 > 0$ such that if $n \geq N_1$ we have $|z_n| \leq C_1 \cdot \sqrt{N}$.  Using (\ref{DWeqv2}) and Chebyshev's inequality we see that for $n \geq N_1$ we have
$$\mathbb{P}( \rho(B^n, Y^n) > \epsilon) \leq e^{-c \epsilon [\Delta_n^x]^{-1} } \cdot  C e^{\alpha (\log N)} e^{C_1^2},$$
which converges to $0$ as $n\rightarrow \infty$. The latter implies (\ref{Suff1}) and concludes the proof of the lemma.
\end{proof}

We next introduce the multi-line generalization of Definition \ref{Grids}.
\begin{definition}\label{Grids2} Continue with the same notation as in Definition \ref{Grids} and fix $k \in \mathbb{N}$. Suppose that $S^{(N,z), i}$ for $i = 1,\dots, k$ and $z \in \llbracket -N, N \rrbracket$ are $k$ independent processes with the same law as $S^{(N,z)}$. In addition, let $\vec{x}, \vec{y} \in \R_n^k$ be such that $|x_i  -y_i| \leq \frac{\Delta_n^x}{\Delta_n^t}\cdot [v-u]$. With this data we define the $\llbracket 1, k \rrbracket$-indexed line ensemble on $[u,v]$ through
\begin{equation}\label{DiscreteLE}
\mathcal{Y}^n(i,t) = x_i + \Delta_n^x \cdot S^{((v -u)/\Delta_n^t,  (y_i-x_i)/ \Delta_n^x), i}_{(t -u)/\Delta_n^t}  \mbox{ for $t \in [u,v]$ and $i \in \llbracket 1, k \rrbracket$}.
\end{equation}
We call the law of the resulting $\llbracket 1, k \rrbracket$-indexed line ensemble $\mathbb{P}_{free, n}^{a,b,\vec x,\vec y}$. 

Suppose that $\vec{x}, \vec{y} \in \R_n^k \cap \weyl_k$. By analogy with Definition \ref{DefAvoidingLaw}, given continuous functions $f: [u,v] \rightarrow (-\infty, \infty]$ and $g:[u,v] \rightarrow [-\infty,  \infty)$, we define the probability measure $\mathbb{P}_{avoid, n}^{u,v, \vec{x}, \vec{y}, f, g}$ to be the distribution of $\mathcal{Y}^n$ from (\ref{DiscreteLE}), conditioned on the event
$$E_n = \{ f(r) > \mathcal{Y}^n(1,r) > \mathcal{Y}^n(2,r) > \cdots > \mathcal{Y}^n(k,r) > g(r) \mbox{ for $r \in [u,v]$} \}.$$
This measure is well-defined if the set of trajectories satisfying the latter conditions is non-empty. 
\end{definition}

We need the following convergence result for non-intersecting random walk bridges. 
\begin{lemma}\label{lem:RW}
Fix $k \in \N$ and $a,b \in \mathbb{R}$ with $a < b$ and assume the same notation as in Definition \ref{Grids2}. Suppose that $f : [a,b] \to (-\infty, +\infty], g : [a,b] \to [-\infty, +\infty)$ are continuous functions such that $f(t) > g(t)$ for $t \in [a,b]$. Let $a', b' \in [a,b]$ be such that $a' < b'$ and suppose that $\vec{x}, \vec{y} \in \weyl_k$ are such that $f(a') > x_1, f(b') > y_1$, $g(a') < x_k$, $g(b') < y_k$. Suppose further that $\vec{x}^n, \vec{y}^n \in \weyl_k \cap\R_{n}^k$ are such that $\lim_{n \to \infty} \vec{x}^n = \vec x$, $\lim_{n \to \infty} \vec{y}^n = \vec y$; and $f_n: [a,b] \to (-\infty, +\infty], g_n : [a,b] \to [-\infty, +\infty)$ are sequences of continuous functions such that $f_n \rightarrow f$ and $g_n \rightarrow g$ uniformly as $n \rightarrow \infty$ on $[a,b]$ (if $f = \infty$ the latter means that $f_n = \infty$ for all large enough $n$ and similarly if $g = -\infty$ the latter means $g_n = -\infty$ for large enough $n$). Finally, suppose that $a_n,b_n \in \Lambda_{n^2}$ are such that $a_n \leq a'$, $b_n \geq b'$ and $a_n$ is maximal while $b_n$ is minimal subject to these conditions. 
Then there exists $N_0 \in \mathbb{N}$ such that $\mathbb{P}_{avoid, n}^{a_n,b_n, \vec{x}^n, \vec{y}^n, f^n, g^n}$ are well-defined for $n \geq N_0$. Moreover, if $\mathcal{Y}^n$ are $\llbracket 1, k \rrbracket$-indexed line ensembles with laws $\mathbb{P}_{avoid, n}^{a_n,b_n, \vec{x}^n, \vec{y}^n, f^n, g^n}$ and $\mathcal{Z}^n$ are the $\llbracket 1, k \rrbracket$-indexed line ensembles on $[a',b']$ defined through
\begin{equation}\label{ProjWalk}
\mathcal{Z}^n(i,t) = \mathcal{Y}^n(i,t) \mbox{ for $n \geq N_0$, $i \in \llbracket 1, k \rrbracket$, $t \in [a',b']$}
\end{equation}
then $\mathcal{Z}^n$ converge weakly to $\mathbb{P}_{avoid}^{a',b', \vec{x}, \vec{y}, f, g}$ as $n \to \infty$.
\end{lemma}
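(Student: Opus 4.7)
The strategy is to transfer the weak convergence of Lemma \ref{ConvToBridge} from free bridges to conditioned ones by controlling the avoidance event. First, I would apply Theorem \ref{DWThm} independently to each of the $k$ coordinates to construct, on a common probability space for each $n$, the random walk bridges $\mathcal{Y}^n$ together with a $k$-tuple of independent Brownian bridges $\mathcal{B}=(\mathcal{B}_1,\ldots,\mathcal{B}_k)$ from $\vec{x}$ to $\vec{y}$ on $[a',b']$, such that $\sup_{t\in[a',b']}|\mathcal{Y}^n(i,t)-\mathcal{B}_i(t)|\to 0$ in probability for each $i$. This is the $k$-component version of Lemma \ref{ConvToBridge} and shows that under $\mathbb{P}_{free,n}^{a_n,b_n,\vec{x}^n,\vec{y}^n}$ the restriction $\mathcal{Z}^n$ of $\mathcal{Y}^n$ to $[a',b']$ converges weakly to $\mathbb{P}_{free}^{a',b',\vec{x},\vec{y}}$.

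Next, write $E_n$ for the avoidance event of $\mathcal{Y}^n$ on $[a_n,b_n]$ against $(f_n,g_n)$ and $E$ for the avoidance event of $\mathcal{B}$ on $[a',b']$ against $(f,g)$. The key step is to show
\begin{equation*}
\mathbb{P}_{free,n}^{a_n,b_n,\vec{x}^n,\vec{y}^n}(E_n)\;\longrightarrow\;\mathbb{P}_{free}^{a',b',\vec{x},\vec{y}}(E)\;>\;0.
\end{equation*}
Positivity of the limit is a multi-line analogue of Lemma \ref{Spread}: the strict inequalities $f(a')>x_1,\,g(a')<x_k,\,f(b')>y_1,\,g(b')<y_k$ and the continuity of $f,g$ yield a deterministic smooth ordered trajectory avoiding both barriers with uniform margin, and an application of Lemma \ref{Spread} coordinatewise shows that $\mathcal{B}$ falls within any prescribed neighbourhood of it with positive probability. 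For the convergence itself, I would decompose $E$ as the intersection of the events $\{\mathcal{B}_i(r)>\mathcal{B}_{i+1}(r)\text{ for all }r\in[a',b']\}$ for $i=1,\ldots,k-1$, together with $\{\mathcal{B}_1(r)<f(r)\text{ for all }r\}$ and $\{\mathcal{B}_k(r)>g(r)\text{ for all }r\}$. Each of these events has ``touching-without-crossing'' boundary of zero Brownian measure: for the pair conditions one conditions on $\mathcal{B}_{i+1}$ and applies Lemma \ref{NoTouch} to $\mathcal{B}_i$ (viewed as a Brownian bridge with $\mathcal{B}_{i+1}$ as its lower barrier, after affine reparametrisation of $[a',b']$ to $[0,1]$ and absorption of the endpoint values), then averages; the two barrier conditions are handled identically with $f,g$ in place of $\mathcal{B}_{i+1}$. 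Once $\partial E$ is shown to be a $\mathbb{P}_{free}^{a',b',\vec{x},\vec{y}}$-null set, the Portmanteau theorem together with the uniform convergence $f_n\to f$, $g_n\to g$ and $a_n\to a'$, $b_n\to b'$ (which absorbs the discrepancy between avoidance on $[a_n,b_n]$ and on $[a',b']$ via a sandwich of $E_n$ between slightly shrunk and enlarged versions of $E$) yields the convergence of the avoidance probabilities. In particular $\mathbb{P}_{free,n}(E_n)>0$ for all $n\geq N_0$, which establishes the well-definedness of $\mathbb{P}_{avoid,n}^{a_n,b_n,\vec{x}^n,\vec{y}^n,f^n,g^n}$.

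For the last step, I would fix any bounded continuous $F\colon C(\llbracket 1,k\rrbracket\times[a',b'])\to\mathbb{R}$ and write
\begin{equation*}
\mathbb{E}_{avoid,n}^{a_n,b_n,\vec{x}^n,\vec{y}^n,f^n,g^n}[F(\mathcal{Z}^n)] \;=\; \frac{\mathbb{E}_{free,n}^{a_n,b_n,\vec{x}^n,\vec{y}^n}[F(\mathcal{Z}^n)\,{\bf 1}_{E_n}]}{\mathbb{P}_{free,n}^{a_n,b_n,\vec{x}^n,\vec{y}^n}(E_n)}.
\end{equation*}
The denominator converges to $\mathbb{P}_{free}^{a',b',\vec{x},\vec{y}}(E)>0$ by the previous paragraph; the numerator converges to $\mathbb{E}_{free}^{a',b',\vec{x},\vec{y}}[F(\mathcal{B})\,{\bf 1}_E]$ by a second Portmanteau argument applied to the bounded measurable map $\mathcal{B}\mapsto F(\mathcal{B})\,{\bf 1}_E(\mathcal{B})$, whose set of discontinuities is contained in $\partial E$ and hence carries zero limit measure. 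Taking the ratio and invoking Definition \ref{DefAvoidingLaw} identifies the limit as $\mathbb{E}_{avoid}^{a',b',\vec{x},\vec{y},f,g}[F(\mathcal{B})]$, which completes the proof. The principal obstacle I foresee is precisely the verification that $\partial E$ is Brownian-null when $k\geq 2$: it requires applying Lemma \ref{NoTouch} to each of the $k+1$ ordered-pair or barrier constraints after an affine rescaling to a standard Brownian bridge, and for the pair conditions it needs the conditioning-then-averaging argument sketched above to reduce the two-random-curve comparison to the single-random-curve setting Lemma \ref{NoTouch} can handle. The moving domain $[a_n,b_n]$ and the $n$-dependent barriers $f_n,g_n$ are only a technical perturbation, controlled by the uniform convergences and the strict margins at the endpoints.
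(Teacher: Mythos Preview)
Your proposal is correct and follows essentially the same approach as the paper: both reduce to showing convergence of the ratio $\mathbb{E}_{free,n}[F\cdot{\bf 1}_{E_n}]/\mathbb{P}_{free,n}(E_n)$ by establishing weak convergence of the free bridges via Lemma \ref{ConvToBridge}, positivity of $\mathbb{P}_{free}(E)$ via Lemma \ref{Spread}, and nullity of the touching boundary via Lemma \ref{NoTouch}. The only cosmetic difference is that the paper invokes the Skorohod representation theorem and bounded convergence in place of your Portmanteau argument, and it handles the moving domain $[a_n,b_n]$ by observing that the piecewise-linear walk on $[a_n,b_n]$ is determined by its restriction to $[a',b']$ (using the maximality/minimality of $a_n,b_n$) rather than by a sandwich argument.
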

\begin{proof}
Observe that we can find $\epsilon > 0$ and continuous functions $h_1, \dots, h_k :[a',b'] \rightarrow \mathbb{R}$ (all depending on $\vec{x}, \vec{y}, f, g, a',b'$)  such that $h_i(a') = x_i$, $h_i(b') =y_i$ for $i  =1 ,\dots, k$, such that the following holds. If $u_i : [a',b'] \rightarrow \mathbb{R}$ are continuous and $\rho(u_i, h_i) =\sup_{x \in [a',b']} |u_i(x) - h_i(x)| < \epsilon$ then 
$$f(x) - \epsilon > u_1(x) + \epsilon > u_1(x) - \epsilon > u_2(x) + \epsilon > \cdots > u_k(x) + \epsilon > u_k(x) - \epsilon > g(x) \mbox{ for all $x \in [a',b']$}.$$
Observe that by Lemma \ref{Spread} we know that 
$$\mathbb{P}_{free}^{a',b',\vec{x}, \vec{y}} \left( \rho(\mathcal{Q}_i, h_i) < \epsilon \mbox{ for all $i = 1, \dots, k$}  \right) > 0,$$
where $(\mathcal{Q}_1, \dots, \mathcal{Q}_k)$ above are the random curves that are $\mathbb{P}_{free}^{a',b',\vec{x}, \vec{y}}$-distributed.

Since $\vec{x}^n - \vec{y}^n \rightarrow \vec{x} - \vec{y}$ we know that there exists $N_1 \in \mathbb{N}$ such that if $n \geq N_1$ we have $|x^n_i - y^n_i| \leq \frac{\Delta_n^x}{\Delta_n^t} \cdot [b_n -a_n]$. For $n \geq N_1$ we know from Lemma \ref{ConvToBridge} that if $\mathcal{Y}^n$ has law $\mathbb{P}_{free, n}^{a_n,b_n,\vec x^n,\vec y^n}$ and $\mathcal{Z}^n$ is as in (\ref{ProjWalk}) then $\mathcal{Z}^n$ converges weakly to $\mathbb{P}_{free}^{a,b,\vec{x}, \vec{y}}$ as $n \rightarrow \infty$. Consequently, there exists $N_2$ such that for $n \geq \max(N_1, N_2)$ we have
$$\mathbb{P}_{free,n}^{a_n,b_n,\vec{x}^n, \vec{y}^n} \left( \rho(\mathcal{Z}^n_i, h_i) < \epsilon \mbox{ for all $i = 1, \dots, k$}  \right) > 0.$$
Suppose further that $N_3$ is sufficiently large so that $\sup_{x \in [a, b]}|f_n(x) - f(x)| < \epsilon/4$ and $\sup_{x \in [a, b]}|g_n(x) - g(x)| < \epsilon/4$. If $f = \infty$ or $g = -\infty$ (or both) we choose $N_3$ sufficiently large so that $f_n = \infty$ or $g_n = -\infty$ (or both). We also let $N_4$ be sufficiently large so that if $n \geq N_4$ and $|x-y| \leq\Delta^t_n$ then $|f(x) - f(y)| < \epsilon/4$ and $|g(x) - g(y)| < \epsilon/4$ (if $f = \infty$ we ignore the first condition and if $g = -\infty$ we ignore the second condition). Finally, we let $N_5$ be sufficiently large so that $n \geq N_5$ implies $\Delta^x_n < \epsilon/4$. Overall, if $n \geq N_0 = \max(N_1, N_2, N_3,N_4, N_5)$ we see that 
\begin{align*}
\{ f_n(r) > \mathcal{Y}^n(1,r) &> \mathcal{Y}^n(2,r) > \cdots > \mathcal{Y}^n(k,r) > g_n(r) \mbox{ for $r \in [a_n,b_n]$} \} \\
&\subset \{ \rho(\mathcal{Z}^n_i, h_i) < \epsilon \mbox{ for all $i = 1, \dots, k$} \}.
\end{align*}
The above implies that $\mathbb{P}_{avoid, n}^{a_n,b_n, \vec{x}^n, \vec{y}^n, f^n, g^n}$ is well-defined as long as $n \geq N_0$, which proves the first part of the lemma.\\

Let $\Lambda' = [a',b']$ and $\Sigma = \llbracket 1, k\rrbracket$, we need to show that for any bounded continuous function $F : C(\Sigma \times \Lambda') \to \R$ we have
\begin{equation}\label{RWLimit}
\lim_{n \to \infty} \E\left[F(\mathcal{Z}^{n})\right] = \E\left[F(\mathcal{Q})\right],
\end{equation}
where $\mathcal{Q}$ is a $\Sigma$-indexed line ensembles, whose distribution is $\mathbb{P}_{avoid}^{a',b', \vec{x}, \vec{y}, f, g}$. 

We define the functions $H_{f,g} : C(\Sigma \times \Lambda') \to \R$ and $H_{f,g}^n: C(\Sigma \times [a_n, b_n]) \to \R$ as
\begin{equation*}
\begin{split}
&H_{f,g}(\CL) = {\bf 1} \{ f(r) > \CL_1(r) > \CL_2(r) > \cdots > \CL_k(r) > g(r) \mbox{ for $r \in [a',b']$}\},\\
&H^n_{f,g}(\CL) = {\bf 1} \{ f(r) > \CL_1(r) > \CL_2(r) > \cdots > \CL_k(r) > g(r) \mbox{ for $r \in [a_n,b_n]$}\}.
\end{split}
\end{equation*}
Using these functions, we can write for $n \geq N_0$
\begin{equation}\label{RW}
\E[F( \mathcal{Z}^n)] = \frac{\E\left[F(\pi_{[a',b']}(\CL^{n})) H_{f_n, g_n}(\CL^{n}) \right]}{\E\left[H_{f_n, g_n}(\CL^{n}) \right]},
\end{equation}
where $\CL^{n}$ is a line ensemble of independent random walk bridges with distribution $\mathbb{P}_{free, n}^{a_n,b_n,\vec x^n,\vec y^n}$. Also if $\mathcal{L} \in C(\Sigma \times [a_n, b_n]) $ we define $\pi_{[a',b']}(\mathcal{L})$ to be the element in $C(\Sigma \times [a', b']) $ defined through
$$ \pi_{[a',b']}(\mathcal{L})(i,x) = \mathcal{L}(i,x)  \mbox{ for $i = 1,\dots, k$ and $x \in [a',b']$}.$$
 We remark that the choice of $N_0$ makes the denominator in (\ref{RW}) strictly positive.

By Definition \ref{DefAvoidingLaw} we also have
\begin{equation}\label{BB}
\E[F(\mathcal{Q})] = \frac{\E\left[F(\CL) H_{f, g}(\CL) \right]}{\E\left[H_{f, g}(\CL) \right]},
\end{equation}
where $\CL$ is a line ensemble of independent random walk bridges with distribution $\mathbb{P}_{free}^{a,b,\vec x,\vec y}$. In view of (\ref{RW}) and (\ref{BB}) we see that to prove (\ref{RWLimit}) it suffices to prove that for any bounded continuous function $F : C(\Sigma \times \Lambda') \to \R$ we have
\begin{equation}\label{RWLimitRed}
\lim_{n \to \infty} \E\left[F(\pi_{[a',b']}(\CL^{n}))H_{f^n, g^n}(\CL^{n})  \right] = \E\left[F(\CL)  H_{f, g}(\CL)\right].
\end{equation}

By Lemma \ref{ConvToBridge} we know that $\pi_{[a',b']}(\mathcal{L}^n) \implies \mathcal{L}$ as $n \rightarrow \infty.$ In addition, using that $C([a,b])$ with the uniform topology is separable, see e.g. \cite[Example 1.3, pp 11]{Billing}, we know that $C(\Sigma \times \Lambda') $ is also separable. In particular we can apply the Skorohod Representation Theorem, see \cite[Theorem 6.7]{Billing}, from which we conclude that there exists a probability space $(\Omega, \mathcal{F}, \mathbb{P})$, which supports $C(\Sigma \times [a_n,b_n])$-valued random variables $\mathcal{L}^n$ and a $C(\Sigma \times \Lambda')$-valued random variable $\mathcal{L}$ such that $\pi_{[a',b']}(\mathcal{L}^n) \rightarrow \mathcal{L}$ for every $\omega \in \Omega$ and such that under $\mathbb{P}$ the law of $\mathcal{L}^n$ is $\mathbb{P}_{free, n}^{a_n,b_n,\vec x^n,\vec y^n}$, while under $\mathbb{P}$ the law of $\mathcal{L}$ is $\mathbb{P}_{free}^{a',b',\vec x,\vec y}$. Here we implicitly used the maximality of $a_n$ and the minimality of $b_n$, which imply that $\mathcal{L}^n$ is completely determined from $ \pi_{[a',b']}(\mathcal{L}^n) $.

It follows from the continuity of $F$ that on the event 
$$E_1 = \{\omega: f(r) > \CL_1(\omega) (r) > \CL_2(\omega) (r) > \cdots > \CL_k(\omega)(r) > g(r) \mbox{ for $ r \in [a',b']$} \}$$
we have $F(\pi_{[a',b']}(\mathcal{L}^n))H_{f^n, g^n}(\CL^{n})  \rightarrow F(\CL)$. In addition, on the event
$$E_2 = \{\omega:  \CL_i(\omega) (r) < \CL_{i+1}(\omega) (r) \mbox{ for some $i \in \llbracket 0, k \rrbracket$ and $r \in [a',b']$ with $\mathcal{L}_0 = f$, $\mathcal{L}_{k+1} = g$} \}$$
we have that $F(\pi_{[a',b']}(\mathcal{L}^n))H_{f^n, g^n}(\CL^{n})  \rightarrow 0$. By Lemma \ref{NoTouch} we know that $\mathbb{P}(E_1 \cup E_2) = 1$ and so $\mathbb{P}$-almost surely we have $F(\pi_{[a',b']}(\mathcal{L}^n))H_{f^n, g^n}(\CL^{n}) \rightarrow F(\CL)  H_{f, g}(\CL)$. By the bounded convergence theorem, we conclude (\ref{RWLimitRed}), which finishes the proof of the lemma.
\end{proof}
 
%
\subsection{Proof of Lemma \ref{AvoidIsGibbs} }\label{Section5.2}
We assume the same notation as in Lemma \ref{AvoidIsGibbs} and Definition \ref{DefBGP}. Put $\Lambda = [a,b]$ and $\Sigma = \llbracket 1,k \rrbracket$. We fix a set $K = \{k_1, k_1 + 1, \dots, k_2 \} \subset \llbracket 1, k\rrbracket$ and $a', b' \in [a,b]$ with $a' < b'$. Furthermore, we take a  bounded Borel-measurable function $F: C(K \times [a,b]) \rightarrow \mathbb{R}$. Our goal in this section is to prove that $\mathbb{P}$-almost surely
\begin{equation}\label{BGPTowerS5}
\mathbb{E} \left[ F\left(\mathcal{L}|_{K \times [a',b']} \right)  {\big \vert} \mathcal{F}_{ext} (K \times (a',b'))  \right] =\mathbb{E}_{avoid}^{a',b', \vec{x}, \vec{y}, f, g} \bigl[ F(\tilde{\mathcal{Q}}) \bigr],
\end{equation}
where
$$\mathcal{F}_{ext} (K \times (a',b')) = \sigma \left \{ \mathcal{L}_i(s): (i,s) \in D_{K,a',b'}^c \right\}$$
is the $\sigma$-algebra generated by the variables in the brackets above, $ \mathcal{L}|_{K \times [a',b']}$ denotes the restriction of $\mathcal{L}$ to the set $K \times [a',b']$, $\vec{x} = (\mathcal{L}_{k_1}(a'), \dots, \mathcal{L}_{k_2}(a'))$, $\vec{y} = (\mathcal{L}_{k_1}(b'), \dots, \mathcal{L}_{k_2}(b'))$, $f = \mathcal{L}_{k_1 - 1}[a',b']$ with the convention that $f = \infty$ if $k_1 - 1 \not \in \Sigma$, $g = \mathcal{L}_{k_2 +1}[a',b']$ with the convention that $g = -\infty$ if $k_2+1 \not \in \Sigma$. 

We split the proof of (\ref{BGPTowerS5}) in two steps for the sake of clarity.\\

{\bf \raggedleft Step 1.} Let $m \in \mathbb{N}$, $n_1, \dots, n_m \in \Sigma$, $t_1, \dots, t_m \in [a,b]$ and $f_1, \dots, f_m : \mathbb{R} \rightarrow \mathbb{R}$ be bounded continuous functions. We let $S = \{i \in \llbracket 1, m\rrbracket: n_i \in K \mbox{ and } t_i \in [a',b']\}$. We claim that 
\begin{equation}\label{S51Red1}
\mathbb{E} \left[ \prod_{i = 1}^m f_i(\mathcal{L}(n_i, t_i)) \right] = \mathbb{E} \left[ \prod_{s \in S^c} f_s(\mathcal{L}(n_s, t_s)) \cdot\mathbb{E}_{avoid}^{a',b', \vec{x}, \vec{y}, f, g} \left[ \prod_{s \in S} f_s(\tilde{\mathcal{Q}}(n_s, t_s)) \right] \right]. 
\end{equation}
We show (\ref{S51Red1}) in the step below. Here we assume its validity and conclude the proof of the lemma.\\

Define the functions 
$$h_n(x;r) = \begin{cases} 0, &\mbox{ if $x > r + n^{-1}$,} \\ 1 - n (x-r), &\mbox{ if $x \in [r, r+n^{-1}]$, } \\ 1, &\mbox{ if $x < r$.} \end{cases}$$
Let us fix $m_1, m_2 \in \mathbb{N}$, $n^1_1, \dots, n^1_{m_1}, n^2_1, \dots, n^2_{m_2} \in \Sigma$, $t^1_1, \dots, t^1_{m_1}, t^2_1, \dots, t^2_{m_2} \in [a,b]$ so that $(n^1_i, t^1_i) \not \in K \times [a', b']$ for $i = 1, \dots, m_1$ and $(n^2_i, t^2_i) \in K \times [a', b']$ for $i = 1, \dots, m_2$. It follows from (\ref{S51Red1}) that
\begin{equation*}
\begin{split}
&\mathbb{E} \hspace{-1mm} \left[ \prod_{i = 1}^{m_1} h_n(\mathcal{L}(n^1_i, t^1_i); a_i)  \prod_{i = 1}^{m_2} h_n(\mathcal{L}(n^2_i, t^2_i); b_i)  \hspace{-0.5mm} \right] \hspace{-0.5mm}= \hspace{-0.5mm} \mathbb{E} \left[ \hspace{-0.5mm} \prod_{i = 1}^{m_1} h_n(\mathcal{L}(n^1_i, t^1_i); a_i)  \mathbb{E}_{avoid}^{a',b', \vec{x}, \vec{y}, f, g} \hspace{-1.5mm} \left[ \hspace{-0.5mm}\prod_{i = 1}^{m_2} h_n(\tilde{\mathcal{Q}}(n^2_i, t^2_i); b_i)  \hspace{-0.5mm} \right]  \hspace{-0.5mm}\right] \hspace{-0.5mm}. 
\end{split}
\end{equation*}
Taking the limit as $n \rightarrow \infty$ we conclude by the bounded convergence theorem that
\begin{equation*}
\begin{split}
&\mathbb{E} \hspace{-1mm} \left[ \prod_{i = 1}^{m_1} \hat{h}(\mathcal{L}(n^1_i, t^1_i); a_i)  \prod_{i = 1}^{m_2} \hat{h}(\mathcal{L}(n^2_i, t^2_i); b_i)  \hspace{-0.5mm} \right] \hspace{-0.5mm}= \hspace{-0.5mm} \mathbb{E} \left[ \hspace{-0.5mm} \prod_{i = 1}^{m_1}\hat{h}(\mathcal{L}(n^1_i, t^1_i); a_i)  \mathbb{E}_{avoid}^{a',b', \vec{x}, \vec{y}, f, g} \hspace{0mm} \left[ \hspace{-0.5mm}\prod_{i = 1}^{m_2} \hat{h}(\tilde{\mathcal{Q}}(n^2_i, t^2_i); b_i)  \hspace{-0.5mm} \right]  \hspace{-0.5mm}\right] \hspace{-0.5mm}. 
\end{split}
\end{equation*}
where $\hat{h}(x;a) = {\bf 1}\{ x \leq a\}$. Let $\mathcal{H}$ denote the space of bounded Borel-measurable functions $H: C(K \times [a,b]) \rightarrow \mathbb{R}$ such that 
\begin{equation}\label{S51H}
\begin{split}
&\mathbb{E} \hspace{-1mm} \left[ \prod_{i = 1}^{m_1} \hat{h}(\mathcal{L}(n^1_i, t^1_i); a_i)  H\left(\mathcal{L}|_{K \times [a',b']}\right)   \right] \hspace{-0.5mm}= \hspace{-0.5mm} \mathbb{E} \left[  \prod_{i = 1}^{m_1}\hat{h}(\mathcal{L}(n^1_i, t^1_i); a_i)  \mathbb{E}_{avoid}^{a',b', \vec{x}, \vec{y}, f, g} \hspace{0mm} \left[ \hspace{-0.5mm} H(\tilde{\mathcal{Q}})  \right]  \right] \hspace{-0.5mm}. 
\end{split}
\end{equation}
Our work so far shows that ${\bf 1}_{A} \in \mathcal{H}$ for any set $A \in \mathcal{A}$, where $\mathcal{A}$ is the $\pi$-system of sets of the form
$$ \{ h \in C( K \times [a',b']) : h(n^2_i, t^2_i) \leq b_i \mbox{ for $i = 1, \dots, m_2$}\}.$$
It is clear that $\mathcal{H}$ is closed under linear combinations (by linearity of the expectation). Furthermore, if $H_n \in \mathcal{H}$ is an increasing sequence of non-negative measurable functions that increase to a bounded function $H$ then $H \in \mathcal{H}$ by the monotone convergence theorem. By the monotone class theorem, see e.g. \cite[Theorem 5.2.2]{Durrett}, we have that $\mathcal{H}$ contains all bounded measurable functions with respect to $\sigma(\mathcal{A})$, and the latter is $\mathcal{C}_{K}$ in view of Lemma \ref{FinDim}. In particular $F \in \mathcal{H}$. \\

Let $\mathcal{B}$ denote the collection of sets $B \in \mathcal{F}_{ext} (K \times (a',b'))$ such that 
\begin{equation}\label{S51B}
\begin{split}
&\mathbb{E} \hspace{-1mm} \left[ {\bf 1}_B \cdot  F\left(\mathcal{L}|_{K \times [a',b']}\right)   \right] \hspace{-0.5mm}= \hspace{-0.5mm} \mathbb{E} \left[  {\bf 1}_B \cdot \mathbb{E}_{avoid}^{a',b', \vec{x}, \vec{y}, f, g} \hspace{0mm} \left[ \hspace{-0.5mm} F(\tilde{\mathcal{Q}})  \right]  \right] \hspace{-0.5mm}. 
\end{split}
\end{equation}
The bounded convergence theorem implies that $\mathcal{B}$ is a $\lambda$-system, and (\ref{S51H}) being true for all bounded $\mathcal{C}_{K}$-measurable functions $H$ implies that $\mathcal{B}$ contains the $\pi$-system $P$ of sets of the form
$$ \{ h \in C( \Sigma \times [a,b]) : h(n_i, t_i) \leq a_i \mbox{ for $i = 1, \dots, m$, where $(n_i, t_i)  \in D_{K,a',b'}^c$}\}.$$
By the $\pi-\lambda$ Theorem, see \cite[Theorem 2.1.6]{Durrett}, we see that $\mathcal{B}$ contains $\sigma(P)$, which is precisely $ \mathcal{F}_{ext} (K \times (a',b'))$. We conclude that (\ref{S51B}) holds for all $B \in \mathcal{F}_{ext} (K \times (a',b'))$. Since by Lemma \ref{LemmaMeasExp} we know that $ \mathbb{E}_{avoid}^{a',b', \vec{x}, \vec{y}, f, g} \hspace{0mm} \bigl[ F(\tilde{\mathcal{Q}})  \bigr] $ is $\mathcal{F}_{ext} (K \times (a',b'))$-measurable, we conclude (\ref{BGPTowerS5}) by the defining properties of conditional expectations.\\

{\bf \raggedleft Step 2.} In this step we prove (\ref{S51Red1}). Following the notation from Definitions \ref{Grids} and \ref{Grids2} we let $\vec{x}^n, \vec{y}^n \in \mathbb{R}^k_n \cap \weyl_k$ be such that $|x^n_i - y_i^n| \leq \frac{\Delta_n^x}{\Delta_n^t} [b-a]$ and $\vec{x}^n \rightarrow \vec{x}$ and $\vec{y}^n \rightarrow \vec{y}$. It follows from Lemma~\ref{lem:RW} applied to $a' = a $, $b' = b$, $f = f_n = \infty$, $g = g_n = -\infty$ that the $\llbracket 1, k \rrbracket$-indexed line ensembles $\mathcal{Y}^n$ whose laws are $\mathbb{P}_{avoid, n}^{a,b, \vec{x}^n, \vec{y}^n, \infty, -\infty}$ converge weakly to $\mathbb{P}_{avoid}^{a,b, \vec{x}, \vec{y}, \infty, -\infty}$ as $n \to \infty$. In particular, we conclude that 
\begin{equation}\label{S51Red2}
\mathbb{E} \left[ \prod_{i = 1}^m f_i(\mathcal{L}(n_i, t_i)) \right] = \lim_{n \rightarrow \infty} \mathbb{E} \left[\prod_{i = 1}^m f_i(\mathcal{Y}^n(n_i, t_i)) \right]. 
\end{equation}
Using that $C([a,b])$ with the uniform topology is separable, see e.g. \cite[Example 1.3, p. 11]{Billing}, we know that $C(\Sigma \times [a,b])$ is also separable. In particular we can apply the Skorohod Representation Theorem, see \cite[Theorem 6.7]{Billing}, from which we conclude that there exists a probability space $(\Omega, \mathcal{F}, \mathbb{P})$, which supports $C(\Sigma \times [a,b])$-valued random variables $\mathcal{Y}^n$ and $\mathcal{L}$ such that $\mathcal{Y}^n \rightarrow \mathcal{L}$ for every $\omega \in \Omega$ and such that under $\mathbb{P}$ the law of $\mathcal{Y}^n$ is $\mathbb{P}_{avoid, n}^{a,b,\vec x^n,\vec y^n\infty, -\infty}$, while under $\mathbb{P}$ the law of $\mathcal{L}$ is $\mathbb{P}_{avoid}^{a,b,\vec x,\vec y,\infty, -\infty}$. 

We now let $a_n,b_n \in \Lambda_{n^2}$ be such that $a_n \leq a'$, $b_n \geq b'$ and $a_n$ is maximal while $b_n$ is minimal subject to these conditions. We also fix $N_0$ sufficiently large so that $n \geq N_0$ implies that $t_s < a_n$ or $t_s > b_n$ for $s \in S^c$ such that $n_s \in K$. Let $\vec{X}^n, \vec{Y}^n$ be defined through 
$$\vec{X}^n_i = \mathcal{Y}^n(k_1 + i - 1, a_n) \mbox{ and } \vec{Y}^n_i = \mathcal{Y}^n(k_1 + i - 1, b_n) \mbox{ for $i \in \llbracket 1, k_2 - k_1 + 1 \rrbracket $}.$$
Since $\mathcal{Y}^n$ is uniformly distributed on all (finitely many) avoiding trajectories from $\vec{x}^n$ to $\vec{y}^n$, we conclude that the restriction of $\mathcal{Y}^n$ to $K \times [a_n, b_n]$ is precisely uniformly distributed on all (finitely many) avoiding trajectories from $\vec{X}^n$ to $\vec{Y}^n$, conditioned on staying below $f_n = \mathcal{Y}^n_{k_1-1}$ and above $g_n = \mathcal{Y}^n_{k_2+1}$ with the usual convention that $f_n = \infty$ if $k_1 = 1$ and $g_n = -\infty$ if $k_2 = k$. The latter observation allows us to deduce that
\begin{equation}\label{S51Red3}
\mathbb{E} \left[\prod_{i = 1}^m f_i(\mathcal{Y}^n(n_i, t_i)) \right] = \mathbb{E} \left[\prod_{s \in S^c}f_s(\mathcal{Y}^n(n_s, t_s))  \cdot \mathbb{E}_{avoid,n} \left[ \prod_{s \in S} f_s(\mathcal{Z}^n(n_s -k_1 + 1, t_s)) \right] \right],
\end{equation}
where we have written $\mathbb{E}_{avoid,n}$ in place of $\mathbb{E}^{a_n, b_n, \vec{X}^n, \vec{Y}^n,f_n, g_n}_{avoid,n} $ to ease the notation.\\

In view of our Skorohod embedding space $(\Omega, \mathcal{F}, \mathbb{P})$ we know that almost surely $f_n \rightarrow f$ on $[a,b]$, where $f(x) = \mathcal{L}(k_1 - 1;x)$ if $k_1 \geq 2$ or $f = \infty$ if $k_1 = 1$. Analogously $g_n \rightarrow g$ on $[a,b]$, where $g(x) = \mathcal{L}(k_2 + 1;x)$ if $k_2\leq k-1$ or $g = -\infty$ if $k_2 = k$. In addition, $\vec{X}^n \rightarrow \vec{X}$ and $\vec{Y}^n \rightarrow \vec{Y}$ where 
$$\vec{X}_i = \mathcal{L}(k_1 + i - 1, a') \mbox{ and } \vec{Y}_i = \mathcal{L}^n(k_1 + i - 1, b') \mbox{ for $i \in \llbracket 1, k_2 - k_1 + 1 \rrbracket $}.$$
Furthermore, by Definition \ref{DefAvoidingLaw} and Lemma \ref{NoTouch} we know that $\mathbb{P}$-almost surely $\vec{X}, \vec{Y} \in \weyl_k.$ Consequently, from Lemma \ref{lem:RW} we conclude that $\mathbb{P}$-almost surely
\begin{equation}\label{S51Red4}
\lim_{n\rightarrow \infty} \mathbb{E}_{avoid,n} \left[ \prod_{s \in S} f_s(\mathcal{Z}^n(n_s -k_1 + 1, t_s)) \right] = \mathbb{E}_{avoid}^{a',b', \vec{x}, \vec{y}, f, g} \left[ \prod_{s \in S} f_s(\tilde{\mathcal{Q}}(n_s, t_s)) \right].
\end{equation}
Finally, the continuity of $f_i$ and and the $\omega$-wise convergence of $\mathcal{Y}^n$ to $\mathcal{L}$ implies that for every $\omega \in \Omega$ we have
\begin{equation}\label{S51Red5}
\lim_{n\rightarrow \infty}\prod_{s \in S^c}f_s(\mathcal{Y}^n(n_s, t_s))  = \prod_{s \in S^c}f_s(\mathcal{L}(n_s, t_s)) .
\end{equation}
Equation (\ref{S51Red1}) is now a consequence of (\ref{S51Red2}), (\ref{S51Red3}), (\ref{S51Red4}) and (\ref{S51Red5}) after an application of the bounded convergence theorem.

%
\subsection{Proofs of Lemmas \ref{MCLxy} and \ref{MCLfg} }\label{Section5.3} The main result of this section is as follows.

\begin{lemma}\label{MCLxyfg} Assume the same notation as in Definition \ref{DefAvoidingLaw}. Fix $k \in \mathbb{N}$, $a < b$ and and two continuous functions $g^t, g^b: [a,b] \rightarrow \mathbb{R} \cup \{- \infty \}$ such that $g^t(x) \geq g^b(x)$ for all $x \in [a,b]$. We also fix $\vec{x}, \vec{y}, \vec{x}', \vec{y}' \in \mathbb{R}_{>}^k$ such that $g^b(a) < x_k$, $g^b(b) < y_k$, $g^t(a) < x_k'$, $g^t(b) < y_k'$  and $x_i \leq x_i'$, $y_i \leq y_i'$ for $i = 1,\dots, k$. Then there exists a probability space $(\Omega, \mathcal{F}, \mathbb{P})$, which supports two $\llbracket 1, k \rrbracket$-indexed line ensembles $\mathcal{L}^t$ and $\mathcal{L}^b$ on $[a,b]$ such that the law of $\mathcal{L}^{t}$ {\big (}resp. $\mathcal{L}^b${\big )} under $\mathbb{P}$ is $\mathbb{P}_{avoid}^{a,b, \vec{x}', \vec{y}', \infty, g^t}$ {\big (}resp. $\mathbb{P}_{avoid}^{a,b, \vec{x}, \vec{y}, \infty, g^b}${\big )} and such that $\mathbb{P}$-almost surely we have $\mathcal{L}_i^t(x) \geq \mathcal{L}^b_i(x)$ for all $i = 1,\dots, k$ and $x \in [a,b]$.
\end{lemma}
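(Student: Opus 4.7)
First observe that Lemma \ref{MCLxyfg} simultaneously generalises Lemma \ref{MCLxy} (take $g^t = g^b = g$) and Lemma \ref{MCLfg} (take $\vec{x}' = \vec{x}$, $\vec{y}' = \vec{y}$), so it suffices to establish this unified statement. The plan is to approximate the continuum avoiding measures by the discrete avoiding measures $\mathbb{P}_{avoid,n}$ on non-intersecting symmetric random walk bridges from Definition \ref{Grids2}, construct the monotone coupling in the combinatorially easier discrete setting, and then transfer it to the continuum by combining Lemma \ref{lem:RW} with joint tightness and Skorokhod's representation theorem.

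For the discrete approximation I would first choose $a_n, b_n \in \Lambda_{n^2}$ with $a_n \to a$, $b_n \to b$; sequences $\vec{x}^n, \vec{x}'^n, \vec{y}^n, \vec{y}'^n \in \R_n^k \cap \weyl_k$ converging respectively to $\vec{x}, \vec{x}', \vec{y}, \vec{y}'$ while preserving the componentwise inequalities $\vec{x}^n \leq \vec{x}'^n$ and $\vec{y}^n \leq \vec{y}'^n$; and continuous functions $g^{b,n} \leq g^{t,n}$ on $[a,b]$ approximating $g^b, g^t$ uniformly (with the convention that $g^{b,n} = -\infty$ whenever $g^b = -\infty$). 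By Lemma \ref{lem:RW}, both discrete measures $\mathbb{P}_{avoid,n}^{a_n, b_n, \vec{x}'^n, \vec{y}'^n, \infty, g^{t,n}}$ and $\mathbb{P}_{avoid,n}^{a_n, b_n, \vec{x}^n, \vec{y}^n, \infty, g^{b,n}}$ are well-defined for all large $n$ and converge weakly, after restriction to $[a,b]$, to the corresponding continuum avoiding laws. The remaining task at the discrete level is to construct, on a common probability space, coupled random line ensembles $\mathcal{Y}^{t,n}$ and $\mathcal{Y}^{b,n}$ with these marginal laws and satisfying $\mathcal{Y}^{t,n}(i, r) \geq \mathcal{Y}^{b,n}(i, r)$ pointwise.

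The discrete coupling will be built via a heat-bath (Glauber) Markov chain on the finite product of the two avoiding trajectory spaces. At each step one picks an interior grid vertex $(i, s)$ uniformly and an independent uniform $U \in [0,1]$, and simultaneously updates $\mathcal{Y}^{t,n}(i, s)$ and $\mathcal{Y}^{b,n}(i, s)$ to the $U$-quantile of each configuration's conditional distribution at $(i, s)$ given the remaining values. Because the step distribution is uniform on $\{-1, 0, 1\}$, a direct computation shows that the conditional law of $\mathcal{Y}^{\bullet, n}(i, s)$ is the uniform distribution on a discrete subinterval of $\R_n$ whose endpoints are jointly non-decreasing functions of the surrounding values $\mathcal{Y}^{\bullet,n}(i, s \pm 1)$, $\mathcal{Y}^{\bullet,n}(i \pm 1, s)$ and the barrier value $g^{\bullet,n}(s)$. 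Consequently, if $\mathcal{Y}^{t,n} \geq \mathcal{Y}^{b,n}$ prior to an update, the ordering is preserved after it. Starting from deterministic initial configurations with $\mathcal{Y}^{t,n}_0 \geq \mathcal{Y}^{b,n}_0$ (obtained, for example, by perturbing the linear interpolations of the endpoint data to meet the non-intersection and barrier constraints) and running the chain to equilibrium---each marginal chain being irreducible and aperiodic with invariant distribution equal to the corresponding target measure---then yields the desired ordered discrete coupling.

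Finally, for the continuum limit, the marginal weak convergence supplied by Lemma \ref{lem:RW} gives tightness of each marginal family on $C(\llbracket 1, k \rrbracket \times [a,b])$, and hence by Prokhorov's theorem the joint laws of the coupled pairs are tight on the product space. Extracting a weakly convergent subsequence, identifying the marginals as the two continuum avoiding measures, and invoking Skorokhod's representation theorem realises the limit as an almost sure pointwise limit of the discrete couplings; the pointwise inequality then passes to the limit by continuity. The hard part will be the explicit description of the conditional distribution of $\mathcal{Y}^{\bullet,n}(i, s)$ at an interior vertex and the verification that its support depends monotonically on the surrounding trajectory values and on the barrier; once that stochastic monotonicity is in hand, all remaining steps (ordered initialisation, irreducibility of the Glauber chain, joint tightness, and Skorokhod embedding) are routine consequences of Lemma \ref{lem:RW}.
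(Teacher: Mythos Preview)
Your proposal is correct and follows essentially the same three-stage architecture as the paper: discrete approximation via Lemma \ref{lem:RW}, an order-preserving Markov chain coupling on the finite set of discrete avoiding trajectories, and passage to the continuum limit by Prohorov plus Skorokhod. The only substantive difference is the choice of dynamics for the coupling chain: you use a discrete-time heat-bath (Glauber) update with a shared uniform quantile, whereas the paper uses a continuous-time chain driven by independent rate-$1$ Poisson clocks indexed by $(r,i,\delta)$ with $\delta\in\{-1,0,1\}$, proposing the move $\mathcal{Y}(i,r)\mapsto\mathcal{Y}(i,r)+\delta\,\Delta_n^x$ and accepting only if the result remains in the admissible set. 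Both mechanisms preserve the pointwise order for the same underlying reason---the set of admissible values at a single site is a discrete interval whose endpoints are monotone in the neighbouring heights and in the barrier---so either verification goes through. The paper is slightly more explicit than you are about the ordered initial state (it writes down the lexicographically maximal trajectories $y^{\max},z^{\max}$ and checks $y^{\max}\le z^{\max}$ directly), and it does not bother to approximate the barriers $g^b,g^t$ by sequences $g^{b,n},g^{t,n}$, using them as they are; your extra approximation is harmless but unnecessary.
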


It is clear that Lemmas \ref{MCLxy} and \ref{MCLfg} both follow from Lemma \ref{MCLxyfg}. The reason we keep the statements of the two lemmas separate earlier in the paper is that it makes their application a bit more transparent in the main body of text.

\begin{proof}[Proof of Lemma \ref{MCLxyfg}] We assume the same notation as in Lemma \ref{MCLxyfg} and also Definition \ref{Grids2}. Specifically, we fix $\Sigma = \llbracket 1, k\rrbracket$ and $\Lambda = [a,b]$. For clarity we split the proof into three steps.\\

{\bf \raggedleft Step 1.} We choose any sequences $\vec{x}^n, \vec{y}^n, \vec{u}^n, \vec{v}^n \in \weyl_k \cap\R_{n}^k$ such that for each $n \in \mathbb{N}$ we have $x_i^n \leq u_i^n$, $y_i^n \leq v_i^n$ for $i = 1, \dots,k$ and also such that $\lim_{n \to \infty} \vec{x}^n = \vec x$, $\lim_{n \to \infty} \vec{y}^n = \vec y$, $\lim_{n \rightarrow \infty}\vec{u}^n = \vec{x}'$ and $\lim_{n\rightarrow \infty} \vec{v}^n = \vec{y}'$. It follows from Lemma \ref{lem:RW} applied to $a' =a$, $b' = b$ that there exists $N_0 \in \mathbb{N}$ such that if $n \geq N_0$ we have that $\mathbb{P}_{avoid, n}^{a,b, \vec{x}^n, \vec{y}^n, \infty, g^b}$ and $\mathbb{P}_{avoid, n}^{a,b, \vec{u}^n, \vec{u}^n, \infty, g^t}$ are well-defined. 

We claim that we can construct sequences of probability spaces $(\Omega_n, \mathcal{F}_n, \mathbb{P}_n)$ for $n \geq N_0$ that support $\llbracket 1, k \rrbracket$-indexed line ensembles $\mathcal{Y}^n$ and $\mathcal{Z}^n$, whose laws are $\mathbb{P}_{avoid, n}^{a,b, \vec{x}^n, \vec{y}^n, \infty, g^b}$ and  $\mathbb{P}_{avoid, n}^{a,b, \vec{u}^n, \vec{v}^n, \infty, g^t}$ respectively, such that for each $\omega \in \Omega_n$ we have
\begin{equation}\label{S52Red1}
\mathcal{Y}^n(\omega)(i,x) \leq \mathcal{Z}^n(\omega)(i,x) \mbox{ for $i = 1, \dots, k$ and $x \in [a,b]$}.
\end{equation}
We show (\ref{S52Red1}) in the next step. Here we assume its validity and conclude the proof of the lemma.\\

It follows from Lemma \ref{lem:RW} that $\mathcal{Y}^n$ converge weakly to $\mathbb{P}_{avoid}^{a,b, \vec{x}, \vec{y}, \infty, g^b}$ and $\mathcal{Z}^n$ converge weakly to $\mathbb{P}_{avoid}^{a,b, \vec{x}', \vec{y}', \infty, g^t}$ as $n \to \infty$. In particular, the latter sequences of measures are relatively compact, which by the separability and completeness of $C(\llbracket 1, k \rrbracket \times [a,b])$ implies that these sequences are tight, cf. \cite[Theorem 5.2]{Billing}. In particular, the sequence of random variables $(\mathcal{Y}^n, \mathcal{Z}^n)$ on $(\Omega_n, \mathcal{F}_n, \mathbb{P}_n)$ (viewed as $C(\llbracket 1, k \rrbracket \times [a,b]) \times C(\llbracket 1, k \rrbracket \times [a,b])$-valued random variables with the product topology and corresponding Borel $\sigma$-algebra) are also tight. 

By Prohorov's theorem, see \cite[Theorem 5.1]{Billing}, we conclude that the sequence of laws of $(\mathcal{Y}^n, \mathcal{Z}^n)$ is relatively compact. Let $n_m$ be a subsequence such that $(\mathcal{Y}^{n_m}, \mathcal{Z}^{n_m})$ converge weakly. By the Skorohod Representation Theorem, see \cite[Theorem 6.7]{Billing}, we conclude that there exists a probability space $(\Omega, \mathcal{F}, \mathbb{P})$, which supports $C(\Sigma \times [a,b])$-valued random variables $\mathcal{Y}^{n_m}, \mathcal{Z}^{n_m}$ and $\mathcal{Y}, \mathcal{Z}$ such that
\begin{enumerate}
\item  $\mathcal{Y}^{n_m} \rightarrow \mathcal{Y}$ for every $\omega \in \Omega$ as $m \rightarrow \infty$; 
\item $\mathcal{Z}^{n_m} \rightarrow \mathcal{Z}$  for every $\omega \in \Omega$ as $m \rightarrow \infty$; 
\item  under $\mathbb{P}$ the law of $\mathcal{Y}^{n_m}$ is $\mathbb{P}_{avoid, n}^{a,b,\vec x^{n_m},\vec y^{n_m}\infty, g^b}$;
\item   under $\mathbb{P}$ the law of $\mathcal{Z}^{n_m}$ is $\mathbb{P}_{avoid, n}^{a,b,\vec u^{n_m},\vec v^{n_m}\infty, g^t}$;
\item  $\mathbb{P}$-almost surely we have $\mathcal{Y}^{n_m}(i,x) \leq \mathcal{Z}^{n_m}(i,x) \mbox{ for $m \geq 1$, $i = 1, \dots, k$ and $x \in [a,b]$}$.
\end{enumerate}
Since $\mathcal{Y}^n$ converge weakly to $\mathbb{P}_{avoid}^{a,b, \vec{x}, \vec{y}, \infty, g^b}$ and $\mathcal{Z}^n$ converge weakly to $\mathbb{P}_{avoid}^{a,b, \vec{x}', \vec{y}', \infty, g^t}$ we conclude that under $\mathbb{P}$ the variables $\mathcal{Y}$ and $\mathcal{Z}$ have laws $\mathbb{P}_{avoid}^{a,b, \vec{x}, \vec{y}, \infty, g^b}$ and $\mathbb{P}_{avoid}^{a,b, \vec{x}', \vec{y}', \infty, g^t}$ respectively. Also conditions (1), (2) and (5) above imply that $\mathbb{P}$-almost surely we have 
$$\mathcal{Y}(i,x) \leq \mathcal{Z}(i,x) \mbox{ for $i = 1, \dots, k$ and $x \in [a,b]$}.$$
Consequently, taking the above probability space $(\Omega, \mathcal{F}, \mathbb{P})$ and setting $(\mathcal{L}^t, \mathcal{L}^b) =(\mathcal{Y}, \mathcal{Z})$ we obtain the statement of the lemma.\\

{\bf \raggedleft Step 2.} In this step we prove (\ref{S52Red1}). Our approach will closely follow the one in \cite[Section 6]{CorHamA}.

Let $Y_n$ and $Z_n$ denote the (finite) sets of possible elements in $C(\llbracket 1, k \rrbracket \times [a,b])$ that the line ensembles $\mathcal{Y}^n$ and $\mathcal{Z}^n$ can take with positive probability. We will construct a continuous time Markov chain $(A_t, B_t)$ taking values in $Y_n \times  Z_n$, such that:
\begin{enumerate}
\item $A_t$ and $B_t$ are each Markov in their own filtration;
\item $A_t$ is irreducible and has invariant measure $\mathbb{P}_{avoid, n}^{a,b, \vec{x}^n, \vec{y}^n, \infty, g^b}$;
\item $B_t$ is irreducible and has invariant measure $\mathbb{P}_{avoid, n}^{a,b, \vec{u}^n, \vec{v}^n, \infty, g^t}$;
\item for every $t \geq 0$ we have $A_t(i,x) \leq B_t(i,x)$ for $i \in \llbracket 1, k \rrbracket$ and $x \in [a,b]$.
\end{enumerate}
We will construct the Markov chain $(A_t, B_t)$ in the next step. Here we assume we have such a construction and conclude the proof of (\ref{S52Red1}).\\

From \cite[Theorems 3.5.3 and 3.6.3]{Norris} we know that $A_N$ weakly converges to $\mathbb{P}_{avoid, n}^{a,b, \vec{x}^n, \vec{y}^n, \infty, g^b}$ and $B_N$ weakly converges to $\mathbb{P}_{avoid, n}^{a,b, \vec{u}^n, \vec{v}^n, \infty, g^t}$ as $N \rightarrow \infty$. In particular, we see that $A_N, B_N$ are tight and then so is the sequence $(A_N,B_N)$. By Prohorov's theorem, see \cite[Theorem 5.1]{Billing}, we conclude that the sequence of laws of $(A_N, B_N)$ is relatively compact. Let $N_m$ be a subsequence such that  $(A_{N_m}, B_{N_m})$ converge weakly. By the Skorohod Representation Theorem, see \cite[Theorem 6.7]{Billing}, we conclude that there exists a probability space $(\Omega, \mathcal{F}, \mathbb{P})$, which supports $C(\Sigma \times [a,b])$-valued random variables $\mathcal{A}_m, \mathcal{B}_m$ and $\mathcal{A}, \mathcal{B}$ such that
\begin{itemize}
\item  $\mathcal{A}_{m} \rightarrow \mathcal{A}$ for every $\omega \in \Omega$ as $m \rightarrow \infty$; 
\item $\mathcal{B}_m \rightarrow \mathcal{B}$  for every $\omega \in \Omega$ as $m \rightarrow \infty$; 
\item  under $\mathbb{P}$ the law of $(\mathcal{A}_m, \mathcal{B}_m)$ is the same as that of $(A_{N_m}, B_{N_m})$.
\end{itemize}
The weak convergence of $A_N, B_N$ implies that $\mathcal{A}$ has law $\mathbb{P}_{avoid, n}^{a,b, \vec{x}^n, \vec{y}^n, \infty, g^b}$ and $\mathcal{B}$ has law $\mathbb{P}_{avoid, n}^{a,b, \vec{u}^n, \vec{v}^n, \infty, g^t}$. Furthermore, the fourth condition in the beginning of the step shows that $\mathcal{A}(i,x) \leq \mathcal{B}(i,x)$ for $i \in \llbracket 1, k \rrbracket$ and $x \in [a,b]$. Consequently, we can take $(\Omega_n, \mathcal{F}_n, \mathbb{P}_n)$ to be the above space $(\Omega, \mathcal{F}, \mathbb{P})$ and set $(\mathcal{Y}^n, \mathcal{Z}^n)= (\mathcal{A}, \mathcal{B})$. This proves (\ref{S52Red1}).\\

{\bf \raggedleft Step 3.} In this final step we construct the chain $(A_t, B_t)$, satisfying the four conditions in the beginning of Step 2. We first describe the initial state of the Markov chain $(A_0, B_0)$. Notice that if $y \in Y_n$ there is a natural way to encode $y(i,x)$ for $i \in \llbracket 1, k \rrbracket$ by a list of $n^2$ symbols $\{-1, 0, 1\}$, where the $j$-th symbol is precisely 
$$\frac{y(i, a + j \cdot \Delta^t_n) - y(i, a + (j-1) \cdot \Delta^t_n)}{\Delta_n^x}.$$
 We define the lexicographic ordering on the set of all such lists of symbols (where of course $1 > 0 > -1$). If we look at $y(i,x)$, we see that there is a maximal sequence of $n^2$ symbols, which consists of 
$\left\lfloor \frac{1}{2} \cdot \left( \frac{y_i^n - x_i^n}{ \Delta_x}+ n^2 \right)\right\rfloor$
symbols $1$, followed by a $0$ if 
$ \frac{1}{2} \cdot \left( \frac{y_i^n - x_i^n}{ \Delta_x}+ n^2 \right)\not \in \mathbb{Z},$
 followed by 
$\left\lfloor \frac{1}{2} \cdot \left( \frac{x_i^n - y_i^n}{ \Delta_x}+ n^2 \right) \right\rfloor$
 symbols $-1$. We call the curve corresponding to this list $y^{max}(i,\cdot)$. One directly checks that $y^{max}=(y^{\max}(1, \cdot), \cdots, y^{\max}(k, \cdot)) \in Y_n.$ In showing the last statement, we implicitly used that $n \geq N_0$ so that $\mathbb{P}_{avoid, n}^{a,b, \vec{x}^n, \vec{y}^n, \infty, g^b}$ is well-defined. 

 We analogously define $z^{max} \in Z_n$ by replacing everywhere above $x_i^n, y_i^n$ with $u_i^n, v_i^n$ respectively. Again one needs to use that $n \geq N_0$ so that $\mathbb{P}_{avoid, n}^{a,b, \vec{u}^n, \vec{v}^n, \infty, g^t}$ is well-defined. One further checks directly that $y^{max}(i,x) \leq z^{max}(i,x)$ for all $i \in \llbracket 1, k \rrbracket$ and $x\in [a,b]$. The state $(y^{max}, z^{max})$ is the initial state of our chain.\\

We next describe the dynamics. For each point $r \in \Lambda_{n^2} \cap (a,b)$, each $i \in \llbracket 1,k \rrbracket$ and each $\delta \in \{-1, 0, 1\}$ we have an independent Poisson clock, ringing with rate $1$. When the clock corresponding to $(r,i,\delta)$ rings at time $T$, we update $(A_{T-},B_{T-})$ as follows. We erase the part of $A_{T-}(i,x)$ (resp. $B_{T-}(i,x)$) for $x \in [r - \Delta_n^t, r+\Delta_n^t]$ and replace that piece with two linear pieces connecting the points $\left(r-\Delta_n^t, A_{T-}(i, r- \Delta_n^t)\right)$ and $\left(r+\Delta_n^t, A_{T-}(i, r+ \Delta_n^t)\right)$ with $\left(r, A_{T-}(i, r) + \delta \cdot \Delta_n^x\right)$ (resp. $\left(r-\Delta_n^t, B_{T-}(i, r- \Delta_n^t)\right)$ and $\left(r+\Delta_n^t, B_{T-}(i, r+ \Delta_n^t)\right)$ with $\left(r, B_{T-}(i, r) + \delta \cdot \Delta_n^x\right)$). If the resulting $C(\Sigma \times [a,b])$-valued element is in $Y_n$ (resp. $Z_n$) we set $A_T$ (resp. $B_T$) to it. Otherwise, we set $A_T$ (resp. $B_T$) to $A_{T-}$ (resp. $B_{T-}$). This defines the dynamics.

It is clear from the above definition that $(A_t, B_t)$ is a Markov chain and that $A_t$ and $B_t$ are individually Markov in their own filtration. Moreover, one directly verifies that the uniform measure on $Y_n$ (resp. $Z_n$) is invariant under the above dynamics. The latter observations show that conditions (1), (2) and (3) in the beginning of Step 2 all hold. We are thus left with verifying condition (4). By construction, we know that $A_t(i,x) \leq B_t(i,x)$ for all $i \in \llbracket 1, k\rrbracket$ and $x \in [a,b]$ when $t =0$. What remains to be seen is that the update rule, explained in the previous paragraph, maintains this property for all $t \geq 0$. \\

For the sake of contradiction, suppose that $A_{T-} \in Y_n$, $B_{T-}\in Z_n$ are such that $A_{T-}(i,x) \leq B_{T-}(i,x)$ for all $i \in \llbracket 1, k\rrbracket$ and $x \in [a,b]$, but that after the $(r,i, \delta)$-clock has rung at time $T$ we no longer have that $A_{T}(i,x) \leq B_{T}(i,x)$ for all $i \in \llbracket 1, k\rrbracket$ and $x \in [a,b]$. By the formulation of the dynamics, the latter implies that $A_T(i,r) > B_T(i,r)$, and is only possible if $A_{T-}(i,r) = B_{T-}(i,r)$. In particular, we distinguish two cases: (C1) $\delta = 1$ and $A_{T}(i,r) = A_{T-}(i,r) + \Delta_n^x$, while $B_{T}(i,r) = B_{T-}(i,r)$ or (C2) $\delta = -1$ and $A_{T}(i,r) = A_{T-}(i,r)$, while $B_{T}(i,r) = B_{T-}(i,r) - \Delta_n^x$. 

In the case (C1), the fact that $B_{T}(i,r) = B_{T-}(i,r)$, means that the $C(\Sigma \times [a,b])$-valued element obtained from $B_{T-}$ by erasing the part of $B_{T-}(i,x)$ for $x \in [r - \Delta_n^t, r+\Delta_n^t]$ and replacing it with two linear pieces connecting $\left(r-\Delta_n^t, B_{T-}(i, r- \Delta_n^t)\right)$ and $\left(r+\Delta_n^t, B_{T-}(i, r+ \Delta_n^t)\right)$ with $\left(r, B_{T-}(i, r) +  \Delta_n^x\right)$ is not in $Z_n$. This means that 
$$B_{T}(i,r) + \Delta^x_n \geq \max\left( B_{T}(i -1,r), B_{T}(i, r- \Delta_n^t) + 2 \Delta^x_n,  B_{T}(i, r+ \Delta_n^t) + 2 \Delta^x_n\right).$$
Here the convention is $B_T(0,x) = \infty$. But then since 
$$A_{T-}(i,r) = B_{T-}(i,r), \mbox{ and $A_{T-}(i,x) \leq B_{T-}(i,x)$ for all $i \in \llbracket 1, k\rrbracket$ and $x \in [a,b]$},$$
we conclude that 
$$A_{T-}(i,r) + \Delta^x_n \geq \max\left( A_{T}(i -1,r), A_{T}(i, r- \Delta_n^t) + 2 \Delta^x_n,  A_{T}(i, r+ \Delta_n^t) + 2 \Delta^x_n\right),$$
again with the convention $A_T(0, x) = \infty$. The latter contradicts the fact that $A_{T}(i,r) = A_{T-}(i,r) + \Delta_n^x$ since it implies $A_T \not \in Y_n$.

In the case (C2), the fact that $A_{T}(i,r) = A_{T-}(i,r)$, means that the $C(\Sigma \times [a,b])$-valued element obtained from $A_{T-}$ by erasing the part of $A_{T-}(i,x)$ for $x \in [r - \Delta_n^t, r+\Delta_n^t]$ and replacing it with two linear pieces connecting $\left(r-\Delta_n^t, A_{T-}(i, r- \Delta_n^t)\right)$ and $\left(r+\Delta_n^t, A_{T-}(i, r+ \Delta_n^t)\right)$ with $\left(r, A_{T-}(i, r) -  \Delta_n^x\right)$ is not in $Y_n$. This means that 
$$A_{T}(i,r) - \Delta^x_n \leq \min\left( A_{T}(i +1,r), A_{T}(i, r- \Delta_n^t) - 2 \Delta^x_n,  A_{T}(i, r+ \Delta_n^t) - 2 \Delta^x_n\right),$$
where $A_{T}(k+1,x) = g^b(x)$. But then since 
$$A_{T-}(i,r) = B_{T-}(i,r), \mbox{ and $A_{T-}(i,x) \leq B_{T-}(i,x)$ for all $i \in \llbracket 1, k\rrbracket$ and $x \in [a,b]$},$$
we conclude that 
$$B_{T-}(i,r) - \Delta^x_n \leq \min\left( B_{T}(i -1,r), B_{T}(i, r- \Delta_n^t) - 2 \Delta^x_n,  B_{T}(i, r+ \Delta_n^t) - 2 \Delta^x_n\right),$$
where $B_{T}(k+1,x) = g^t(x)$ and we used that $g^t(x) \geq g^b(x)$ for all $x \in [a,b]$. The latter; however, contradicts the fact that $B_{T}(i,r) = B_{T-}(i,r) - \Delta_n^x$, as it implies that $ B_T \not \in Z_n$. Overall, we see that we reach a contradiction in both cases. This means that $(A_t, B_t)$ satisfies all four conditions in Step 2, which concludes the proof of the lemma.
\end{proof}

\bibliographystyle{amsplain}
\bibliography{PD}

\end{document}